\newtheorem{theorem}{Theorem}[section]
\newtheorem{corollary}[theorem]{Corollary}
\newtheorem{proposition}[theorem]{Proposition}
\theoremstyle{definition}  
\newtheorem{definition} [theorem] {Definition}
\newtheorem{remark} [theorem] {Remark}
\newcommand{\la}{\lambda}
\newcommand{\norm}[1]{\left\lVert#1\right\rVert}
\newcommand{\pd}[2]{\frac{\partial#1}{\partial#2}}
\newcommand{\R}{\mathbb{R}}
\newcommand{\e}{\varepsilon}
\newcommand{\co}{{\rm{const}}}
\newcommand{\Tr}{\mathop{\rm{Tr}}}
\newcommand{\dive}{\mathop{\rm{div}}}
\newcommand{\df}[4]{\ensuremath\sideset{_{#1}}{_{#4}}{\mathop{\left\langle #2, #3 \right\rangle}}}
\newcommand{\ps}[3]{\left( #2, #3 \right)_{#1}}
\begin{document}

\title[On fractional parabolic equations with Hardy-type potentials]
{On fractional parabolic equations\\ with Hardy-type potentials}
\date{December 10, 2022}

\author{Veronica Felli, Ana Primo and Giovanni Siclari}

\address[V. Felli, G. Siclari]{Dipartimento di Matematica
  e Applicazioni
\newline\indent
Universit\`a degli Studi di Milano - Bicocca
\newline\indent
Via Cozzi 55, 20125, Milano, Italy}
\email{veronica.felli@unimib.it,  g.siclari2@campus.unimib.it}

\address[A. Primo]{Departamento de  Matem{\'a}ticas
\newline\indent
Universidad  Aut\'onoma de  Madrid
\newline\indent 28049, Madrid, Spain
}
\email{ana.primo@uam.es}

\thanks{{\it 2020 Mathematics Subject Classification:}
35R11, 
  35K67, 
  35B40, 
  35B60. 
  \\
  \indent {\it Keywords:} Fractional parabolic equations; unique continuation;
monotonicity formula; Hardy-type potentials.\\
  V. Felli and G. Siclari are partially
supported by the GNAMPA-INdAM 2022 grant ``Questioni di esistenza e
unicit\`a per problemi non locali con potenziali''. \\ A. Primo is supported by PDI2019-110712GB-I00 project and  ``Regional Programme of Research and Technological Innovation.V PRICIT''. \\
Special thanks to  Departamento de  Matem{\'a}ticas of Universidad Autónoma de Madrid for the hospitality  offered to G. Siclari in July 2022.
}

 \begin{abstract}
   \noindent  
A classification of local asymptotic profiles and strong unique
continuation properties are established for a class of fractional heat
equations with a Hardy-type potential, via an Almgren-Poon monotonicity
formula combined with a blow-up analysis.
\end{abstract}

\maketitle

\section{Introduction} \label{sec-introduction}
This paper  deals with the following singular fractional evolution equation 
\begin{equation}\label{eq-frac-heat-hardy}
(w_t-\Delta w)^s=\frac{1}{\kappa_s}\left(\frac{\mu}{|x|^{2s}} w+g
  w\right),
\quad \text{ in } \R^N \times (t_0-T,t_0),
\end{equation}
where  $T>0$, and  
\begin{equation}\label{s-N-def-lambdaNs-kappas-mu}
s \in (0,1), \quad N>2s, \quad  \mu <\kappa_s\Lambda_{N,s}, \quad \kappa_s:=\frac{\Gamma(1-s)}{2^{2s-1} \Gamma(s)},	 \quad \Lambda_{N,s}:=2^{2s} \frac{\Gamma^2\left(\frac{N+2s}{4}\right)}{\Gamma^2\left(\frac{N-2s}{4}\right)}.
\end{equation} 
We are interested in studying the asymptotic behaviour of solutions to
\eqref{eq-frac-heat-hardy} at $(x,t)=(0,t_0)$ along the directions
$(\la x, t_0-\la^2t)$ as $\la \to 0^+$.  Our main result is a
  classification of possible limiting asymptotic rates and profiles
in terms of the eigenfunctions of a weighted Ornstein-Uhlenbeck
operator, see Theorem \ref{theorem-blow-up} in Section
\ref{sec-functional-settings} for a more precise statement. As a
corollary, we obtain a strong space-like unique continuation property from
the point $(0,t_0)$, see Corollary \ref{corollary-unique-principle} in
Section~\ref{sec-functional-settings}.

On the perturbing potential $g$  we assume the following hypotheses: 
\begin{align}
  &g,g_t\in L^r((t_0-T,t_0),L^{\frac{N}{2s}}(\R^N)), \quad
    g_t\in L^\infty_{loc}((t_0-T,t_0), L^{\frac{N}{2s}}(\R^N)), \label{hp-g-Lr} \\
  &|g(x,t)|+|\nabla g(x,t)\cdot x|\le C_g(1+|x|^{-2s+\varepsilon}) 
    \quad  \text{ for all } t \in (t_0-T,t_0) \text{ and  a.e. } x \in \R^N,  \label{hp-g-subhomogeneous} 
\end{align}
for some constant $C_g>0$, $\varepsilon \in (0,2s)$ and $r>1$. The
gradient of $g$ in \eqref{hp-g-subhomogeneous} is with respect to the
variable $x$ and it is meant in a distributional sense.  The
assumption \eqref{hp-g-subhomogeneous} on the potential $g$ is quite 
natural when dealing with Hardy-type singularities; it 
  essentially says that $g$ is negligible at $0$ with respect to
  the singular potential $\frac{\mu}{|x|^{2s}}$. We recall that a similar assumption is made in \cite{Fall-Felli-2014} for an
analogous equation with the fractional Laplacian $(-\Delta)^s$,
corresponding to
the stationary case of~\eqref{eq-frac-heat-hardy}.

In the literature one may find many definitions of the operator
$H^s (w):=(w_t-\Delta w)^s$ in \eqref{eq-frac-heat-hardy}, that is of
the fractional power of the classical heat operator
$H (w):=w_t-\Delta w$. We refer to \cite{AT} and \cite{ST} for a
presentation of the several ways to define $H^s$ corresponding to
different functional settings. It is also worth mentioning that a
pointwise formula for $H^su$ is derived in
\cite{ST}.  In Section \ref{sec-functional-settings} we give a precise
definition of $H^s$ and of weak solutions to
\eqref{eq-frac-heat-hardy} by the Fourier transform.

Our approach is based on an Almgren-Poon type monotonicity formula,
see \cite{PC}, combined with a blow-up argument. We mention that
monotonicity methods and blow-up analysis are used in \cite{FP} to
prove strong unique continuation and classification of blow-up
profiles for parabolic equations with a Hardy potential (corresponding
to the case $s=1$ in \eqref{eq-frac-heat-hardy}); analogous results
are obtained in \cite{FP2} for a class of parabolic equations with
critical electromagnetic potentials.
    
    There exists a large literature dealing with strong continuation
properties in the local parabolic setting. We mention \cite{LI1} for
unique continuation for parabolic operators with $L^{\frac{N+1}{2}}$ time-independent
coefficients and \cite{SS,SO} for unique continuation on horizontal
components, proved by Carleman weighted inequalities, in the presence of
time-dependent coefficients. The paper \cite{CH} contains not only a
unique continuation result but also some local asymptotic analysis of
solutions to parabolic inequalities with bounded coefficients.
We quote \cite{AV,ES,EF,EKPV,EV,FE} for
unique continuation results for parabolic equations with
time-dependent potentials by Carleman inequalities and monotonicity
methods. We also refer to  \cite{BGM}  for unique
  continuation properties  for the heat operator with a Hardy
  potential established by Carleman estimates.

To deal with the
fractional case \eqref{eq-frac-heat-hardy}, we introduce an
Almgren-Poon frequency function for an equivalent localized problem,
constructed by the extension procedure developed in \cite{BG,BCS,NS,ST}, in the
spirit of the one introduced by Caffarelli and Silvestre  in
\cite{CS} for the
fractional powers of the Laplacian. This leads us to
  deal with equation \eqref{prob-frac-extended-forward}, which is  a local degenerate or singular parabolic problem in
a one more
dimension, see Section \ref{sec-functional-settings} for the 
details.

In a fractional parabolic setting, an Almgren-Poon frequency formula is
first established in \cite{ST} in the absence of potentials, i.e. for
$g\equiv0$ and $\mu=0$. Subsequently, an Almgren-Poon monotonicity
approach is used in \cite{BG} to prove unique continuation properties
for weak solutions to \eqref{eq-frac-heat-hardy} in the case $\mu=0$,
that is without the Hardy singularity, and under $C^1$ or $C^2$
regularity assumptions on the potential $g$, depending on the value of
$s$.  In \cite{BG} a crucial role is played by a H\"older regularity
theory for solutions to the extended problem, which has, in addition
to its independent interest, applications to the estimates needed to
derive an Almgren-Poon type monotonicity formula.  We mention that a
space-like strong unique continuation property is established in
\cite{ABDG} in the case $\mu=0$ via a conditional elliptic type
doubling property and blow-up analysis. The case $\mu=0$ is treated
also in \cite{AT}, where, under similar regularity assumptions on the
potential $g$, a fine analysis of the structure of the nodal set and
of possible blow-ups of solutions vanishing with a finite order is
performed.  The approach of \cite{AT} is also based on an Almgren-Poon
type monotonicity formula and makes use of some uniform H\"older
bounds, improving the regularity estimates of \cite{BG} and providing
an independent proof of the H\"older regularity of weak solutions.

Due to the presence of a Hardy-type potential, there is no hope
to obtain similar regularity results, since weak solutions to
\eqref{eq-frac-heat-hardy} may in general be not bounded, see Theorem
\ref{theorem-blow-up}.  In the spirit of \cite{FP}, to overcome
  this difficulty we
  rely instead on the theory of abstract parabolic equations, once 
a formulation of the extension problem in a suitable Gaussian
space is obtained. 
Furthermore we also obtain a classification
of the asymptotic profiles of weak solutions to
\eqref{eq-frac-heat-hardy} at $(x,t)=(0,t_0)$ along the directions
$(\la x, t_0-\la^2t)$ as $\la \to 0^+$, see Theorem
\ref{theorem-blow-up} and Theorem \ref{theorem-blow-up-extended} in
Section \ref{sec-functional-settings}.

This paper is organized as follows. In Section
\ref{sec-functional-settings} we introduce a functional setting
suitable for the weak formulation of \eqref{eq-frac-heat-hardy}
and state the main results.  In Section \ref{sec-Inequalities-Traces}
we prove some functional inequalities and trace results in Gaussian
spaces. In Section \ref{sec-formulation} we give an alternative
   weak formulation of the extended problem in Gaussian spaces and prove a
regularity result. In Section \ref{sec-eigenvalues} we describe the
eigenvalues of a weighted Ornstein-Uhlenbeck operator which turn
out to be related to the classification of the asymptotic behaviour of
weak solutions to \eqref{eq-frac-heat-hardy} at $(0,t_0)$.  In Section
\ref{sec-Almgren-Poon-type-monotonicity-formula} we derive an
Almgren-Poon type monotonicity formula for the extended problem,
which is combined with a blow-up analysis in Section \ref{sec-blow-up} to
obtain our main results, i.e. the asymptotic of solutions and the strong space-like unique
continuation property.

\section{Functional settings and main results} \label{sec-functional-settings}

To formally introduce the fractional heat operator, let us first set
some notations. For any real Hilbert space $X$ we denote with $X^*$ its
dual space and with $\df{X^*}{\cdot}{\cdot}{X}$ the duality between $X^*$
and $X$; $\ps{X}{\cdot}{\cdot}$ denotes the scalar product in $X$.

The operator  $H^s$ can  be defined by  means of the Fourier transform
as follows: for any function $w \in \mathcal{S}(\R^{N+1})$,  
\begin{equation*}
\widehat{ H^s (w)}(\xi,\theta):=(i\theta+|\xi|^2)^s\widehat{w}(\xi,\theta),
\end{equation*} 
where the Fourier transform of $w$ is defined as
\begin{equation*}
\mathcal{F}(w)(\xi,\theta)=\widehat{w}(\xi,\theta):
=\frac{1}{(2 \pi)^{\frac{N+1}{2}}}\int_{\R^{N+1}}e^{-i(x \cdot\xi+t \theta )}w(x,t) \, dx\,dt.
\end{equation*}
Furthermore, we can extend $H^s$ to its natural domain; more
precisely, we can define $H^s$ on
\begin{equation*}
  \mathop{\rm{Dom}}(H^s):=\left\{w \in L^2(\R^{N+1}):
    \int_{\R^{N+1}}|i\theta+|\xi|^2|^s|\widehat{w}(\xi,\theta)|^2 \,
    d\xi\, d\theta
    <+\infty\right\},
\end{equation*}
endowed with the norm 
\begin{equation*}
\norm{w}_{\mathop{\rm{Dom}}(H^s)}:=\left(\int_{\R^{N+1}} w^2(x,t) \,
  dx\,dt
  +\int_{\R^{N+1}}|i\theta+|\xi|^2|^{s}|\widehat{w}(\xi,\theta)|^2
  d\xi \,
  d\theta\right)^{\frac12},
\end{equation*}
as the map from $\mathop{\rm{Dom}}(H^s)$ into its dual space
$(\mathop{\rm{Dom}}(H^s))^*$, defined as
\begin{equation}\label{def-operator-Hs}
  \df{(\mathop{\rm{Dom}}(H^s))^*}{H^s(w)}{v}{\mathop{\rm{Dom}}(H^s)}:
  =\int_{\R^{N+1}}(i\theta+|\xi|^2)^s
  \widehat{w}(\xi,\theta)\overline{\widehat{v}(\xi,\theta)} d \xi d\eta,
\end{equation} 
for any $w, v \in \mathop{\rm{Dom}}(H^s)$. 

It is worth noticing that, since  $|\xi|^{2s}\le |i\theta+|\xi|^2|^{s}$ for any $(\theta,\xi) \in \R^{N+1}$, 
\begin{equation*}
\norm{v}_{ L^2(\R,W^{s,2}(\R^N)) }\le \norm{v}_{\mathop{\rm{Dom}}(H^s)}
\end{equation*}
for any   $v \in  \mathop{\rm{Dom}}(H^s)$. Hence the natural embedding 
\begin{equation}\label{domHs-inclusion-L2Hs}
\mathop{\rm{Dom}}(H^s) \hookrightarrow L^2(\R,W^{s,2}(\R^N))
\end{equation}
is linear and continuous. With  $W^{s,2}(\R^N)$ we are denoting  the usual fractional Sobolev space.
Furthermore, since we are dealing with  a Hardy-type potential, the weighted $L^2$-space
\begin{equation*}
L^2(\R^N,|x|^{-2s}):=\left\{ v:\R^N\to \R \text{ measurable:} \int_{\R^N}\frac{v^2}{|x|^{2s}} \, dx < + \infty \right\}
\end{equation*}
will play a role in our analysis, together with the following
Hardy-type inequality due to Herbst \cite{HI}:
\begin{equation}\label{ineq-hardy-frac-Herbst}
\int_{\R^N} \,|\xi|^{2s} |\widehat{\phi}|^2\,d\xi\geq
\Lambda_{N,s}\,\int_{\R^N} |x|^{-2s} \phi^2\,dx
\end{equation}
for all $\phi\in
\mathcal{C}_0^\infty(\R^N)$,  where  $\Lambda_{N,s} >0$, defined in \eqref{s-N-def-lambdaNs-kappas-mu}, is optimal and not attained.

In view of \eqref{def-operator-Hs}, we define  a weak  solution of \eqref{eq-frac-heat-hardy} as a function  $w \in \mathop{\rm{Dom}}(H^s)$ such that 
\begin{equation}\label{eq-frac-heat-hardy-weak-forward}
\sideset{_{(\mathop{\rm{Dom}}(H^s))^*}}{_{\mathop{\rm{Dom}}(H^s)}}{\mathop{\left\langle H^s(w), \phi\right\rangle}}=\frac{1}{\kappa_s}\int_{t_0-T}^{t_0}\left(\int_{\R^N}\left(\frac{\mu}{|x|^{2s}} w\phi +gw\phi \right)\, dx \right) dt,
\end{equation}
for any $\phi \in C^\infty_c(\R^N\times(t_0-T,t_0) )$.  In view of \eqref{hp-g-subhomogeneous}, \eqref{domHs-inclusion-L2Hs}, \eqref{ineq-hardy-frac-Herbst},   and the H\"older inequality,  the above  definition of weak solution is well-posed, that is   the right hand side, as a function of $\phi$, belongs to  $({\rm{Dom}}(H^s))^*$.

In order to  develop an Almgren-Poon type monotonicity formula, we
apply the extension procedure of
  \cite{BCS} (see also \cite{BG,NS,ST}) to localize the problem.

Let us set some notation first.
We denote as $(z,t)=(x,y,t)$ the variable in   $\R^{N} \times
  (0,+\infty) \times \R$. Moreover we let
\begin{align*}
&\R^{N+1}_+:=\R^{N} \times (0,+\infty),\\
&B_r^+:=\{z \in \R^{N+1}_+:|z| <r\}, \\
&B_r':=\{(x,0) \in \R^{N+1}:|x| <r\},\\
&S_r^+:=\{z \in \R^{N+1}_+:|z|=r\}, 
\end{align*}
for any $r>0$. With a slight abuse, we often identify $B_r'$ with
$\{x \in \R^{N}:|x|<r\}$.  Furthermore we use the symbols
$\nabla $ and $\dive$ to  denote the gradient, respectively the
divergence, with respect to the space variable $z=(x,y)$.

For any $p \in [1,\infty)$ and any open set $E \subset \R^{N+1}_+$, let   
\begin{equation*}
L^p(E,y^{1-2s}):=\left\{V:E\to \R \text{ measurable }: \int_{E} y^{1-2s}|V|^p \, dz<+\infty\right\}.
\end{equation*}
If $E$ is an  open Lipschitz set contained in $\R^{N+1}_+$,   $H^1(E,y^{1-2s})$ is defined as  the completion of $C_c^{\infty}(\overline{E})$ with  respect to the norm  
\begin{equation*}
\norm{\phi}_{H^1(E,y^{1-2s})}:=\left(\int_{E} y^{1-2s}(\phi^2 +|\nabla \phi|^2)\, dz\right)^{\frac12}.
\end{equation*}
In view of   \cite[Theorem 11.11, Theorem 11.2, 11.12
Remarks(iii)]{KA}  and the extension theorems for weighted Sobolev
spaces with weights in the Muckenhoupt's $A_2$ class proved in
\cite{CSK},  the space $H^1(E,y^{1-2s})$ admits a concrete characterization   as 
\begin{equation*}
H^1(E,y^{1-2s})=\left\{V \in W^{1,1}_{loc}(E):\int_{E} y^{1-2s} (V^2+|\nabla V|^2)\, dz< +\infty\right\}.
\end{equation*}
We also note that by  \cite{LM} there exists a linear and continuous trace operator 
\begin{equation}\label{trace-Hs}
\Tr:H^1(\R_+^{N+1},y^{1-2s}) \to W^{2,s}(\R^{N}).
\end{equation}

The following theorem is a particular case of a very general extension
result proved in \cite{BCS}.  See also \cite[Theorem
]{NS},
\cite[Theorem 1.7]{ST} and \cite[Section 3, Section 4]{BG}.
\begin{theorem}\cite[Theorem 4.1, Remark 4.3]{BCS} \label{theorem-extension}
  If $ w \in \mathop{\rm{Dom}}(H^s)$, then there exists a function
  $W \in L^2(\R,H^1(\R^{N+1}_+,y^{1-2s}))$ that weakly solves
\begin{equation*}
\begin{cases}
  y^{1-2s}W_t-\dive(y^{1-2s}\nabla W)=0, &\text{ in } \R_+^{N+1} \times \R,\\
  \Tr(W(\cdot,t))=w(\cdot,t), &\text{ on }\R^N, \text{ for a.e. } t \in \R,\\
  -\lim\limits_{y \to 0^+}y^{1-2s}\pd{W}{y}= \kappa_s H^s(w), &\text{
    on }\R^N\times \R,
\end{cases}
\end{equation*}
 in the sense that 
\begin{multline}\label{eq-frac-extended-without-eq}
\int_{\R}\left(\int_{\R_+^{N+1}}y^{1-2s} W\phi_t  \, dz \, \right)dt\\
=\int_{\R}\left(\int_{\R_+^{N+1}}y^{1-2s} \nabla W \cdot \nabla \phi \, dz \, \right)dt-\kappa_s\df{(\mathop{\rm{Dom}}(H^s))^*}{H^s(w)}{\phi(\cdot,0,\cdot)}{\mathop{\rm{Dom}}(H^s)} \notag 
\end{multline}
for any $\phi \in C^\infty_c (\overline{\R_+^{N+1}} \times \R)$.	
\end{theorem}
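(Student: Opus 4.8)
The plan is to produce $W$ explicitly by Fourier transforming in the $(x,t)$ variables and solving the resulting ordinary differential equation in $y$, in the spirit of the Caffarelli--Silvestre extension adapted to the parabolic setting (see also \cite{NS,ST}); the key analytic input is the classical theory of the modified Bessel functions. Applying $\mathcal F$ in $(x,t)$ to $y^{1-2s}W_t-\dive(y^{1-2s}\nabla W)=0$, writing $\dive(y^{1-2s}\nabla W)=y^{1-2s}\Delta_x W+\p_y(y^{1-2s}\p_yW)$ and using $\widehat{W_t}=i\theta\,\widehat W$, $\widehat{\Delta_xW}=-|\xi|^2\widehat W$, one is led to require that for a.e.\ fixed $(\xi,\theta)$ the profile $y\mapsto\widehat W(\xi,y,\theta)$ solve
\begin{equation*}
\p_y\bigl(y^{1-2s}\p_y\widehat W\bigr)=(i\theta+|\xi|^2)\,y^{1-2s}\,\widehat W\quad\text{for }y>0,\qquad \widehat W(\xi,0,\theta)=\widehat w(\xi,\theta).
\end{equation*}
Setting $a=a(\xi,\theta):=(i\theta+|\xi|^2)^{1/2}$ with the principal branch (so $\mathrm{Re}\,a>0$ for $(\xi,\theta)\neq(0,0)$), the ansatz $\widehat W(\xi,y,\theta)=\widehat w(\xi,\theta)\,\Phi(ay)$ reduces the equation to $\Phi''+\tfrac{1-2s}{\rho}\Phi'-\Phi=0$, whose unique bounded solution normalised by $\Phi(0)=1$ is $\Phi(\rho)=\frac{2^{1-s}}{\Gamma(s)}\rho^sK_s(\rho)$, with $K_s$ the modified Bessel function of the second kind, which decays exponentially at $+\infty$. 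One then \emph{defines} $W:=\mathcal F^{-1}\bigl((\xi,\theta)\mapsto\widehat w(\xi,\theta)\,\Phi(a(\xi,\theta)y)\bigr)$.

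Next I would verify that $W\in L^2(\R,H^1(\R^{N+1}_+,y^{1-2s}))$. By Plancherel in $(x,t)$, the weighted norms of $W$, $\nabla_xW$ and $\p_yW$ over $\R^{N+1}_+\times\R$ turn into $\int_{\R^{N+1}}|\widehat w(\xi,\theta)|^2\,G(\xi,\theta)\,d\xi\,d\theta$ for explicit weights $G$, which after the substitution $\rho=|a|y$ are expressed through fixed moments of $\Phi$ and $\Phi'$; these converge because $\rho^sK_s(\rho)$ is bounded near $\rho=0$ while $\Phi$ and $\Phi'$ decay exponentially at $+\infty$, uniformly since $\mathrm{Re}(a/|a|)\ge 2^{-1/2}$, and one then checks finiteness using the defining integrability of $\mathop{\rm{Dom}}(H^s)$. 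Since $\Phi(0)=1$ the identity $\widehat W(\xi,0,\theta)=\widehat w(\xi,\theta)$ holds, so by continuity of the trace operator \eqref{trace-Hs} (first for Schwartz $w$, then by density) we get $\Tr(W(\cdot,t))=w(\cdot,t)$ for a.e.\ $t$.

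For the co-normal datum I would use the series of $K_s$ at the origin, which gives $\Phi(\rho)=1-\tfrac{\kappa_s}{2s}\rho^{2s}+o(\rho^{2s})$ as $\rho\to0^+$ — the constant working out exactly to $\kappa_s=\Gamma(1-s)/(2^{2s-1}\Gamma(s))$ — so that $\rho^{1-2s}\Phi'(\rho)\to-\kappa_s$ and hence, for a.e.\ $(\xi,\theta)$,
\begin{equation*}
-\lim_{y\to0^+}y^{1-2s}\p_y\widehat W(\xi,y,\theta)=\kappa_s\,(i\theta+|\xi|^2)^s\,\widehat w(\xi,\theta)=\kappa_s\,\widehat{H^s(w)}(\xi,\theta).
\end{equation*}
Finally, to obtain the weak formulation \eqref{eq-frac-extended-without-eq} I would test the equation against $\phi\in C_c^\infty(\overline{\R^{N+1}_+}\times\R)$, integrate by parts in $z$ over $\{y>\delta\}$ and in $t$ over $\R$, and let $\delta\to0^+$: the interior integrals pass to the limit by dominated convergence thanks to the bound of the previous step, while the boundary contribution $\int_\R\int_{\R^N}\delta^{1-2s}\p_yW(\cdot,\delta,\cdot)\,\phi(\cdot,\delta,\cdot)$ converges, by the last display and Plancherel, to $-\kappa_s\df{(\mathop{\rm{Dom}}(H^s))^*}{H^s(w)}{\phi(\cdot,0,\cdot)}{\mathop{\rm{Dom}}(H^s)}$, which is precisely the pairing in \eqref{def-operator-Hs}.

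The most delicate point is making the boundary behaviour rigorous: the pointwise-in-Fourier limits for the trace and for the co-normal derivative must be upgraded to genuine convergence in the relevant trace and dual spaces, so that the boundary term in the integration by parts really converges to the claimed duality pairing, while at the same time one must secure the precise membership $W\in L^2(\R,H^1(\R^{N+1}_+,y^{1-2s}))$. All of this rests on uniform (in $(\xi,\theta)$) estimates for the Bessel profile $\Phi$ and its derivative — the moment bounds governing the function-space membership as well as the near-boundary expansion governing $\lim_{y\to0^+}y^{1-2s}\p_yW$ and its deviation from $-\kappa_s a^{2s}\widehat w$ — which require quantitative bounds for $K_s$ and $K_s'$ together with careful use of the integrability built into $\mathop{\rm{Dom}}(H^s)$; this is exactly what legitimises exchanging limits and integration. (Alternatively one could realise $W$ through the subordination formula for $H^s$, as in \cite{ST}, but the analysis of the boundary datum is of comparable delicacy.)
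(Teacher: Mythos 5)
You should first note that the paper gives no proof of this statement at all: it is imported verbatim from \cite[Theorem 4.1, Remark 4.3]{BCS} (see also \cite{NS,ST}), so your reconstruction is measured against the cited literature rather than against an argument in the text. Your route --- Fourier transform in $(x,t)$, reduction to a Bessel-type ODE in $y$, profile $\Phi(\rho)=\frac{2^{1-s}}{\Gamma(s)}\rho^s K_s(\rho)$ --- is the classical construction (essentially that of \cite{NS,ST}; \cite{BCS} argue via the subordinated semigroup so as to cover general master equations, but for this constant-coefficient operator the two coincide), and the computations you indicate are correct: the ODE reduction, the normalisation $\Phi(0)=1$, the expansion $\Phi(\rho)=1-\frac{\kappa_s}{2s}\rho^{2s}+o(\rho^{2s})$ producing exactly $\kappa_s$, the sector bound $\mathrm{Re}(a/|a|)\ge 2^{-1/2}$, and the scheme for passing the boundary term to the duality pairing (a uniform bound for $\rho^{1-2s}\Phi'(\rho)$ on the sector, giving $|\delta^{1-2s}\p_y\widehat W|\le C|i\theta+|\xi|^2|^{s}|\widehat w|$, plus dominated convergence against $\widehat{\phi(\cdot,0,\cdot)}$, does that job).

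The genuine gap is the asserted membership $W\in L^2(\R,H^1(\R^{N+1}_+,y^{1-2s}))$, which you dispatch with ``one then checks finiteness using the defining integrability of $\mathop{\rm Dom}(H^s)$''. Plancherel and the substitution $\rho=|a|y$ give, with constants depending only on $\arg a$ and hence uniform on the sector,
\begin{equation*}
\int_{\R}\int_{\R^{N+1}_+} y^{1-2s}|\nabla W|^2\,dz\,dt\;\approx\;\int_{\R^{N+1}}|\widehat w|^2\bigl(|\xi|^2+|a|^2\bigr)|a|^{2s-2}\,d\xi\,d\theta\;\le\;2\int_{\R^{N+1}}|\widehat w|^2\,\bigl|i\theta+|\xi|^2\bigr|^{s}\,d\xi\,d\theta,
\end{equation*}
which is indeed controlled by the $\mathop{\rm Dom}(H^s)$-norm; but the zeroth-order term is
\begin{equation*}
\int_{\R}\int_{\R^{N+1}_+} y^{1-2s}|W|^2\,dz\,dt\;\approx\;\int_{\R^{N+1}}|\widehat w(\xi,\theta)|^2\,\bigl|i\theta+|\xi|^2\bigr|^{s-1}\,d\xi\,d\theta,
\end{equation*}
and this is sharp, since $|\Phi(ay)|\ge\tfrac12$ for $|a|y$ small yields $\int_0^{c/|a|}y^{1-2s}|\Phi(ay)|^2\,dy\ge c'|a|^{2s-2}$. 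The weight $|i\theta+|\xi|^2|^{s-1}$ blows up at low frequencies and is not dominated by $1+|i\theta+|\xi|^2|^{s}$, so the integral is infinite for suitable $w\in\mathop{\rm Dom}(H^s)$ whose Fourier transform concentrates near $(\xi,\theta)=(0,0)$. Hence your verification of the function-space membership (and with it the a.e.-$t$ applicability of the trace operator \eqref{trace-Hs}) does not close as written: you must either invoke the estimates of \cite{BCS} in the precise form they are stated there (gradient energy bound together with uniform-in-$y$ $L^2$ bounds, which is also all that the localized Gaussian quantities used later in the paper actually require), weaken the conclusion to a space not containing the global weighted $L^2$ norm of $W$ itself, or impose extra low-frequency integrability on $\widehat w$. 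This is precisely the delicate point that the paper sidesteps by citing the result rather than proving it.
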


The following corollary is an easy consequence of Theorem \ref{theorem-extension} and \eqref{eq-frac-heat-hardy-weak-forward}.
\begin{corollary}\label{corollary-frac-extended-forward}
If $ w \in \mathop{\rm{Dom}}(H^s)$  solves \eqref{eq-frac-heat-hardy-weak-forward},  then there exists $W \in L^2(\R,H^1(\R^{N+1}_+,y^{1-2s}))$ 
that weakly solves 
\begin{equation}\label{prob-frac-extended-forward}
\begin{cases}
y^{1-2s}W_t-\dive(y^{1-2s}\nabla W)=0, &\text{ in } \R_+^{N+1} \times (t_0-T,t_0),\\
\Tr( W(\cdot,t))= w(\cdot,t), &\text{ on }\R^N, \text{ for a.e. } t \in  (t_0-T,t_0),\\
-\lim\limits_{y \to 0^+}y^{1-2s}\pd{W}{y}= \frac{\mu}{|x|^{2s}} w +g w, &\text{ on }\R^N\times (t_0-T,t_0),
\end{cases}
\end{equation}
in the sense that 
\begin{multline}\label{eq-weak-frac-extended-forward}
\int_{t_0-T}^{t_0}\left(\int_{\R_+^{N+1}}y^{1-2s} W\phi_t  \, dz \, \right)dt\\
=\int_{t_0-T}^{t_0}\left(\int_{\R_+^{N+1}}y^{1-2s}  \nabla  W \cdot \nabla \phi \, dz \, \right)dt-\int_{t_0-T}^{t_0}\left(\int_{\R^N}\left(\frac{\mu}{|x|^{2s}} w \phi +g w\phi \right)\, dx \right) dt, 
\end{multline}
for any $\phi \in C^\infty_c (\overline{\R_+^{N+1}} \times(t_0-T,t_0))$.	
\end{corollary}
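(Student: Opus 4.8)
The plan is to combine Theorem~\ref{theorem-extension}, applied to the weak solution $w$ of \eqref{eq-frac-heat-hardy-weak-forward}, with the weak formulation \eqref{eq-frac-heat-hardy-weak-forward} itself. Since $w\in\mathop{\rm{Dom}}(H^s)$, Theorem~\ref{theorem-extension} produces an extension $W\in L^2(\R,H^1(\R^{N+1}_+,y^{1-2s}))$ satisfying the identity displayed there, namely
\begin{equation*}
\int_{\R}\int_{\R_+^{N+1}}y^{1-2s} W\phi_t \, dz\, dt
=\int_{\R}\int_{\R_+^{N+1}}y^{1-2s} \nabla W\cdot\nabla\phi \, dz\, dt
-\kappa_s\df{(\mathop{\rm{Dom}}(H^s))^*}{H^s(w)}{\phi(\cdot,0,\cdot)}{\mathop{\rm{Dom}}(H^s)}
\end{equation*}
for every $\phi\in C^\infty_c(\overline{\R_+^{N+1}}\times\R)$. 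The first step is therefore simply to restrict attention to test functions $\phi\in C^\infty_c(\overline{\R_+^{N+1}}\times(t_0-T,t_0))$, a subclass of the admissible test functions above, so that all time integrals run over $(t_0-T,t_0)$.

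The second step is to rewrite the duality term $\df{(\mathop{\rm{Dom}}(H^s))^*}{H^s(w)}{\phi(\cdot,0,\cdot)}{\mathop{\rm{Dom}}(H^s)}$ using the equation solved by $w$. For $\phi\in C^\infty_c(\overline{\R_+^{N+1}}\times(t_0-T,t_0))$, the trace $\phi(\cdot,0,\cdot)$ lies in $C^\infty_c(\R^N\times(t_0-T,t_0))$ and is hence an admissible test function in \eqref{eq-frac-heat-hardy-weak-forward}. Applying that identity with test function $\phi(\cdot,0,\cdot)$ gives
\begin{equation*}
\kappa_s\df{(\mathop{\rm{Dom}}(H^s))^*}{H^s(w)}{\phi(\cdot,0,\cdot)}{\mathop{\rm{Dom}}(H^s)}
=\int_{t_0-T}^{t_0}\int_{\R^N}\Big(\frac{\mu}{|x|^{2s}}w\,\phi(\cdot,0,\cdot)+g\,w\,\phi(\cdot,0,\cdot)\Big)\,dx\,dt.
\end{equation*}
Substituting this into the extension identity and noting that $\phi(\cdot,0,\cdot)$ is exactly the boundary trace appearing on $\R^N\times(t_0-T,t_0)$ yields precisely \eqref{eq-weak-frac-extended-forward}. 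The same $W$ then weakly solves \eqref{prob-frac-extended-forward}: the interior equation and the trace condition $\Tr(W(\cdot,t))=w(\cdot,t)$ come verbatim from Theorem~\ref{theorem-extension}, while the Neumann-type condition $-\lim_{y\to0^+}y^{1-2s}\partial_yW=\frac{\mu}{|x|^{2s}}w+gw$ is the distributional reading of \eqref{eq-weak-frac-extended-forward}, since $-\lim_{y\to0^+}y^{1-2s}\partial_yW=\kappa_sH^s(w)$ and $\kappa_sH^s(w)=\frac{\mu}{|x|^{2s}}w+gw$ on $\R^N\times(t_0-T,t_0)$ by \eqref{eq-frac-heat-hardy-weak-forward}.

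There is essentially no analytical obstacle here, as this is a direct bookkeeping argument; the only point requiring a word of care is the well-posedness of the right-hand side, i.e.\ that $\frac{\mu}{|x|^{2s}}w+gw$ defines an element of $(\mathop{\rm{Dom}}(H^s))^*$ so that pairing it against $\phi(\cdot,0,\cdot)$ makes sense. This was already observed in the text right after \eqref{eq-frac-heat-hardy-weak-forward}, using \eqref{hp-g-subhomogeneous}, the embedding \eqref{domHs-inclusion-L2Hs}, the Herbst inequality \eqref{ineq-hardy-frac-Herbst}, and Hölder's inequality, together with the continuity of $\Tr$ in \eqref{trace-Hs}. Beyond invoking this, the proof is a one-line substitution and the corollary follows.
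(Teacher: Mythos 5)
Your argument is correct and is exactly the reasoning the paper intends: it states the corollary as an easy consequence of Theorem \ref{theorem-extension} and \eqref{eq-frac-heat-hardy-weak-forward}, which is precisely your restriction of the test functions to $C^\infty_c(\overline{\R_+^{N+1}}\times(t_0-T,t_0))$ followed by the substitution of the weak formulation of the equation for $w$, tested with $\phi(\cdot,0,\cdot)$, into the duality term. Nothing further is needed.
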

The asymptotic behavior at $(0,t_0)$ of a solution $W$ of \eqref{prob-frac-extended-forward}, and consequently  of a solution $w$ of \eqref{eq-frac-heat-hardy-weak-forward}, will turn out to be related to the following eigenvalue problem for a weighted  Ornstein-Uhlenbeck operator: 
\begin{equation}\label{prob-eigenvalue-Ornstein-Uhlenbeck-operator}
\begin{cases}
-\dive(y^{1-2s}\nabla Y)+y^{1-2s}\frac{z}{2}\cdot \nabla Y=\gamma y^{1-2s}Y, &\text{ in } \R^{N+1}_+,\\
-\lim\limits_{y \to 0^+}y^{1-2s}\pd{Y}{y}=\frac{\mu}{|x|^{2s}}\Tr(Y), &\text{ on } \R^{N},
\end{cases}
\end{equation}
with $\mu <\kappa_s\Lambda_{N,s}$, see
\eqref{s-N-def-lambdaNs-kappas-mu} for the definition of $\kappa_s$
and $\Lambda_{N,s}$.  To introduce a suitable functional setting for
problem \eqref{prob-eigenvalue-Ornstein-Uhlenbeck-operator}, we define
\begin{equation*}
  G_s(z,t):=	t^{-\frac{N+2-2s}{2}} e^{-\frac{|z|^2}{4t}} \quad
  \text{ for any } (z,t) \in \R^{N+1}_+ \times (0,\infty).
\end{equation*}
It is easy to verify that $G_s \in C^{\infty}(\R^{N+1}_+ \times (0,\infty))$ solves the problem 
\begin{equation*}
\begin{cases}
y^{1-2s}\pd{G_s}{t} -\dive(y^{1-2s}\nabla G_s)=0, &\text{ in } \R^{N+1}_+ \times (0,\infty),\\
\lim\limits_{y \to 0^+} y^{1-2s} \pd{G_s}{y}=0, & \text{ on }\R^{N}  \times (0,\infty),
\end{cases}
\end{equation*}
in a classical sense.
Furthermore 
\begin{equation}\label{eq-G-nabla}
  \nabla G_s(z,t)=-\frac{z}{2t} G_s(z,t) \quad \text{ for any }(z,t) \in \R^{N+1}_+ \times (0,\infty).
\end{equation}
Letting 
\begin{equation*}
  G(z):=G_{s}(z,1)=e^{-\frac{|z|^2}{4}}, \quad \text{ for any } z \in \R^{N+1},
\end{equation*}
we define 
\begin{equation*}
  \mathcal{L}:=\left\{V:\R^{N+1}_+\to \R \text{ measurable}: \int_{\R_+^{N+1}} y^{1-2s}V^2    G \, dz  <+\infty\right\}
\end{equation*}
and  $\mathcal{H}$ as the completion  of $C^{\infty}_c(\overline{\R_+^{N+1}})$ with respect to the norm 
\begin{equation*}
  \norm{\phi}_{\mathcal{H}}:=\left( \int_{\R_+^{N+1}} y^{1-2s}(\phi^2 +|\nabla \phi |^2) G\, dz\right)^\frac12.
\end{equation*}
It is clear that both $\mathcal{L}$ and $\mathcal{H}$ are Hilbert
spaces with respect to the natural scalar product associated to the
$\norm{\,\cdot\,}_{\mathcal{L}}$-norm and the
$\norm{\,\cdot\,}_{\mathcal{H}}$-norm respectively.  We observe that
\begin{equation*}
\norm{W}_{\mathcal{H}} \le  \norm{W}_{H^1(\R^{N+1}_+,y^{1-2s})} \quad \text{ for any } W \in H^1(\R^{N+1}_+,y^{1-2s}),
\end{equation*}
hence the embedding
\begin{equation*}
H^1(\R^{N+1}_+,y^{1-2s})\hookrightarrow \mathcal{H}
\end{equation*}
is linear and continuous.
Furthermore, we consider the weighted $L^2$-spaces 
\begin{equation*}
L^2(\R^N,G(x,0)):=\left\{v:\R^{N}\to \R \text{ measurable}: \int_{\R^N}v^2(x)  G(x,0) \, dx  <+\infty\right\}
\end{equation*}
and 
\begin{equation*}
L^2(\R^N, |x|^{-2s} G(x,0)):=\left\{v:\R^{N}\to \R \text{ measurable}: \int_{\R^N} \frac{v^2(x)}{|x|^{2s}}    G(x,0) \, dx  <+\infty\right\}.
\end{equation*}
The trace operator $\Tr$ introduced in \eqref{trace-Hs} can be extended to a continuous linear trace operator, still denoted as $\Tr$, from $\mathcal{H}$ to $L^2(\R^N,G(x,0))$, see Proposition \ref{prop-trace-ineq} in Section \ref{sec-Inequalities-Traces}.
Furthermore $\Tr$ takes values in $L^2(\R^N,|x|^{-2s}G(x,0))$ and 
\begin{equation*}
\Tr: \mathcal{H}\to L^2(\R^N,|x|^{-2s}G(x,0)),
\end{equation*}
is  linear and continuous, see Proposition \ref{prop-trace-L-2-x-2s} in  Section \ref{sec-Inequalities-Traces}.

We say that $\gamma$ is an eigenvalue of problem
  \eqref{prob-eigenvalue-Ornstein-Uhlenbeck-operator} if there exists
  an eigenfunction $Y\in \mathcal{H}\setminus\{0\}$ weakly satisfying
  \eqref{prob-eigenvalue-Ornstein-Uhlenbeck-operator}, i.e. 
\begin{equation}\label{eq-eigenvalue-Ornstein-Uhlenbeck-operator}
\int_{\R^{N+1}_+}y^{1-2s} \nabla Y \cdot\nabla V \, G\, dz-\int_{\R^N} \frac{\mu}{|x|^{2s}}\Tr(Y)\Tr(V)   \, G(0,\cdot)\, dx
=\gamma \int_{\R^{N+1}_+}y^{1-2s}  Y V \,  G\, dz
\end{equation} 
for any $V \in \mathcal{H}$. Proposition \ref{prop-trace-L-2-x-2s} in
Section \ref{sec-Inequalities-Traces} ensures that the above
definition of weak solution is well posed.

In order to compute the eigenvalues of
\eqref{prob-eigenvalue-Ornstein-Uhlenbeck-operator} we separate the
variable $z$ in radial and angular parts. Henceforward we denote
\begin{equation*}
\mathbb{S}^{N-1}:=\{x \in \R^{N}: |x|=1\},\quad
\mathbb{S}_+^{N}:=\{z \in \R_+^{N+1}: |z|=1\},
\end{equation*}
identifying  $\partial \mathbb{S}^N_+$ with $\mathbb{S}^{N-1}$.
Writing  as $\theta=(\theta_1,\dots, \theta_{N+1})$ the coordinates on
$\mathbb{S}^{N}$, we define
\begin{equation*}
L^2	(\mathbb{S}^N_+,\theta_{N+1}^{1-2s}):=\left\{v:\mathbb{S}^N_+ \to \R \text{ measurable}: \int_{\mathbb{S}^N_+} \theta_{N+1}^{1-2s}|v|^2 \, dz<+\infty\right\}
\end{equation*}
and $H^1(\mathbb{S}^N_+,\theta_{N+1}^{1-2s})$ as the completion of
$C^{\infty}_c(\overline{\mathbb{S}^N_+})$ with respect to the norm

\begin{equation*}
  \norm{\phi}_{H^1(\mathbb{S}^N_+,\theta_{N+1}^{1-2s})}:=
  \left(\int_{\mathbb{S}^N_+} \theta_{N+1}^{1-2s}(|\phi|^2
    +|\nabla_{\mathbb{S}^N} \phi|^2) \, dS\right)^{\frac{1}{2}},
\end{equation*}
where $\nabla_{\mathbb{S}^N}$ and $dS$ denote the Riemannian
gradient 
and the volume element, respectively,
with respect to the standard metric on the unit $N$-dimensional
  sphere $\mathbb{S}^N$.

We refer to \cite{Fall-Felli-2014} for the following proposition.
\begin{proposition}\cite [Lemma 2.2]{Fall-Felli-2014}
There exists a linear and continuous trace operator 
\begin{equation}\label{def-trace-S}
  \mathcal{T}: H^1(\mathbb{S}^N_+,\theta_{N+1}^{1-2s})
  \to L^2(\mathbb{S}^{N-1})=L^2(\partial \mathbb{S}^N_+).
\end{equation}
Furthermore, letting $\kappa_s$ and $\Lambda_{N,s}$ be as in
\eqref{s-N-def-lambdaNs-kappas-mu},
\begin{equation}\label{ineq-trace-S}
  \kappa_s\Lambda_{N,s} \int_{\mathbb{S}^{N-1}} |\mathcal{T}(V)|^2 dS'
  \le \left(\frac{N-2s}{2}\right)^2 \int_{\mathbb{S}_+^N}\theta_{N+1}^{1-2s} |V|^2 dS
  +\int_{\mathbb{S}_+^N}\theta_{N+1}^{1-2s} |\nabla_{\mathbb{S}^N}V|^2 dS
\end{equation}
for any $v \in H^1(\mathbb{S}^N, \theta_{N+1}^{1-2s})$, where
  $dS'$ denotes the volume element on $\mathbb{S}^{N-1}$.
\end{proposition}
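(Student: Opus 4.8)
The plan is to reduce the spherical trace inequality \eqref{ineq-trace-S} to the Euclidean fractional Hardy inequality \eqref{ineq-hardy-frac-Herbst} by combining the Caffarelli--Silvestre extension with a separation of variables in polar coordinates; the existence and continuity of $\mathcal{T}$ will then be a byproduct of \eqref{ineq-trace-S} itself.

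\emph{Step 1: a Hardy inequality on the extension half-space.} I would first observe that for every $W\in H^1(\R^{N+1}_+,y^{1-2s})$
\[
\kappa_s\Lambda_{N,s}\int_{\R^N}|x|^{-2s}(\Tr W)^2\,dx\le\int_{\R^{N+1}_+}y^{1-2s}|\nabla W|^2\,dz .
\]
For $W\in C_c^\infty(\overline{\R^{N+1}_+})$ this follows by applying \eqref{ineq-hardy-frac-Herbst} to $\phi:=W(\cdot,0)$ and using the variational description of the extension (see \cite{CS,BCS,ST}), i.e.\ that $\kappa_s\int_{\R^N}|\xi|^{2s}|\widehat\phi|^2\,d\xi$ equals the minimum of $\int_{\R^{N+1}_+}y^{1-2s}|\nabla U|^2\,dz$ over all $U$ with $\Tr U=\phi$; the general case follows by density of $C_c^\infty(\overline{\R^{N+1}_+})$ and continuity of $\Tr$ in \eqref{trace-Hs}.

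\emph{Step 2: separation of variables.} For $V\in C_c^\infty(\overline{\mathbb{S}^N_+})$ set $\mathcal{T}(V):=\restr{V}{\partial\mathbb{S}^N_+}$, identified with a function on $\mathbb{S}^{N-1}$, and for a Lipschitz $f\colon(0,\infty)\to\R$ with compact support define $W(z):=f(|z|)\,V(z/|z|)$. By the concrete description of $H^1(\R^{N+1}_+,y^{1-2s})$ recalled before the statement, $W\in H^1(\R^{N+1}_+,y^{1-2s})$, with $\Tr W(x)=f(|x|)\,\mathcal{T}(V)(x/|x|)$. Writing $z=r\theta$ with $r=|z|$, $\theta\in\mathbb{S}^N$, one has $y^{1-2s}\,dz=r^{N+1-2s}\theta_{N+1}^{1-2s}\,dr\,dS$, $|\nabla W|^2=|f'(r)|^2|V(\theta)|^2+r^{-2}|f(r)|^2|\nabla_{\mathbb{S}^N}V(\theta)|^2$, and, on $\partial\R^{N+1}_+=\R^N$, $|x|^{-2s}\,dx=\rho^{N-1-2s}\,d\rho\,dS'$; inserting these in the inequality of Step~1 and factoring out the radial integrals yields
\[
\kappa_s\Lambda_{N,s}\,B(f)\int_{\mathbb{S}^{N-1}}|\mathcal{T}(V)|^2\,dS'\le A(f)\int_{\mathbb{S}^N_+}\theta_{N+1}^{1-2s}|V|^2\,dS+B(f)\int_{\mathbb{S}^N_+}\theta_{N+1}^{1-2s}|\nabla_{\mathbb{S}^N}V|^2\,dS ,
\]
where $A(f):=\int_0^\infty r^{N+1-2s}|f'|^2\,dr$ and $B(f):=\int_0^\infty r^{N-1-2s}|f|^2\,dr$.

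\emph{Step 3: choice of $f$ and conclusion.} It remains to pick $f$ so that $A(f)/B(f)$ gets as close as we like to $\big(\tfrac{N-2s}{2}\big)^2$. The formal extremal $r\mapsto r^{-(N-2s)/2}$ gives pointwise $r^{N+1-2s}|f'|^2=\big(\tfrac{N-2s}{2}\big)^2 r^{-1}$ and $r^{N-1-2s}|f|^2=r^{-1}$, both with logarithmically divergent integral; truncating it to $[1/n,n]$ and connecting it to $0$ on $(0,1/n)$ and on $(n,2n)$ produces $f_n$ with $A(f_n)=\big(\tfrac{N-2s}{2}\big)^2\,2\log n+O(1)$ and $B(f_n)=2\log n+O(1)$, hence $A(f_n)/B(f_n)\to\big(\tfrac{N-2s}{2}\big)^2$. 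Dividing the last inequality by $B(f_n)$ and letting $n\to\infty$ gives \eqref{ineq-trace-S} for all $V\in C_c^\infty(\overline{\mathbb{S}^N_+})$. Since its right-hand side is bounded by $\max\big\{1,\big(\tfrac{N-2s}{2}\big)^2\big\}\norm{V}^2_{H^1(\mathbb{S}^N_+,\theta_{N+1}^{1-2s})}$, the map $V\mapsto\mathcal{T}(V)$ is bounded from the dense subspace $C_c^\infty(\overline{\mathbb{S}^N_+})$ into $L^2(\mathbb{S}^{N-1})$, hence extends uniquely to a linear continuous operator $\mathcal{T}\colon H^1(\mathbb{S}^N_+,\theta_{N+1}^{1-2s})\to L^2(\mathbb{S}^{N-1})$, and \eqref{ineq-trace-S} passes to the limit along this extension. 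The main obstacle is precisely this last step: the test function realising the constant $\big(\tfrac{N-2s}{2}\big)^2$ has infinite weighted energy, so the constant can only be approached, and one must verify that the cutoff corrections are negligible with respect to the logarithmically divergent leading terms, so that they cancel in the quotient $A(f_n)/B(f_n)$. A subsidiary point, if one wishes to avoid citing \cite{CS,BCS}, is to derive in Step~1 the variational energy identity for the extension with the precise constant $\kappa_s$.
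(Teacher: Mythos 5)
Your argument is correct. Note that the paper itself does not prove this proposition: it is imported verbatim from \cite[Lemma 2.2]{Fall-Felli-2014}, and your proof is essentially a reconstruction of the standard argument used there — the sharp trace Hardy inequality $\kappa_s\Lambda_{N,s}\int_{\R^N}|x|^{-2s}|\Tr W|^2\,dx\le\int_{\R^{N+1}_+}y^{1-2s}|\nabla W|^2\,dz$ on the half-space (Herbst's inequality combined with the Caffarelli--Silvestre energy identity, whose normalization indeed produces exactly the constant $\kappa_s$ of \eqref{s-N-def-lambdaNs-kappas-mu}), followed by separation of variables with the log-truncated homogeneous profile $r^{-(N-2s)/2}$, whose cutoff errors are $O(1)$ against the logarithmically divergent main terms, so the quotient $A(f_n)/B(f_n)$ tends to $\big(\tfrac{N-2s}{2}\big)^2$ as you computed. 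The only points worth making explicit are the ones you already flag or can quote: the variational characterization of the extension energy with constant $\kappa_s$ (available in \cite{CS,BCS,ST}), and the fact that for the Lipschitz, compactly supported functions $f_n(|z|)V(z/|z|)$ the trace in the sense of \eqref{trace-Hs} coincides with the pointwise restriction, which justifies applying Step 1 to them; density of $C^\infty_c(\overline{\mathbb{S}^N_+})$ then gives the extension of $\mathcal{T}$ and the passage of \eqref{ineq-trace-S} to the limit exactly as you say.
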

Let us consider the following eigenvalue problem 
\begin{equation}\label{prob-eigenvalue-half-a-sphere}
\begin{cases}
  -\dive_{\mathbb{S}^{N}}(\theta_{N+1}^{1-2s}\nabla_{\mathbb{S}^{N}}
  \psi)
  =\nu \theta_{N+1}^{1-2s}\psi, &\text{ in } \mathbb{S}^{N}_+,\\
  -\lim\limits_{\theta_{N+1} \to
    0^+}\theta_{N+1}^{1-2s}\nabla_{\mathbb{S}^{N}}\psi \cdot
  e_{N+1}= \mu\mathcal{T}(\psi), &\text{ on } \mathbb{S}^{N-1},
\end{cases}
\end{equation}
where $e_{N+1}:=(0, \dots,1) \in \R^{N+1}$ and
$\mu<\kappa_s \Lambda_{N,s}$ as in
\eqref{s-N-def-lambdaNs-kappas-mu}. We say that $\nu\in \R$ is an
eigenvalue of \eqref{prob-eigenvalue-half-a-sphere} if there exists
$\psi \in H^1(\mathbb{S}^N_+,\theta_{N+1}^{1-2s})\setminus \{0\}$,
called eigenfunction, such that
\begin{equation*}
  \int_{\mathbb{S}^N_+} \theta_{N+1}^{1-2s}\nabla_{\mathbb{S}^N} \psi
  \cdot \nabla_{\mathbb{S}^N} V\, dS
  -\mu \int_{\mathbb{S}^{N-1}} \mathcal{T}(\psi) \mathcal{T}(V) \, dS'
  =\nu \int_{\mathbb{S}^N_+} \theta_{N+1}^{1-2s} \psi V \, dS
\end{equation*}
for any $V \in H^1(\mathbb{S}^N_+,\theta_{N+1}^{1-2s})$.  Since the
natural embedding
$H^1(\mathbb{S}^N_+,\theta_{N+1}^{1-2s})\hookrightarrow
L^2(\mathbb{S}^N_+,\theta_{N+1}^{1-2s})$ is compact, see \cite{FKCSR}
and \cite{Opic-Kufner},
by classical spectral theory the eigenvalues of
\eqref{prob-eigenvalue-half-a-sphere} are a non-decreasing and
diverging sequence
$\{\nu_k(\mu)\}_{k \in \mathbb{N} \setminus \{0\}}$. In the sequence
$\{\nu_k(\mu)\}_{k \in \mathbb{N}\setminus \{0\}}$ we repeat each
eigenvalue as many times as the dimension of the associated
eigenspace.  Inequality \eqref{ineq-trace-S} implies the following
estimate on the first eigenvalue:
\begin{equation}\label{ineq-nu-1}
	\nu_1(\mu) >-\left(\frac{N-2s}{2}\right)^2.
\end{equation}
Furthermore there exists an orthonormal basis
$\{\psi_k\}_{k \in \mathbb{N} \setminus \{0\}} $ of
$L^2(\mathbb{S}^N_+,\theta_{N+1}^{1-2s})$ such that, for any
$k \in \mathbb{N}\setminus\{0\}$, the function $\psi_k$ is an
eigenfunction of problem \eqref{prob-eigenvalue-half-a-sphere}
associated to $\nu_k(\mu)$.

\begin{remark} If $\mu=0$, then a
  combination of the regularity result of \cite[Theorem 1.1]{STV} with
  the blow-up analysis done in \cite{Fall-Felli-2014} for the
  Caffarelli-Silvestre extended problem implies that the set of
  eigenvalues of \eqref{prob-eigenvalue-half-a-sphere} is
  $\{k^2+k(N-2s):k \in \mathbb{N}\}$.
\end{remark}

Let, for any $n \in\mathbb{N}$ and $j \in \mathbb{N} \setminus\{0\}$,
\begin{equation}\label{def-egienfunctions}
Y_{n,j}(z):=|z|^{-\alpha_j} P_{j,n}\left(\frac{|z|^2}{4}\right)\psi_j\left(\frac{z}{|z|}\right)
\end{equation}
where 
\begin{align}
  &\alpha_j:=\frac{N-2s}{2}- \sqrt{\left(\frac{N-2s}{2}\right)^2+\nu_j(\mu)},\label{def-alphak}  \\
  &P_{j,n}(t):=\sum_{i=0}^n\frac{(-n)_i}{\left(\frac{N+2-2s}{2}-\alpha_j\right)_i}\frac{t^i}{i!}, \quad \text{ with }\quad 
\begin{cases}
(s)_i=\prod_{j=0}^{i-1}(s+j),\\
(s)_0=1.
\end{cases} \label{def-Pnj}
\end{align}
Let us also consider the $\mathcal L$-normalized functions 
\begin{equation}\label{def-egienfunctions-normalised}
  \widetilde{Y}_{n,j}:=\frac{Y_{n,j}}{\norm{Y_{n,j}}_{\mathcal{L}}}
  \quad \text{ for any } (n,j) \in \mathbb{N} \times \mathbb{N} \setminus \{0\}.
\end{equation}
The following result is proved in Section \ref{sec-eigenvalues}
  and provides a complete description of the spectrum of problem
  \eqref{prob-eigenvalue-Ornstein-Uhlenbeck-operator}.

\begin{proposition}\label{prop-eigenvalues}
The set of eigenvalues of problem \eqref{prob-eigenvalue-Ornstein-Uhlenbeck-operator} is
\begin{equation}\label{def-eigenvalues}
  \left\{\gamma_{m,k}:=m -\frac{\alpha_k}{2}:
    k\in \mathbb{N} \setminus \{0\}, m \in \mathbb{N}\right\},
\end{equation} 
where $\{\nu_k(\mu)\}_{k\in \mathbb{N} \setminus \{0\}}$ are the
eigenvalues of problem \eqref{prob-eigenvalue-half-a-sphere} and
$\alpha_k$ is defined in \eqref{def-alphak}.  The multiplicity of each
eigenvalue $\gamma_{m,k}$ is finite and equal to
\begin{equation*}
\#\left\{j \in \mathbb{N}\setminus \{0\}:\gamma_{m,k}+\frac{\alpha_j}{2} \in \mathbb{N}\right\}.
\end{equation*} 
Furthermore, for any $(m,k) \in \mathbb{N} \times \mathbb{N}\setminus \{0\}$,
\begin{equation*}
	E_{m,k}=\left\{\widetilde{Y}_{n,j}:(n,j) \in \mathbb{N} \times\mathbb{N}\setminus \{0\} \text{ and }  \gamma_{m,k}=n - \frac{\alpha_j}{2}\right\}
\end{equation*} 
is an $\mathcal{L}$-orthonormal basis of the eigenspace associated to the eigenvalue $\gamma_{m,k}$, where $\widetilde{Y}_{n,j}$ has been defined in \eqref{def-egienfunctions-normalised}.
Finally 
\begin{equation}\label{def-basis-egienfunctions}
\bigcup_{(m,k) \in \mathbb{N} \times \mathbb{N}\setminus \{0\}}E_{m,k}
\end{equation} 
is a orthonormal basis of  $\mathcal{L}$.

\end{proposition}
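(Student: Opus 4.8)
The plan is to diagonalise the bilinear form underlying the weak formulation \eqref{eq-eigenvalue-Ornstein-Uhlenbeck-operator}. Since $\nabla G=-\frac z2G$ by \eqref{eq-G-nabla}, the operator in \eqref{prob-eigenvalue-Ornstein-Uhlenbeck-operator} can be written as $-\frac1G\dive(y^{1-2s}G\nabla Y)$, so that \eqref{eq-eigenvalue-Ornstein-Uhlenbeck-operator} is precisely the weak eigenvalue equation, with respect to the scalar product of $\mathcal L$, for the symmetric bilinear form
\[
q(Y,V):=\int_{\R^{N+1}_+}y^{1-2s}\nabla Y\cdot\nabla V\,G\,dz-\mu\int_{\R^N}|x|^{-2s}\Tr(Y)\Tr(V)\,G(\cdot,0)\,dx
\]
on $\mathcal H$; by Proposition~\ref{prop-trace-L-2-x-2s} and $\mu<\kappa_s\Lambda_{N,s}$ this form is continuous on $\mathcal H$ and coercive modulo a multiple of $\norm{\cdot}_{\mathcal L}^2$. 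The proof then reduces to two claims: (i) every $Y_{n,j}$ of \eqref{def-egienfunctions} belongs to $\mathcal H\setminus\{0\}$ and solves \eqref{eq-eigenvalue-Ornstein-Uhlenbeck-operator} with $\gamma=n-\frac{\alpha_j}2$; (ii) $\{\widetilde Y_{n,j}\}_{(n,j)}$ is an orthonormal basis of $\mathcal L$. Granting (i)--(ii), if $Y\in\mathcal H\setminus\{0\}$ is any eigenfunction with eigenvalue $\gamma$, testing \eqref{eq-eigenvalue-Ornstein-Uhlenbeck-operator} against $\widetilde Y_{n,j}$ and using the symmetry of $q$ gives $\bigl(\gamma-n+\frac{\alpha_j}2\bigr)\ps{\mathcal L}{Y}{\widetilde Y_{n,j}}=0$ for all $(n,j)$; since the $\widetilde Y_{n,j}$ span $\mathcal L$ and $Y\neq0$, the number $\gamma$ must equal some $n-\frac{\alpha_j}2$ and $Y$ must lie in the span of those $\widetilde Y_{n,j}$ with $n-\frac{\alpha_j}2=\gamma$. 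This yields at once the description \eqref{def-eigenvalues} of the spectrum, the claim that $E_{m,k}$ is an orthonormal basis of the associated eigenspace, and the stated multiplicity.

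To prove (i), I would first derive the form of the eigenfunctions by separating variables in polar coordinates $z=\rho\theta$, $\rho=|z|$, $\theta\in\mathbb{S}^N_+$, using that $y^{1-2s}G\,dz=\rho^{N+1-2s}e^{-\rho^2/4}\theta_{N+1}^{1-2s}\,d\rho\,dS$: for $\psi_j$ an eigenfunction of \eqref{prob-eigenvalue-half-a-sphere} normalised in $L^2(\mathbb{S}^N_+,\theta_{N+1}^{1-2s})$, plugging $Y=\phi(\rho)\,\psi_j(z/|z|)$ into \eqref{eq-eigenvalue-Ornstein-Uhlenbeck-operator} tested against $V=\eta(\rho)\,\psi_j(z/|z|)$ — and using the weak spherical equation, which makes the $|x|^{-2s}$ boundary term cancel against part of the tangential Dirichlet integral, leaving the coefficient $\nu_j(\mu)$ — turns it into the one-dimensional Sturm--Liouville equation
\[
-\bigl(\rho^{N+1-2s}e^{-\rho^2/4}\phi'\bigr)'+\nu_j(\mu)\,\rho^{N-1-2s}e^{-\rho^2/4}\phi=\gamma\,\rho^{N+1-2s}e^{-\rho^2/4}\phi\quad\text{on }(0,\infty).
\]
The indicial roots at $\rho=0$ are $\frac{N-2s}2\pm\sqrt{\bigl(\frac{N-2s}2\bigr)^2+\nu_j(\mu)}$, which are well defined thanks to \eqref{ineq-nu-1}; the substitution $\phi(\rho)=\rho^{-\alpha_j}v(\rho^2/4)$ with $\alpha_j$ as in \eqref{def-alphak} reduces the equation to Kummer's equation $t\,v''+(b-t)\,v'+\bigl(\gamma+\frac{\alpha_j}2\bigr)v=0$ with $b=\frac{N+2-2s}2-\alpha_j=1+\sqrt{\bigl(\frac{N-2s}2\bigr)^2+\nu_j(\mu)}>1$. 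Since $\phi\in L^2\bigl((0,\infty),\rho^{N+1-2s}e^{-\rho^2/4}\,d\rho\bigr)$ if and only if, after the change of variable $t=\rho^2/4$, $v\in L^2\bigl((0,\infty),t^{b-1}e^{-t}\,dt\bigr)$, one must discard the solution with the larger indicial behaviour at $0$ and keep only the confluent hypergeometric solutions that terminate, i.e.\ those with $n:=\gamma+\frac{\alpha_j}2\in\N$; the resulting polynomial is exactly $P_{j,n}$ of \eqref{def-Pnj}, a generalised Laguerre polynomial $L_n^{(b-1)}$ up to a multiplicative constant. This identifies the $Y_{n,j}$ as the candidate eigenfunctions, and a cutoff argument near the singular set $\{\rho=0\}\cup\{y=0,\,x=0\}$ — exploiting $\alpha_j<\frac{N-2s}2$ for integrability near $0$, the Gaussian decay at infinity, and Herbst's inequality \eqref{ineq-hardy-frac-Herbst} (equivalently $\mu<\kappa_s\Lambda_{N,s}$) to discard the boundary terms — shows that $Y_{n,j}\in\mathcal H$ and that \eqref{eq-eigenvalue-Ornstein-Uhlenbeck-operator} holds.

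For (ii), orthonormality of $\{\widetilde Y_{n,j}\}$ in $\mathcal L$ follows from orthonormality of $\{\psi_j\}$ on $\mathbb{S}^N_+$ together with, for fixed $j$, orthogonality of $\{L_n^{(b-1)}\}_{n\in\N}$ in $L^2\bigl((0,\infty),t^{b-1}e^{-t}\,dt\bigr)$ (valid since $b-1=\sqrt{\bigl(\frac{N-2s}2\bigr)^2+\nu_j(\mu)}>-1$), after $t=\rho^2/4$ and the normalisation \eqref{def-egienfunctions-normalised}. Completeness of $\{\widetilde Y_{n,j}\}$ in $\mathcal L$ reduces, via the same change of variable and the Hilbert-space decomposition $\mathcal L\cong\bigoplus_j L^2\bigl((0,\infty),\rho^{N+1-2s}e^{-\rho^2/4}\,d\rho\bigr)$ induced by the orthonormal basis $\{\psi_j\}$ — complete in $L^2(\mathbb{S}^N_+,\theta_{N+1}^{1-2s})$, as recalled before the statement — to the classical completeness of the generalised Laguerre polynomials. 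Finally, the multiplicity of $\gamma_{m,k}=m-\frac{\alpha_k}2$ equals the number of pairs $(n,j)$ with $n-\frac{\alpha_j}2=\gamma_{m,k}$, i.e.\ of indices $j$ with $\gamma_{m,k}+\frac{\alpha_j}2\in\N$; this number is finite because $\nu_k(\mu)\to+\infty$ forces $\alpha_j\to-\infty$, so that $\gamma_{m,k}+\frac{\alpha_j}2<0$ for all large $j$. I expect the main obstacle to be the rigorous separation of variables at the level of the weak formulation — in particular the passage between the single PDE eigenvalue problem and the family of one-dimensional Sturm--Liouville problems while correctly tracking the Hardy boundary term — together with the verification that the explicit solutions $Y_{n,j}$ genuinely lie in $\mathcal H$; the remaining ingredients are the classical spectral theory of the Laguerre operator.
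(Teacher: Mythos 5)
Your proposal is correct, but it reaches the classification by a genuinely different route than the paper. You prove completeness of $\{\widetilde Y_{n,j}\}$ in $\mathcal L$ directly, via the tensor decomposition of $\mathcal L$ along the spherical basis $\{\psi_j\}$ and the classical completeness of generalized Laguerre polynomials in $L^2((0,\infty),t^{a_j}e^{-t}\,dt)$, and then identify the spectrum and the eigenspaces through the symmetry relation $(\gamma-\gamma_{n,j})\ps{\mathcal L}{Y}{\widetilde Y_{n,j}}=0$. The paper argues in the opposite direction: it expands an arbitrary eigenfunction along $\{\psi_k\}$, analyses the resulting radial ODE via Kummer/Tricomi asymptotics at $0$ and at $\infty$ to show that every eigenfunction is a finite combination of the $Y_{n,j}$ with matching $n-\frac{\alpha_j}{2}$, and then obtains completeness of the family from the abstract spectral theorem based on the compactness of $\mathcal H\hookrightarrow\mathcal L$ (Propositions \ref{prop-embedding} and \ref{prop-basis-eigenfunctions}). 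Your route dispenses with both the compact-embedding spectral theorem and the asymptotic analysis of arbitrary eigenfunctions, at the price of importing Laguerre completeness; both routes still need the same ``direct computation'' that $Y_{n,j}\in\mathcal H$ and solves \eqref{eq-eigenvalue-Ornstein-Uhlenbeck-operator}, which you, like the paper, only sketch. Two small caveats in your motivational ODE discussion: the indicial roots at $\rho=0$ are $-\frac{N-2s}{2}\pm\sqrt{\left(\frac{N-2s}{2}\right)^2+\nu_j(\mu)}$ (your expression omits the minus sign on the first term), and plain membership in $L^2\left((0,\infty),\rho^{N+1-2s}e^{-\rho^2/4}\,d\rho\right)$ does not by itself discard the more singular branch when $\sqrt{\left(\frac{N-2s}{2}\right)^2+\nu_j(\mu)}<1$; the paper excludes it through the stronger Hardy-type integrability \eqref{ineq-hardy-extended} encoded in $Y\in\mathcal H$. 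Since in your scheme this step only motivates the definition \eqref{def-egienfunctions} and plays no role in the logical chain (the classification rests on completeness plus symmetry), these slips do not affect the validity of your proof.
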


The main result of the present paper is the following
classification of the asymptotic behavior near $(0,t_0)$ of any
solution $W$ of \eqref{prob-frac-extended-forward}, based on the limit
as $t \to t_0^-$ of the following Almgren-Poon type frequency function
\begin{multline}\label{def-N-forward}
  \mathcal{N}(t)\\:=\frac{(t_0-t)\left(\int_{\R^{N+1}_+}
      y^{1-2s}|\nabla W|^2 G_s(z,t_0-t) \, dz-\int_{\R^N}
      \left(\frac{\mu}{|x|^{2s}}w^2 +g w^2\right)G_s(x,0,t_0-t)\,
      dx\right)}{\int_{\R^{N+1}_+} y^{1-2s} W^2 G_s(\cdot,t_0-t) \,
    dz}.
\end{multline}

\begin{theorem}\label{theorem-blow-up-extended}
  Let $W \not \equiv 0$ be a weak solution to
  \eqref{prob-frac-extended-forward}. Then there exist
  $m_0 \in \mathbb{N}$ and $k_0 \in \mathbb{N}\setminus\{0\}$ such
  that
\begin{equation}\label{limit-N-forward}
\lim_{t \to t_0^-} \mathcal{N}(t)=\gamma_{m_0,k_0},		
\end{equation}
where $\mathcal{N}$ has been defined in \eqref{def-N-forward} and $\gamma_{m_0,k_0}$ in \eqref{def-eigenvalues}.
Furthermore, letting 
\begin{equation}\label{def-J0}
J_0:=\left\{(m,k) \in \mathbb{N}\times\mathbb{N}\setminus \{0\}:\gamma_{m_0,k_0}=m-\frac{\alpha_k}{2}\right\},
\end{equation}
for any $\tau \in (0,1)$ 
\begin{equation*}
\lim_{\la \to 0^+}\int_{\tau}^{1}\norm{\la^{-2\gamma_{m_0,k_0}}W(\la z\sqrt{t},t_0-\la^2 t)-t^{\gamma_{m_0,k_0}}\sum_{(m,k) \in J_0}\beta_{m,k}\widetilde{Y}_{m,k}(z)}^2_{\mathcal{H}} dt=0
\end{equation*}
and 
\begin{equation*}
\lim_{\la \to 0^+}\sup_{t \in [\tau,1]}\norm{\la^{-2\gamma_{m_0,k_0}}W(\la z\sqrt{t},t_0-\la^2 t)-t^{\gamma_{m_0,k_0}}\sum_{(m,k) \in J_0}\beta_{m,k}\widetilde{Y}_{m,k}(z)}^2_{\mathcal{L}}=0
\end{equation*}
where $\widetilde{Y}_{m,k}$ has been defined in \eqref{def-egienfunctions-normalised}, 
\begin{multline}\label{def-beta}
  \beta_{m,k}=\Lambda^{-2\gamma_{m_0,k_0}}\int_{\R^{N+1}_+}
  y^{1-2s}W(\Lambda z,t_0-\Lambda^2)
  \widetilde{Y}_{m,k}(z) G(z)\, dz \\
  +2\int^\Lambda_0\tau^{2s-1-2\gamma_{m_0,k_0}}\left(\int_{\R^N}
    g(\tau x,t_0-\tau^2)\Tr(W)(\tau x,t_0-\tau
    ^2)\Tr(\widetilde{Y}_{m,k})(x) G(x,0)\, dx \right)\, d\tau,
\end{multline}
for any $\Lambda \in (0,\Lambda_0)$ and for some $\Lambda_0 \in
(0,\sqrt{T})$, and $\Tr$ has been defined in \eqref{trace-Hs}. Finally  $\beta_{m,k} \neq 0$ for some $(m,k) \in J_0$.
\end{theorem}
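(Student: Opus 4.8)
The plan is to combine an Almgren--Poon type monotonicity formula for the extended problem \eqref{prob-frac-extended-forward} — derived in Section~\ref{sec-Almgren-Poon-type-monotonicity-formula} — with a blow-up analysis carried out in Section~\ref{sec-blow-up}, in the spirit of \cite{FP}, but working throughout in the Gaussian spaces $\mathcal H$ and $\mathcal L$, since the Hardy singularity rules out any use of pointwise regularity. All the functional ingredients are already available: the sharp Hardy inequality \eqref{ineq-hardy-frac-Herbst}, its spherical trace version \eqref{ineq-trace-S}, the trace operators into $L^2(\R^N,G(x,0))$ and $L^2(\R^N,|x|^{-2s}G(x,0))$, and the complete description of the spectrum of \eqref{prob-eigenvalue-Ornstein-Uhlenbeck-operator} given in Proposition~\ref{prop-eigenvalues}.

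\textbf{Step 1: the frequency function has a finite limit.} Write $H(t):=\int_{\R^{N+1}_+}y^{1-2s}W^2 G_s(\cdot,t_0-t)\,dz$ and let $D(t)$ denote the numerator of \eqref{def-N-forward} divided by $(t_0-t)$, so that $\mathcal N(t)=(t_0-t)D(t)/H(t)$. Using \eqref{ineq-hardy-frac-Herbst}, \eqref{ineq-trace-S}, the strict inequality $\mu<\kappa_s\Lambda_{N,s}$ and the subhomogeneity bound \eqref{hp-g-subhomogeneous}, one bounds the Hardy and $g$-terms by a small fraction of the Dirichlet energy plus $H(t)$, so that $\mathcal N$ is well defined and bounded on some interval $(t_0-\delta,t_0)$. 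Differentiating, Poon's identity gives $H'(t)=\frac{2}{t_0-t}(t_0-t)D(t)$ up to a remainder controlled by $g$, while a Rellich--Ne\v cas/Pohozaev identity on $\R^{N+1}_+$ with the weight $y^{1-2s}$ gives $D'(t)$, the only delicate contribution being the boundary term on $\{y=0\}$ carrying the Hardy potential and $g$; again this is where \eqref{hp-g-subhomogeneous} and the sharp constant $\kappa_s\Lambda_{N,s}$ are essential. These combine into a differential inequality $\mathcal N'(t)\ge -h(t_0-t)(\mathcal N(t)+1)$ with $h\in L^1(0,\delta)$, whence $t\mapsto e^{\int_t^{t_0}h(t_0-\sigma)\,d\sigma}(\mathcal N(t)+1)$ is monotone and $\gamma:=\lim_{t\to t_0^-}\mathcal N(t)\in[0,+\infty)$ exists; a finer analysis yields an algebraic rate $|\mathcal N(t)-\gamma|\lesssim (t_0-t)^\delta$ and the doubling estimates $H(t_0-\la^2)\asymp \la^{4\gamma}$ as $\la\to0^+$.

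\textbf{Step 2: blow-up and identification of $\gamma$.} For $\la\in(0,\sqrt\delta)$ set $W^\la(z,t):=H(t_0-\la^2)^{-1/2}W(\la z\sqrt t,t_0-\la^2 t)$, so that $\int_{\R^{N+1}_+}y^{1-2s}(W^\la(\cdot,1))^2 G\,dz=1$; each $W^\la$ weakly solves the rescaled equation in which the Hardy term is unchanged and $g$ is replaced by a potential $g_\la\to0$ (by \eqref{hp-g-subhomogeneous}), with frequency tending to $\gamma$. The uniform energy bounds from Step~1 make $\{W^\la\}$ bounded in $L^2((\tau,1),\mathcal H)\cap C^0([\tau,1],\mathcal L)$ for every $\tau\in(0,1)$, so along a subsequence $W^{\la_n}\rightharpoonup\widetilde W$, strongly in $L^2((\tau,1),\mathcal L)$ and in $C^0([\tau,1],\mathcal L)$; in particular $\norm{\widetilde W(\cdot,1)}_{\mathcal L}=1$ and $\widetilde W\not\equiv0$. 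Passing to the limit in the rescaled equations and using that the frequency of $\widetilde W$ is the constant $\gamma$, the equality case in the monotonicity formula forces $\widetilde W$ to be self-similar, $\widetilde W(z,t)=t^{\gamma}Y(z)$ with $Y\in\mathcal H\setminus\{0\}$ a weak solution of \eqref{prob-eigenvalue-Ornstein-Uhlenbeck-operator} with eigenvalue $\gamma$. Proposition~\ref{prop-eigenvalues} then gives $\gamma=\gamma_{m_0,k_0}$ for some $(m_0,k_0)\in\N\times(\N\setminus\{0\})$, which is \eqref{limit-N-forward}.

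\textbf{Step 3: the profile, the coefficients $\beta_{m,k}$, and convergence of the whole family.} For $(n,j)\in\N\times(\N\setminus\{0\})$ set $\varphi_{n,j}(\la):=\int_{\R^{N+1}_+}y^{1-2s}W(\la z,t_0-\la^2)\widetilde Y_{n,j}(z)G(z)\,dz$, so that $W(\la z,t_0-\la^2)=\sum_{n,j}\varphi_{n,j}(\la)\widetilde Y_{n,j}(z)$ in $\mathcal L$ and, by Parseval, $H(t_0-\la^2)=\sum_{n,j}\varphi_{n,j}(\la)^2$. Testing \eqref{eq-weak-frac-extended-forward} against rescaled eigenfunctions and using that $\widetilde Y_{n,j}$ solves \eqref{prob-eigenvalue-Ornstein-Uhlenbeck-operator} with eigenvalue $\gamma_{n,j}$, one obtains a first-order linear ODE $\varphi_{n,j}'(\la)=\frac{2\gamma_{n,j}}{\la}\varphi_{n,j}(\la)+R_{n,j}(\la)$, where $R_{n,j}$ collects the $g$-contribution and is estimated through \eqref{hp-g-subhomogeneous}. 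Solving by variation of constants from a base radius $\Lambda\in(0,\Lambda_0)$, and using the exponent $\e>0$ of \eqref{hp-g-subhomogeneous} to guarantee the integrability near $0$ of $\tau\mapsto\tau^{2s-1-2\gamma_{m_0,k_0}}\big(\int_{\R^N}g(\tau x,t_0-\tau^2)\Tr(W)(\tau x,t_0-\tau^2)\Tr(\widetilde Y_{m,k})(x)G(x,0)\,dx\big)$, one shows that $\la^{-2\gamma_{m_0,k_0}}\varphi_{m,k}(\la)\to\beta_{m,k}$ with $\beta_{m,k}$ as in \eqref{def-beta} when $(m,k)\in J_0$, and that $\la^{-2\gamma_{m_0,k_0}}\varphi_{n,j}(\la)\to0$ when $\gamma_{n,j}\ne\gamma_{m_0,k_0}$ — from the ODE when $\gamma_{n,j}>\gamma_{m_0,k_0}$, and because otherwise $\mathcal N(t)$ could not converge to $\gamma_{m_0,k_0}$ when $\gamma_{n,j}<\gamma_{m_0,k_0}$. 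Combining this with a uniform smallness bound for the high-frequency tail (again from the boundedness of the frequency) gives $\la^{-4\gamma_{m_0,k_0}}H(t_0-\la^2)\to\sum_{(m,k)\in J_0}\beta_{m,k}^2$, which by Step~1 is bounded below by a positive constant, hence $\beta_{m,k}\ne0$ for some $(m,k)\in J_0$. Finally, writing $W(\la z\sqrt t,t_0-\la^2 t)=\sum_{n,j}\varphi_{n,j}(\la\sqrt t)\widetilde Y_{n,j}(z)$ and using $\la^{-2\gamma_{m_0,k_0}}\varphi_{n,j}(\la\sqrt t)=t^{\gamma_{m_0,k_0}}(\la\sqrt t)^{-2\gamma_{m_0,k_0}}\varphi_{n,j}(\la\sqrt t)$ together with the coefficient-wise convergence and the tail estimate, one obtains the stated strong convergence in $L^2((\tau,1),\mathcal H)$ and in $C^0([\tau,1],\mathcal L)$ of $\la^{-2\gamma_{m_0,k_0}}W(\la z\sqrt t,t_0-\la^2 t)$ to $t^{\gamma_{m_0,k_0}}\sum_{(m,k)\in J_0}\beta_{m,k}\widetilde Y_{m,k}(z)$, now along the whole family $\la\to0^+$. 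The main obstacles are: (i) the weighted Pohozaev identity and the attendant control of the boundary term on $\{y=0\}$ in Step~1, which must be justified by approximation within the abstract parabolic/Gaussian framework rather than on smooth solutions; and (ii) the upgrade in Step~3 from subsequential weak convergence to strong convergence of the full family towards the \emph{explicit} finite sum, which rests on both the sharp doubling $H(t_0-\la^2)\asymp\la^{4\gamma_{m_0,k_0}}$ and the variation-of-constants analysis of the coefficient ODEs.
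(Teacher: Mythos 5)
Your overall strategy (Almgren--Poon monotonicity in the Gaussian setting, blow-up along the parabolic scaling, expansion of $V(\cdot,\la^2)$ in the eigenbasis of \eqref{prob-eigenvalue-Ornstein-Uhlenbeck-operator}, a first-order ODE for the Fourier coefficients solved by variation of constants, and a Urysohn-type conclusion) is the same as the paper's; the only methodological difference in Step~1 (a weighted Rellich--Pohozaev identity in the original variables versus the paper's computation of $\frac{d}{dt}(tD(t))$ by testing with $V_t$ after the self-similar change of variables, justified by Faedo--Galerkin) is essentially cosmetic, and you correctly flag that it must be justified by approximation. However, there is a genuine gap in the logical order of Steps~1 and~3. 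In Step~1 you assert that ``a finer analysis yields \dots the doubling estimates $H(t_0-\la^2)\asymp\la^{4\gamma}$''. The monotonicity formula alone cannot give the lower bound of that sharpness: since $\nu_1\ge 0$ comes with no rate, one only gets $\mathcal N(t)\le\gamma+o(1)$, hence $H(t)\ge K_2(\sigma)t^{2\gamma+\sigma}$ for every $\sigma>0$ (this is exactly Proposition \ref{prop-H-estmiates}); the sharp statement $\liminf_{t\to0^+}t^{-2\gamma}H(t)>0$ is Proposition \ref{prop-limit-H-positive} in the paper and its proof requires the blow-up result \emph{and} the coefficient ODE with the $\e$-gain from \eqref{hp-g-subhomogeneous}. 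In your Step~3 you then use this unproven sharp lower bound to conclude that $\beta_{m,k}\neq0$ for some $(m,k)\in J_0$, so as written the nondegeneracy of the limit profile rests on an assertion you never establish, and the argument is circular at precisely the hard point.

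The gap is fixable with the ingredients you already have, but the implication must run the other way: assume by contradiction that $\la^{-4\gamma}H(\la^2)\to0$; then every coefficient satisfies $\la^{-2\gamma}\varphi_{n,j}(\la)\to0$, and for $(n,j)\in J_0$ the variation-of-constants formula together with the bound $|\xi_{n,j}(\la)|\lesssim\la^{\e+2\gamma-2s}$ (which uses \eqref{hp-g-subhomogeneous} and the upper bound $H(t)\le K_1t^{2\gamma}$) gives $|\varphi_{n,j}(\la)|\lesssim\la^{2\gamma+\e}$; choosing $\sigma<\e$ in \eqref{ineq-H-estimates-below} then forces $\varphi_{n,j}(\la)/\sqrt{H(\la^2)}\to0$ for all $(n,j)\in J_0$, contradicting the strong $\mathcal L$-convergence of the normalized blow-ups to an eigenfunction $Y$ with $\norm{Y}_{\mathcal L}=1$ supported on the $J_0$-modes. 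Only after this does one obtain $\la^{-4\gamma}H(\la^2)\to\sum_{(m,k)\in J_0}\beta_{m,k}^2>0$ and hence $\beta_{m,k}\neq0$ for some $(m,k)$, with $\beta_{m,k}$ given by \eqref{def-beta} and independent of the subsequence, so that Urysohn yields convergence of the whole family. Two further small inaccuracies: $\gamma$ need not lie in $[0,+\infty)$ (one only has $\gamma>-\frac{N+2-2s}{4}$ by \eqref{ineq-N-bounded-below}, and negative eigenvalues $\gamma_{0,k}=-\frac{\alpha_k}{2}$ do occur for $\mu>0$); and your heuristic for discarding modes with $\gamma_{n,j}<\gamma_{m_0,k_0}$ (``otherwise $\mathcal N$ could not converge'') should instead be derived from $|\varphi_{n,j}(\la)|\le\sqrt{H(\la^2)}\lesssim\la^{2\gamma}$ combined with the ODE, or bypassed entirely, as in the paper, by deducing the full convergence from the subsequential compactness of Step~2 plus the subsequence-independent identification of the limit.
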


From Theorem \ref{theorem-blow-up-extended} and the relationship
between problems 
\eqref{eq-frac-heat-hardy-weak-forward} and \eqref{prob-frac-extended-forward}  given by Corollary
\ref{corollary-frac-extended-forward} we can easily deduce a similar
result for solutions to \eqref{eq-frac-heat-hardy-weak-forward}.
\begin{theorem}\label{theorem-blow-up}
  Let $w \not \equiv 0$ be a solution to
  \eqref{eq-frac-heat-hardy-weak-forward}. Then there exist
  $m_0 \in \mathbb{N}$ and $k_0 \in \mathbb{N}\setminus\{0\}$ such
  that, for any $\tau \in (0,1)$, 
\begin{equation*}
  \lim_{\la \to 0^+}\int_{\tau}^{1}\norm{\la^{-2\gamma_{m_0,k_0}}
    w(\la x\sqrt{t},t_0-\la^2 t)-t^{\gamma_{m_0,k_0}}\sum_{(m,k) \in
      J_0}\beta_{m,k}
    \Tr(\widetilde{Y}_{m,k})(x )}^2_{L^2(\R^N, G(\cdot,0))}dt =0,
\end{equation*}
where $\widetilde{Y}_{m,k}$,  $\beta_{m,k}$, and  $\Tr$ are defined in
\eqref{def-egienfunctions-normalised}, \eqref{def-beta}, and
\eqref{trace-Hs}, respectively.
\end{theorem}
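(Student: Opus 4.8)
The plan is to deduce Theorem \ref{theorem-blow-up} directly from Theorem \ref{theorem-blow-up-extended} by passing from the extended solution $W$ to its trace $w = \Tr(W(\cdot,t))$. By Corollary \ref{corollary-frac-extended-forward}, given a nontrivial weak solution $w$ of \eqref{eq-frac-heat-hardy-weak-forward}, there is an extended solution $W \in L^2(\R,H^1(\R^{N+1}_+,y^{1-2s}))$ of \eqref{prob-frac-extended-forward} whose trace on $\{y=0\}$ is $w$. The first step is to observe that $W \not\equiv 0$: if $W$ vanished identically then its trace $w$ would vanish too, contradicting $w \not\equiv 0$. Hence Theorem \ref{theorem-blow-up-extended} applies to $W$, producing $m_0 \in \mathbb{N}$, $k_0 \in \mathbb{N}\setminus\{0\}$, and coefficients $\beta_{m,k}$ such that the two limits (in $L^2_t(\mathcal H)$ and uniformly in $t$ in $\mathcal L$) hold for the rescalings $\la^{-2\gamma_{m_0,k_0}}W(\la z\sqrt t, t_0-\la^2 t)$, with limiting profile $t^{\gamma_{m_0,k_0}}\sum_{(m,k)\in J_0}\beta_{m,k}\widetilde Y_{m,k}(z)$, and with at least one $\beta_{m,k}\neq 0$.

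Next I would apply the trace operator. For fixed $t \in [\tau,1]$, set $V_\la(z) := \la^{-2\gamma_{m_0,k_0}}W(\la z\sqrt t, t_0-\la^2 t) - t^{\gamma_{m_0,k_0}}\sum_{(m,k)\in J_0}\beta_{m,k}\widetilde Y_{m,k}(z)$, which belongs to $\mathcal{H}$. By the extension of $\Tr$ to a continuous linear operator $\mathcal H \to L^2(\R^N,G(\cdot,0))$ (Proposition \ref{prop-trace-ineq}), there is a constant $C>0$ with $\norm{\Tr(V_\la)}_{L^2(\R^N,G(\cdot,0))}^2 \le C\norm{V_\la}_{\mathcal H}^2$. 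A change of variables in the definition of $\Tr$ shows that $\Tr\big(W(\la\,\cdot\,\sqrt t,t_0-\la^2 t)\big)(x) = w(\la x\sqrt t, t_0-\la^2 t) = \Tr(W(\cdot,t_0-\la^2 t))(\la x\sqrt t)$, so $\Tr(V_\la)(x) = \la^{-2\gamma_{m_0,k_0}}w(\la x\sqrt t, t_0-\la^2 t) - t^{\gamma_{m_0,k_0}}\sum_{(m,k)\in J_0}\beta_{m,k}\Tr(\widetilde Y_{m,k})(x)$, using linearity of $\Tr$ on the finite sum. Integrating the trace inequality over $t \in (\tau,1)$ gives
\begin{equation*}
\int_\tau^1 \norm{\Tr(V_\la)}_{L^2(\R^N,G(\cdot,0))}^2\,dt \le C\int_\tau^1 \norm{V_\la}_{\mathcal H}^2\,dt \to 0 \quad \text{as } \la\to 0^+,
\end{equation*}
which is precisely the asserted convergence in Theorem \ref{theorem-blow-up}.

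The remaining point is to record that the limiting profile is genuinely nontrivial as a trace, i.e. that $\sum_{(m,k)\in J_0}\beta_{m,k}\Tr(\widetilde Y_{m,k})\not\equiv 0$; this is not strictly part of the displayed statement but is what makes the result meaningful. One way to see it: the functions $\{\widetilde Y_{m,k}\}$ with $(m,k)\in J_0$ are the $\mathcal L$-orthonormal basis $E_{m_0,k_0}$ of the eigenspace of $\gamma_{m_0,k_0}$, built from $Y_{n,j}(z)=|z|^{-\alpha_j}P_{j,n}(|z|^2/4)\psi_j(z/|z|)$; since each $\psi_j$ is a nontrivial eigenfunction on $\mathbb S^N_+$ and the trace $\mathcal T(\psi_j)$ on $\mathbb S^{N-1}$ is nonzero (otherwise $\psi_j$ would solve a purely interior Neumann-type problem forcing $\psi_j\equiv 0$, or one invokes unique continuation for the sphere problem), the traces $\Tr(\widetilde Y_{m,k})$ remain linearly independent; combined with $\beta_{m,k}\neq 0$ for some $(m,k)\in J_0$ this yields nontriviality.

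I do not expect a serious obstacle here: the argument is essentially ``apply a bounded linear operator and commute it with the rescaling.'' The only mildly delicate point is checking that $\Tr$ commutes with the parabolic rescaling $W \mapsto W(\la\,\cdot\,\sqrt t, t_0 - \la^2 t)$ — this is a routine change of variables, using that the rescaling acts only on the $x$-variables (and the time parameter) while $\Tr$ restricts to $\{y=0\}$, so the $y$-direction, in which the trace is taken, is untouched. One should also make sure the rescaled function lies in $\mathcal H$ for each $t$ so that the trace inequality is applicable; this follows from $W \in L^2(\R, H^1(\R^{N+1}_+, y^{1-2s}))$ together with the continuous embedding $H^1(\R^{N+1}_+,y^{1-2s}) \hookrightarrow \mathcal H$ and the regularity of $W$ established in the blow-up analysis. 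Everything else is a direct consequence of Theorem \ref{theorem-blow-up-extended} and Proposition \ref{prop-trace-ineq}.
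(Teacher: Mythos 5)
Your argument is correct and coincides with the paper's own deduction: the paper also obtains Theorem \ref{theorem-blow-up} from Theorem \ref{theorem-blow-up-extended} by applying the continuity of the trace operator $\Tr:\mathcal{H}\to L^2(\R^N,G(\cdot,0))$ from Proposition \ref{prop-trace-ineq} to the rescaled differences. The additional remarks on commuting $\Tr$ with the parabolic rescaling and on nontriviality of the trace of the limit profile are fine but not part of the paper's (one-line) argument for this step.
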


Thanks to Theorem
\ref{theorem-blow-up-extended} and Theorem \ref{theorem-blow-up}, we can prove that a strong unique
continuation principle holds for solutions of equations
\eqref{eq-frac-heat-hardy-weak-forward} and
\eqref{eq-weak-frac-extended-forward}.
\begin{corollary} \label{corollary-unique-principle-extended}
Let $W$ be a weak solution of problem
\eqref{prob-frac-extended-forward} such  that 
\begin{equation}\label{eq-W-O}
  W(z,t)=O\left((|z|^2+(t_0-t))^k\right) \quad
  \text{ as } z \to 0 \text{ and } t \to t_0^- \quad \text{for all } k \in \mathbb{N}.
\end{equation}
Then $W\equiv 0$ on $\R^{N+1}_+\times(t_0-T,t_0)$.
\end{corollary}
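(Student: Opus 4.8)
The plan is to derive a contradiction from the assumption that $W \not\equiv 0$ by combining the vanishing-order hypothesis \eqref{eq-W-O} with the blow-up classification of Theorem \ref{theorem-blow-up-extended}. First I would observe that, since $W$ is a nontrivial weak solution of \eqref{prob-frac-extended-forward}, Theorem \ref{theorem-blow-up-extended} applies and yields $m_0 \in \mathbb{N}$, $k_0 \in \mathbb{N}\setminus\{0\}$ with $\lim_{t\to t_0^-}\mathcal N(t)=\gamma_{m_0,k_0}=m_0-\frac{\alpha_{k_0}}{2}$, together with the non-vanishing of the blow-up limit: $\beta_{m,k}\neq 0$ for some $(m,k)\in J_0$, so the rescaled functions $\la^{-2\gamma_{m_0,k_0}}W(\la z\sqrt{t},t_0-\la^2 t)$ converge (in the $L^2((\tau,1),\mathcal H)$ and $C([\tau,1],\mathcal L)$ senses) to the nonzero limiting profile $t^{\gamma_{m_0,k_0}}\sum_{(m,k)\in J_0}\beta_{m,k}\widetilde Y_{m,k}(z)$.

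The second step is to show that hypothesis \eqref{eq-W-O} forces the blow-up rate $2\gamma_{m_0,k_0}$ to be arbitrarily large, which is impossible. Concretely, from \eqref{eq-W-O} one gets, for each fixed $k\in\mathbb N$, a bound of the form $|W(\la z\sqrt t,t_0-\la^2 t)|\le C_k\,\la^{2k}(|z|^2 t + t)^k = C_k\la^{2k} t^k(|z|^2+1)^k$ for $\la$ small, $z$ in a bounded region and $t\in[\tau,1]$ (after a routine change of variables, using $|z\sqrt t|^2 + (t_0-(t_0-\la^2 t)) = \la^2(|z|^2 t + t)$, up to the $\la^2$ scaling of the parabolic "radius"). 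Hence $\la^{-2\gamma_{m_0,k_0}}|W(\la z\sqrt t,t_0-\la^2 t)|\le C_k\la^{2k-2\gamma_{m_0,k_0}}t^k(|z|^2+1)^k$. Integrating against $y^{1-2s}G(z)\,dz\,dt$ over a bounded set $K\times[\tau,1]$ — the Gaussian weight $G$ ensures these integrals are finite — and choosing $k>\gamma_{m_0,k_0}$, the right-hand side tends to $0$ as $\la\to 0^+$. Comparing with the convergence to the limiting profile from Theorem \ref{theorem-blow-up-extended}, the limiting profile must vanish on $K\times[\tau,1]$; since $K$ and $\tau$ are arbitrary and $t^{\gamma_{m_0,k_0}}$ never vanishes, this gives $\sum_{(m,k)\in J_0}\beta_{m,k}\widetilde Y_{m,k}\equiv 0$ on $\R^{N+1}_+$. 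By $\mathcal L$-orthonormality of the family $\{\widetilde Y_{m,k}\}$ (Proposition \ref{prop-eigenvalues}) this forces $\beta_{m,k}=0$ for all $(m,k)\in J_0$, contradicting the last assertion of Theorem \ref{theorem-blow-up-extended}. Therefore $W\equiv 0$ on $\R^{N+1}_+\times(t_0-T,t_0)$.

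The step I expect to require the most care is making the pointwise bound \eqref{eq-W-O} interact cleanly with the weighted-norm convergence in Theorem \ref{theorem-blow-up-extended}: the statement \eqref{eq-W-O} is a pointwise bound near $(0,t_0)$ valid only on a parabolic neighbourhood, so one must check that for $\la$ small enough the scaled point $(\la z\sqrt t,t_0-\la^2 t)$ lies in that neighbourhood uniformly for $z\in K$ and $t\in[\tau,1]$, and one must control the $y$-direction (the extension variable) where $W$ need not be more than $H^1(\cdot,y^{1-2s})$-regular a priori — here one leans on the regularity theory developed in Section \ref{sec-formulation} and on the fact that the Gaussian weight $G$ tames the growth in $z$, so that dominated convergence can be applied to pass the vanishing bound through the $\|\cdot\|_{\mathcal L}$ and $\|\cdot\|_{\mathcal H}$ norms. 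A minor additional point is that, strictly speaking, \eqref{eq-W-O} is a condition on $W$ itself rather than on its trace, so the conclusion $w\equiv 0$ (and hence the unique continuation statement Corollary \ref{corollary-unique-principle} for the original equation) follows a posteriori from $\Tr(W)=w$; within the present corollary it suffices to conclude $W\equiv 0$.
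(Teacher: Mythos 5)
Your proposal is correct and follows essentially the same route as the paper: assume $W\not\equiv 0$, invoke Theorem \ref{theorem-blow-up-extended} to get a nontrivial blow-up profile, and use \eqref{eq-W-O} (which under the parabolic scaling gives the bound $C_k\la^{2k}t^k(1+|z|^2)^k$) to force that profile to vanish, a contradiction. The only difference is cosmetic: the paper compares pointwise a.e.\ limits (extracting an a.e.\ convergent subsequence from the $\mathcal{L}$-convergence), while you integrate the pointwise bound over compact sets $K\times[\tau,1]$ against $y^{1-2s}G$; both implementations of the same idea work.
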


\begin{corollary}\label{corollary-unique-principle}
Let $w$ be a solution of \eqref{eq-frac-heat-hardy-weak-forward} such that 
\begin{equation*}
w(x,t)=O\left((|x|^2+(t_0-t))^k\right) \quad \text{ as } x \to 0  \text{ and } t \to t_0^- \quad \text{ for all } k \in \mathbb{N}.
\end{equation*}
Then $w\equiv 0$ in $\R^{N}\times(t_0-T,t_0)$.
\end{corollary}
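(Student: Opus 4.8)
The plan is to obtain Corollary~\ref{corollary-unique-principle} as a consequence of its extended counterpart, Corollary~\ref{corollary-unique-principle-extended}, which in turn I would prove by contradiction against the blow-up classification of Theorem~\ref{theorem-blow-up-extended}. So suppose first that $W\not\equiv0$ is a weak solution of \eqref{prob-frac-extended-forward} satisfying \eqref{eq-W-O}. By Theorem~\ref{theorem-blow-up-extended} there are $m_0\in\mathbb N$, $k_0\in\mathbb N\setminus\{0\}$, a set $J_0$ as in \eqref{def-J0}, and coefficients $\beta_{m,k}$, $(m,k)\in J_0$, not all vanishing, such that the rescalings $W_\la(z,t):=\la^{-2\gamma_{m_0,k_0}}W(\la z\sqrt t,t_0-\la^2t)$ converge, as $\la\to0^+$ and for every $\tau\in(0,1)$, to $t^{\gamma_{m_0,k_0}}\sum_{(m,k)\in J_0}\beta_{m,k}\widetilde{Y}_{m,k}(z)$ in $L^2((\tau,1),\mathcal H)$.

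I would then use \eqref{eq-W-O} to annihilate this limit profile. Fix $k\in\mathbb N$ with $k>\gamma_{m_0,k_0}$ and a compact set $K\Subset\R^{N+1}_+$. For $z\in K$ and $t\in[\tau,1]$ one has $\la z\sqrt t\to0$ and $t_0-\la^2t\to t_0^-$ as $\la\to0^+$, while $|\la z\sqrt t|^2+\la^2t=\la^2t(|z|^2+1)\le C_K\la^2$; hence \eqref{eq-W-O} gives $|W_\la(z,t)|\le C_{K,k}\la^{2(k-\gamma_{m_0,k_0})}$ for $\la$ small, so $W_\la\to0$ in $L^2((\tau,1)\times K,y^{1-2s}\,dz\,dt)$. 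Since $\mathcal H\hookrightarrow\mathcal L$ and $G$ is bounded below on $K$, convergence in $L^2((\tau,1),\mathcal H)$ implies convergence in $L^2((\tau,1)\times K,y^{1-2s}\,dz\,dt)$; by uniqueness of limits the profile $t^{\gamma_{m_0,k_0}}\sum_{(m,k)\in J_0}\beta_{m,k}\widetilde{Y}_{m,k}$ vanishes a.e.\ on $(\tau,1)\times K$. Letting $K$ exhaust $\R^{N+1}_+$ and using $t^{\gamma_{m_0,k_0}}>0$, we obtain $\sum_{(m,k)\in J_0}\beta_{m,k}\widetilde{Y}_{m,k}=0$ in $\mathcal L$; the $\mathcal L$-orthonormality of $\{\widetilde{Y}_{m,k}:(m,k)\in J_0\}$ from Proposition~\ref{prop-eigenvalues} then forces all $\beta_{m,k}$ to vanish, contradicting the last assertion of Theorem~\ref{theorem-blow-up-extended}. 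Hence $W\equiv0$ on $\R^{N+1}_+\times(t_0-T,t_0)$, which proves Corollary~\ref{corollary-unique-principle-extended}.

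To deduce Corollary~\ref{corollary-unique-principle}, let $w$ be a solution of \eqref{eq-frac-heat-hardy-weak-forward} with the stated vanishing and let $W\in L^2(\R,H^1(\R^{N+1}_+,y^{1-2s}))$ be the extension of $w$ given by Corollary~\ref{corollary-frac-extended-forward}, so that $\Tr(W(\cdot,t))=w(\cdot,t)$. It suffices to show that $W$ satisfies \eqref{eq-W-O}, since then Corollary~\ref{corollary-unique-principle-extended} yields $W\equiv0$ and hence $w=\Tr(W)\equiv0$. The key point is that the infinite-order vanishing of the trace $w$ at $(0,t_0)$ propagates to the interior of $\R^{N+1}_+$: this I would obtain from interior regularity estimates for the degenerate parabolic operator $y^{1-2s}\p_t-\dive(y^{1-2s}\nabla\cdot)$ (cf.\ the regularity theory of Section~\ref{sec-formulation} and \cite{BG,AT}) applied on the parabolic half-cylinders $B_r^+\times(t_0-r^2,t_0)$ with $r\to0^+$, combined with a scaling argument; on such cylinders the conormal boundary datum $\tfrac{\mu}{|x|^{2s}}w+gw$, although singular at $x=0$, still lies --- thanks to \eqref{hp-g-subhomogeneous} and to the fact that the infinite-order vanishing of $w$ dominates the $|x|^{-2s}$ weight in an $L^p$-averaged sense --- in the function spaces required by that theory, and it inherits infinite-order vanishing as well. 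An alternative, avoiding the extended problem, is to argue directly on $w$: applying $\Tr$ (continuous from $\mathcal H$ to $L^2(\R^N,G(\cdot,0))$) to the convergence in Theorem~\ref{theorem-blow-up-extended}, i.e.\ invoking Theorem~\ref{theorem-blow-up}, the decay of $w$ forces $\sum_{(m,k)\in J_0}\beta_{m,k}\Tr(\widetilde{Y}_{m,k})=0$ in $L^2(\R^N,G(\cdot,0))$; separating radial and angular variables in the $\widetilde{Y}_{m,k}$ (recall $J_0$ contains at most one index $m$ for each $k$) and using that $\mathcal T$ is injective on each eigenspace of \eqref{prob-eigenvalue-half-a-sphere} --- a consequence of unique continuation from $\mathbb S^{N-1}$ for that degenerate elliptic problem --- one again concludes that all $\beta_{m,k}$, $(m,k)\in J_0$, vanish, a contradiction.

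I expect the main obstacle to be exactly this last passage: transferring the infinite-order vanishing from the boundary trace $w$ to the extension $W$ (equivalently, establishing the required injectivity of the boundary traces of the Ornstein--Uhlenbeck eigenfunctions). Near the Hardy point the conormal datum of the extended problem is genuinely singular, so the interior regularity machinery for degenerate parabolic equations has to be dovetailed carefully with the a priori high-order vanishing of $w$ so as to keep all quantities in admissible function spaces. All the remaining steps --- the parabolic rescaling, the identification of the limiting profile, and the use of the eigenfunction basis --- are routine once Theorems~\ref{theorem-blow-up-extended} and~\ref{theorem-blow-up} and Proposition~\ref{prop-eigenvalues} are available.
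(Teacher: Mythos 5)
Your primary route --- deducing Corollary \ref{corollary-unique-principle} from Corollary \ref{corollary-unique-principle-extended} by ``propagating'' the infinite-order vanishing of $w$ at $(0,t_0)$ to the extension $W$ --- has a genuine gap, and it is not how the paper argues. The extension $W$ at an interior point $(x,y,t)$ is a nonlocal (Poisson-kernel type) average of the trace, so pointwise vanishing of $w$ at the single point $(0,t_0)$, even to infinite order, gives no control on $W$ in $B_r^+\times(t_0-r^2,t_0)$: condition \eqref{eq-W-O} does not follow from the hypothesis on $w$ by interior regularity and scaling on half-cylinders, and the paper explicitly points out that, because of the Hardy potential, no boundedness/H\"older theory of the type used in \cite{BG,AT} is available in any case (solutions may be unbounded). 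Obtaining \eqref{eq-W-O} for $W$ from the decay of its trace would itself be a boundary unique continuation statement of essentially the same depth as the result you are proving, so this step cannot be dismissed as routine.

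Your ``alternative'' route is, in substance, the paper's actual proof: one works directly with Theorem \ref{theorem-blow-up}; the decay hypothesis forces the limiting profile $\sum_{(m,k)\in J_0}\beta_{m,k}\Tr(\widetilde{Y}_{m,k})$ to vanish, and one must then conclude that all $\beta_{m,k}$ vanish, contradicting Theorem \ref{theorem-blow-up-extended}. The paper isolates the needed fact as Proposition \ref{prop-trace-Y-0}: since all indices in $J_0$ correspond to the same eigenvalue $\gamma_{m_0,k_0}$, the combination $Y:=\sum_{(m,k)\in J_0}\beta_{m,k}\widetilde{Y}_{m,k}$ is itself an eigenfunction of \eqref{prob-eigenvalue-Ornstein-Uhlenbeck-operator}; if $\Tr(Y)=0$ then, by the equation, its weighted conormal derivative also vanishes, so the even reflection of $Y$ across $\{y=0\}$ solves a degenerate elliptic equation with Muckenhoupt $A_2$ weight in all of $\R^{N+1}$ and must vanish by the unique continuation result of \cite{TXZ}, whence $\beta_{m,k}=0$ for all $(m,k)\in J_0$ by $\mathcal{L}$-orthonormality. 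In your sketch this crucial step is only asserted (``$\mathcal T$ is injective on each eigenspace \dots a consequence of unique continuation''), and your separation-of-variables reduction needs a further argument you do not supply (e.g. linear independence of the radial factors $|x|^{-\alpha_k}P_{k,m}(|x|^2/4)$, whose exponents $\alpha_k$ are pairwise distinct for $(m,k)\in J_0$) before the injectivity of $\mathcal T$ on single eigenspaces can be applied; arguing on the whole combination $Y$, as in Proposition \ref{prop-trace-Y-0}, bypasses both issues. So the fallback strategy is the right one, but its key lemma is left unproved, and the route you present as primary would not work.
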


The next theorem is a backward uniqueness result for the Cauchy
problem associated with \eqref{prob-frac-extended-forward}. Its
  proof relies exclusively on the monotonicity argument developed in Section
\ref{sec-Almgren-Poon-type-monotonicity-formula} and does not require
the blow-up argument which is instead needed to obtain the above space-like unique
continuation properties.

\begin{theorem} \label{theorem-unicity-Cauchy}
If $W$ is a solution of \eqref{prob-frac-extended-forward} and there exists $ t_1 \in (t_0-T,t_0)$ such that 
\begin{equation*}
W\left(z, t_1\right)= 0 \quad \text{ for a.e. } z \in \R^{N+1}_+,
\end{equation*}
then $W \equiv 0$ in $\R^{N+1}_+\times(t_0-T,t_0)$.
\end{theorem}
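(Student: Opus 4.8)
The plan is to deduce Theorem \ref{theorem-unicity-Cauchy} from the Almgren-Poon monotonicity machinery of Section \ref{sec-Almgren-Poon-type-monotonicity-formula}, exactly as one does for backward uniqueness of the classical heat equation. The key observation is that the frequency function $\mathcal{N}(t)$ in \eqref{def-N-forward} measures, in a scale-invariant way, how fast $W$ decays as $t \to t_0^-$; if $W(\cdot,t_1)\equiv 0$ for some $t_1 < t_0$ but $W\not\equiv 0$, then $\mathcal{N}$ should blow up as $t$ increases toward $t_1$ from below, contradicting the boundedness of $\mathcal{N}$ near $t_1$ that follows from $W\in L^2((t_0-T,t_0),H^1(\R^{N+1}_+,y^{1-2s}))$ together with the interior regularity established in Section \ref{sec-formulation}.

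Concretely, I would first argue by contradiction: suppose $W\not\equiv 0$. Recentering the time axis so that $t_1$ plays the role of the blow-up time, one considers on the interval $(t_0-T,t_1)$ the height function $H(t):=\int_{\R^{N+1}_+} y^{1-2s} W^2 G_s(\cdot,t_1-t)\,dz$ and the energy $D(t)$ (the numerator of the recentered frequency), and derives the differential identities that the monotonicity argument produces, namely $H'(t) = \frac{2}{t_1-t}\,D(t)$ (or a variant thereof with lower-order corrections coming from $g$, controlled using \eqref{hp-g-Lr}--\eqref{hp-g-subhomogeneous}) and the almost-monotonicity of the recentered frequency $\mathcal{N}_1(t)=\frac{(t_1-t)D(t)}{H(t)}$. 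From the almost-monotonicity one gets that $\mathcal{N}_1(t)\le C$ for $t$ in a left neighborhood of $t_1$, and then integrating $ \frac{d}{dt}\log H(t) = \frac{2\mathcal{N}_1(t)}{t_1-t} + (\text{controlled error})$ on an interval $[t_1-\delta,\tau]$ with $\tau\uparrow t_1$ yields a lower bound $H(\tau)\ge c\,(t_1-\tau)^{2C}$ for some constant $c>0$, hence $H(\tau)$ cannot tend to $0$ as $\tau \to t_1^-$. But the continuity of $t\mapsto H(t)$ up to $t_1$ (which itself requires knowing $W$ has enough regularity in time near $t_1$, supplied by the regularity theory of Section \ref{sec-formulation}) forces $H(t_1) = \int_{\R^{N+1}_+} y^{1-2s} W^2(\cdot,t_1) G_s(\cdot,0)\,dz = 0$ from the hypothesis $W(\cdot,t_1)\equiv 0$, a contradiction. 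This shows $W\equiv 0$ on $\R^{N+1}_+\times(t_0-T,t_1)$, and then a standard forward-in-time uniqueness argument for the degenerate parabolic equation \eqref{prob-frac-extended-forward} (again using the energy estimate and Gaussian weights, now on $(t_1,t_0)$) propagates the vanishing to all of $(t_0-T,t_0)$.

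The main obstacle I anticipate is making the frequency argument rigorous near the endpoint $t_1$ where the Gaussian kernel $G_s(\cdot,t_1-t)$ degenerates: one must justify that $H(t)$ and $D(t)$ are well-defined and continuous as $t\uparrow t_1$, that the differentiation formulas hold in the relevant sense, and that the lower-order terms induced by the potential $g$ (both the Hardy term $\mu|x|^{-2s}$, handled via the Hardy inequality \eqref{ineq-hardy-frac-Herbst} and the trace inequality of Proposition \ref{prop-trace-L-2-x-2s}, and the perturbation $g$, handled via \eqref{hp-g-Lr}--\eqref{hp-g-subhomogeneous} and Hölder's inequality) genuinely produce only integrable corrections that do not destroy the almost-monotonicity. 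All of this is precisely the content of the estimates proved in Section \ref{sec-Almgren-Poon-type-monotonicity-formula}, so the proof here should amount to recentering those estimates at $t_1$ instead of $t_0$ and then extracting the contradiction; the only genuinely new input is the forward uniqueness step on $(t_1,t_0)$, which is a routine Grönwall/energy argument for the weighted equation and does not need the frequency function at all.
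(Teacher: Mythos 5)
Your plan breaks down at its central step, the recentred frequency argument on $(t_0-T,t_1)$. By centring the Poon weight at the vanishing time $t_1$ you make $t_1$ the degenerate centre of the Gaussian: $G_s(z,t_1-t)=(t_1-t)^{-\frac{N+2-2s}{2}}e^{-\frac{|z|^2}{4(t_1-t)}}$ concentrates as $t\to t_1^-$, so the quantity ``$H(t_1)=\int_{\R^{N+1}_+}y^{1-2s}W^2(\cdot,t_1)G_s(\cdot,0)\,dz$'' is not defined, and the hypothesis $W(\cdot,t_1)=0$ does \emph{not} give $H(\tau)\to 0$: after the self-similar change of variables one has $H(\tau)=\int_{\R^{N+1}_+}\zeta_{N+1}^{1-2s}\,W^2\big(\sqrt{t_1-\tau}\,\zeta,\tau\big)e^{-|\zeta|^2/4}\,d\zeta$, and the concentrating rescaling destroys any control coming from the vanishing of a single time slice (this theorem assumes no rate of vanishing, unlike Corollary \ref{corollary-unique-principle-extended}). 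Worse, even granting $H(\tau)\to0$, the bound you derive, $H(\tau)\ge c\,(t_1-\tau)^{2C}$, has a right-hand side that itself tends to $0$ as $\tau\to t_1^-$, so there is no contradiction: a bounded frequency centred at $t_1$ only produces polynomial two-sided bounds in $(t_1-\tau)$, which are exactly the behaviour a nontrivial solution has (compare Propositions \ref{prop-H-estmiates} and \ref{prop-limit-H-positive} with terminal time $t_1$, which give $H(\tau)\approx (t_1-\tau)^{2\gamma}$). So the backward step, as written, proves nothing.

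The paper's proof avoids this precisely by never centring the weight at the vanishing time. The interval is covered by overlapping windows $(a_i,b_i)$ of length $2\alpha\le\bar T$, and in the $i$-th window the Gaussian is centred at the later time $t_0-a_i$; at the vanishing time the weight is then a fixed nondegenerate Gaussian, so $H_i$ vanishes at an \emph{interior} rescaled time $t_i>0$, and integrating the frequency bound of Proposition \ref{prop-N-bounded} gives $H_i(t)\ge H_i(T_i)(t/T_i)^{2C}>0$ at $t=t_i$ — a genuine contradiction (Proposition \ref{prop-H-positive}). Propagation in the other time direction comes for free from the monotonicity of $t\mapsto t^{-2C_{N,s,\mu}+\frac{N-2+2s}{2}}H_i(t)$ (Proposition \ref{prop-Ht-monotonicity}), so no separate forward Gr\"onwall argument is needed, and the overlap of the windows together with Propositions \ref{prop-Hi-not-0-then-Hi+1-not-0} and \ref{prop-H-positve-everywhere} spreads the vanishing to all of $(t_0-T,t_0)$. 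Your forward step on $(t_1,t_0)$ is fine in spirit (it is essentially Proposition \ref{prop-Ht-monotonicity} and its corollary in disguise), but to repair the backward step you must adopt the ``centre strictly later than $t_1$'' device; with the centre at $t_1$ the contradiction you aim for simply does not exist.
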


\section{Inequalities and  Traces in Gaussian spaces}  \label{sec-Inequalities-Traces}
In this section we prove some inequalities and trace results for Gaussian spaces.
We start with a Hardy-type inequality. 
\begin{proposition}\label{prop-ineq-hardy-extended}
For any $V \in \mathcal{H}$ 
\begin{multline}\label{ineq-hardy-extended}
  \int_{\R_+^{N+1}}y^{1-2s} \frac{V^2}{|z|^2} G \, dz +\frac{1}{4(N-2s)^2}\int_{\R_+^{N+1}}y^{1-2s} |z|^2 V^2G \, dz\\
  \le \frac{4}{(N-2s)^2}\int_{\R_+^{N+1}}y^{1-2s} |\nabla V|^2G \,
  dz+\frac{N+2-2s}{(N-2s)^2}\int_{\R_+^{N+1}}y^{1-2s} V^2G \, dz.
\end{multline}
\end{proposition}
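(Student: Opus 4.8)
The plan is to prove \eqref{ineq-hardy-extended} first for $V \in C_c^\infty(\overline{\R^{N+1}_+})$ (which is dense in $\mathcal{H}$ by definition), and then pass to the general case by density. For such a $V$ all four integrals in \eqref{ineq-hardy-extended} are finite: the only one that is not evident is $\int_{\R^{N+1}_+} y^{1-2s}\,|z|^{-2} V^2\,G\,dz$, whose integrability near the origin follows by writing $z = r\theta$ with $r>0$ and $\theta\in\mathbb{S}^N_+$, since the singular part of the integrand becomes a constant multiple of $r^{N-1-2s}\,\theta_{N+1}^{1-2s}$, which is integrable thanks to $N>2s$ and $s<1$.

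The heart of the matter is a single ``completing the square'' identity built on the radial vector field
\[
F(z):=\frac{N-2s}{2}\,\frac{z}{|z|^2}-\frac{z}{4},
\]
which is exactly $\nabla\log\big(|z|^{(N-2s)/2}G^{1/2}\big)$. Starting from the trivial inequality $0\le\int_{\R^{N+1}_+} y^{1-2s}\,|\nabla V+FV|^2\,G\,dz$ and expanding the square, one obtains
\[
0\le\int y^{1-2s}|\nabla V|^2 G +\int y^{1-2s}\,F\cdot\nabla(V^2)\,G +\int y^{1-2s}|F|^2 V^2 G .
\]
Integrating the middle term by parts rewrites it as $-\int \dive(y^{1-2s}G\,F)\,V^2$, provided the boundary flux of $y^{1-2s}G\,F\,V^2$ vanishes (this is the point that requires care, see below). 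Using $\nabla G=-\tfrac{z}{2}G$, which is \eqref{eq-G-nabla} evaluated at $t=1$, one computes the two elementary divergences
\[
\dive\!\big(y^{1-2s}G\,z\big)=y^{1-2s}G\Big[(N+2-2s)-\tfrac{|z|^2}{2}\Big],
\qquad
\dive\!\Big(y^{1-2s}G\,\tfrac{z}{|z|^2}\Big)=y^{1-2s}G\Big[\tfrac{N-2s}{|z|^2}-\tfrac12\Big],
\]
so that $\dive(y^{1-2s}G\,F)=y^{1-2s}G\big[\tfrac{(N-2s)^2}{2|z|^2}-\tfrac{N-2s}{4}-\tfrac{N+2-2s}{4}+\tfrac{|z|^2}{8}\big]$, whereas $|F|^2=\tfrac{(N-2s)^2}{4|z|^2}-\tfrac{N-2s}{4}+\tfrac{|z|^2}{16}$. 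Substituting and simplifying, the constant terms and the mixed contributions cancel exactly, and one is left with
\[
0\le\int y^{1-2s}|\nabla V|^2 G-\frac{(N-2s)^2}{4}\int y^{1-2s}\frac{V^2}{|z|^2}G+\frac{N+2-2s}{4}\int y^{1-2s}V^2 G-\frac1{16}\int y^{1-2s}|z|^2V^2 G,
\]
and dividing by $\tfrac{(N-2s)^2}{4}$ and rearranging yields precisely \eqref{ineq-hardy-extended}.

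The step requiring attention is the justification of the integration by parts, i.e. the vanishing of the boundary flux of $y^{1-2s}G\,F\,V^2$, which is delicate both on the face $\{y=0\}$ and near the singularity $z=0$ of $F$. One carries out the computation on $(\R^{N+1}_+\setminus\overline{B_\e^+})\cap\{y>\delta\}$ and lets first $\delta\to0^+$ and then $\e\to0^+$. On $\{y=\delta\}$ the flux is bounded by $C\delta^{2-2s}\int_{\mathrm{supp}\,V}(|x|^2+\delta^2)^{-1}\,dx$, which tends to $0$ because $2-2s>0$ while the remaining integral is $O(1)$ for $N\ge3$, $O(\log(1/\delta))$ for $N=2$, and $O(\delta^{-1})$ for $N=1$ --- and in the last case $N>2s$ forces $s<\tfrac12$, so the power $\delta^{1-2s}$ still wins. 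On $S_\e^+$ the flux is $O(\e^{N-2s})\to0$ since $N>2s$. Finally, for general $V\in\mathcal{H}$ one takes $V_n\in C_c^\infty(\overline{\R^{N+1}_+})$ with $V_n\to V$ in $\mathcal{H}$; then $\int y^{1-2s}|\nabla V_n|^2G\to\int y^{1-2s}|\nabla V|^2G$ and $\int y^{1-2s}V_n^2G\to\int y^{1-2s}V^2G$, while $V_n\to V$ a.e. along a subsequence, so Fatou's lemma applied to the (nonnegative) left-hand side of \eqref{ineq-hardy-extended} promotes the inequality from the $V_n$ to $V$.
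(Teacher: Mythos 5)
Your proof is correct, and it follows a genuinely more self-contained route than the paper's. The paper quotes \cite[Lemma 2.4]{Fall-Felli-2014} (the sharp Hardy inequality for the $y^{1-2s}$-weighted extension) as a black box, applies it to $\phi\, e^{-|z|^2/8}=\phi\sqrt G$, expands the gradient, and then needs only one integration by parts, namely of $y^{1-2s}\nabla(\phi^2)\cdot z\, e^{-|z|^2/4}$ over $\{y>\delta\}$, where the boundary flux on $\{y=\delta\}$ dies because of the factor $\delta^{2-2s}$ and the vector field $z$ is smooth, so no excision near the origin is needed. You instead complete the square directly with the combined field $F=\frac{N-2s}{2}\frac{z}{|z|^2}-\frac z4=\nabla\log\big(|z|^{(N-2s)/2}\sqrt G\big)$, which merges the Hardy part and the Gaussian part into a single identity; your divergence computations and the resulting cancellations are correct and reproduce exactly the constants in \eqref{ineq-hardy-extended}. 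What this buys is independence from the cited lemma (you effectively reprove it), at the price of a more delicate integration by parts: because $F$ is singular at $z=0$ you must excise both the slab $\{y\le\delta\}$ and the half-ball $B_\e^+$, and your treatment of the two boundary fluxes (the $\delta^{2-2s}$ gain on $\{y=\delta\}$, including the low-dimensional cases, and the $O(\e^{N-2s})$ bound on $S_\e^+$ using $N>2s$) is sound; with the order of limits you chose, the $\{y=\delta\}$ face even stays at distance $\e$ from the origin, so the case distinction in $N$ is not strictly needed. The final density step via Fatou's lemma on the nonnegative left-hand side, with a.e. convergence along a subsequence and convergence of the right-hand side in the $\mathcal H$-norm, is the same mechanism the paper implicitly uses and is fine.
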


\begin{proof}
  By density, it is enough to prove \eqref{ineq-hardy-extended}
  for any $\phi \in C^{\infty}_c(\overline{\R_+^{N+1}})$. Thanks to
  \cite[Lemma 2.4]{Fall-Felli-2014}
\begin{multline}\label{ineq-hardy-extended:1}
  \int_{\R_+^{N+1}}y^{1-2s} \frac{\phi^2}{|z|^2} G\, dz \le
  \frac{4}{(N-2s)^2}\int_{\R^{N+1}}y^{1-2s} \left|\nabla\left( \phi e^{-\frac{|z|^2}{8}}\right)\right|^2 \, dz\\
  =\frac{4}{(N-2s)^2}\int_{\R^{N+1}}y^{1-2s} \left(\left|\nabla
      \phi\right|^2 -\frac{1}{4}\nabla \left(\phi^2\right) \cdot z+
    \frac{1}{16}|z|^2 \phi^2\right)e^{-\frac{|z|^2}{4}}\, dz.
\end{multline}  
Let, for any $\delta >0$,
\begin{equation}\label{R-N-delta}
\R^{N+1}_{\delta}:= \{(x,y)\in \R^{N+1}:y >\delta\}. 
\end{equation}
Since  on $\R^{N+1}_+$
\begin{equation*}
  \dive(y^{1-2s}\phi^2 e^{-\frac{|z|^2}{4}}z)=
  y^{1-2s}\left[(1-2s)\phi^2+\nabla\left(\phi^2\right)\cdot z
    -\frac{1}{2}|z|^2\phi^2+(N+1)\phi^2\right]e^{-\frac{|z|^2}{4}},
\end{equation*}
then 
\begin{multline*}
  \int_{\R_\delta^{N+1}}y^{1-2s}\nabla\left(\phi^2\right)\cdot
  z\,e^{-\frac{|z|^2}{4}} dz
  =\int_{\R_\delta^{N+1}}y^{1-2s}\left[(-N-2+2s)\phi^2+\frac{1}{2}|z|^2\phi^2\right]e^{-\frac{|z|^2}{4}} dz \\
  -\delta^{2-2s}\int_{\R^N} \phi^2(x,\delta)
  e^{-\frac{|x|^2+\delta^2}{4}} \, dx.
\end{multline*}
Since $(2-2s)>0$, we can pass to the limit as $\delta \to 0^+$ and conclude that 
\begin{equation*}
  \int_{\R_+^{N+1}}y^{1-2s}\nabla\left(\phi^2\right)\cdot
  z\,e^{-\frac{|z|^2}{4}} dz
  =\int_{\R_+^{N+1}}y^{1-2s}\left[(-N-2+2s)\phi^2+\frac{1}{2}|z|^2\phi^2\right]e^{-\frac{|z|^2}{4}} dz. 
\end{equation*}
Then from \eqref{ineq-hardy-extended:1} we deduce that 
\begin{multline*}
  \int_{\R_+^{N+1}}y^{1-2s} \frac{\phi^2}{|z|^2} G\, dz \\
  \le \frac{4}{(N-2s)^2}\int_{\R^{N+1}}y^{1-2s} \left(\left|\nabla
      \phi\right|^2 + \frac{1}{16}|z|^2
    \phi^2+\frac{1}{4}(N+2-2s)\phi^2-\frac{1}{8}|z|^2
    \phi^2\right)e^{-\frac{|z|^2}{4}}\, dz.
\end{multline*}  
which proves \eqref{ineq-hardy-extended}.
\end{proof}

\begin{proposition}\label{prop-H-t-embbeded-H1}
Let $V \in \mathcal{H}$. Then $V \sqrt{G} \in H^1(\R^{N+1}_+,y^{1-2s})$ and 
\begin{equation}\label{ineq-VsqrtG}
  \norm{\nabla\left( V \sqrt{G}\right)}_{L^2(\R^{N+1}_+,y^{1-2s})}^2
  \le \frac{(N+2-2s)}{2}\int_{\R_+^{N+1}} y^{1-2s}V^2 G \, dz
  +4 \int_{\R_+^{N+1}} y^{1-2s}|\nabla V |^2 G \, dz.
\end{equation}
\end{proposition}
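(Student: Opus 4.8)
The plan is to reduce everything to a density argument and then expand the gradient of the product $V\sqrt{G}$ in terms of $\nabla V$ and $\nabla G$. First I would take $\phi\in C^\infty_c(\overline{\R_+^{N+1}})$, which is dense in $\mathcal H$ by definition, and prove \eqref{ineq-VsqrtG} for $\phi$; the general case then follows once we know that the map $V\mapsto V\sqrt G$ is continuous from $\mathcal H$ to $H^1(\R^{N+1}_+,y^{1-2s})$, which will be a byproduct of the estimate itself (the right-hand side is exactly $\norm{V}_{\mathcal H}^2$ up to constants, together with the obvious bound $\int y^{1-2s}(V\sqrt G)^2\,dz=\int y^{1-2s}V^2G\,dz\le\norm V_{\mathcal H}^2$). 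For $\phi$ smooth and compactly supported in $\overline{\R_+^{N+1}}$, the product $\phi\sqrt G=\phi\, e^{-|z|^2/8}$ is smooth with Gaussian decay, so it certainly lies in $H^1(\R^{N+1}_+,y^{1-2s})$ and we may compute its gradient classically.

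The key computation is the Leibniz expansion: since $\sqrt G=e^{-|z|^2/8}$ and, by \eqref{eq-G-nabla} evaluated at $t=1$, $\nabla G=-\tfrac z2 G$, we get $\nabla\sqrt G=-\tfrac z4\sqrt G$, hence
\begin{equation*}
  \nabla(\phi\sqrt G)=\sqrt G\,\nabla\phi-\frac{z}{4}\phi\sqrt G,
\end{equation*}
and therefore
\begin{equation*}
  |\nabla(\phi\sqrt G)|^2=G|\nabla\phi|^2-\frac{1}{2}\,G\,(\nabla\phi\cdot z)\,\phi+\frac{|z|^2}{16}\,G\,\phi^2.
\end{equation*}
Integrating against $y^{1-2s}$, the cross term $-\tfrac12\int y^{1-2s}G\,\phi\,(\nabla\phi\cdot z)\,dz=-\tfrac14\int y^{1-2s}G\,\nabla(\phi^2)\cdot z\,dz$ is handled exactly as in the proof of Proposition \ref{prop-ineq-hardy-extended}: one integrates by parts the identity for $\dive(y^{1-2s}\phi^2 e^{-|z|^2/4}z)$ on the truncated domain $\R^{N+1}_\delta$ from \eqref{R-N-delta}, uses that $2-2s>0$ to kill the boundary term at $y=\delta$ in the limit $\delta\to0^+$, and obtains
\begin{equation*}
  \int_{\R_+^{N+1}}y^{1-2s}G\,\nabla(\phi^2)\cdot z\,dz=\int_{\R_+^{N+1}}y^{1-2s}\Big[(-N-2+2s)\phi^2+\frac{|z|^2}{2}\phi^2\Big]G\,dz.
\end{equation*}
Substituting this back, the $|z|^2$-terms combine to $\big(\tfrac{1}{16}-\tfrac18\big)\int y^{1-2s}|z|^2\phi^2 G\,dz=-\tfrac{1}{16}\int y^{1-2s}|z|^2\phi^2G\,dz\le 0$, and can simply be dropped; the remaining zeroth-order term contributes $\tfrac14(N+2-2s)\int y^{1-2s}\phi^2G\,dz$. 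This already yields
\begin{equation*}
  \norm{\nabla(\phi\sqrt G)}^2_{L^2(\R^{N+1}_+,y^{1-2s})}\le \int_{\R_+^{N+1}}y^{1-2s}|\nabla\phi|^2G\,dz+\frac{N+2-2s}{4}\int_{\R_+^{N+1}}y^{1-2s}\phi^2G\,dz,
\end{equation*}
which is in fact stronger than \eqref{ineq-VsqrtG}; to get the stated (weaker) form one only needs to enlarge the constants $1\le 4$ and $\tfrac{N+2-2s}{4}\le\tfrac{N+2-2s}{2}$.

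Finally, to pass from $\phi$ to a general $V\in\mathcal H$, I would take $\phi_n\to V$ in $\mathcal H$ with $\phi_n\in C^\infty_c(\overline{\R_+^{N+1}})$; then $\phi_n\sqrt G\to V\sqrt G$ in $L^2(\R^{N+1}_+,y^{1-2s})$ and, by the estimate applied to $\phi_n-\phi_m$, the sequence $\nabla(\phi_n\sqrt G)$ is Cauchy in $L^2(\R^{N+1}_+,y^{1-2s})$, so $V\sqrt G\in H^1(\R^{N+1}_+,y^{1-2s})$ and the inequality passes to the limit. I do not anticipate a serious obstacle here; the one point requiring a little care is the justification of the integration by parts in the cross term, i.e.\ controlling the behaviour at $y=0^+$ and at spatial infinity — but at infinity the Gaussian weight $G$ gives exponential decay and at $y=0$ the exponent $2-2s>0$ makes the boundary term vanish, exactly as in Proposition \ref{prop-ineq-hardy-extended}, so one can essentially cite that argument verbatim.
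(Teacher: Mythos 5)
Your proof is correct, but it takes a genuinely different route from the paper's. The paper's argument is a two-line one: it writes $\nabla(V\sqrt G)=\nabla V\,\sqrt G+\tfrac12 V G^{-1/2}\nabla G$, applies the crude bound $|a+b|^2\le 2|a|^2+2|b|^2$ to get the pointwise estimate $|\nabla(V\sqrt G)|^2\le 2|\nabla V|^2G+\tfrac18|z|^2V^2G$, and then invokes Proposition \ref{prop-ineq-hardy-extended} as a black box to absorb $\int_{\R^{N+1}_+}y^{1-2s}|z|^2V^2G\,dz$; this is exactly where the constants $4$ and $\tfrac{N+2-2s}{2}$ in \eqref{ineq-VsqrtG} come from, and the product rule is applied directly to $V\in\mathcal H$ (legitimate since $\mathcal H\subset W^{1,1}_{loc}$ locally). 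You instead expand $|\nabla(\phi\sqrt G)|^2$ exactly for smooth $\phi$ and integrate the cross term by parts, repeating the truncation argument on $\R^{N+1}_\delta$ from the proof of Proposition \ref{prop-ineq-hardy-extended}; this avoids using the Hardy-type inequality itself (only its intermediate integration-by-parts identity) and yields the sharper bound with constants $1$ and $\tfrac{N+2-2s}{4}$ after discarding the favourable term $-\tfrac1{16}\int y^{1-2s}|z|^2\phi^2G\,dz$, from which \eqref{ineq-VsqrtG} follows by enlarging constants; your final density step (Cauchy in $H^1(\R^{N+1}_+,y^{1-2s})$ for $\phi_n\sqrt G$) is a clean, if slightly more laborious, substitute for the paper's direct application to $V$. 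In short, the paper buys brevity by reusing Proposition \ref{prop-ineq-hardy-extended}; your computation buys better constants and independence from that proposition, at the cost of redoing its integration by parts and adding an explicit approximation argument.
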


\begin{proof}
If $ V \in \mathcal{H}$ then, in view of \eqref{eq-G-nabla},
\begin{equation*}
  |\nabla(V \sqrt{G})|^2=\left|\nabla V \sqrt{G}
    +\frac{1}{2}V G^{-\frac{1}{2}}\nabla G \right|^2 \le 2 |\nabla V|^2 G +\frac{1}{8}V^2|z|^2 G
\end{equation*}
and so by Proposition \ref{prop-ineq-hardy-extended} it is a clear
that $V \sqrt{G} \in H^1(\R^{N+1}_+,y^{1-2s})$ and \eqref{ineq-VsqrtG}
holds.
\end{proof}

\begin{proposition}\label{prop-trace-ineq}
  The trace operator $\Tr$ introduced in \eqref{trace-Hs} can be
  extended to a linear and continuous trace operator, still denoted as
  $\Tr$,
\begin{equation}\label{def-trace-H-s}
\Tr:\mathcal{H}\to L^2(\R^N,G(x,0)).
\end{equation}
 In particular there exists a constant $K_{N,s}>0$, which depends only
 on $N$ and $s$, such that, for any $V \in
 \mathcal{H}$,
\begin{equation}\label{ineq-trace}
  \int_{\R^{N}} |\Tr(V)|^2G(\cdot,0) \, dx 
  \le K_{N,s} \left(\int_{\R^{N+1}_+}y^{1-2s}|\nabla V|^2 G\, dz
    + \int_{\R^{N+1}_+}y^{1-2s} V^2  G \, dz\right).
\end{equation}
\end{proposition}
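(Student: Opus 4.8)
The plan is to reduce the statement to the already-established continuity of the classical weighted trace operator \eqref{trace-Hs} by conjugating with the Gaussian weight $\sqrt{G}$. First I would argue on the dense subspace $C^\infty_c(\overline{\R^{N+1}_+})$, on which $\Tr$ is just restriction to $\{y=0\}$. For $V \in C^\infty_c(\overline{\R^{N+1}_+})$ the product $V\sqrt{G}$ again lies in $C^\infty_c(\overline{\R^{N+1}_+})$ (the factor $\sqrt{G}=e^{-|z|^2/8}$ is smooth and $V$ has compact support), so evaluating at $y=0$ gives the pointwise identity
\[
\Tr(V\sqrt{G})(x)=V(x,0)\,e^{-|x|^2/8}=\Tr(V)(x)\,\sqrt{G(x,0)}.
\]
Squaring and integrating over $\R^N$, this yields $\int_{\R^N}|\Tr(V)|^2 G(\cdot,0)\,dx=\int_{\R^N}|\Tr(V\sqrt{G})|^2\,dx=\norm{\Tr(V\sqrt{G})}_{L^2(\R^N)}^2$.

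Next I would invoke the continuity of the trace operator of \eqref{trace-Hs}: composing it with the trivial continuous embedding $W^{s,2}(\R^N)\hookrightarrow L^2(\R^N)$, there is a constant $C_{N,s}>0$ with $\norm{\Tr(U)}_{L^2(\R^N)}^2\le C_{N,s}\norm{U}_{H^1(\R^{N+1}_+,y^{1-2s})}^2$ for every $U\in H^1(\R^{N+1}_+,y^{1-2s})$. Applying this to $U=V\sqrt{G}$, which belongs to $H^1(\R^{N+1}_+,y^{1-2s})$ by Proposition \ref{prop-H-t-embbeded-H1}, and expanding $\norm{V\sqrt G}^2_{H^1(\R^{N+1}_+,y^{1-2s})}=\int_{\R^{N+1}_+}y^{1-2s}V^2 G\,dz+\norm{\nabla(V\sqrt G)}^2_{L^2(\R^{N+1}_+,y^{1-2s})}$, the gradient term being controlled through inequality \eqref{ineq-VsqrtG}, I obtain \eqref{ineq-trace} for every $V\in C^\infty_c(\overline{\R^{N+1}_+})$, with $K_{N,s}=C_{N,s}\max\{\tfrac{N+4-2s}{2},4\}$.

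Finally, since $\mathcal{H}$ is by definition the completion of $C^\infty_c(\overline{\R^{N+1}_+})$ with respect to $\norm{\cdot}_{\mathcal H}$, and \eqref{ineq-trace} shows that $\Tr$ is a bounded linear map from $(C^\infty_c(\overline{\R^{N+1}_+}),\norm{\cdot}_{\mathcal H})$ into the Hilbert space $L^2(\R^N,G(\cdot,0))$, it extends uniquely to a linear continuous operator $\Tr:\mathcal H\to L^2(\R^N,G(\cdot,0))$ satisfying the same bound; the extension is consistent with \eqref{trace-Hs} on $H^1(\R^{N+1}_+,y^{1-2s})\hookrightarrow\mathcal H$ because both operators are obtained by continuous extension from the common dense subspace $C^\infty_c(\overline{\R^{N+1}_+})$. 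I do not anticipate a serious obstacle: the genuine analytic content — that conjugation by $\sqrt G$ maps $\mathcal H$ into $H^1(\R^{N+1}_+,y^{1-2s})$ — has already been absorbed into Proposition \ref{prop-H-t-embbeded-H1} (which itself rests on the Hardy-type inequality of Proposition \ref{prop-ineq-hardy-extended}), and the only points requiring a little care are the identity $\Tr(V\sqrt G)=\Tr(V)\sqrt{G(\cdot,0)}$ on the dense class and the verification that the extended trace coincides with the one of \eqref{trace-Hs} on the overlap.
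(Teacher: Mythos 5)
Your proposal is correct and follows essentially the same route as the paper: both apply the classical (unweighted Gaussian) trace inequality for $H^1(\R^{N+1}_+,y^{1-2s})$ to the conjugated function $V\sqrt{G}$, control $\norm{V\sqrt G}_{H^1(\R^{N+1}_+,y^{1-2s})}$ via Proposition \ref{prop-H-t-embbeded-H1}, and then extend by density from $C^\infty_c(\overline{\R^{N+1}_+})$. The only cosmetic difference is that you derive the smooth-function trace bound from \eqref{trace-Hs} composed with $W^{s,2}(\R^N)\hookrightarrow L^2(\R^N)$, while the paper quotes the inequality directly from the same source, and you additionally spell out the consistency of the extension on the overlap, which the paper leaves implicit.
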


\begin{proof}
  There exists a constant $C_{N,s}>0$, which depends only on $N$ and
  $s$, such that, for any
  $\phi \in C^{\infty}_c(\overline{\R^{N+1}_+})$,
\begin{equation}\label{prop-trace-ineq:2}
  \int_{\R^{N}} |\phi(x,0)|^2\, dx\le C_{N,s}
  \left(\int_{\R^{N+1}_+}y^{1-2s}|\nabla \phi|^2\, dz +
    \int_{\R^{N+1}_+}y^{1-2s} \phi^2\, dz\right),
\end{equation}
see for example \cite{LM}. Testing \eqref{prop-trace-ineq:2} with
$\phi \sqrt{G}\in C^{\infty}_c(\overline{\R^{N+1}_+})$,   by Proposition
\ref{prop-H-t-embbeded-H1} we obtain
\eqref{ineq-trace} for any
$\phi \in C^{\infty}_c(\overline{\R^{N+1}_+})$.  Then the operator $\Tr$ is densely
defined on $\mathcal{H}$ and it is continuous. Hence it can be
extended to a continuous trace operator on $\mathcal{H}$ satisfying
\eqref{ineq-trace}.
\end{proof}

\begin{proposition}\label{prop-ineq-hardy-frac}
  Letting $\kappa_{s}$ and $\Lambda_{N,s}$ be as in
  \eqref{s-N-def-lambdaNs-kappas-mu}, for any function
  $V \in \mathcal{H}$
\begin{multline}\label{ineq-hardy-frac}
\kappa_{s} \Lambda_{N,s}\int_{\R^N}\frac{|\Tr(V)|^2}{|x|^{2s}}
G(\cdot,0) \, dx
+\frac{1}{16}\int_{\R_+^{N+1}}y^{1-2s}|z|^2 V^2 G \, dz\\
\le\int_{\R_+^{N+1}}y^{1-2s}|\nabla V|^2 G \, dz+\frac{N+2-2s}{4}
\int_{\R_+^{N+1}}y^{1-2s} V^2 G \, dz.
\end{multline}
\end{proposition}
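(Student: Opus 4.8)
The plan is to reduce the Gaussian-weighted fractional Hardy inequality \eqref{ineq-hardy-frac} to the classical Herbst inequality \eqref{ineq-hardy-frac-Herbst} via the substitution $V \mapsto V\sqrt{G}$, which by Proposition \ref{prop-H-t-embbeded-H1} maps $\mathcal{H}$ into $H^1(\R^{N+1}_+,y^{1-2s})$. First I would argue by density, so it suffices to prove the estimate for $\phi \in C^{\infty}_c(\overline{\R^{N+1}_+})$. Set $\Phi := \phi\sqrt{G} = \phi\, e^{-|z|^2/8}$. The point is that the fractional Hardy inequality for the extension already exists in the literature for unweighted $H^1(\R^{N+1}_+,y^{1-2s})$ functions: combining the Caffarelli--Silvestre characterization of the trace operator \eqref{trace-Hs} with Herbst's inequality \eqref{ineq-hardy-frac-Herbst}, one has
\begin{equation*}
  \kappa_s\Lambda_{N,s}\int_{\R^N}\frac{|\Tr(\Phi)|^2}{|x|^{2s}}\,dx
  \le \int_{\R^{N+1}_+} y^{1-2s}|\nabla\Phi|^2\,dz
\end{equation*}
for all $\Phi\in H^1(\R^{N+1}_+,y^{1-2s})$ (this is the minimization characterization of $\kappa_s\Lambda_{N,s}$; it can also be extracted from \cite{Fall-Felli-2014}). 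Since $\Tr(\phi\sqrt G) = \Tr(\phi)\,G(\cdot,0)^{1/2} = \Tr(\phi)\,e^{-|x|^2/8}$, the left-hand side becomes exactly $\kappa_s\Lambda_{N,s}\int_{\R^N}|x|^{-2s}|\Tr(\phi)|^2 G(\cdot,0)\,dx$, which is the first term we want.

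Next I would expand the gradient on the right-hand side. Using \eqref{eq-G-nabla},
\begin{equation*}
  |\nabla(\phi\sqrt G)|^2
  = |\nabla\phi|^2 G - \tfrac14 \nabla(\phi^2)\cdot z\, G + \tfrac{1}{16}|z|^2\phi^2 G,
\end{equation*}
so that $\int y^{1-2s}|\nabla(\phi\sqrt G)|^2\,dz$ equals $\int y^{1-2s}|\nabla\phi|^2 G\,dz + \tfrac{1}{16}\int y^{1-2s}|z|^2\phi^2 G\,dz - \tfrac14\int y^{1-2s}\nabla(\phi^2)\cdot z\, G\,dz$. The cross term is handled exactly by the integration-by-parts identity already derived inside the proof of Proposition \ref{prop-ineq-hardy-extended}: approximating over $\R^{N+1}_\delta$ as in \eqref{R-N-delta} and using that the boundary term at $y=\delta$ vanishes as $\delta\to 0^+$ because $2-2s>0$, one gets
\begin{equation*}
  \int_{\R^{N+1}_+} y^{1-2s}\nabla(\phi^2)\cdot z\, G\,dz
  = \int_{\R^{N+1}_+} y^{1-2s}\Bigl[(-N-2+2s)\phi^2 + \tfrac12|z|^2\phi^2\Bigr] G\,dz.
\end{equation*}
Substituting this in, the $|z|^2$ terms combine to $\tfrac{1}{16} - \tfrac18 = -\tfrac{1}{16}$, i.e. we obtain $\int y^{1-2s}|\nabla(\phi\sqrt G)|^2\,dz = \int y^{1-2s}|\nabla\phi|^2 G\,dz - \tfrac{1}{16}\int y^{1-2s}|z|^2\phi^2 G\,dz + \tfrac{N+2-2s}{4}\int y^{1-2s}\phi^2 G\,dz$. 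Moving the (negative-signed) $|z|^2$ term to the left and combining with the Herbst-based bound above yields \eqref{ineq-hardy-frac} precisely.

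The main obstacle I anticipate is justifying the unweighted extended Hardy inequality for the test function $\phi\sqrt G$ at the level of $H^1(\R^{N+1}_+,y^{1-2s})$, including the correct compatibility $\Tr(\phi\sqrt G)=\Tr(\phi)\sqrt{G(\cdot,0)}$ and the fact that $\phi\sqrt G$ is still admissible (smooth, sufficiently decaying) — this is where care with the weighted-Sobolev trace theory and the Muckenhoupt-$A_2$ extension framework cited after \eqref{trace-Hs} is needed; alternatively one can bypass it by directly citing \cite[Lemma 2.4 or its trace analogue]{Fall-Felli-2014}. The integration-by-parts step is purely routine since the identical computation with the same $\delta$-truncation and boundary-term control has already been carried out in the proof of Proposition \ref{prop-ineq-hardy-extended}, so it requires no new idea.
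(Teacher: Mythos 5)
Your proposal is correct and follows essentially the same route as the paper: by density, apply the extended Hardy--trace inequality to $\phi e^{-|z|^2/8}$ (the paper cites this directly as \cite[Lemma 2.5]{Fall-Felli-2014}), and then handle the cross term via the same $\delta$-truncated integration by parts already carried out in the proof of Proposition \ref{prop-ineq-hardy-extended}. The constants in your expansion match the paper's, so nothing is missing.
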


\begin{proof}
  It is enough to prove \eqref{ineq-hardy-frac} for any
  $ \phi \in C^{\infty}_c(\overline{\R_+^{N+1}})$. Thanks to
  \cite[Lemma 2.5]{Fall-Felli-2014}, for any
  $ \phi \in C^{\infty}_c(\overline{\R_+^{N+1}})$
\begin{equation*}
  \int_{\R^N}\frac{\phi(\cdot,0)^2}{|x|^{2s}} G \, dx \le
  \kappa_s^{-1} \Lambda_{N,s}^{-1}\int_{\R_+^{N+1}}y^{1-2s}
  \left|\nabla \Big(\phi e^{-\frac{|z|^2}{8}}\Big)\right|^2  \, dz.
\end{equation*}
Then we can follow the proof of Proposition
\ref{prop-ineq-hardy-extended} to conclude that
\eqref{ineq-hardy-frac} holds.
\end{proof}

Proposition \ref{prop-ineq-hardy-frac} directly implies the
  following trace result. 
\begin{proposition}\label{prop-trace-L-2-x-2s}
  Let $\Tr$ be the trace operator introduced in
  \eqref{trace-Hs}.  Then
\begin{equation*}
  \Tr(\mathcal{H}) \subseteq L^2(\R^N,|x|^{-2s}G(x,0))
\end{equation*}
and $\Tr:\mathcal{H} \to L^2(\R^N,|x|^{-2s}G(x,0))$ 
is a well defined, linear and continuous  operator.
\end{proposition}

\begin{proposition}\label{prop-equiv-norm}
  For $\mu$ being as in \eqref{s-N-def-lambdaNs-kappas-mu}, let us
  consider the quadratic form
\begin{equation*}
  B(V):=\int_{\R_+^{N+1}}y^{1-2s}|\nabla V|^2 G \, dz-
  \mu \int_{\R^N}\frac{|\Tr(V)|^2}{|x|^{2s}} G(\cdot,0) \, dx,
\end{equation*}
for any $V \in \mathcal{H}$.
Then 
\begin{equation*}
  \inf_{V\in \mathcal{H}\setminus\{0\}}{ \frac{ B(V)+\frac{N+2-2s}{4
      }\int_{\R_+^{N+1}}y^{1-2s} V^2 G \, dz}
    {\int_{\R_+^{N+1}}y^{1-2s}|\nabla V|^2 G \, dz+\frac{N+2-2s}{4}\int_{\R_+^{N+1}}y^{1-2s} V^2G \, dz}}>0.
\end{equation*}
\end{proposition}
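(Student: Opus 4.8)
The plan is to bound the quotient from below using the sharp Gaussian Hardy-type inequality of Proposition~\ref{prop-ineq-hardy-frac}, the strict subcriticality $\mu<\kappa_s\Lambda_{N,s}$ producing the required positive gap. First I would note that the quotient is well defined on $\mathcal H\setminus\{0\}$: by Proposition~\ref{prop-trace-L-2-x-2s} one has $\Tr(V)\in L^2(\R^N,|x|^{-2s}G(x,0))$ for every $V\in\mathcal H$, so $B(V)$ is finite, while the denominator, which equals
\begin{equation*}
  D(V):=\int_{\R_+^{N+1}}y^{1-2s}|\nabla V|^2 G \, dz+\frac{N+2-2s}{4}\int_{\R_+^{N+1}}y^{1-2s} V^2G \, dz,
\end{equation*}
is comparable to $\norm{V}_{\mathcal H}^2$ and hence strictly positive whenever $V\neq 0$.

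The case $\mu\le 0$ is immediate: then $-\mu\ge 0$, so $B(V)+\frac{N+2-2s}{4}\int_{\R_+^{N+1}}y^{1-2s}V^2G\,dz\ge D(V)$, i.e.\ the numerator dominates the denominator and the infimum is at least $1$. Assume now $0<\mu<\kappa_s\Lambda_{N,s}$. Multiplying \eqref{ineq-hardy-frac} by the positive number $\mu/(\kappa_s\Lambda_{N,s})$ and discarding the nonnegative term involving $|z|^2V^2$ on its left-hand side gives
\begin{equation*}
  \mu\int_{\R^N}\frac{|\Tr(V)|^2}{|x|^{2s}}G(\cdot,0)\,dx\le \frac{\mu}{\kappa_s\Lambda_{N,s}}\,D(V)\qquad\text{for every }V\in\mathcal H.
\end{equation*}
Subtracting this from $D(V)$ and recalling the definition of $B$, we obtain
\begin{equation*}
  B(V)+\frac{N+2-2s}{4}\int_{\R_+^{N+1}}y^{1-2s}V^2G\,dz=D(V)-\mu\int_{\R^N}\frac{|\Tr(V)|^2}{|x|^{2s}}G(\cdot,0)\,dx\ge\Big(1-\frac{\mu}{\kappa_s\Lambda_{N,s}}\Big)D(V),
\end{equation*}
so the quotient is bounded below by $1-\mu/(\kappa_s\Lambda_{N,s})>0$. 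In all cases the infimum is therefore at least $\min\{1,\,1-\mu/(\kappa_s\Lambda_{N,s})\}>0$.

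There is essentially no obstacle here: all the analytic content is carried by Proposition~\ref{prop-ineq-hardy-frac}, whose constant $\kappa_s\Lambda_{N,s}$ is precisely the sharp constant in the Herbst inequality \eqref{ineq-hardy-frac-Herbst} transplanted to the Gaussian extension setting, and it is exactly this sharpness together with the assumption $\mu<\kappa_s\Lambda_{N,s}$ that makes numerator and denominator comparable. The only points needing (routine) attention are the finiteness of $B$ on all of $\mathcal H$ and the sign bookkeeping for $\mu\le 0$; the $|z|^2$-term that we threw away is not needed for this statement, but keeping it would yield a coercivity estimate controlling the stronger weighted norm $\int_{\R_+^{N+1}}y^{1-2s}|z|^2V^2G\,dz$ as well.
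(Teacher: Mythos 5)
Your argument is correct, and it rests on exactly the same analytic ingredient as the paper's proof, namely the Gaussian Hardy-trace inequality \eqref{ineq-hardy-frac} of Proposition \ref{prop-ineq-hardy-frac} (with the $|z|^2$-term discarded); the only difference is the logical packaging. The paper argues by contradiction: assuming the infimum were $0$, it rearranges the contradiction hypothesis and applies \eqref{ineq-hardy-frac} to get $\bigl(\kappa_s\Lambda_{N,s}-\tfrac{\mu}{1-\epsilon}\bigr)\int_{\R^N}|x|^{-2s}|\Tr(V_\epsilon)|^2G(\cdot,0)\,dx<0$, which for small $\epsilon$ forces $\Tr(V_\epsilon)=0$ and then contradicts the assumed inequality; this treats all signs of $\mu$ in one stroke but produces no explicit constant. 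Your direct route splits into the trivial case $\mu\le 0$ and the case $0<\mu<\kappa_s\Lambda_{N,s}$, where scaling \eqref{ineq-hardy-frac} by $\mu/(\kappa_s\Lambda_{N,s})$ absorbs the trace term and yields the quantitative lower bound $\min\{1,\,1-\mu/(\kappa_s\Lambda_{N,s})\}$ for the infimum, which is slightly more informative than the paper's qualitative conclusion; your preliminary remarks on finiteness of $B(V)$ (via Proposition \ref{prop-trace-L-2-x-2s}) and positivity of the denominator for $V\neq 0$ are the right routine checks. Either proof is acceptable.
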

\begin{proof}
  We argue by contradiction assuming that, for any
  $\epsilon \in (0,1)$, there exists $V_\epsilon \in \mathcal{H}$ such
  that
\begin{align}\label{eq-prop-eq-equiv-norm:1}
  &\int_{\R_+^{N+1}}y^{1-2s}|\nabla V_\epsilon|^2 G  \, dz-\mu \int_{\R^N}\frac{|\Tr(V_\epsilon)|^2}{|x|^{2s}} G \, dx 
    +\frac{N+2-2s}{4}\int_{\R_+^{N+1}}y^{1-2s} V_\epsilon^2 G\, dz  \\
  &<\epsilon\left(\int_{\R_+^{N+1}}y^{1-2s}|\nabla V_\epsilon|^2 G \,
    dz
    +\frac{N+2-2s}{4}\int_{\R_+^{N+1}}y^{1-2s} V_\epsilon^2 G\, dz\right),\notag 
\end{align}
i.e.
\begin{equation*}
-\frac{\mu}{1-\epsilon} \int_{\R^N}\frac{|\Tr(V_\epsilon)|^2}{|x|^{2s}} G \, dx 
<-\int_{\R_+^{N+1}}y^{1-2s}|\nabla V_\epsilon|^2 G \, dz-\frac{N+2-2s}{4}\int_{\R_+^{N+1}}y^{1-2s} V_\epsilon^2G \, dz.
\end{equation*} 
Hence by \eqref{ineq-hardy-frac}
\begin{equation*}
\left(\kappa_s\Lambda_{N,s}-\frac{\mu}{1-\epsilon}\right)\int_{\R^N}\frac{|\Tr(V_\epsilon)|^2}{|x|^{2s}} G \, dx < 0. 
\end{equation*}
By \eqref{s-N-def-lambdaNs-kappas-mu}, we conclude that, choosing
$\epsilon<1$ small enough, $\Tr(V_\epsilon)=0$ thus contradicting
\eqref{eq-prop-eq-equiv-norm:1}.
\end{proof}

\begin{proposition}\label{prop-ineq-D+H}
  Let $K>0$. Then there exist a constant
  $C_{N,s,\mu}>0$ depending only on $N$, $s$, $\mu$ and
  $\bar T \in (0,\min\{T,1\})$, depending only on $N$, $s$, $K$, $\mu$, such that,
  for every $\tilde T\in(0,\bar T]$ and any measurable function $f:\R^N\times (0,\tilde T) \to \R$
  satisfying
\begin{equation}\label{hp-h-subhomogeneous-no-nabla}
  |f(x,t)| \le K \left(1+|x|^{-2s+\varepsilon}\right)
  \quad \text{ for a.e. } t \in (0, \tilde T) \text{ and a.e. } x \in \R^N, 
\end{equation}
the following inequality
\begin{multline}\label{ineq-D+H}
  \int_{\R^{N+1}_+} y^{1-2s}|\nabla V|^2 G \, dz-\int_{\R^N}
  \left(\frac{\mu}{|x|^{2s}}|\Tr(V)|^2 +t^sf(\sqrt{t}x,t) |\Tr(V)|^2\right)G(\cdot,0)\, dx \\
  +\frac{N+2-2s}{4}\int_{\R^{N+1}_+} y^{1-2s}V^2 G\, dz \ge
  C_{N,s,\mu}\left(\int_{\R^{N+1}_+} y^{1-2s}|\nabla V|^2 G \,
    dz+\int_{\R^{N+1}_+} y^{1-2s}V^2 G \, dz\right)
\end{multline}
is satisfied for a.e. $t\in (0,\tilde T)$ and for any
$V \in \mathcal{H}$.  Furthermore, there exists a constant
$C'_{N,s}>0$, depending only on $N,s$, such that, for a.e.
$t\in (0,\tilde T)$ and any $V \in \mathcal{H}$,
\begin{multline}\label{ineq-h}
\int_{\R^N} t^s|f(\sqrt{t}x,t)| |\Tr(V)|^2G(\cdot,0)\, dx \\
\le K\, C'_{N,s}  (t^s+t^{\frac{\epsilon}{2}}) \Bigg(\int_{\R^{N+1}_+} y^{1-2s}|\nabla V|^2 G \, dz 
+\int_{\R^{N+1}_+} y^{1-2s}V^2 G \, dz\Bigg).
\end{multline}
\end{proposition}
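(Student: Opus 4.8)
The plan is to prove \eqref{ineq-h} first and then deduce \eqref{ineq-D+H} from it, combined with Proposition \ref{prop-equiv-norm}. For \eqref{ineq-h}, the starting point is a pointwise bound obtained from the scaling in \eqref{hp-h-subhomogeneous-no-nabla}: since $s+\tfrac{-2s+\varepsilon}{2}=\tfrac{\varepsilon}{2}$,
\begin{equation*}
t^s\,|f(\sqrt t\,x,t)|\le K t^s\bigl(1+|\sqrt t\,x|^{-2s+\varepsilon}\bigr)
=K t^s+K t^{\varepsilon/2}|x|^{-2s+\varepsilon}
\end{equation*}
for a.e. $t\in(0,\tilde T)$ and a.e. $x\in\R^N$. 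Moreover, since $\varepsilon\in(0,2s)$, a two-case check (according to whether $|x|\le 1$ or $|x|\ge 1$) gives $|x|^{-2s+\varepsilon}\le 1+|x|^{-2s}$ for every $x\ne 0$, uniformly in $\varepsilon$. Inserting these bounds into $\int_{\R^N} t^s|f(\sqrt t\,x,t)||\Tr(V)|^2 G(\cdot,0)\,dx$ and estimating the resulting $G(\cdot,0)$-weighted terms by the trace inequality \eqref{ineq-trace} and the $|x|^{-2s}G(\cdot,0)$-weighted term by the Hardy-type inequality \eqref{ineq-hardy-frac} yields \eqref{ineq-h}, with $C'_{N,s}$ depending only on $K_{N,s}$, $\kappa_s$, $\Lambda_{N,s}$ and hence only on $N,s$.

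To obtain \eqref{ineq-D+H}, recall that Proposition \ref{prop-equiv-norm} furnishes a constant $C_0=C_0(N,s,\mu)>0$ with
\begin{multline*}
B(V)+\frac{N+2-2s}{4}\int_{\R^{N+1}_+}y^{1-2s}V^2 G\,dz\\
\ge C_0\min\Bigl\{1,\tfrac{N+2-2s}{4}\Bigr\}\int_{\R^{N+1}_+}y^{1-2s}\bigl(|\nabla V|^2+V^2\bigr)G\,dz
\end{multline*}
for every $V\in\mathcal H$, where $B$ is the quadratic form of that proposition. Since the left-hand side of \eqref{ineq-D+H} is exactly this quantity minus $\int_{\R^N}t^sf(\sqrt t\,x,t)|\Tr(V)|^2 G(\cdot,0)\,dx$, and the latter is bounded in absolute value by $K\,C'_{N,s}(t^s+t^{\varepsilon/2})\int_{\R^{N+1}_+}y^{1-2s}(|\nabla V|^2+V^2)G\,dz$ by \eqref{ineq-h}, it is enough to fix $\bar T\in(0,\min\{T,1\})$ so small that $K\,C'_{N,s}(\bar T^s+\bar T^{\varepsilon/2})\le\tfrac12 C_0\min\{1,\tfrac{N+2-2s}{4}\}$. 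Then \eqref{ineq-D+H} holds for every $\tilde T\in(0,\bar T]$ with $C_{N,s,\mu}:=\tfrac12 C_0\min\{1,\tfrac{N+2-2s}{4}\}$, which depends only on $N,s,\mu$, while the threshold $\bar T$ depends only on $N,s,K,\mu$, as required.

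No genuine analytic difficulty arises: the argument is built entirely on the Hardy and trace inequalities established earlier in this section. The one point that needs attention is the bookkeeping of constants, namely arranging that the coercivity constant $C_{N,s,\mu}$ in \eqref{ineq-D+H} be independent of $K$, with the full $K$-dependence absorbed into the smallness of $\bar T$. This is possible precisely because the perturbation term is controlled, uniformly in $V$, by a factor $K\,C'_{N,s}(t^s+t^{\varepsilon/2})$ whose $t$-decay rate is independent of $K$.
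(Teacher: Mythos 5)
Your argument is correct and follows essentially the same route as the paper: the bound \eqref{ineq-h} is obtained by scaling the hypothesis \eqref{hp-h-subhomogeneous-no-nabla}, splitting into $\{|x|\le 1\}$ and $\{|x|\ge 1\}$, and invoking the trace inequality \eqref{ineq-trace} together with the Hardy-type inequality \eqref{ineq-hardy-frac}, after which \eqref{ineq-D+H} follows by absorbing the perturbation into the coercivity constant from Proposition \ref{prop-equiv-norm} for $\bar T$ small. Your explicit bookkeeping of how the $K$-dependence is pushed into $\bar T$ while $C_{N,s,\mu}$ stays independent of $K$ is exactly what the paper's brief ``choosing $\bar T$ small enough'' step amounts to.
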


\begin{proof}
  Thanks to \eqref{hp-h-subhomogeneous-no-nabla}, for any
  $V \in C^\infty_c(\overline{\R^{N+1}_+})$ and a.e. $t\in(0,\tilde
    T)$,
\begin{align*}
  \bigg|\int_{\R^N}&f(\sqrt{t} x,t)|\phi(x,0)|^2G(x,0) \, dx\bigg| \\
  &\le K\int_{\R^N}|\phi(x,0)|^2G(x,0) \, dx+Kt^{-s+\frac{\e}{2}}\int_{\R^N}|x|^{-2s+\varepsilon}|\phi(x,0)|^2G(x,0)
    \, dx\\
  &
    \le  K\int_{\R^N}|\phi(x,0)|^2G(x,0) \, dx +Kt^{-s+\frac{\varepsilon}{2}}\int_{\{|x|\ge
    1\}}|\phi(x,0)|^2G(x,0) \, dx\\&\qquad+Kt^{-s+\frac{\e}{2}}
    \int_{\{|x|\le 1\}}\frac{|\phi(x,0)|^2}{|x|^{2s}}G(x,0) \, dx \notag \\
  &\le
    \frac{K}{t^s}\left(t^s+t^{\frac{\varepsilon}{2}}\right)\int_{\R^N}|\phi(x,0)|^2G(x,0)
    \, dx
    +Kt^{-s+\frac{\e}{2}}\int_{\R^N}\frac{|\phi(x,0)|^2}{|x|^{2s}}G(x,0) \, dx. \notag
\end{align*}
Then, in view of \eqref{ineq-trace}, \eqref{ineq-hardy-frac}, a
density argument implies \eqref{ineq-h}. From 
Proposition \ref{prop-equiv-norm} and \eqref{ineq-h}, choosing
$\bar T$ small enough, we deduce \eqref{ineq-D+H}.
\end{proof}

\begin{proposition}
There exists a constant
  $C''_{N,s}>0$, depending only on $N$ and $s$, such that, for any
 $\rho \in L^{\frac{N}{2s}}(\R^N)$ and 
  $V \in \mathcal{H}$,
\begin{equation}\label{ineq-h_t}
  \int_{\R^N} |\rho| |\Tr(V)|^2 G(\cdot,0) \, dx
  \le C''_{N,s}  \norm{\rho}_{L^{\frac{N}{2s}}(\R^N)}\norm{V}_{\mathcal{H}}^2.
\end{equation}
\end{proposition}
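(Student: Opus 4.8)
The plan is to combine H\"older's inequality on $\R^N$ with a Gaussian--weighted Sobolev trace embedding, the latter being deduced from Proposition \ref{prop-H-t-embbeded-H1} together with the classical trace theory for $H^1(\R^{N+1}_+,y^{1-2s})$ and the fractional Sobolev embedding $W^{s,2}(\R^N)\hookrightarrow L^{2N/(N-2s)}(\R^N)$.

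First I would reduce the inequality to the case $V=\phi\in C^\infty_c(\overline{\R^{N+1}_+})$: given $V\in\mathcal{H}$ and a sequence $\phi_n\to V$ in $\mathcal{H}$, Proposition \ref{prop-trace-ineq} yields $\Tr(\phi_n)\to\Tr(V)$ in $L^2(\R^N,G(\cdot,0))$, hence $\Tr(\phi_n)\to\Tr(V)$ a.e. on $\R^N$ along a subsequence, so Fatou's lemma transfers the bound from the $\phi_n$ to $V$. For $V=\phi\in C^\infty_c(\overline{\R^{N+1}_+})$, since $N>2s$ the exponents $\frac{N}{2s}$ and $\frac{N}{N-2s}$ are conjugate, and H\"older's inequality gives
\[
\int_{\R^N}|\rho|\,|\Tr(\phi)|^2 G(\cdot,0)\,dx \le \norm{\rho}_{L^{\frac{N}{2s}}(\R^N)}\left(\int_{\R^N}|\Tr(\phi)|^{\frac{2N}{N-2s}}(G(\cdot,0))^{\frac{N}{N-2s}}\,dx\right)^{\frac{N-2s}{N}}.
\]

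The key observation is that $(G(\cdot,0))^{N/(N-2s)}=(\sqrt{G(\cdot,0)})^{2N/(N-2s)}$ and that $\sqrt{G(x,0)}=e^{-|x|^2/8}$ is precisely the trace on $\{y=0\}$ of $\sqrt{G}$, so the last factor above equals $\norm{\Tr(\phi\sqrt{G})}_{L^{2N/(N-2s)}(\R^N)}^2$. By Proposition \ref{prop-H-t-embbeded-H1} we have $\phi\sqrt{G}\in H^1(\R^{N+1}_+,y^{1-2s})$ with $\norm{\phi\sqrt{G}}_{H^1(\R^{N+1}_+,y^{1-2s})}^2\le C\norm{\phi}_{\mathcal{H}}^2$ for a constant depending only on $N$ and $s$; composing the continuity of the trace operator in \eqref{trace-Hs}, $\Tr:H^1(\R^{N+1}_+,y^{1-2s})\to W^{s,2}(\R^N)$, with the fractional Sobolev embedding $W^{s,2}(\R^N)\hookrightarrow L^{2N/(N-2s)}(\R^N)$ (valid since $N>2s$) gives $\norm{\Tr(\phi\sqrt{G})}_{L^{2N/(N-2s)}(\R^N)}\le C'\norm{\phi}_{\mathcal{H}}$. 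Plugging this into the H\"older estimate proves \eqref{ineq-h_t} for $\phi$, and the reduction above then yields it for every $V\in\mathcal{H}$. No serious obstacle is expected: the only step requiring a word of justification is the identity $\Tr(\phi\sqrt{G})=\Tr(\phi)\sqrt{G(\cdot,0)}$, which is immediate for $\phi\in C^\infty_c(\overline{\R^{N+1}_+})$ since then $\phi\sqrt{G}\in C^\infty_c(\overline{\R^{N+1}_+})$ and both traces are just restriction to $\{y=0\}$; the argument is otherwise a bookkeeping of known embeddings.
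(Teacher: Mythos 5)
Your proposal is correct and follows essentially the same route as the paper: H\"older with conjugate exponents $\tfrac{N}{2s}$ and $\tfrac{N}{N-2s}$, the bound $\norm{V\sqrt{G}}_{H^1(\R^{N+1}_+,y^{1-2s})}\le C\norm{V}_{\mathcal{H}}$ from Proposition \ref{prop-H-t-embbeded-H1}, the trace \eqref{trace-Hs} into $W^{s,2}(\R^N)$, and the fractional Sobolev embedding. The only difference is that you make explicit, via a density and Fatou argument, the identity $\Tr(V\sqrt{G})=\Tr(V)\sqrt{G(\cdot,0)}$, which the paper uses implicitly.
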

\begin{proof}
  By Proposition \ref{prop-H-t-embbeded-H1} and \eqref{trace-Hs},
  $\Tr(V) \sqrt{G(\cdot,0)} \in W^{s,2}(\R^N)$. Hence, thanks to the
  Fractional Sobolev Embedding Theorem,
  $\Tr(V) \sqrt{G(\cdot,0)} \in L^{\frac{2N}{N-2s}}(\R^N)$ and there
  exists a constant $S_{N,s}>0$, which depends only on $N$ and $s$, such
  that
 \begin{equation*}
   \norm{\Tr(V) \sqrt{G(\cdot,0)}}_{L^{\frac{2N}{N-2s}}(\R^N)}
   \le S_{N,s} \norm{\Tr(V)\sqrt{G(\cdot,0)}}_{W^{s,2}(\R^N)}.
 \end{equation*} 
Furthermore, by the Hold\"er inequality, 
\begin{equation*}
  \int_{\R^N}  |\rho| |\Tr(V)|^2 G(\cdot,0) \, dx
  \le  \norm{\rho}_{L^{\frac{N}{2s}}(\R^N)}
  \norm{\Tr(V)\sqrt{G(\cdot,0)}}_{L^{\frac{2N}{N-2s}}(\R^N)}^2
\end{equation*}
and so  \eqref{ineq-h_t} follows from   \eqref{trace-Hs} and  \eqref{ineq-VsqrtG}.
\end{proof}

\section{An alternative formulation in Gaussian spaces} \label{sec-formulation}

In this section we present an alternative formulation of
\eqref{eq-weak-frac-extended-forward} and a regularity
result. Henceforth, for the sake of simplicity, we will assume that
$t_0=0$; this is not restrictive up to a translation. We deal with the
backward version of \eqref{prob-frac-extended-forward} which is
completely equivalent to \eqref{prob-frac-extended-forward}.  Let
\begin{align} 
  &h(x,t):=g(x,-t) \quad\text{for any } t \in (0,T)
    \text{ and a.e. } x \in \R^{N},\label{def-h}\\
  &U(z,t):=W(z,-t) \text{ for a.e.}  t \in (0,T)
    \text{ and  } z \in \R^{N+1}_+ \label{def-U},\\
  &u:=\Tr(U) \label{def-u}.
\end{align}
Then $U \in L^2(\R,H^1(\R^{N+1}_+,y^{1-2s}))$ and $U$ weakly solves
\begin{equation}\label{prob-frac-extended}
\begin{cases}
  y^{1-2s}U_t+\dive(y^{1-2s}\nabla U)=0,
  &\text{ in } \R_+^{N+1} \times (0,T),\\
  \Tr( U(\cdot,t))= u(\cdot,t),
  &\text{ on }\R^{N}, \text{ for a.e. } t \in  (0,T),\\
  -\lim\limits_{y \to 0^+}y^{1-2s}\pd{ U}{y}= \frac{\mu}{|x|^{2s}} u
  +\ h u &\text{ on }\R^N\times (0,T),
\end{cases}
\end{equation}
in the sense that 
\begin{multline}\label{eq-weak-frac-extended}
  \int_{0}^{T}\left(\int_{\R_+^{N+1}}y^{1-2s} U\phi_t  \, dz \, \right)dt\\
  =-\int_{0}^{T}\left(\int_{\R_+^{N+1}}y^{1-2s} \nabla U \cdot \nabla
    \phi \, dz \,
  \right)dt+\int_0^T\left(\int_{\R^N}\left(\frac{\mu}{|x|^{2s}} u
      \phi+h u\phi \right)dx \right) dt,
\end{multline}
for any $\phi \in C^\infty_c (\overline{\R_+^{N+1}} \times(0,T))$, if
and only if $W$ is a weak solution to problem
\eqref{prob-frac-extended-forward}.

\begin{definition}\label{def-weak-derivative}
  Let $X$ be a Hilbert space and $[T_1,T_2] \subset \R$.  A function
  $U \in L^2((T_1,T_2),X)$ has a weak derivative
  $\Psi\in L^2((T_1,T_2), X)$ if, for any
  $\phi \in C^\infty_c((T_1,T_2),X)$,
\begin{equation*}
	\int_{T_1}^{T_2}\ps{X}{U}{\phi_t}dt=-\int_{T_1}^{T_2}\ps{X}{\Psi}{\phi}dt.
\end{equation*}
Furthermore we define
\begin{equation*}
  H^1((T_1,T_2), X):=\{U \in L^2((T_1,T_2),X):
  U \text{ has  a weak derivative } \Psi\in L^2((T_1,T_2), X)\}.
\end{equation*}
\end{definition}

Let $(X,L,X^*)$ be a Hilbert triplet. Thanks to the Riesz isomorphism,
the property that $U \in L^2((T_1,T_2),X^*)$ has a weak derivative
$\Psi \in L^2((T_1,T_2),X^*)$ can be rephrased equivalently as
\begin{equation*}
  \int_{T_1}^{T_2}\df{X^*}{U(t)}{\phi_t(t)}{X}dt=
  -\int_{T_1}^{T_2}\df{X^*}{\Psi(t)}{\phi(t)}{X}dt
\end{equation*}
for any $\phi \in C^\infty_c((T_1,T_2),X)$. Then the property that a
function $U \in L^2((T_1,T_2),X)$ has a weak derivative
$\Psi \in L^2((T_1,T_2),X^*)$ is equivalent to the fact that
\begin{equation}\label{def-weak-derivative-respect-to-t-H1*}
  \int_{T_1}^{T_2}\ps{L}{U(t)}{\phi_t(t)}dt
  =-\int_{T_1}^{T_2}\df{X^*}{\Psi(t)}{\phi(t)}{X}dt
\end{equation}
for any $\phi \in C^\infty_c((T_1,T_2),X)$.

For any $U\in L^2(\R,H^1(\R^{N+1}_+,y^{1-2s}))$ satisfying
\eqref{eq-weak-frac-extended}, let us consider the function
\begin{equation}\label{def-V}
V(z,t):=U(\sqrt{t}z,t).
\end{equation} 
By a density argument, we can easily verify that 
\begin{equation*}
v(\cdot,t):=\Tr(U(\sqrt t\cdot ,t))=u(\sqrt{t}\cdot,t).
\end{equation*} 
In Proposition \ref{prop-eq-Vt} below, we derive the weak
  formulation of  the problem
  solved by $V$.
\begin{proposition}\label{prop-eq-Vt}
  Let $U\in L^2(\R,H^1(\R^{N+1}_+,y^{1-2s}))$ be a solution of
  \eqref{eq-weak-frac-extended}. Then, letting $V$ be as in
  \eqref{def-V},
\begin{equation}\label{V-H*}
  V \in L^2((\tau,T),\mathcal{H}), \quad  V_t \in
  L^2((\tau,T),\mathcal{H}^*)
  \quad \text{ for any } \tau \in (0,T),
\end{equation} 
 and 
\begin{multline}\label{eq-weak-formulation-gaussian}
  \df{\mathcal{H^*}}{V_t}{\phi}{ \mathcal{H}}=\frac{1}{t}
  \int_{\R^{N+1}_+}y^{1-2s}\nabla V \cdot \nabla \phi \, G \, dz \\
  -\frac{1}{t}\int_{\R^N}\left(\frac{\mu}{|x|^{2s}}
    v(x,t)\phi(x,0)+t^sh(\sqrt{t}x,t)v(x,t)\phi(x,0)\right) G(x,0)\,
  dx,
\end{multline} 
for any $\phi \in C^\infty_c(\overline{\R_+^{N+1}})$ and for a.e. $t \in (0,T)$.
\end{proposition}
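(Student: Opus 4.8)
The plan is to deduce \eqref{eq-weak-formulation-gaussian} from the weak formulation \eqref{eq-weak-frac-extended} through the self-similar change of variables $z'=\sqrt t\,z$, and to read off the regularity \eqref{V-H*} from the resulting identity together with the inequalities of Section~\ref{sec-Inequalities-Traces}. First I would dispose of the easy half of \eqref{V-H*}, namely $V\in L^2((\tau,T),\mathcal H)$. For a.e.\ $t$ one has $U(\cdot,t)\in H^1(\R^{N+1}_+,y^{1-2s})$ and $\nabla_z V(\cdot,t)=\sqrt t\,(\nabla U)(\sqrt t\,\cdot,t)$, so a direct change of variables in $\norm{V(\cdot,t)}_{\mathcal H}^2$, using $G_s(\sqrt t\,z,t)=t^{-\frac{N+2-2s}2}G(z)$ and $G\le1$, gives
\begin{equation*}
\norm{V(\cdot,t)}_{\mathcal H}^2\le\big(t^{-\frac{N-2s}2}+t^{-\frac{N+2-2s}2}\big)\,\norm{U(\cdot,t)}_{H^1(\R^{N+1}_+,y^{1-2s})}^2 .
\end{equation*}
In particular $V(\cdot,t)\in H^1(\R^{N+1}_+,y^{1-2s})\hookrightarrow\mathcal H$ for a.e.\ $t$, and since the prefactor is bounded on $[\tau,T]$ while $\norm{U(\cdot,t)}_{H^1(\R^{N+1}_+,y^{1-2s})}^2\in L^1(0,T)$, we conclude $V\in L^2((\tau,T),\mathcal H)$.

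For the equation I would keep the computation formal at first: from $V(z,t)=U(\sqrt t\,z,t)$ one gets $V_t=\tfrac{z}{2t}\cdot\nabla_z V+U_t(\sqrt t\,z,t)$, and substituting $y^{1-2s}U_t=-\dive(y^{1-2s}\nabla U)$ together with \eqref{eq-G-nabla} the two first-order transport terms cancel, so that $V$ solves $y^{1-2s}G\,V_t+\tfrac1t\dive(y^{1-2s}G\,\nabla V)=0$ in $\R^{N+1}_+\times(0,T)$ with $-\lim_{y\to0^+}y^{1-2s}\partial_yV=\tfrac{\mu}{|x|^{2s}}v+t^sh(\sqrt t\,x,t)v$ on $\R^N\times(0,T)$; testing against $\phi\in C^\infty_c(\overline{\R^{N+1}_+})$ and integrating by parts in $z$ yields \eqref{eq-weak-formulation-gaussian}. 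To make this rigorous I would fix $\phi\in C^\infty_c(\overline{\R^{N+1}_+})$ supported in $\overline{B_R^+}$ and $\eta\in C^\infty_c((0,T))$, and insert into \eqref{eq-weak-frac-extended} the test function
\begin{equation*}
\Phi(z',t):=\eta(t)\,\phi\!\left(z'/\sqrt t\,\right)G_s(z',t),
\end{equation*}
which belongs to $C^\infty_c(\overline{\R^{N+1}_+}\times(0,T))$ since $G_s(\cdot,t)$ is smooth up to $\{y=0\}$ and $\Phi(\cdot,t)$ is supported in $\{|z'|\le\sqrt T\,R\}$. Expanding $\Phi_t$ and $\nabla_{z'}\Phi$ by means of \eqref{eq-G-nabla}, using that $G_s$ solves $y^{1-2s}\partial_tG_s=\dive(y^{1-2s}\nabla G_s)$ with $\lim_{y\to0^+}y^{1-2s}\partial_yG_s=0$ (which is what lets the first-order terms cancel after one integration by parts in $z'$), and then carrying out the change of variables $z'=\sqrt t\,z$ and, in the boundary integral, $x'=\sqrt t\,x$ — the several powers of $t$ coming from $y^{1-2s}$, $dz'$, $|x'|^{-2s}$, $dx'$ and $G_s$ combining so that only a single factor $\tfrac1t$ survives (and a factor $t^s$ attaches to $h$) — one is left with
\begin{multline*}
\int_0^T\eta'(t)\Big(\int_{\R^{N+1}_+}y^{1-2s}V\phi\,G\,dz\Big)dt\\
=-\int_0^T\eta(t)\left(\frac1t\int_{\R^{N+1}_+}y^{1-2s}\nabla V\cdot\nabla\phi\,G\,dz-\frac1t\int_{\R^N}\Big(\frac{\mu}{|x|^{2s}}v\,\phi(\cdot,0)+t^sh(\sqrt t\,x,t)v\,\phi(\cdot,0)\Big)G(\cdot,0)\,dx\right)dt
\end{multline*}
for every $\eta$ and $\phi$. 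By \eqref{def-weak-derivative-respect-to-t-H1*} applied with the Hilbert triplet $(\mathcal H,\mathcal L,\mathcal H^*)$ and the separable test functions $\eta(t)\phi$, this says that $V$ has a weak derivative in the sense of Definition~\ref{def-weak-derivative}, acting as the right-hand side of \eqref{eq-weak-formulation-gaussian}; once we know (see below) that this right-hand side defines, for a.e.\ $t$, a bounded functional on $\mathcal H$, it follows that \eqref{eq-weak-formulation-gaussian} holds for a.e.\ $t$, and, using a countable $\norm{\cdot}_{\mathcal H}$-dense subfamily of $C^\infty_c(\overline{\R^{N+1}_+})$, off a single null set in $t$ for every $\phi$.

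Finally, to upgrade to $V_t\in L^2((\tau,T),\mathcal H^*)$ I would estimate the right-hand side of \eqref{eq-weak-formulation-gaussian} as a linear functional of $\phi\in\mathcal H$: the Cauchy--Schwarz inequality controls $\int_{\R^{N+1}_+}y^{1-2s}\nabla V\cdot\nabla\phi\,G\,dz$ by $\norm{V}_{\mathcal H}\norm{\phi}_{\mathcal H}$; Proposition~\ref{prop-trace-L-2-x-2s} controls $\int_{\R^N}\frac{\mu}{|x|^{2s}}v\,\phi(\cdot,0)\,G(\cdot,0)\,dx$; and, since \eqref{hp-g-subhomogeneous} gives $t^s|h(\sqrt t\,x,t)|\le C_g\big(t^s+t^{\e/2}|x|^{-2s+\e}\big)$, splitting $\{|x|\le1\}$ and $\{|x|\ge1\}$ and using Propositions~\ref{prop-trace-ineq} and~\ref{prop-trace-L-2-x-2s} (cf.\ \eqref{ineq-h}) controls $\int_{\R^N}t^sh(\sqrt t\,x,t)v\,\phi(\cdot,0)\,G(\cdot,0)\,dx$ by $C\,C_g(t^s+t^{\e/2})\norm{V}_{\mathcal H}\norm{\phi}_{\mathcal H}$. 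Altogether, for a.e.\ $t\in(0,T)$,
\begin{equation*}
\norm{V_t(t)}_{\mathcal H^*}\le\Big(\frac{C_1}{t}+C_2\big(t^{s-1}+t^{\frac\e2-1}\big)\Big)\norm{V(t)}_{\mathcal H},
\end{equation*}
with $C_1,C_2>0$ depending only on $N$, $s$, $\mu$, $C_g$; since the prefactor is bounded on $[\tau,T]$ and $V\in L^2((\tau,T),\mathcal H)$, this gives $V_t\in L^2((\tau,T),\mathcal H^*)$ and completes \eqref{V-H*}. The one genuinely delicate point is the integration by parts near $\{y=0\}$ involving the degenerate weight (equivalently, the term with $\partial_tG_s$ after the change of variables), which I would justify by first integrating over $\R^{N+1}_\delta=\{y>\delta\}$, where the integrands are smooth and the boundary contribution at $\{y=\delta\}$ is $O(\delta^{2-2s})$ and hence vanishes as $\delta\to0^+$, and then letting $\delta\to0^+$, exactly as in the proof of Proposition~\ref{prop-ineq-hardy-extended}; everything else is bookkeeping of scaling exponents and routine use of the inequalities of Section~\ref{sec-Inequalities-Traces}.
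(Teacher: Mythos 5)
Your overall route is the same as the paper's: insert into \eqref{eq-weak-frac-extended} the test function $\phi G_s$ adapted to the self-similar scaling, use \eqref{eq-G-nabla} and the equation satisfied by $G_s$, change variables $z'=\sqrt t\,z$, and then read off \eqref{V-H*} from the fact that the right-hand side of \eqref{eq-weak-formulation-gaussian} defines, for a.e.\ $t$, a bounded functional on $\mathcal H$ with norm $\lesssim t^{-1}\norm{V(\cdot,t)}_{\mathcal H}$. Your explicit scaling bound giving $V\in L^2((\tau,T),\mathcal H)$, the use of separated tests $\eta(t)\phi(z)$ together with a density remark, and the final $\mathcal H^*$ estimate via \eqref{ineq-trace}, \eqref{ineq-hardy-frac} and \eqref{hp-g-subhomogeneous} are all fine and consistent with the paper's argument.

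The genuine gap is in your treatment of the ``delicate point''. The boundary contribution at $\{y=\delta\}$ produced by the integration by parts has the form $\delta^{2-2s}\int_{\R^N}V(x,\delta,t)\,\tilde\phi(x,\delta,t)\,G(x,\delta)\,dx$ (equivalently, before the change of variables, $\tfrac{\delta^{2-2s}}{2t}\int_{\R^N}U(x,\delta,t)\,\phi\,\eta\,G_s\,dx$), i.e.\ $\delta^{2-2s}$ times an integral of the \emph{solution} restricted to the hyperplane $\{y=\delta\}$. Unlike in the proof of Proposition \ref{prop-ineq-hardy-extended}, where the corresponding factor involves only a fixed function in $C^\infty_c(\overline{\R^{N+1}_+})$ and is therefore bounded uniformly in $\delta$, here $V(\cdot,t)$ is merely in a weighted Sobolev space, and nothing guarantees that $\int_{\R^N}V(x,\delta,t)\tilde\phi\,G\,dx$ stays bounded as $\delta\to0^+$; so the claim that the boundary term ``is $O(\delta^{2-2s})$ and hence vanishes'' is unjustified as stated, and the analogy with Proposition \ref{prop-ineq-hardy-extended} does not apply. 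The paper closes exactly this point differently: it proves that
\begin{equation*}
\liminf_{\delta\to0^+}\ \delta^{2-2s}\int_{\R^N}V(x,\delta,t)\,\tilde\phi(x,\delta,t)\,G(x,\delta)\,dx=0,
\end{equation*}
by a contradiction/Fubini argument (if the quantity stayed bounded away from $0$, then $\delta^{1-2s}\int_{\R^N}V\tilde\phi\,G\,dx\ge C/\delta$ would contradict the finiteness of $\int_{\R^{N+1}_+}y^{1-2s}|V\tilde\phi|\,G\,dz$), and then passes to the limit only along a suitable sequence $\delta_n\to0^+$, which is all that is needed since the remaining terms converge by dominated convergence. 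With this replacement of your $O(\delta^{2-2s})$ claim, the rest of your proof goes through.
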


\begin{proof}
  Let $\phi \in C^\infty_c (\overline{\R_+^{N+1}}
  \times(0,T))$. Testing \eqref{eq-weak-frac-extended} with $\phi G_s$
  we obtain
\begin{multline*}
  \int_{0}^{T}\left(\int_{\R_+^{N+1}}y^{1-2s} U\phi
    \left[-\frac{N+2-2s}{2t} +\frac{|z|^2}{4t^2}\right]G_s \, dz \,
  \right)dt+\int_{0}^{T}
  \left(\int_{\R_+^{N+1}}y^{1-2s} U\phi_t G_s \, dz  \right) dt\\
  =\int_{0}^{T}\left(\int_{\R_+^{N+1}}y^{1-2s} \nabla U \cdot
    \frac{z}{2t} \phi \, G_s dz\right)
  dt-\int_{0}^{T}\left(\int_{\R_+^{N+1}}y^{1-2s}
    \nabla  U \cdot \nabla \phi \, G_s dz \, \right)dt\\
  +\int_0^T\left(\int_{\R^N}\left(\frac{\mu}{|x|^{2s}} u(x) \phi(x,0,t)
      +h(x,t) u(x)\phi(x,0,t) \right)G_s(x,0,t)\, dx \right) dt.
\end{multline*}
Let $\tilde{\phi}(z,t):= \phi(\sqrt{t}z,t)$. Then the   change of variables $z=\sqrt{t}z'$ yields
\begin{multline}\label{prop-eq-Vt:2}
  \int_{0}^{T}\left(\int_{\R_+^{N+1}}y^{1-2s} V\tilde{\phi}
    \left[-\frac{N+2-2s}{2t}+\frac{|z|^2}{4t}\right]G  \, dz \, \right)dt\\
  +\int_{0}^{T}\left( \int_{\R_+^{N+1}}y^{1-2s} V\left[\tilde{\phi}_t
      -\nabla \tilde{\phi}\cdot\frac{z}{2t}\right]G    \, dz \, \right)dt\\
  =\int_{0}^{T}\left(\int_{\R_+^{N+1}}y^{1-2s} \nabla V \cdot
    \frac{z}{2t} \tilde{\phi} \, G dz\right) dt-
  \int_{0}^{T}\frac{1}{t}\left(\int_{\R_+^{N+1}}y^{1-2s}
    \nabla  V \cdot \nabla \tilde{\phi} \, G   dz \, \right)dt\\
  +\int_0^T\frac{1}{t}\left(\int_{\R^N}\left(\frac{\mu}{|x|^{2s}} v
      \tilde{\phi}(x,0,t) +t^sh(\sqrt{t}x,t) v\tilde{\phi}(x,0,t)
    \right)G \, dx \right) dt
\end{multline}
for any
$\tilde\phi \in C^\infty_c (\overline{\R_+^{N+1}} \times(0,T))$,
where $V$ is defined in \eqref{def-V}.

Let $\R_\delta^{N+1}$ be as in
\eqref{R-N-delta} for any $\delta >0$.  Then, by the Dominated
Convergence Theorem,
\begin{equation*}
  \int_{\R_+^{N+1}}y^{1-2s} V (\nabla \tilde{\phi}\cdot z) G  \, dz
  =\lim_{\delta \to 0^+}\int_{\R_{\delta}^{N+1}}y^{1-2s}
  V (\nabla \tilde{\phi}\cdot z)G    \, dz.
\end{equation*}
Furthermore, since 
\begin{equation*}
  \dive\left(y^{1-2s}V\tilde{\phi} G z\right)=y^{1-2s}\left[(N+2-2s)V
    \tilde{\phi}
    +(\nabla V \cdot z) \tilde\phi +V(\nabla \tilde{\phi}\cdot z)
    -V \tilde{\phi}\, \frac{|z|^2}{2}\right]G , 
\end{equation*}
an integration by parts on $\R^{N+1}_\delta$ yields 
\begin{multline} \label{prop-eq-Vt:3.1}
  \int_{\R_{\delta}^{N+1}}y^{1-2s} V (\nabla \tilde{\phi}\cdot z) G
  \, dz
  =-(N+2-2s)\int_{\R_{\delta}^{N+1}} y^{1-2s}V \tilde{\phi} G  \, dz\\
  -\int_{\R_{\delta}^{N+1}}y^{1-2s}(\nabla V \cdot z) \tilde{\phi} G\,
  dz+\int_{\R_{\delta}^{N+1}}y^{1-2s} V \tilde{\phi} \,\frac{|z|^2}{2}G
  \,dz
  -\delta^{2-2s}\int_{\R^N}V(x,\delta,t)\tilde{\phi}(x,\delta,t)G(x,\delta)
  \, dx.
\end{multline}
We claim that 
\begin{equation}\label{prop-eq-Vt:4}
  \liminf_{\delta \to 0^+}\delta^{2-2s}
  \int_{\R^N}V(x,\delta,t)\tilde{\phi}(x,\delta,t)G(x,\delta) \, dx=0.
\end{equation}
To prove \eqref{prop-eq-Vt:4} we argue by contradiction.  If
\eqref{prop-eq-Vt:4} does not hold, then there exists a constant $C>0$
and $ \bar \delta \in (0,+\infty)$ such that
\begin{equation*}
  \delta^{1-2s}\int_{\R^N}V(x,\delta,t)\tilde{\phi}(x,\delta,t)G(x\delta)
  \, dx>\frac{C}{\delta}
\end{equation*}
for any $\delta \in (0,\bar \delta)$. Integrating on
$(0,\bar \delta)$, we obtain, thanks to the Fubini-Tonelli Theorem,
\begin{multline*}
+\infty >\int_{\R_+^{N+1}}y^{1-2s} V\tilde{\phi}G  \, dz
\ge \int_0^{\bar \delta}\left(\int_{\R^N}y^{1-2s} V \tilde{\phi} G  \, dx\right)\, dy\ge
C\int_0^{\bar \delta} \frac{1}{y} \, dy=+\infty,
\end{multline*}
which is a contradiction.  Hence there exists a sequence
$\delta_n \to 0 ^+$ such that, passing to the limit as
$ \delta =\delta_n$ and $n \to \infty$ in \eqref{prop-eq-Vt:3.1}, we
obtain that, for a.e. $t\in(0,T)$, 
\begin{multline}\label{prop-eq-Vt:5}
  \int_{\R_+^{N+1}}y^{1-2s} V (\nabla \tilde{\phi}\cdot z) G  \, dz
  =-(N+2-2s)\int_{\R_+^{N+1}}y^{1-2s} V \tilde{\phi} G\, dz\\
  -\int_{\R_+^{N+1}}y^{1-2s}(\nabla V \cdot z) \tilde{\phi} G\,
  dz+\int_{\R_+^{N+1}}y^{1-2s} V \tilde{\phi} \frac{|z|^2}{2}G \, dz.
\end{multline}
Putting together \eqref{prop-eq-Vt:2} and  \eqref{prop-eq-Vt:5}, we
conclude that 
\begin{multline*}
  \int_{0}^{T}\left(\int_{\R_+^{N+1}}y^{1-2s} V \tilde{\phi}_t \, dz
  \right) dt
  =-\int_{0}^{T}\left(\frac{1}{t}\int_{\R^{N+1}_+}y^{1-2s}\nabla V
    \cdot \nabla \tilde{\phi}\,G \, dz\right) dt \\
  +\int_{0}^{T}\left(\frac{1}{t} \int_{\R^N}\left(\frac{\mu}{|x|^{2s}}
      v(x,t)\tilde{\phi}(x,0,t)
      +t^sh(\sqrt{t}x,t)v(x,t)\tilde{\phi}(x,0,t)\right)G(x,0)\,
    dx\right) dt.
\end{multline*} 
The integrand at the right hand side of the above equation belongs to
$\mathcal{H}^*$ as a function of $\tilde{\phi}$ for a.e.
$t \in (\tau,T)$ in view of \eqref{hp-g-subhomogeneous}, \eqref{ineq-trace},
\eqref{ineq-hardy-frac}, \eqref{def-h} 
 and the H\"older inequality. Hence, in view of
\eqref{def-weak-derivative-respect-to-t-H1*}, 
we conclude that \eqref{V-H*} and \eqref{eq-weak-formulation-gaussian}
are satisfied.
\end{proof}

\begin{remark}\label{remark-parabolic-change-variables}
  From the theory of abstract parabolic equations, see for example
  \cite[Theorem 8.60]{LG} and \cite[Theorem 1, p. 473, Theorem 2,
  p. 477]{DL}, if $V$ satisfies \eqref{V-H*}, then
\begin{align*}
  &V \in C^0([\tau,T], \mathcal{L}), \text{ for any } \tau \in (0,T),\\
  &t \to \norm{V(\cdot,t)}^2_{\mathcal{L}}
    \text{ is absolutely continuous on } [\tau,T] \text{ for any } \tau \in (0,T),\\
  &\sideset{_{\mathcal{H}^*}}{_{\mathcal{H}}}{\mathop{\left\langle
    V_t(\cdot,t),
    V(\cdot,t)\right\rangle}}=
    \frac{1}{2}\frac{d}{dt}\norm{V(\cdot,t)}^2_{\mathcal{L}}=\frac{1}{2}\frac{d}{dt}\int_{\R^{N+1}_+} y^{1-2s}V^2G \, dz 
\end{align*}
in a distributional sense and  for a.e. $t \in (0,T)$.
 More in general, if $V,W$ satisfies \eqref{V-H*}, then 
\begin{align*}
  &t \to \ps{\mathcal{L}}{V(\cdot,t)}{W(\cdot,t)}
    \text{ is absolutely continuous on } [\tau,T]
    \text{ for any } \tau \in (0,T), \notag\\
  &\sideset{_{\mathcal{H}^*}}{_{\mathcal{H}}}{\mathop{\left\langle
    V_t(\cdot,t),
    W(\cdot,t)\right\rangle}}
    +\sideset{_{\mathcal{H}^*}}{_{\mathcal{H}}}{\mathop{\left\langle
    W_t(\cdot,t),
    V(\cdot,t)\right\rangle}}=\frac{d}{dt}\ps{\mathcal{L}}{V(\cdot,t)}{W(\cdot,t)} 
\end{align*}
in a distributional sense and  for a.e. $t \in (0,T)$.
\end{remark}

\begin{proposition} \label{prop-trace-H1(0,T)} Let
  $(\mathop{\rm{\widetilde{T}r}}U)(\cdot,t):=\Tr(U(\cdot,t))$ for any
  $U \in H^1((0,T),\mathcal{H})$. Then
\begin{equation*}
  \mathop{\rm{\widetilde{T}r}}: H^1((0,T), \mathcal{H}) \to H^1((0,T),L^2(\R^N,|x|^{-2s}G)) 
	\end{equation*}
	is a linear and continuous trace operator such that  
	\begin{equation*}
		(\mathop{\rm{\widetilde{T}r}}(U))_t(\cdot,t)=\mathop{\rm{Tr}}(U_t(\cdot,t)), \text{ for any  } U \in H^1((0,T), \mathcal{H}) \text{ and } \text{ a.e. } t \in (0,T).
	\end{equation*}
\end{proposition}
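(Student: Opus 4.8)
The plan is to establish the trace statement by a density argument built on the trace inequality \eqref{ineq-trace} of Proposition \ref{prop-trace-ineq}, treating the $t$-variable as a parameter and exploiting the fact that $\Tr$ is a bounded linear operator between the fixed Hilbert spaces $\mathcal H$ and $L^2(\R^N,|x|^{-2s}G)$ (the latter mapping being continuous by Proposition \ref{prop-trace-L-2-x-2s}). First I would recall that, for $U\in H^1((0,T),\mathcal H)$, the weak derivative $U_t\in L^2((0,T),\mathcal H)$ exists in the sense of Definition \ref{def-weak-derivative}. Since $\Tr\colon\mathcal H\to L^2(\R^N,|x|^{-2s}G)$ is linear and continuous, the composition $t\mapsto \Tr(U(\cdot,t))$ lies in $L^2((0,T),L^2(\R^N,|x|^{-2s}G))$ with $\norm{\Tr(U(\cdot,t))}\le C\norm{U(\cdot,t)}_{\mathcal H}$ for a.e.\ $t$, whence $\norm{\mathop{\rm{\widetilde{T}r}}U}_{L^2((0,T),L^2)}\le C\norm{U}_{L^2((0,T),\mathcal H)}$; and likewise $t\mapsto\Tr(U_t(\cdot,t))$ lies in $L^2((0,T),L^2(\R^N,|x|^{-2s}G))$.

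The core of the argument is then to verify that $\Tr(U_t(\cdot,t))$ is in fact the weak time-derivative of $t\mapsto\Tr(U(\cdot,t))$. Fix a test function of the form $\varphi(t)\psi(x)$ with $\varphi\in C_c^\infty((0,T))$ and $\psi\in C_c^\infty(\R^N)$; more generally one uses $\phi\in C_c^\infty((0,T),L^2(\R^N,|x|^{-2s}G))$, which can be approximated by finite sums of such separated functions (this is where one needs the separability of the target space and density of tensor products). For separated test functions one computes
\begin{equation*}
\int_0^T \Big(\int_{\R^N}\Tr(U(\cdot,t))\,\psi\, \tfrac{G(\cdot,0)}{|x|^{2s}}\,dx\Big)\varphi'(t)\,dt
=\int_0^T \varphi'(t)\,\ell_\psi\big(U(\cdot,t)\big)\,dt,
\end{equation*}
where $\ell_\psi(V):=\int_{\R^N}\Tr(V)\,\psi\,|x|^{-2s}G(\cdot,0)\,dx$ is a bounded linear functional on $\mathcal H$ (bounded by \eqref{ineq-hardy-frac} and $\psi\in C_c^\infty$). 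Because $U\in H^1((0,T),\mathcal H)$ and $\ell_\psi\in\mathcal H^*$, the scalar function $t\mapsto\ell_\psi(U(\cdot,t))$ is in $H^1((0,T))$ with derivative $\ell_\psi(U_t(\cdot,t))$: this follows from Definition \ref{def-weak-derivative} by testing with the $\mathcal H$-valued function $t\mapsto\varphi(t)\,R_{\mathcal H}\ell_\psi$, where $R_{\mathcal H}$ is the Riesz representation (or, more directly, by noting that composing an $X$-valued Sobolev function with a bounded functional yields a scalar Sobolev function with the expected derivative). Integrating by parts in the scalar variable $t$ then gives
\begin{equation*}
\int_0^T \varphi'(t)\,\ell_\psi\big(U(\cdot,t)\big)\,dt
=-\int_0^T \varphi(t)\,\ell_\psi\big(U_t(\cdot,t)\big)\,dt
=-\int_0^T\Big(\int_{\R^N}\Tr(U_t(\cdot,t))\,\psi\,\tfrac{G(\cdot,0)}{|x|^{2s}}\,dx\Big)\varphi(t)\,dt,
\end{equation*}
which is exactly the weak-derivative identity for separated test functions. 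A density argument over $\phi\in C_c^\infty((0,T),L^2(\R^N,|x|^{-2s}G))$, using the already-established $L^2$-bounds on both $\mathop{\rm{\widetilde{T}r}}U$ and $\Tr(U_t(\cdot,\cdot))$, upgrades this to all admissible test functions, proving $(\mathop{\rm{\widetilde{T}r}}U)_t=\Tr(U_t)$ and hence $\mathop{\rm{\widetilde{T}r}}U\in H^1((0,T),L^2(\R^N,|x|^{-2s}G))$; continuity of $\mathop{\rm{\widetilde{T}r}}$ is immediate from the two $L^2$-bounds above.

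I expect the main obstacle to be the careful handling of the $\mathcal H$-valued versus scalar-valued derivative interchange, i.e.\ justifying rigorously that composition with a bounded linear functional commutes with the weak $t$-derivative; this is standard for Bochner-Sobolev functions but must be phrased cleanly in the framework of Definition \ref{def-weak-derivative}, and one must be slightly careful that the test functions in that definition are $\mathcal H$-valued whereas the target-space test functions are $L^2(\R^N,|x|^{-2s}G)$-valued, so the passage between the two requires a density step (finite linear combinations $\varphi(t)\psi$ with $\psi\in C_c^\infty(\R^N)$ are dense in $C_c^\infty((0,T),L^2(\R^N,|x|^{-2s}G))$, and such $\psi$ give rise to bona fide elements of $\mathcal H^*$). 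Everything else — the $L^2$-in-time bounds, linearity, continuity — is a routine consequence of the time-independent trace inequalities proved in Section \ref{sec-Inequalities-Traces}.
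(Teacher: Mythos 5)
Your argument is correct, but it takes a different route from the paper. The paper's proof regularizes in time: it picks $U_n\in C^\infty([0,T],\mathcal H)$ with $U_n\to U$ in $H^1((0,T),\mathcal H)$ (via a density lemma for Bochner--Sobolev spaces, \cite[Lemma 2.5.6]{HNVML}), shows by a difference-quotient computation and \eqref{ineq-hardy-frac} that $\mathop{\rm{\widetilde{T}r}}(U_n)\in C^1([0,T],L^2(\R^N,|x|^{-2s}G(x,0)))$ with classical derivative $\Tr((U_n)_t)$, and then passes to the limit in the weak formulation. You instead avoid approximating $U$ altogether and dualize: for $\psi$ ranging over a suitable set, the functional $\ell_\psi(V)=\int_{\R^N}\Tr(V)\psi\,|x|^{-2s}G(\cdot,0)\,dx$ lies in $\mathcal H^*$ (again by \eqref{ineq-hardy-frac}, i.e.\ Proposition \ref{prop-trace-L-2-x-2s}), testing Definition \ref{def-weak-derivative} with $t\mapsto\varphi(t)R_{\mathcal H}\ell_\psi$ gives the scalar weak-derivative identity for $t\mapsto\ell_\psi(U(\cdot,t))$, and a density step over separated test functions upgrades this to the vector-valued statement. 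Note that $\ell_\psi\in\mathcal H^*$ for every $\psi\in L^2(\R^N,|x|^{-2s}G(x,0))$ by Cauchy--Schwarz, so you do not even need $\psi\in C_c^\infty(\R^N)$, which slightly simplifies your density argument (expand a general test function in an orthonormal basis of the separable target space); this "scalar weak derivative against a dense family of functionals" characterization is standard but should be stated cleanly, as you acknowledge. What each approach buys: the paper's argument leans on the smooth-in-time approximation lemma and elementary difference quotients, while yours trades that for the separability/tensor-density step and works directly at the level of the weak formulation; both ultimately rest on the same time-independent continuity estimate for $\Tr:\mathcal H\to L^2(\R^N,|x|^{-2s}G(x,0))$.
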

\begin{proof}
  In view of \eqref{ineq-hardy-frac} we have that
  $\mathop{\rm{\widetilde{T}r}}(U) \in
  L^2((0,T),L^2(\R^N,|x|^{-2s}G(x,0)))$. Furthermore, there exists a
  sequence
  $\{U_n\}_{n \in \mathbb{N}} \subset C^\infty([0,T],\mathcal{H})$
  such that $U_n \to U$ as $n \to \infty$ in $H^1((0,T), \mathcal{H})$
  thanks to \cite[Lemma 2.5.6.]{HNVML}.

	Let us prove that $\mathop{\rm{\widetilde{T}r}}(U_n) \in C^1([0,T], L^2(\R^N,|x|^{-2s}G(x,0)))$ and that 
	\begin{equation*}
		(\mathop{\rm{\widetilde{T}r}}(U_n))_t(\cdot,t)=\mathop{\rm{Tr}}((U_n)_t(\cdot,t)) \quad \text{ for any } t \in (0,T).
	\end{equation*} 
	We start by showing that the incremental ratio of $\mathop{\rm{\widetilde{T}r}}(U_n)(\cdot,t)$ tends to  $\Tr((U_n)_t(\cdot,t))$ strongly in  $L^2(\R^N,|x|^{-2s}G(x,0))$ for any $t \in (0,T)$.
	Let $ t\in (0,T)$ and $h \in \R$ be such that $|h| \le \min\{t,T-t\}$.
	Then, by definition of $\mathop{\rm{\widetilde{T}r}}$ and linearity,
	\begin{multline*}
          \int_{\R^N}\frac{G(x,0)}{|x|^{2s}}
          \left|\left(
              \frac{\mathop{\rm{\widetilde{T}r}}(U_n)(\cdot,t+h)
                -\mathop{\rm{\widetilde{T}r}}(U_n)(\cdot,t)}{h}
            \right)
            -\mathop{\rm{\widetilde{T}r}}((U_n)_t (\cdot,t))\right|^2\,dx \\
          =\int_{\R^N}\frac{G(x,0)}{|x|^{2s}} \left|
            \Tr\left(\frac{U_n(\cdot,t+h)-U_n(\cdot,t)}{h} -(U_n)_t(\cdot,t)\right)\right|^2\,dx\\
          \le {\rm{const}} \norm{\frac{U_n(\cdot,t+h)-U_n(\cdot,t)}{h}
            -(U_n)_t (\cdot,t)}_{\mathcal{H}}^2 \to 0
	\end{multline*}
	as $h \to 0^+$, in view of \eqref{ineq-hardy-frac}.
	
	In the same way we can show that
        $(\mathop{\rm{\widetilde{T}r}}(U_n))_t \in
        C^0([0,T],L^2(\R^N,|x|^{-2s}G(x,0)))$.  It follows that,
        taking the limit of the incremental ratio,
	\begin{align}
          \frac{d}{dt}&\ps{L^2(\R^N,|x|^{-2s}G(x,0))}{{\rm{\widetilde{T}r}}(U_n)}{\phi}\\
                      &=\ps{L^2(\R^N,|x|^{-2s}G(x,0))}{{\rm{Tr}}((U_n)_t)}{\phi}
                        +\ps{L^2(\R^N,|x|^{-2s}G(x,0))}{{\rm{\widetilde{T}r}}(U_n)}{\phi_t} 
        \end{align}
	for any  function $\phi \in C_c^\infty((0,T),L^2(\R^N,|x|^{-2s}G(x,0)))$.

	Then, for any test function $\phi \in C_c^\infty((0,T),L^2(\R^N,|x|^{-2s})G(x,0)))$,
	\begin{multline*}
		\int_0^T\left(\int_{\R^N}\frac{\mathop{\rm{\widetilde{T}r}}(U)}{|x|^{2s}}\phi_tG(x,0) \, dx \right)\, dt=\lim_{n \to \infty}\int_0^T\left(\int_{\R^N}\frac{\mathop{\rm{\widetilde{T}r}}(U_n)}{|x|^{2s}}\phi_tG(x,0) \, dx \right)\, dt\\
		=-\lim_{n \to \infty}\int_0^T\left(\int_{\R^N}\frac{\mathop{\rm{Tr}}((U_n)_t)}{|x|^{2s}}\phi G(x,0) \, dx \right)\, dt=\int_0^T\left(\int_{\R^N}\frac{\mathop{\rm{Tr}}(U_t)}{|x|^{2s}}\phi G(x,0)\, dx \right)\, dt.
	\end{multline*}
	We  conclude that there exists the weak derivative with respect to $t$ of $\mathop{\rm{\widetilde{T}r}}(U)$ and that 
	\begin{equation*}
		(\mathop{\rm{\widetilde{T}r}}(U))_t(\cdot,t)=\Tr(U_t(\cdot,t))  \text{ for  a.e. } t \in (0,T).
	\end{equation*}
	The continuity of the operator follows from  \eqref{ineq-hardy-frac}.
\end{proof}

\begin{remark}\label{remark-Lt-Ht*}
The  natural embedding
\begin{equation*}
  I:\mathcal{L} \to \mathcal{H}^*, \quad I(V)(W):=\int_{\R^{N+1}_+} y^{1-2s} V W G\, dz
\end{equation*}
is linear, continuous and injective. With a slight abuse of notation,
we will identify $\mathcal{L}$ and $I(\mathcal{L})$.
\end{remark}

\begin{proposition}\label{prop-Vt-L2}
For $K>0$, let $\bar T \in (0,\min\{T,1\})$ depending on $K$  be as in
Proposition \ref{prop-ineq-D+H}. Let  $\tilde T\in(0,\bar T]$ and
$f\in L^1_{loc}(\R^N\times(0,\tilde T))$ be such that
\begin{align}
&|f(x,t)|+|\nabla f(x,t)\cdot x|\le  K(1+|x|^{-2s+\varepsilon})
                \text{ for a.e. } t \in (0,\tilde T) \text{ and a.e. } x \in \R^N,\label{hp-f-subhomogeneous}\\
& f_t\in L^\infty_{loc}((0,\tilde T), L^{\frac{N}{2s}}(\R^N)). \notag  
\end{align}
If $ \tau \in (0,\tilde T)$, $V \in L^2((\tau,\tilde T),\mathcal{H})$,
$V_t \in L^2((\tau,\tilde T),\mathcal{H}^*)$,
$V(\cdot,\tilde T)\in\mathcal{H}$, and $V$ is a solution of
\eqref{eq-weak-formulation-gaussian} with $h=f$, then
$V_t \in L^2((\tau,\tilde T), \mathcal{L})$ in the sense of Remark
\ref{remark-Lt-Ht*}.
\end{proposition}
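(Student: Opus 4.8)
The plan is to establish the parabolic ``improved regularity'' energy estimate for equation \eqref{eq-weak-formulation-gaussian}, exploiting that the bilinear form on its right hand side is symmetric and becomes coercive on $\mathcal{H}$ after adding a zeroth order term. Fix $\tau\in(0,\tilde T)$ and, for $t\in(0,\tilde T)$ and $W_1,W_2\in\mathcal{H}$, put
\[
  \mathfrak{b}_t(W_1,W_2):=\frac{1}{t}\int_{\R^{N+1}_+}y^{1-2s}\nabla W_1\cdot\nabla W_2\,G\,dz-\frac{1}{t}\int_{\R^N}\left(\frac{\mu}{|x|^{2s}}+t^sf(\sqrt tx,t)\right)\Tr(W_1)\Tr(W_2)\,G(\cdot,0)\,dx,
\]
so that \eqref{eq-weak-formulation-gaussian} with $h=f$ reads $\langle V_t,\phi\rangle_{\mathcal{H}^*,\mathcal{H}}=\mathfrak{b}_t(V,\phi)$ for a.e.\ $t$ and all $\phi\in C^\infty_c(\overline{\R^{N+1}_+})$. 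Each $\mathfrak{b}_t$ is symmetric, and, since \eqref{hp-f-subhomogeneous} implies \eqref{hp-h-subhomogeneous-no-nabla} and $\tilde T\le\bar T$, Proposition \ref{prop-ineq-D+H} yields a constant $C_{N,s,\mu}>0$ such that
\[
  \mathfrak{b}_t(W,W)+\frac{N+2-2s}{4t}\int_{\R^{N+1}_+}y^{1-2s}W^2\,G\,dz\ge\frac{C_{N,s,\mu}}{t}\,\norm{W}_{\mathcal{H}}^2
\]
for a.e.\ $t\in(0,\tilde T)$ and all $W\in\mathcal{H}$.

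The core of the argument is the (for the moment formal) energy identity obtained by testing the equation with $V_t$ and using the symmetry of $\mathfrak{b}_t$,
\[
  \norm{V_t(\cdot,t)}_{\mathcal{L}}^2=\frac{1}{2}\frac{d}{dt}\,\mathfrak{b}_t\bigl(V(\cdot,t),V(\cdot,t)\bigr)-\frac{1}{2}(\partial_t\mathfrak{b}_t)\bigl(V(\cdot,t),V(\cdot,t)\bigr),
\]
where $\partial_t\mathfrak{b}_t$ is the form produced by differentiating in $t$ the coefficients $t^{-1}$ and $t^{s-1}f(\sqrt tx,t)$. Integrating over $(t,\tilde T)$ for $\tau<t<\tilde T$ gives
\[
  \int_t^{\tilde T}\norm{V_r(\cdot,r)}_{\mathcal{L}}^2\,dr=\frac{1}{2}\mathfrak{b}_{\tilde T}\bigl(V(\cdot,\tilde T),V(\cdot,\tilde T)\bigr)-\frac{1}{2}\mathfrak{b}_t\bigl(V(\cdot,t),V(\cdot,t)\bigr)-\frac{1}{2}\int_t^{\tilde T}(\partial_r\mathfrak{b}_r)\bigl(V(\cdot,r),V(\cdot,r)\bigr)\,dr,
\]
and the right hand side is bounded uniformly in $t\in(\tau,\tilde T)$. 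Indeed the first term is finite because $V(\cdot,\tilde T)\in\mathcal{H}$, the potential part of $\mathfrak{b}_{\tilde T}$ being controlled by \eqref{ineq-hardy-frac} and \eqref{ineq-h}; the second satisfies $-\mathfrak{b}_t(V(\cdot,t),V(\cdot,t))\le\frac{N+2-2s}{4\tau}\sup_{r\in[\tau,\tilde T]}\norm{V(\cdot,r)}_{\mathcal{L}}^2$ by the coercivity inequality, and this supremum is finite since $V\in C^0([\tau,\tilde T],\mathcal{L})$ (Remark \ref{remark-parabolic-change-variables}); and for the last term, expanding $\partial_r\bigl[r^{s-1}f(\sqrt rx,r)\bigr]$ by the chain rule and using \eqref{hp-f-subhomogeneous}, \eqref{ineq-hardy-frac}, \eqref{ineq-h} and \eqref{ineq-h_t}, together with the scaling identity $\norm{f_t(\sqrt r\cdot,r)}_{L^{\frac{N}{2s}}(\R^N)}=r^{-s}\norm{f_t(\cdot,r)}_{L^{\frac{N}{2s}}(\R^N)}$, one obtains $|(\partial_r\mathfrak{b}_r)(W,W)|\le C_\tau\norm{W}_{\mathcal{H}}^2$ for a.e.\ $r\in(\tau,\tilde T)$, with $C_\tau$ depending also on $\norm{f_t}_{L^\infty((\tau,\tilde T),L^{\frac{N}{2s}}(\R^N))}$; since $V\in L^2((\tau,\tilde T),\mathcal{H})$ this is integrable. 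Here both the bound on $|\nabla f\cdot x|$ in \eqref{hp-f-subhomogeneous} and the assumption $f_t\in L^\infty_{loc}((0,\tilde T),L^{\frac{N}{2s}}(\R^N))$ are essential. Letting $t\to\tau^+$ we conclude $\int_\tau^{\tilde T}\norm{V_r}_{\mathcal{L}}^2\,dr<+\infty$, i.e.\ $V_t\in L^2((\tau,\tilde T),\mathcal{L})$ in the sense of Remark \ref{remark-Lt-Ht*}.

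The point requiring care, and the main obstacle, is the rigorous justification of the energy identity: from $V\in L^2((\tau,\tilde T),\mathcal{H})$ and $V_t\in L^2((\tau,\tilde T),\mathcal{H}^*)$ alone one may not test \eqref{eq-weak-formulation-gaussian} with $V_t$, and $r\mapsto\mathfrak{b}_r(V(\cdot,r),V(\cdot,r))$ is not a priori absolutely continuous — its absolute continuity being essentially equivalent to the conclusion. This is handled in the standard way by a Galerkin scheme: one picks an $\mathcal{L}$-orthonormal basis of $\mathcal{H}$ (for instance the eigenfunctions $\widetilde{Y}_{m,k}$ of Proposition \ref{prop-eigenvalues}, or the eigenfunctions of a coercive modification of $\mathfrak{b}_{\tilde T}$, so that the associated $\mathcal{L}$-orthogonal projections are bounded on $\mathcal{H}$), solves the finite-dimensional Cauchy problems backward from $t=\tilde T$ with data the projections of $V(\cdot,\tilde T)$ — for which the above energy identity holds exactly — integrates it on $(t,\tilde T)$, bounds the right hand side uniformly in the dimension exactly as above (using in addition the elementary energy bound on $\int_\tau^{\tilde T}\norm{V^n}_{\mathcal{H}}^2\,dr$ obtained by testing with $V^n$), identifies the Galerkin limit with $V$ via uniqueness for the backward Cauchy problem for \eqref{eq-weak-formulation-gaussian}, and passes to the weak limit in $L^2((\tau,\tilde T),\mathcal{L})$ using weak lower semicontinuity of the norm. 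Equivalently, one may invoke the abstract regularity theory for parabolic problems governed by a $C^1$-in-time family of symmetric coercive forms, as in \cite[Theorem 8.60]{LG} and \cite{DL}, whose hypotheses amount precisely to the coercivity inequality above and to the estimate on $\partial_r\mathfrak{b}_r$; checking the latter — i.e.\ that differentiating the potential coefficient in time again produces a Hardy-type weight plus an $L^{\frac{N}{2s}}$ perturbation — is where the assumptions on $f$ are genuinely used.
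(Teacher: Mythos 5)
Your proposal is correct and follows essentially the same route as the paper: an energy estimate obtained by (formally) testing \eqref{eq-weak-formulation-gaussian} with $V_t$, exploiting the symmetry and the coercivity from Proposition \ref{prop-ineq-D+H}, controlling the time derivative of the potential coefficient through \eqref{hp-f-subhomogeneous}, $f_t\in L^\infty_{loc}((0,\tilde T),L^{\frac{N}{2s}}(\R^N))$, \eqref{ineq-h} and \eqref{ineq-h_t}, all made rigorous by a Faedo--Galerkin approximation with terminal data at $t=\tilde T$ and a weak compactness argument in $L^2((\tau,\tilde T),\mathcal{L})$ identifying the weak limit of $(V_n)_t$ with $V_t$. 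The only cosmetic difference is that the paper differentiates the trace term via Proposition \ref{prop-trace-H1(0,T)} and cites \cite{E} for the Galerkin scheme, while you phrase the same computation as differentiating the time-dependent form and also mention the abstract Lions-type alternative.
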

\begin{proof}
For all $t\in(0,\tilde T)$, let us consider the linear map
\begin{align*}
  &A_t:\mathcal{H}\to  \mathcal{H}^*,  \\
  &\df{\mathcal{H}*}{A_t(V)}{\phi}{\mathcal{H}}:
    =\int_{\R^{N+1}_+}y^{1-2s} \nabla V\cdot \nabla \phi \,G\, dz \\
  &-\int_{\R^N}\left(\frac{\mu}{|x|^{2s}}\Tr(V) \Tr(\phi)
    +t^sf(\sqrt{t}x,t)\Tr(V) \Tr(\phi) \right) G\,dx
    \quad \text{ for any } \phi, V \in \mathcal{H}. 
\end{align*}
In view of \eqref{hp-f-subhomogeneous}, the H\"older inequality,
\eqref{ineq-trace} and \eqref{ineq-hardy-frac}, $A_t$ is well
defined and continuous.  From standard techniques in the theory
  of parabolic equations, see for example \cite{E}, the
Faedo-Galerkin method provides a sequence of functions
$\{V_n\}_{n \in \mathbb{N}}$ such that:
\begin{align}
  &V_n \in L^2((\tau,\tilde T), \mathcal{H}) \text{ for any } n \in \mathbb{N}, \notag  \\
  &V_n \rightharpoonup V \text{ weakly  in  } L^2((\tau,\tilde T),\mathcal{H}) \text{ as } n \to \infty, \label{eq:1} \\
  &(V_n)_t \in L^2((\tau,\tilde T), \mathcal{H})  \text{ for any } n \in \mathbb{N}, \notag   \\
  &(V_n)_t \rightharpoonup V_t \text{ weakly in  }  L^2((\tau,\tilde T), \mathcal{H}^*) \text{ as } n \to \infty,\label{prop-Vt-L2:7} \\
  &V_n(\cdot,\tilde T) \to V(\cdot, \tilde T) \text{ strongly   in  } \mathcal{H} \text{ as } n \to \infty,  \\
  &\{V_n\}_{n \in \mathbb{N}} \text{ is bounded in } C([\tau, \tilde T],\mathcal{L}).  \label{prop-Vt-L2:9}
\end{align}

For any $n \in \mathbb{N}$, the function $V_n$
belongs to $H^1((\tau,\tilde T),W_n)$ and solves, for a.e. $ t \in
  (0,\tilde T)$,
\begin{multline}\label{prop-Vt-L2:1.1}
  t\int_{\R^{N+1}_+}y^{1-2s}(V_n)_t \phi \,G \,
  dz=\int_{\R^{N+1}_+}y^{1-2s}\nabla V_n \cdot \nabla \phi \,G \, dz
  \\-\int_{\R^N}\left(\frac{\mu}{|x|^{2s}}
    \Tr(V_n)\Tr(\phi)+t^sf(\sqrt{t}x,t)\Tr(V_n)\Tr(\phi)\right)G(x,0)\,
  dx,
\end{multline}
for any $\phi \in W_n\subset \mathcal{H}$, where $W_n$ is a suitable
finite dimensional subspace of $\mathcal{H}$.  Testing
\eqref{prop-Vt-L2:1.1} with $(V_n)_t$ and integrating with
respects to $t$ on $(\tau,\tilde T)$, we obtain that 
\begin{multline*}
  \int_{\tau}^{\tilde T}t\left(\int_{\R^{N+1}_+}y^{1-2s}|(V_n)_t|^2
    \,G \, dz\right)\, dt
  =\int_{\tau}^{\tilde T}\df{\mathcal{H}*}{A_t(V_n)}{(V_n)_t}{\mathcal{H}} \, dt \\
  =\frac{1}{2}\df{\mathcal{H}*}{A_{\tilde T}(V_n)(\tilde T )}{V_n(\tilde
    T)}{\mathcal{H}}
  -\frac{1}{2}\df{\mathcal{H}*}{A_{\tau}(V_n)(\tau)}{V_n(\tau)}{\mathcal{H}} \\
  +\frac12\int_\tau^{\tilde
    T}\left(\int_{\R^N}\left[st^{s-1}f(\sqrt{t}x,t)+\frac{1}{2}t^{s-\frac{1}{2}}
      \nabla f(\sqrt{t}x,t) \cdot x +
      t^sf_t(\sqrt{t}x,t)\right]|\Tr(V_n)|^2 \, G(\cdot,0)dx\right) \,
  dt
\end{multline*}
thanks to Proposition \ref{prop-trace-H1(0,T)}. For a.e. $t \in (\tau,\tilde T)$, 
\begin{equation}\label{prop-Vt-L2:3}
  \int_{\R^N}\left|st^{s-1}f(\sqrt{t}x,t)+\frac{1}{2}t^{s-\frac{1}{2}}
    \nabla f(\sqrt{t}x,t) \cdot x\right| |\Tr(V_n)|^2 \, G(\cdot,0)dx \le {\rm{const}} \norm{V_n}^2_{\mathcal{H}}
\end{equation}
 in view of  \eqref{ineq-trace},   \eqref{ineq-hardy-frac}, and \eqref{hp-f-subhomogeneous}.

 By \eqref{ineq-D+H},  \eqref{ineq-h_t}, \eqref{eq:1}, \eqref{prop-Vt-L2:9}, and
 \eqref{prop-Vt-L2:3}, and we conclude that
 $\{(V_n)_t\}_{n \in \mathbb{N}}$ is bounded in
 $L^2((\tau, \tilde T),\mathcal{L})$.  Then, up to a subsequence, there
 exists $W \in L^2((\tau,\tilde T),\mathcal{L})$ such that
\begin{equation*}
	(V_n)_t \rightharpoonup W\text{ weakly in } L^2((\tau,\tilde T),\mathcal{L}).
\end{equation*}
By \eqref{prop-Vt-L2:7} we conclude that $W=V_t$, hence  $V_t \in  L^2((\tau,\tilde T),\mathcal{L})$.
\end{proof}

\section{Spectrum of a weighted Ornstein-Uhlenbeck
  operator} \label{sec-eigenvalues} In this section we prove
Proposition \ref{prop-eigenvalues}. The following compactness result ensures that the point spectrum
of \eqref{prob-eigenvalue-Ornstein-Uhlenbeck-operator} is discrete.
\begin{proposition}\label{prop-embedding}
The embedding $i:\mathcal{H} \to \mathcal{L}$, $i(V)=V$,
is compact.
\end{proposition}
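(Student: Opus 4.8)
The plan is to show that every sequence $\{V_n\}_{n\in\N}$ which is bounded in $\mathcal{H}$, say $\norm{V_n}_{\mathcal{H}}\le M$, has a subsequence converging strongly in $\mathcal{L}$. The first ingredient is a uniform bound on the second moment, which is an immediate corollary of Proposition~\ref{prop-ineq-hardy-extended}: dropping the nonnegative term $\int_{\R^{N+1}_+}y^{1-2s}|z|^{-2}V^2G\,dz$ from the left-hand side of \eqref{ineq-hardy-extended} produces a constant $C_{N,s}>0$ with $\int_{\R^{N+1}_+}y^{1-2s}|z|^2V^2G\,dz\le C_{N,s}\norm{V}_{\mathcal{H}}^2$ for every $V\in\mathcal{H}$. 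In particular, the masses of the $V_n$ are uniformly tight at infinity:
\[
\int_{\R^{N+1}_+\setminus B_R^+}y^{1-2s}V_n^2\,G\,dz\le\frac1{R^2}\int_{\R^{N+1}_+}y^{1-2s}|z|^2V_n^2\,G\,dz\le\frac{C_{N,s}M^2}{R^2}\qquad\text{for all }R>0,\ n\in\N.
\]

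The second ingredient is local compactness. On each half-ball $B_R^+$ the Gaussian weight satisfies $e^{-R^2/4}\le G\le1$, so $\norm{V}_{H^1(B_R^+,y^{1-2s})}\le e^{R^2/8}\norm{V}_{\mathcal{H}}$; using the concrete characterization of $H^1(B_R^+,y^{1-2s})$ recalled in Section~\ref{sec-functional-settings} (valid since $B_R^+$ is a bounded Lipschitz subset of $\R^{N+1}_+$, see \cite{KA,CSK}), the restrictions $V_n|_{B_R^+}$ form a bounded sequence in $H^1(B_R^+,y^{1-2s})$. Since $s\in(0,1)$ one has $|1-2s|<1$, hence $y^{1-2s}$ lies in the Muckenhoupt class $A_2$ and the embedding $H^1(B_R^+,y^{1-2s})\hookrightarrow L^2(B_R^+,y^{1-2s})$ is compact, by the weighted Rellich--Kondrachov theory already used in this paper for the spherical eigenvalue problem \eqref{prob-eigenvalue-half-a-sphere} (see \cite{FKCSR} and \cite{Opic-Kufner}). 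Applying this with $R=k$ for every $k\in\N$ and extracting a diagonal subsequence, I would obtain a subsequence, still denoted $\{V_n\}$, that converges in $L^2(B_k^+,y^{1-2s})$ --- and therefore, since $G\le1$ there, in $L^2(B_k^+,y^{1-2s}G)$ --- for every $k\in\N$. A standard $\e/3$ argument then finishes: given $\e>0$, fix $k$ with $C_{N,s}M^2/k^2<\e$, so that the tails of all $V_n$ outside $B_k^+$ are smaller than $\e$; for $n,m$ large the local convergence makes $\int_{B_k^+}y^{1-2s}(V_n-V_m)^2G\,dz$ small, while $\int_{\R^{N+1}_+\setminus B_k^+}y^{1-2s}(V_n-V_m)^2G\,dz\le2\int_{\R^{N+1}_+\setminus B_k^+}y^{1-2s}(V_n^2+V_m^2)G\,dz<4\e$; hence $\{V_n\}$ is Cauchy in $\mathcal{L}$ and the proof is complete.

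I expect the only genuinely delicate point to be the weighted compact embedding on $B_R^+$: the weight $y^{1-2s}$ is degenerate (if $s<1/2$) or singular (if $s>1/2$) precisely on the flat face $\{y=0\}$ of $\partial B_R^+$, so the classical Rellich--Kondrachov theorem does not apply directly and one must invoke the Muckenhoupt $A_2$-weighted Sobolev theory of \cite{FKCSR} (see also \cite{Opic-Kufner}) --- the same ingredient the paper already uses for the trace eigenvalue problem on $\mathbb{S}^N_+$. Everything else is routine: the moment estimate is a one-line consequence of Proposition~\ref{prop-ineq-hardy-extended}, and the tightness-plus-local-compactness scheme is the familiar device for upgrading a non-compact embedding to a compact one once a uniform decay estimate is at hand, exactly as in the classical Ornstein--Uhlenbeck setting $H^1(\R^n,e^{-|x|^2})\hookrightarrow L^2(\R^n,e^{-|x|^2})$.
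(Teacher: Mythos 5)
Your proof is correct and follows essentially the same strategy as the paper's: local compactness on half-balls via the $A_2$-weighted compact embedding (the paper cites \cite[Theorem 19.7]{Opic-Kufner}) combined with uniform tail control at infinity deduced from the Hardy-type inequality \eqref{ineq-hardy-extended}. The only cosmetic difference is that you run a diagonal/Cauchy argument on a bounded sequence, whereas the paper first extracts a weak limit and applies the same tail estimate to $V-V_n$.
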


\begin{proof}
  Let $\{V_n\}$ be a sequence converging to some $V \in \mathcal{H}$
  weakly in $\mathcal{H}$ as $n \to \infty$.  Then by \cite[Theorem
  19.7]{Opic-Kufner}, for any $R>0$
\begin{equation}\label{prop-embedding:1}
\lim_{n \to \infty}\int_{B_R^+}y^{1-2s}|V-V_n|^2  G\, dz=0.
\end{equation}
Moreover, for any $n \in \mathbb{N}$ and   $R>0$, by \eqref{ineq-hardy-extended},
\begin{multline}\label{prop-embedding:2}
  \int_{\R^{N+1}_+\setminus B_R^+}y^{1-2s}|V-V_n|^2  G dz
  \le \frac{1}{R^2}\int_{\R^{N+1}_+\setminus B_R^+}y^{1-2s}|z|^2|V-V_n|^2  G dz\\
\le  \frac{16}{ R^2}\int_{\R_+^{N+1}}y^{1-2s} |\nabla (V-V_n)|^2G \,
dz
+ \frac{4(N+2-2s)}{R^2}\int_{\R_+^{N+1}}y^{1-2s}  |V-V_n|^2G \, dz. 
\end{multline}
Since  $\{V_n\}_{n \in \mathbb{N}}$ is bounded in $\mathcal{H}$ we conclude by \eqref{prop-embedding:2} that
\begin{equation}\label{prop-embedding:3}
\int_{\R^{N+1}_+\setminus B_R^+}y^{1-2s}|V-V_n|^2  G dz \le \frac {\co}{R^2} \quad \text{ for any } R>0 \text{ and for any } n \in \mathbb{N}.
\end{equation}
Putting together \eqref{prop-embedding:1} and \eqref{prop-embedding:3} we obtain that $V_n \to V$ strongly in $\mathcal{L}$ as $n \to +\infty$, thus completing the proof.
\end{proof}

\begin{proposition}\label{prop-basis-eigenfunctions}
  The eigenvalues of
  \eqref{prob-eigenvalue-Ornstein-Uhlenbeck-operator} form a
  non-decreasing, diverging sequence
  $\{\gamma_k\}_{k \in \mathbb{N}\setminus \{0\}}$. Furthermore there
  exists an orthonormal basis of $\mathcal{L}$ of eigenfunctions of
  \eqref{prob-eigenvalue-Ornstein-Uhlenbeck-operator} whose elements
  belong to $\mathcal{H}$.
\end{proposition}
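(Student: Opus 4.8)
The plan is to recast problem \eqref{prob-eigenvalue-Ornstein-Uhlenbeck-operator} as the eigenvalue problem of a compact, self-adjoint, positive operator on $\mathcal{L}$ and then to apply the spectral theorem. First I would introduce the symmetric bilinear form
\begin{equation*}
  a(V,W):=\int_{\R^{N+1}_+}y^{1-2s}\nabla V\cdot\nabla W\,G\,dz-\mu\int_{\R^N}\frac{\Tr(V)\Tr(W)}{|x|^{2s}}\,G(\cdot,0)\,dx,\qquad V,W\in\mathcal{H}.
\end{equation*}
By the Cauchy--Schwarz inequality and Proposition \ref{prop-trace-L-2-x-2s} (which controls the singular boundary term by $\norm{V}_{\mathcal{H}}\norm{W}_{\mathcal{H}}$), the form $a$ is continuous on $\mathcal{H}\times\mathcal{H}$. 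Setting $\Lambda_0:=\frac{N+2-2s}{4}>0$ and $\tilde a(V,W):=a(V,W)+\Lambda_0\,\ps{\mathcal{L}}{V}{W}$, Proposition \ref{prop-equiv-norm} provides a constant $c>0$ with $\tilde a(V,V)\ge c\,\norm{V}_{\mathcal{H}}^2$ for all $V\in\mathcal{H}$; hence $\tilde a$ is a scalar product on $\mathcal{H}$ inducing a norm equivalent to $\norm{\cdot}_{\mathcal{H}}$.

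Next I would introduce the solution operator. For $f\in\mathcal{L}$ the map $W\mapsto\ps{\mathcal{L}}{f}{W}$ is a bounded linear functional on $\mathcal{H}$, since the embedding $\mathcal{H}\hookrightarrow\mathcal{L}$ is continuous; therefore the Riesz representation theorem in the Hilbert space $(\mathcal{H},\tilde a)$ yields a unique $Tf\in\mathcal{H}$ such that $\tilde a(Tf,W)=\ps{\mathcal{L}}{f}{W}$ for every $W\in\mathcal{H}$. Composing with the embedding of Proposition \ref{prop-embedding} we regard $T$ as a bounded linear operator from $\mathcal{L}$ into itself. It is self-adjoint, because for $f,g\in\mathcal{L}$ one has $\ps{\mathcal{L}}{Tf}{g}=\tilde a(Tg,Tf)=\tilde a(Tf,Tg)=\ps{\mathcal{L}}{f}{Tg}$, and positive, because $\ps{\mathcal{L}}{Tf}{f}=\tilde a(Tf,Tf)\ge c\,\norm{Tf}_{\mathcal{H}}^2\ge0$; moreover $Tf=0$ forces $\ps{\mathcal{L}}{f}{W}=0$ for every $W$ in the dense subspace $\mathcal{H}$ of $\mathcal{L}$, so $f=0$ and $T$ is injective. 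Finally, since $T$ is the composition of the bounded map $f\mapsto Tf$ from $\mathcal{L}$ into $\mathcal{H}$ with the compact embedding $i:\mathcal{H}\to\mathcal{L}$ of Proposition \ref{prop-embedding}, the operator $T$ is compact on $\mathcal{L}$.

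Then I would apply the spectral theorem for compact, self-adjoint, positive, injective operators: $\mathcal{L}$ admits an orthonormal basis $\{Y_k\}_{k\in\mathbb{N}\setminus\{0\}}$ of eigenfunctions of $T$ whose eigenvalues $\eta_k>0$ can be ordered into a non-increasing sequence with $\eta_k\to0^+$ (the sequence is infinite since $\mathcal{L}$ is infinite-dimensional and $T$ is injective). Each $Y_k=\eta_k^{-1}TY_k$ belongs to $\mathcal{H}$, and from $\tilde a(Y_k,W)=\eta_k^{-1}\ps{\mathcal{L}}{Y_k}{W}$, valid for all $W\in\mathcal{H}$, we read off $a(Y_k,W)=\gamma_k\,\ps{\mathcal{L}}{Y_k}{W}$ with $\gamma_k:=\eta_k^{-1}-\Lambda_0$, which is precisely \eqref{eq-eigenvalue-Ornstein-Uhlenbeck-operator}; thus $\{\gamma_k\}_{k\in\mathbb{N}\setminus\{0\}}$ is a non-decreasing sequence diverging to $+\infty$. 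Conversely, if $Y\in\mathcal{H}\setminus\{0\}$ satisfies \eqref{eq-eigenvalue-Ornstein-Uhlenbeck-operator} with eigenvalue $\gamma$, then $\gamma+\Lambda_0>0$ by the coercivity of $\tilde a$ and $Y$ is an eigenfunction of $T$ with eigenvalue $(\gamma+\Lambda_0)^{-1}$; hence the $\gamma_k$ exhaust the eigenvalues of \eqref{prob-eigenvalue-Ornstein-Uhlenbeck-operator} and each of them has finite multiplicity. I do not anticipate a genuine obstacle: the only delicate points, namely the continuity of $a$ on $\mathcal{H}$ and the coercivity of $\tilde a$, are already furnished by Propositions \ref{prop-trace-L-2-x-2s} and \ref{prop-equiv-norm}, and everything else is the routine functional-analytic machinery of Hilbert triplets and compact operators.
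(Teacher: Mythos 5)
Your proposal is correct and follows essentially the same route as the paper: the shifted bilinear form $a(V,W)+\frac{N+2-2s}{4}(V,W)_{\mathcal{L}}$ is coercive on $\mathcal{H}$ (the paper invokes \eqref{ineq-D+H} with $f\equiv 0$, which rests on Proposition \ref{prop-equiv-norm} as you do), its inverse is a compact self-adjoint operator on $\mathcal{L}$ by the compact embedding of Proposition \ref{prop-embedding}, and the spectral theorem yields the orthonormal basis and the non-decreasing diverging sequence of eigenvalues. Your write-up simply makes explicit the details (continuity of the form, self-adjointness, positivity, injectivity, and the shift correspondence $\gamma_k=\eta_k^{-1}-\frac{N+2-2s}{4}$) that the paper leaves to the reader.
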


\begin{proof}
Let $L:\mathcal{H} \to \mathcal{H}^*$ be defined as 
\begin{multline*}
L(V)(\phi):=\int_{\R^{N+1}_+}y^{1-2s} \nabla U \cdot\nabla \phi \, G\, dz\\
-\int_{\R^N} \frac{\mu}{|x|^{2s}}\Tr(U)\Tr(\phi)   \, G\, dx+
\frac{N+2-2s}{4}\int_{\R^{N+1}_+}y^{1-2s}   V \phi \, G\, dz\\
\end{multline*}
for any $V,\phi \in \mathcal{H}$.  In view of \eqref{ineq-D+H} in the
case $f\equiv 0$, the operator $L$ is coercive. It follows that the
operator $T:\mathcal{L}\to \mathcal{L}$ defined as 
$T:=L^{-1}$ is
well-defined. Since $T$ is also compact in view of Proposition
\ref{prop-embedding}, the conclusion follows from by the Spectral Theorem.
\end{proof}

\begin{remark} \label{remark-def-trace-Sr+}
For any $r>0$, there exists a linear, continuous and compact trace operator 
\begin{equation*}
\mathop{{\rm{Tr}}_{S_r^+}}:\mathcal{H} \to L^2(S_r^+,y^{1-2s}).
\end{equation*}
Indeed $\mathcal{H} \hookrightarrow H^1(B_r^+,y^{1-2s})$ since
$G>{ \rm{const}} >0$ on $B_r^+$; moreover, in view of \cite[Theorem
19.7]{Opic-Kufner} and the Divergence Theorem, one can easily verify
that the trace operator from
$H^1(B_r^+,y^{1-2s})$ to $L^2(S_r^+, y^{1-2s})$ is compact.
\end{remark}

\begin{proof}[\textbf{Proof of Proposition \ref{prop-eigenvalues}}]
  Let $\gamma$ be an eigenvalue of problem
  \eqref{prob-eigenvalue-Ornstein-Uhlenbeck-operator} and $Y$ an
  associated eigenfunction.  Let
  $\{\psi_k\}_{k\in \mathbb{N} \setminus \{0\}}$ be the orthonormal
  basis of $L^2(\mathbb{S}^N_+,\theta_{N+1}^{1-2s})$ introduced in
  Section
  \ref{sec-functional-settings}. By Remark \ref{remark-def-trace-Sr+},
  for any $r >0$ the function $Y$ admits a trace
  $\mathop{{\rm{Tr}}_{S_r^+}}(Y) \in
  L^2(S_r^+,  y^{1-2s})$. By a change of variables
  $\mathop{{\rm{Tr}}_{\mathbb{S^N}_+}}(Y(r\cdot))\in
  L^2(\mathbb{S}^N_+,\theta_{N+1}^{1-2s})$, hence 
\begin{equation*}
  \mathop{{\rm{Tr}}_{\mathbb{S^N}_+}}(Y(r\cdot))
  =\sum_{k=1}^\infty \varphi_k(r) \psi_k, \quad 
 \text{ with } \varphi_k(r):=
 \int_{\mathbb{S}^N_+}\theta_{N+1}^{1-2s}\,
\mathop{{\rm{Tr}}_{\mathbb{S^N}_+}}(Y(r\theta))
  \,\psi_k (\theta)\, dS.
\end{equation*} 
Since, by classical elliptical regularity theory,
$Y \in C^{\infty}_{loc}(\R^{N+1}_+)$, so writing $z=r \theta$ where
$r=|z|$ and $\theta=\frac{z}{|z|}$, we have that
$Y(z)=Y(r
\theta)=\mathop{\rm{Tr}_{\mathbb{S^N}_+}}(Y(r\cdot))|_{\theta}
=\sum_{k=1}^\infty \varphi_k(r) \psi_k(\theta)$ for any
$z=r\theta \in \R^{N+1}_+$.  Then thanks to \cite[Lemma
2.1]{Fall-Felli-2014}, 
\eqref{prob-eigenvalue-Ornstein-Uhlenbeck-operator}, and \eqref{prob-eigenvalue-half-a-sphere}, a direct
computation shows that
\begin{equation}\label{prop-eigenvalues:2}
  \varphi_k''(r)+\left(\frac{N+1-2s}{r}-\frac{r}{2}\right)
  \varphi_k'(r)+\left( \gamma-\frac{\nu_k}{r^2}\right)\varphi_k(r)=0,
  \quad \text{ in }  (0,+\infty)
\end{equation}
for any $k\in \mathbb{N} \setminus \{0\}$.
Since $Y \in \mathcal{H}$
\begin{align}\label{prop-eigenvalues:3}
  &+\infty >\int_{\R^{N+1}_+} y^{1-2s} \frac{Y^2}{|z|^2}
    e^{-\frac{|z|^2}{4}}\, dz
    =\int_{0}^\infty r^{N-1-2s} e^{-\frac{r^2}{4}}
    \left(\int_{\mathbb{S}^N_+}\theta_{N+1}^{1-2s}Y^2(r\theta)\, dS\right) \, dr \\
  &=\int_{0}^\infty r^{N-1-2s} e^{-\frac{r^2}{4}}
    \left(\sum_{k=1}^{\infty}\varphi_k^2(r)\right) \, dr\ge
    \int_{0}^\infty r^{N-1-2s} e^{-\frac{r^2}{4}}\varphi_k^2(r) \, dr, \notag 
\end{align}
for any $k\in \mathbb{N} \setminus \{0\}$, thanks to Plancherel's
identity and \eqref{ineq-hardy-extended}.  Analogously
\begin{equation}\label{prop-eigenvalues:3.1}
  +\infty >\int_{\R^{N+1}_+} y^{1-2s} Y^2 e^{-\frac{|z|^2}{4}}\, dz
  \ge \int_{0}^\infty r^{N+1-2s} e^{-\frac{r^2}{4}}\varphi_k^2(r) \, dr, 
\end{equation}
for any $k\in \mathbb{N} \setminus \{0\}$.
Furthermore, letting 
\begin{equation*}
  w_k(t):=(4t)^{\frac{\alpha_k}{2}} \varphi_k(2\sqrt{t})
  \quad\text{ for any } t \in (0,\infty), 
\end{equation*}
 \eqref{prop-eigenvalues:2}  and a direct computation imply that $w_k$ solves 
\begin{equation}\label{prop-eigenvalues:5}
  tw''_k(t) +\left(\frac{N+2-2s}{2}-\alpha_k -t\right)w_k'(t)
  +\left(\frac{\alpha_k}{2}+\gamma\right)w_k(t)\quad\text{ in }  (0,\infty).
\end{equation} 
Equation \eqref{prop-eigenvalues:5} is the well-known Kummer Confluent
Hypergeometric Equation,
\begin{equation}\label{prop-eigenvalues:6}
 tw''_k(t) +\left(b -t\right)w_k'(t)-cw_k(t)\quad\text{ in }  (0,\infty), 
\end{equation}  
with parameters $b= \left(\frac{N+2-2s}{2}-\alpha_k\right)>1$, by
\eqref{ineq-nu-1} and \eqref{def-alphak}, and
$c=-\left(\frac{\alpha_k}{2}+\gamma\right)$, see \cite{AMSI} or
\cite{MD}.  Then the solution $w_k$ can be written as
\begin{equation*}
w_k(t)=A_kM\left(-\frac{\alpha_k}{2}-\gamma,\frac{N+2-2s}{2}-\alpha_k,t\right)+B_k T\left(-\frac{\alpha_k}{2}-\gamma,\frac{N+2-2s}{2}-\alpha_k,t\right)
\end{equation*} 
with $A_k,B_k \in \R$, where $M(c,b,t)$ denotes the Kummer function
and $T(c,b,t)$ denotes the Tricomi function; $M(c,b,t)$ and $T(c,b,t)$
are linearly independent solutions of \eqref{prop-eigenvalues:6} (see
\cite{AMSI} or \cite{MD}).  Furthermore from \cite{AMSI}
\begin{equation}\label{prop-eigenvalues:8}
  T\left(-\frac{\alpha_k}{2}-\gamma,\frac{N+2-2s}{2}-\alpha_k,t\right)
  \sim \co \, t^{1-\frac{N+2-2s}{2}+\alpha_k} \quad \text{ as } t \to 0^+,
\end{equation}
where the constant in \eqref{prop-eigenvalues:8} depends only on
$s,\alpha_k, N,\gamma$ and is different from $0$.  We recall the
following expression for the Kummer function:
\begin{equation*}
M(c,b,t)=\sum_{n=0}^{\infty} \frac{(c)_n}{(b)_n} \frac{t^n}{n!},
\end{equation*}
where $(\cdot)_n$ is the Pochhammer’s symbol defined in
\eqref{def-Pnj}. It is clear that $M(c,b,t)$ has a finite limit as
$t \to 0^+$, while its asymptotic behaviour at $+\infty$ depends on the parameter
$c$.  Then, for any $k \in \mathbb{N} \setminus \{0\}$, if $B_k \neq 0$
\begin{equation*}
w_k(t) \sim \co \, B_k t^{1-\frac{N+2-2s}{2}+\alpha_k} \quad \text{ as } t \to 0^+,
\end{equation*} 
for some $\co\neq0$, and so 
\begin{equation*}
\varphi_k(r) \sim B_k\co \, r^{-N+2s+\alpha_k} \quad \text{ as } r \to 0^+.
\end{equation*}
From \eqref{prop-eigenvalues:3} we deduce that necessarily $B_k=0$ for any
$k \in \mathbb{N} \setminus \{0\}$. Hence
\begin{equation}\label{prop-eigenvalues:12}
w_k(t)=A_kM\left(-\frac{\alpha_k}{2}-\gamma,\frac{N+2-2s}{2}-\alpha_k,t\right).
\end{equation}
Moreover, if $\left(\frac{\alpha_k}{2}+\gamma\right) \notin \mathbb{N}$, then
\begin{equation}\label{prop-eigenvalues:13}
  M\left(-\frac{\alpha_k}{2}-\gamma,\frac{N+2-2s}{2}-\alpha_k,t\right)
  \sim\co \, e^t t^{\frac{\alpha_k}{2}-\gamma -\frac{N}{2}-1+s} \quad \text{ as } t \to +\infty,
\end{equation} 
for some $\co\neq 0$, see \cite{AMSI}. From \eqref{prop-eigenvalues:12} and \eqref{prop-eigenvalues:13} it follows that 
\begin{equation*}
\varphi_k(r) \sim A_k\co \, e{^\frac{r^2}{4}} r^{-2\gamma -N-2+2s} \quad \text{ as } r \to +\infty
\end{equation*} 
and hence necessarily $A_k=0$ for any
$k \in \mathbb{N} \setminus \{0\}$ in view of
\eqref{prop-eigenvalues:3.1}.  In conclusion, if $\gamma$ is an
eigenvalue of \eqref{prob-eigenvalue-Ornstein-Uhlenbeck-operator}, then
there exists $k \in \mathbb{N}\setminus\{0\}$ such that
$(\frac{\alpha_k}{2}+\gamma) \in \mathbb{N}$.

On the other hand, for any $m \in \mathbb{N}$ and
$k \in \mathbb{N}\setminus\{0\}$, letting $Y_{m,k}$ be as in
\eqref{def-egienfunctions}, a direct computation shows that $Y_{m,k}$
is a solution of \eqref{prob-eigenvalue-Ornstein-Uhlenbeck-operator}
with $\gamma:=m-\frac{\alpha_k}{2}$, by \cite[Lemma
2.1]{Fall-Felli-2014}, and
\eqref{prob-eigenvalue-half-a-sphere}. Hence $(m-\frac{\alpha_k}{2})$
is an eigenvalue of
\eqref{prob-eigenvalue-Ornstein-Uhlenbeck-operator}.

From the well-known correspondence between Kummer functions and
 the generalized Laguerre polynomials $L_n^{a}$, we have that
  $P_{j,n}(t)=\binom{n+a_j}{n}^{-1}L_n^{a_j}(t)$, where
  $a_j=\big(\big(\frac{N-2s}{2}\big)^2+\nu_j(\mu)\big)^{1/2}$. Then, recalling
  that $\{\psi_k\}_{k\in \mathbb{N} \setminus \{0\}}$ is an
  orthonormal basis of $L^2(\mathbb{S}_+^N,\theta_{N+1}^{1-2s})$ and
  using the orthogonality relation for Laguerre polynomials, it is
easy to verify that $Y_{m_1,j_1}$ is orthogonal to $Y_{m_2,j_2}$ in
$\mathcal L$ whenever $(m_1,j_1)\neq(m_2,j_2)$. Then we conclude that
\eqref{def-basis-egienfunctions} is an orthonormal basis of
$\mathcal{L}$.
\end{proof}

\begin{proposition} \label{prop-trace-Y-0}
Let $Y$ be a solution  of \eqref{prob-eigenvalue-Ornstein-Uhlenbeck-operator} in the sense of \eqref{eq-eigenvalue-Ornstein-Uhlenbeck-operator} such that $\Tr(Y)=0$. Then $Y\equiv 0$ on $\R_+^{N+1}$.
\end{proposition}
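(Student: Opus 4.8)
The plan is to exploit the fact that $\Tr(Y)=0$ forces $Y$ to satisfy \emph{both} a homogeneous Dirichlet and a homogeneous Neumann condition on the degenerate hyperplane $\{y=0\}$, an overdetermination which is incompatible with $Y\not\equiv0$ by unique continuation. First I would observe that, since $\Tr(Y)=0$, the boundary integral in the weak formulation \eqref{eq-eigenvalue-Ornstein-Uhlenbeck-operator} disappears, so that
\[
  \int_{\R^{N+1}_+}y^{1-2s}\nabla Y\cdot\nabla V\,G\,dz=\gamma\int_{\R^{N+1}_+}y^{1-2s}YV\,G\,dz\qquad\text{for all }V\in\mathcal H .
\]
Since $-\dive(y^{1-2s}G\,\nabla\cdot)=G\,\mathcal A(\cdot)$, where $\mathcal A$ denotes the operator appearing in \eqref{prob-eigenvalue-Ornstein-Uhlenbeck-operator}, this says that $Y$ weakly solves $-\dive(y^{1-2s}G\,\nabla Y)=\gamma\,y^{1-2s}G\,Y$ in $\R^{N+1}_+$ together with $\Tr(Y)=0$ and the homogeneous Neumann condition $\lim_{y\to0^+}y^{1-2s}\partial_yY=0$ (the latter being exactly the vanishing of the Neumann datum $\tfrac{\mu}{|x|^{2s}}\Tr(Y)$).

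Next I would reflect $Y$ across $\{y=0\}$ in the two natural ways, the even reflection $\bar Y(x,y):=Y(x,|y|)$ and the odd reflection $\widehat Y(x,y):=\operatorname{sgn}(y)\,Y(x,|y|)$. Because $\Tr(Y)=0$, the odd reflection belongs to the relevant weighted Sobolev class across $\{y=0\}$, and a direct reflection argument — in which the vanishing of the boundary term is precisely what is needed — shows that both $\bar Y$ and $\widehat Y$ are weak solutions on the whole of $\R^{N+1}$ of
\[
  -\dive\bigl(|y|^{1-2s}G\,\nabla U\bigr)=\gamma\,|y|^{1-2s}G\,U ;
\]
the odd reflection works because the Dirichlet datum is zero, the even one because the Neumann datum is zero. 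Hence $U:=\bar Y-\widehat Y$ is a weak solution of this equation on the connected open set $\R^{N+1}$ which, by construction, vanishes identically on the open half-space $\{y>0\}$ (and equals $2Y(x,-y)$ on $\{y<0\}$).

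Finally, the weight $|y|^{1-2s}G$ is locally comparable to the Muckenhoupt $A_2$ weight $|y|^{1-2s}$, so the weak unique continuation property for degenerate elliptic equations of this kind applies; equivalently, since $U\equiv0$ on $\{y>0\}$ the trace and the conormal derivative of $U$ both vanish on $\{y=0\}$, and a solution of this extension equation with vanishing Cauchy data on $\{y=0\}$ must vanish identically. This yields $U\equiv0$ on $\R^{N+1}$, hence $Y\equiv0$ on $\R^{N+1}_+$. I expect the unique continuation input across the degenerate hyperplane $\{y=0\}$ to be the delicate step. An alternative, more computational route avoiding it is to use the explicit description of the eigenfunctions in Proposition \ref{prop-eigenvalues}: expanding $Y=\sum_{(n,j)}\beta_{n,j}\widetilde Y_{n,j}$ over the finite-dimensional eigenspace of \eqref{prob-eigenvalue-Ornstein-Uhlenbeck-operator} associated with $\gamma$ and imposing $\Tr(Y)=0$, the linear independence of the radial profiles $r\mapsto r^{-\alpha_j}P_{j,n}(r^2/4)$ corresponding to distinct values of $\alpha_j$ (they have distinct leading order as $r\to0^+$) reduces the statement to the injectivity of the trace operator $\mathcal T$ of \eqref{def-trace-S} on each eigenspace of the half-sphere problem \eqref{prob-eigenvalue-half-a-sphere}, which in turn follows from the analysis of that problem carried out in \cite{Fall-Felli-2014}.
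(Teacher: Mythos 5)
Your proposal is correct and is essentially the paper's argument: the paper simply extends $Y$ by zero across $\{y=0\}$ (using that both the trace and the weighted Neumann datum vanish) to get a weak solution of $-\dive(|y|^{1-2s}G\,\nabla\widehat Y)=\gamma|y|^{1-2s}G\,\widehat Y$ on $\R^{N+1}$ vanishing on a half-space, and then invokes the unique continuation result of \cite{TXZ} for degenerate elliptic equations with Muckenhoupt $A_2$ weights. Your function $U=\bar Y-\widehat Y$ is exactly (twice, up to reflection) this zero extension, so the even/odd reflection step is just a slightly more roundabout construction of the same object, with the same unique continuation input.
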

\begin{proof}
If $\Tr(Y)=0$, by
\eqref{prob-eigenvalue-Ornstein-Uhlenbeck-operator} we have that
$\big(-\lim\limits_{y \to 0^+}y^{1-2s}\pd{Y}{y}\big)=0$ on $\R^{N}$. Hence the
function
\begin{equation*}
  \widehat Y(x,y)=
  \begin{cases}
    Y(x,y),&\text{if }y\geq0,\\
    0 ,&\text{if }y<0,
  \end{cases}
\end{equation*}
belongs to  $H^1_{loc}(\R^{N+1},|y|^{1-2s})$ and weakly solves
\begin{equation*}
-\dive(|y|^{1-2s}G\,\nabla \widehat Y)=\gamma |y|^{1-2s}G\,\widehat Y \quad\text{ in } \R^{N+1}.
\end{equation*}
The unique continuation principle for elliptic equations with
Muckenhoupt weights proved in \cite{TXZ} then implies that $\widehat
Y\equiv 0$ in $\R^{N+1}$, so that  $Y\equiv 0$ on $\R_+^{N+1}$.
\end{proof}

\section{An Almgren-Poon type monotonicity
  formula} \label{sec-Almgren-Poon-type-monotonicity-formula} In this
section we develop an Almgren-Poon type monotonicity formula for
solutions of \eqref{eq-weak-formulation-gaussian}. Let $\bar T$ be as
in Proposition \ref{prop-ineq-D+H} with $K=C_g$ and $C_g$ as in  \eqref{hp-g-subhomogeneous}, and
\begin{equation*}
	\alpha:= \frac{T}{2(\lfloor T/\bar T\rfloor+1)}
\end{equation*}
where $\lfloor \cdot \rfloor$ denotes the floor function, i.e. $\lfloor x \rfloor=\max\{j \in \mathbb{Z}:j\le x\}$. It follows that 
\begin{equation*}
	(0,T)=\bigcup_{i=1}^k(a_i,b_i)
\end{equation*}
where
\begin{equation*}
	k=2(\lfloor T/\bar T\rfloor+1)-1, \quad a_i=(i-1) \alpha, \quad \text{ and } \quad b_i=(i+1)\alpha.
\end{equation*}
It is clear that $2 \alpha \in (0, \bar T)$ and
$(a_i,b_i) \cap (a_{i+1},b_{i+1}) \neq \emptyset$. For every $i\in\{1,\dots,k\}$ we define
\begin{align*}
  &V_i(z,t)=U(\sqrt{t} z,t+a_i), \quad z \in \R^{N+1}_+, \quad t \in (0,2 \alpha),\\
  &v_i(x,t)=u(\sqrt{t}x,t+a_i), \quad  x \in \R^N,  \quad t \in (0,2 \alpha),
\end{align*}
see \eqref{def-U} and \eqref{def-u}.
Then  $\Tr(V_i(\cdot,t))=v_i(\cdot,t)$  for every $i=1,\dots,k$ and a.e. $t \in (0,2 \alpha)$.

\begin{remark} \label{reamrk-eq-V-i}
Reasoning as in Section \ref{sec-formulation}, it is easy to see that, for any $i=1 \dots, k$, the function $V_i $ solves 
	\begin{multline}\label{eq:eqVi}
          \sideset{_{\mathcal{H}^*}}{_{\mathcal{H}}}{\mathop{\left\langle
                (V_i)_t,
                \phi\right\rangle}}=\frac{1}{t}\int_{\R^{N+1}_+}y^{1-2s}\nabla
          V_i \cdot \nabla \phi \,G \, dz
          \\-\frac{1}{t}\int_{\R^N}\left(\frac{\mu}{|x|^{2s}}
            v_i(x,t)\phi(x,0)+t^sh(\sqrt{t}x,t+a_i)v_i(x,t)\phi(x,0)\right)G(x,0)\,
          dx,
	\end{multline}
	for any $\phi \in C_c^{\infty}(\overline{\R^{N+1}})$ and a.e. $t \in (0,2 \alpha)$.  
Furthermore,  by  Proposition \ref{prop-eq-Vt} $V_i  \in L^2((\tau,2\alpha),\mathcal{H})$   and $(V_i)_t \in L^2((\tau,2\alpha),\mathcal{H}^*)$  for any $\tau \in (0,2 \alpha)$.
\end{remark}

For any $i=i, \dots, k$ and  $t \in (0,2 \alpha)$, let 
\begin{equation*}
H_i(t):=\int_{\R^{N+1}_+} y^{1-2s}V_i^2G \, dz 
\end{equation*}
and 
\begin{equation}\label{def-D}
D_i(t):=\frac{1}{t}\int_{\R^{N+1}_+} y^{1-2s}|\nabla V_i|^2 G \, dz-\frac{1}{t}\int_{\R^N}  \left(\frac{\mu}{|x|^{2s}}v_i^2 +t^sh(\sqrt{t}x,t+a_i) v_i^2\right)G(x,0)\, dx.
\end{equation}

\begin{proposition}\label{prop-drivitive-H}
For any $i=1,\dots,k$, we have that $H_i \in W^{1,1}_{loc}(0,2 \alpha )$ and 
\begin{equation}\label{eq-H'}
H_i'(t)=2\df{\mathcal{H}^*_t}{(V_i)_t}{V_i}{_{\mathcal{H}_t}}=2 D_i(t) 
\end{equation}
in a distributional sense and  a.e. in $(0,2 \alpha)$.
\end{proposition}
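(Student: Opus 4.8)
The plan is to derive both equalities in \eqref{eq-H'} from the abstract parabolic identity recorded in Remark \ref{remark-parabolic-change-variables} combined with the weak formulation \eqref{eq:eqVi}. Fix $i\in\{1,\dots,k\}$. By Remark \ref{reamrk-eq-V-i} the function $V_i$ satisfies $V_i\in L^2((\tau,2\alpha),\mathcal H)$ and $(V_i)_t\in L^2((\tau,2\alpha),\mathcal H^*)$ for every $\tau\in(0,2\alpha)$, i.e.\ the analogue of \eqref{V-H*} holds on each subinterval $(\tau,2\alpha)$. Since $H_i(t)=\int_{\R^{N+1}_+}y^{1-2s}V_i^2 G\,dz=\norm{V_i(\cdot,t)}_{\mathcal L}^2$, Remark \ref{remark-parabolic-change-variables} applied on $[\tau,2\alpha]$ yields that $H_i$ is absolutely continuous on $[\tau,2\alpha]$ and that $H_i'(t)=2\,\df{\mathcal H^*}{(V_i)_t(\cdot,t)}{V_i(\cdot,t)}{\mathcal H}$ for a.e.\ $t\in(\tau,2\alpha)$ and in the distributional sense. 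Since $\tau\in(0,2\alpha)$ is arbitrary, $H_i$ is locally absolutely continuous, hence $H_i\in W^{1,1}_{loc}(0,2\alpha)$, and the first equality in \eqref{eq-H'} holds.

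For the second equality I would test \eqref{eq:eqVi} with $\phi=V_i(\cdot,t)$. Since \eqref{eq:eqVi} is stated only for $\phi\in C_c^\infty(\overline{\R^{N+1}_+})$, the first step is a density argument: for a.e.\ $t\in(0,2\alpha)$ every term on the right-hand side of \eqref{eq:eqVi} — the Gaussian Dirichlet form and the two boundary integrals with the potentials $\mu|x|^{-2s}$ and $t^sh(\sqrt t\,x,t+a_i)$ — defines a bounded linear functional of $\phi\in\mathcal H$, by \eqref{ineq-trace}, \eqref{ineq-hardy-frac}, the subhomogeneity bound \eqref{hp-g-subhomogeneous} (via \eqref{def-h}) and H\"older's inequality; since $C_c^\infty(\overline{\R^{N+1}_+})$ is dense in $\mathcal H$ by definition, identity \eqref{eq:eqVi} extends to all $\phi\in\mathcal H$, for a.e.\ $t$. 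For a.e.\ $t\in(0,2\alpha)$ we have $V_i(\cdot,t)\in\mathcal H$ and $\Tr(V_i(\cdot,t))=v_i(\cdot,t)$, so choosing $\phi=V_i(\cdot,t)$ in the extended \eqref{eq:eqVi} reproduces exactly the expression defining $D_i(t)$ in \eqref{def-D}, that is $\df{\mathcal H^*}{(V_i)_t(\cdot,t)}{V_i(\cdot,t)}{\mathcal H}=D_i(t)$ for a.e.\ $t$. Combining this with the previous paragraph gives $H_i'(t)=2D_i(t)$ a.e.\ and in $\mathcal D'(0,2\alpha)$.

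The only point requiring care is this extension of \eqref{eq:eqVi} from smooth compactly supported test functions to arbitrary elements of $\mathcal H$ — concretely, the continuity on $\mathcal H$ of the two boundary functionals, which is precisely what Propositions \ref{prop-trace-ineq} and \ref{prop-ineq-hardy-frac}, together with the estimate \eqref{hp-g-subhomogeneous}, are designed to provide. Everything else is bookkeeping; in particular no difficulty arises near $t=0$, since the statement and all the facts invoked are local on $(0,2\alpha)$ and the factor $1/t$ appearing in $D_i$ is bounded on compact subsets of $(0,2\alpha)$.
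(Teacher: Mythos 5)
Your proposal is correct and follows essentially the same route as the paper: the paper's proof simply invokes Remark \ref{remark-parabolic-change-variables} (via Remark \ref{reamrk-eq-V-i}) together with the definition \eqref{def-D} of $D_i$, which is exactly your argument, with the density/continuity step extending \eqref{eq:eqVi} to test functions in $\mathcal{H}$ left implicit in the paper and spelled out by you.
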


\begin{proof}
The claim follows from Remark \ref{remark-parabolic-change-variables}  and  \eqref{def-D}.
\end{proof}

\begin{proposition}\label{prop-Ht-monotonicity}
Let $C_{N,s,\mu}$ be as in Proposition \ref{prop-ineq-D+H} with $K:= C_g$ and $C_g$ as in  \eqref{hp-g-subhomogeneous}. Then the function
\begin{equation*}
t \to t^{-2C_{N,s,\mu}+\frac{N-2+2s}{2}}H_i(t)
\end{equation*}
is non-decreasing in $(0,2\alpha)$.
\end{proposition}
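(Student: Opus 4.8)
The plan is to turn the statement into a first-order differential inequality for $H_i$ and then feed in the coercivity estimate \eqref{ineq-D+H}. By Proposition \ref{prop-drivitive-H} we know that $H_i\in W^{1,1}_{loc}(0,2\alpha)$ and $H_i'=2D_i$, both a.e. and in the distributional sense. Since $t\mapsto t^{\beta}$ is smooth on $(0,2\alpha)$, the product $t\mapsto t^{\beta}H_i(t)$ belongs to $W^{1,1}_{loc}(0,2\alpha)$ for every $\beta\in\R$, and the Leibniz rule gives
\begin{equation*}
\frac{d}{dt}\bigl(t^{\beta}H_i(t)\bigr)=t^{\beta-1}\bigl(\beta H_i(t)+tH_i'(t)\bigr)=t^{\beta-1}\bigl(\beta H_i(t)+2tD_i(t)\bigr)
\end{equation*}
for a.e. $t\in(0,2\alpha)$. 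Taking $\beta:=-2C_{N,s,\mu}+\tfrac{N+2-2s}{2}$, the claim then reduces to the pointwise inequality $2tD_i(t)\geq\bigl(2C_{N,s,\mu}-\tfrac{N+2-2s}{2}\bigr)H_i(t)$ for a.e. $t\in(0,2\alpha)$: indeed this forces $\tfrac{d}{dt}(t^{\beta}H_i)\geq 0$ a.e. on $(0,2\alpha)$, and a $W^{1,1}_{loc}$ function with nonnegative distributional derivative is non-decreasing, being absolutely continuous on every compact subinterval.

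To establish this inequality I would apply Proposition \ref{prop-ineq-D+H} with $K:=C_g$ (the constant in \eqref{hp-g-subhomogeneous}), with $\tilde T:=2\alpha$, with $f(x,t):=h(x,t+a_i)=g(x,-(t+a_i))$, tested on $V:=V_i(\cdot,t)$. Three facts must be checked. First, $2\alpha\in(0,\bar T)$ by the very choice of $\alpha$ at the start of this section, so $\tilde T\leq\bar T$ and the constant $C_{N,s,\mu}$ is available uniformly on all of $(0,2\alpha)$ — this is precisely the purpose of the covering $(0,T)=\bigcup_{i}(a_i,b_i)$. Second, for $t\in(0,2\alpha)$ we have $t+a_i\in(a_i,b_i)\subset(0,T)$, so $f(\cdot,t)$ is well defined and, by \eqref{hp-g-subhomogeneous}, $|f(x,t)|=|g(x,-(t+a_i))|\leq C_g(1+|x|^{-2s+\varepsilon})$, i.e. \eqref{hp-h-subhomogeneous-no-nabla} holds with $K=C_g$. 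Third, $V_i(\cdot,t)\in\mathcal{H}$ for a.e. $t\in(0,2\alpha)$ by Remark \ref{reamrk-eq-V-i}. Recalling that $\Tr(V_i(\cdot,t))=v_i(\cdot,t)$ and comparing the left-hand side of \eqref{ineq-D+H} with the definition \eqref{def-D} of $D_i$, that left-hand side is exactly $tD_i(t)+\tfrac{N+2-2s}{4}H_i(t)$; bounding the right-hand side of \eqref{ineq-D+H} from below by $C_{N,s,\mu}H_i(t)$ (the discarded gradient term being nonnegative) we obtain
\begin{equation*}
tD_i(t)+\frac{N+2-2s}{4}\,H_i(t)\geq C_{N,s,\mu}\,H_i(t)\qquad\text{for a.e. }t\in(0,2\alpha),
\end{equation*}
which is the desired pointwise inequality.

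The analytic heart of the matter has already been done: the coercive Hardy-type bound \eqref{ineq-D+H} and the identity $H_i'=2D_i$ of Proposition \ref{prop-drivitive-H}. Consequently I do not expect a genuine obstacle here; the only delicate points are the bookkeeping needed to match the rescaled, time-translated problem solved by $V_i$ with the hypotheses of Proposition \ref{prop-ineq-D+H} — in particular keeping $\tilde T=2\alpha\leq\bar T$, so that a single constant $C_{N,s,\mu}$ works throughout $(0,2\alpha)$ — and the fact that $H_i$ is a priori only of class $W^{1,1}_{loc}$, so that the monotonicity must be read off the sign of the distributional derivative together with absolute continuity rather than from a classical differentiation.
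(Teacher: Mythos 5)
Your argument is exactly the paper's: combine $H_i'=2D_i$ from Proposition \ref{prop-drivitive-H} with the coercivity estimate \eqref{ineq-D+H} applied to $V=V_i(\cdot,t)$ and $f(x,t)=h(x,t+a_i)$, deduce $tD_i(t)\ge\big(C_{N,s,\mu}-\tfrac{N+2-2s}{4}\big)H_i(t)$, and conclude from the sign of the a.e.\ derivative of $t^{\beta}H_i(t)$; your preliminary checks ($2\alpha\le\bar T$, the bound $|f|\le C_g(1+|x|^{-2s+\e})$, $V_i(\cdot,t)\in\mathcal{H}$) are precisely the bookkeeping the paper leaves implicit. The only discrepancy is the exponent: you prove monotonicity of $t^{-2C_{N,s,\mu}+\frac{N+2-2s}{2}}H_i(t)$, which is what \eqref{ineq-D+H} actually yields, whereas the proposition (and the paper's own proof) write $\frac{N-2+2s}{2}$; since that exponent is smaller, the printed claim is formally stronger and does not literally follow from \eqref{ineq-D+H}, so this looks like a typo in the paper rather than a gap on your side, and it is harmless later on, where only a power-type bound on $H(\la^2t)/H(\la^2)$ for $t$ in compact subsets of $(0,1]$ is used.
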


\begin{proof}
 In view of \eqref{ineq-D+H} and \eqref{eq-H'}
\begin{equation*}
H_i'(t) \ge \frac{1}{t} \left(2C_{N,s,\mu}-\frac{N-2+2s}{2}\right)H_i(t) \quad \text{for a.e.  } t \in (0,2 \alpha),
\end{equation*}
hence 
\begin{equation*}
  \frac{d}{dt}\left(t^{-2C_{N,s,\mu}+\frac{N-2+2s}{2}}H_i(t)\right) \ge 0  \quad \text{for a.e.  } t \in (0,2 \alpha).
\end{equation*}
We conclude that $t \to t^{-2C_{N,s,\mu}+\frac{N-2+2s}{2}}H_i(t)$ is non-decreasing in $(0,2 \alpha)$.
\end{proof}

\begin{corollary}
If $1\le i\le k$ and  $H_i(\bar t)=0$ for some $ \bar t \in (0,2 \alpha)$, then $H_i(t)=0$ for any $t \in (0,\bar t)$.
\end{corollary}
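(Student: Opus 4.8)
The plan is to derive this as an immediate consequence of the monotonicity statement in Proposition \ref{prop-Ht-monotonicity}. First I would record the elementary but essential observation that $H_i$ is nonnegative: by definition $H_i(t)=\int_{\R^{N+1}_+} y^{1-2s}V_i^2 G\,dz\ge 0$ for every $t\in(0,2\alpha)$, and moreover $H_i\in W^{1,1}_{loc}(0,2\alpha)$ by Proposition \ref{prop-drivitive-H}, so in particular $H_i$ has a continuous representative on $(0,2\alpha)$ and point evaluations make sense.

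Next I would introduce the auxiliary function $\Phi_i(t):=t^{-2C_{N,s,\mu}+\frac{N-2+2s}{2}}H_i(t)$, where $C_{N,s,\mu}$ is the constant from Proposition \ref{prop-ineq-D+H} with $K:=C_g$. Proposition \ref{prop-Ht-monotonicity} gives that $\Phi_i$ is non-decreasing on $(0,2\alpha)$. Hence, for any $t\in(0,\bar t)$ we have $0\le \Phi_i(t)\le \Phi_i(\bar t)$, the first inequality because $\Phi_i$ is the product of the strictly positive factor $t^{-2C_{N,s,\mu}+\frac{N-2+2s}{2}}$ with the nonnegative quantity $H_i(t)$, and the second by monotonicity.

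Finally I would use the hypothesis $H_i(\bar t)=0$, which forces $\Phi_i(\bar t)=\bar t^{-2C_{N,s,\mu}+\frac{N-2+2s}{2}}H_i(\bar t)=0$. Combining with the previous display yields $\Phi_i(t)=0$ for all $t\in(0,\bar t)$, and dividing by the strictly positive factor $t^{-2C_{N,s,\mu}+\frac{N-2+2s}{2}}$ gives $H_i(t)=0$ for all $t\in(0,\bar t)$, which is the claim. I do not anticipate any genuine obstacle here: the only point requiring a moment's care is to make sure the sign of the power weight is irrelevant (it is always positive for $t>0$) and that $H_i\ge 0$, so that vanishing of the monotone quantity at $\bar t$ propagates backward to all smaller times.
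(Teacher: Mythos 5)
Your argument is correct and coincides with the paper's own proof: both use the monotonicity of $t \mapsto t^{-2C_{N,s,\mu}+\frac{N-2+2s}{2}}H_i(t)$ from Proposition \ref{prop-Ht-monotonicity} together with nonnegativity of $H_i$ to propagate the vanishing at $\bar t$ backward in time. Your added remarks on continuity and the positivity of the power weight are harmless elaborations of the same reasoning.
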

\begin{proof}
  Since $t \to t^{-2C_{N,s,\mu}+\frac{N-2+2s}{2}}H_i(t)$ is
  non-decreasing in $(0,2 \alpha)$ by Proposition
  \ref{prop-Ht-monotonicity} and it is non-negative, from
  the assumption $H(\bar t)=0$ it follows that $H_i(t)=0$ for any $t \in (0,\bar t)$.
\end{proof}
The regularity of the function $tD_i(t)$ is discussed in the following proposition.
\begin{proposition}\label{prop-derivative-tD}
If $1 \le i \le k$ and $T_i \in (0,2 \alpha)$ is such that $V_i(\cdot,T_i) \in \mathcal{H}$ then 
\begin{itemize}
\item[(i)] $(V_i)_t \in L^2((\tau,T_i),\mathcal{L})$ for any $\tau \in (0,T_i)$, 
\item[(ii)] the function  $t \to tD_i(t)$ belongs to $W^{1,1}_{loc}(0, T_i)$ and its weak derivative is  as follows:
\begin{multline}\label{eq-derivative-tD}
\frac{d}{dt} (tD_i(t))=2t \int_{\R_+^{N+1}}y^{1-2s}\left|(V_i)_t\right|^2 Gdz\\
-\int_{\R^N}\left(st^{s-1}h(\sqrt{t}x,t+a_i)+t^{s-\frac12}\nabla h(\sqrt{t}x,t+a_i)\cdot \frac{x}{2}+t^s h_t(\sqrt{t}x,t+a_i) \right) v_i^2(x,t) G(x,0) \, dx.
\end{multline}
\end{itemize}
\end{proposition}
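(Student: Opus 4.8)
The plan is to deduce (i) directly from Proposition \ref{prop-Vt-L2} and to prove (ii) by first establishing the differential identity for the Faedo--Galerkin approximants of $V_i$ and then passing to the limit. For (i), set $f(x,t):=h(x,t+a_i)$; by \eqref{def-h}, \eqref{hp-g-Lr} and \eqref{hp-g-subhomogeneous} the function $f$ belongs to $L^1_{loc}$, satisfies \eqref{hp-f-subhomogeneous} with $K=C_g$, and has $f_t\in L^\infty_{loc}((0,2\alpha),L^{\frac{N}{2s}}(\R^N))$. Since $2\alpha\in(0,\bar T)$, $T_i\in(0,2\alpha)$, $V_i(\cdot,T_i)\in\mathcal H$, and, by Remark \ref{reamrk-eq-V-i}, $V_i\in L^2((\tau,2\alpha),\mathcal H)$ with $(V_i)_t\in L^2((\tau,2\alpha),\mathcal H^*)$ solves \eqref{eq-weak-formulation-gaussian} with $h$ replaced by $f$, Proposition \ref{prop-Vt-L2} applied on the interval $(0,T_i)$ yields $(V_i)_t\in L^2((\tau,T_i),\mathcal L)$ for every $\tau\in(0,T_i)$, which is (i).

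For (ii), let $\{V_n\}$ be the Galerkin approximants constructed in the proof of Proposition \ref{prop-Vt-L2}, so that $V_n\in H^1((\tau,T_i),W_n)$ for a finite-dimensional $W_n\subset\mathcal H$ and $V_n$ solves \eqref{prop-Vt-L2:1.1}; denoting by $A_t$ the bilinear form introduced there with $f=h(\cdot,\cdot+a_i)$, one has $tD_n(t)=\df{\mathcal H^*}{A_t(V_n)}{V_n}{\mathcal H}$. This map is absolutely continuous, being built from the $H^1$-in-$t$ function $V_n$ with values in the finite-dimensional space $W_n$ and from the coefficient $t^sh(\sqrt{t}x,t+a_i)$, whose time-differentiability is granted by \eqref{hp-g-Lr} and \eqref{hp-g-subhomogeneous}. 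Differentiating it, using the symmetry of $A_t$ and then \eqref{prop-Vt-L2:1.1} tested with $(V_n)_t\in W_n$, gives
\begin{multline*}
\frac{d}{dt}\big(tD_n(t)\big)=2\,\df{\mathcal H^*}{A_t(V_n)}{(V_n)_t}{\mathcal H}-\int_{\R^N}\partial_t\big(t^sh(\sqrt{t}x,t+a_i)\big)\,v_n^2\,G(x,0)\,dx\\
=2t\int_{\R^{N+1}_+}y^{1-2s}|(V_n)_t|^2\,G\,dz-\int_{\R^N}\partial_t\big(t^sh(\sqrt{t}x,t+a_i)\big)\,v_n^2\,G(x,0)\,dx,
\end{multline*}
where $\partial_t\big(t^sh(\sqrt{t}x,t+a_i)\big)=st^{s-1}h(\sqrt{t}x,t+a_i)+t^{s-\frac12}\nabla h(\sqrt{t}x,t+a_i)\cdot\frac{x}{2}+t^sh_t(\sqrt{t}x,t+a_i)$. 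Integrating over $(\tau,t)\subset(0,T_i)$ and letting $n\to\infty$ gives the integrated form of \eqref{eq-derivative-tD}; since its right-hand side, as a function of $\sigma$, lies in $L^1_{loc}(0,T_i)$ by \eqref{ineq-trace}, \eqref{ineq-hardy-frac}, \eqref{ineq-h_t} and part (i), this is precisely the assertion that $tD_i\in W^{1,1}_{loc}(0,T_i)$ with weak derivative \eqref{eq-derivative-tD}. The boundary term $tD_n(t)-\tau D_n(\tau)$ and the trace integrals are handled by the same inequalities, together with $V_n(\cdot,T_i)\to V_i(\cdot,T_i)$ in $\mathcal H$.

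The delicate point in this limit is the passage to the limit in the quadratic term $\int_\tau^t 2\sigma\int_{\R^{N+1}_+}y^{1-2s}|(V_n)_t|^2\,G\,dz\,d\sigma$ (and, similarly, in the trace integrals, for which one needs strong convergence of the traces): the a priori estimates in the proof of Proposition \ref{prop-Vt-L2} only give $(V_n)_t\rightharpoonup(V_i)_t$ weakly in $L^2((\tau,T_i),\mathcal L)$, which by lower semicontinuity yields just an inequality. One therefore has to upgrade the convergence to $V_n\to V_i$ strongly in $L^2((\tau,T_i),\mathcal H)$ and $(V_n)_t\to(V_i)_t$ strongly in $L^2((\tau,T_i),\mathcal L)$. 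Since the datum $V_n(\cdot,T_i)$ converges strongly in $\mathcal H$ and, by Proposition \ref{prop-ineq-D+H}, the forms $A_t$ are uniformly coercive modulo lower order terms on $[\tau,T_i]$, this can be obtained from parabolic regularity for \eqref{eq-weak-formulation-gaussian} (continuous dependence of the solution on the $\mathcal H$-datum), or, more directly, by testing the equation solved by $V_n-V_m$ with $(V_n-V_m)_t$ and using the uniform coercivity to show that $\{V_n\}$ is a Cauchy sequence in the above norms. With this strong convergence established, all the remaining terms pass to the limit and \eqref{eq-derivative-tD} follows.
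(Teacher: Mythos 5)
Your strategy is the same as the paper's: part (i) is obtained exactly as in the paper, by applying Proposition \ref{prop-Vt-L2} with $f(x,t)=h(x,t+a_i)$ (admissible by \eqref{def-h}, \eqref{hp-g-Lr}, \eqref{hp-g-subhomogeneous}) to the problem described in Remark \ref{reamrk-eq-V-i}; part (ii) is obtained, as in the paper, by an approximation argument which justifies testing \eqref{eq:eqVi} with $(V_i)_t$ and integrating in time, and your computation of $\partial_t\big(t^sh(\sqrt{t}x,t+a_i)\big)$ and the resulting integrated identity agree with \eqref{eq-derivative-tD} (the paper only records this integrated identity, compressing the justification into the phrase ``approximating procedure similar to Proposition \ref{prop-Vt-L2}, formally testing with $(V_i)_t$'').

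The point where your argument does not close is precisely the ``delicate point'' you flag. The remedy ``testing the equation solved by $V_n-V_m$ with $(V_n-V_m)_t$'' fails: the Faedo--Galerkin functions $V_n$, $V_m$ solve only the equations projected onto $W_n$, $W_m$, so $V_n-V_m$ satisfies no equation that may be tested with its own time derivative (even for nested spaces $W_m\subset W_n$ one has $(V_n-V_m)_t\in W_n$, which is not an admissible test function for the $V_m$-equation), and the Cauchy estimate cannot be run. The alternative you mention, ``continuous dependence of the solution on the $\mathcal{H}$-datum'' in the norms you need ($L^2((\tau,T_i),\mathcal{H})$ for $V$ and $L^2((\tau,T_i),\mathcal{L})$ for $V_t$), is itself proved by the very energy identity under discussion, so invoking it here is circular. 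The natural repair uses part (i): once $(V_i)_t\in L^2((\tau,T_i),\mathcal{L})$ and $V_i\in L^2((\tau,T_i),\mathcal{H})$ are known, one proves \eqref{eq-derivative-tD} directly for $V_i$ by testing \eqref{eq:eqVi} (extended by density to $\mathcal{H}$-valued test functions) with the incremental quotients $\delta^{-1}\big(V_i(\cdot,t+\delta)-V_i(\cdot,t)\big)\in\mathcal{H}$ at the times $t$ and $t+\delta$, exploiting the symmetry of the bilinear form and the $t$-regularity of the coefficient $t^sh(\sqrt{t}x,t+a_i)$ guaranteed by \eqref{hp-g-Lr}, \eqref{hp-g-subhomogeneous} and \eqref{def-h}, and then letting $\delta\to0$; this yields the local absolute continuity of $t\mapsto tD_i(t)$ and the identity without any strong Galerkin convergence. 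With that replacement (or an equivalent time-regularization) your proof matches the paper's.
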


\begin{proof}
Let $1\le i\le k$. Then (i) follows from Proposition   \ref{prop-Vt-L2} and Remark \ref{reamrk-eq-V-i}.

With an approximating procedure similar to Proposition
\ref{prop-Vt-L2}, formally testing \eqref{eq:eqVi}  with $(V_i)_t$
yields, for a.e. $\tau \in (0,T_i)$,
\begin{align*}
  &\int_{\R^{N+1}_+}y^{1-2s}|\nabla V_i(\cdot,\tau)|^2G \, dz-
    \int_{\R^N}\left(\frac{\mu}{|x|^{2s}}v_i^2(\cdot,\tau)+\tau^{s}
    h(\sqrt{\tau}x,\tau+a_i)
    v_i^2(\cdot,\tau) \right)G(x,0) \, dx \\
  &=\int_{\R^{N+1}_+}y^{1-2s}|\nabla V_i(\cdot,T_i)|^2G \, dz\\
  &-\int_{\R^N}\left(\frac{\mu}{|x|^{2s}}v_i^2(\cdot,T_i)+ {T_i}^{s}
    h(\sqrt{T_i}x,T_i+a_i)
    v_i^2(\cdot,T_i) \right)G(x,0) \,  dx\\
  &-2\int_{\tau}^{T_i}t\left(\int_{\R^{N+1}}y^{1-2s} (V_i)_t^2 G \, dz \right)\, dt\\
  &+\int_{\tau}^{T_i}\left(\int_{\R^N}\left(st^{s-1}h(\sqrt{t}x,t+a_i)+t^s
    h_t(\sqrt{t}x,t+a_i)\right)
    v_i^2(x,t) G(x,0) \, dx \right)\, dt\\
  &+\int_{\tau}^{T_i}\left(\int_{\R^N}t^{s-\frac12}\nabla
    h(\sqrt{t}x,t+a_i)\cdot \frac{x}{2}
    v_i^2(x,t) G(x,0) \, dx \right)\, dt,
\end{align*}
hence, thanks to \eqref{hp-g-Lr}, \eqref{hp-g-subhomogeneous} and \eqref{def-h}, the function 
\begin{equation*}
  \tau \mapsto \int_{\R^{N+1}_+}y^{1-2s}|\nabla V_i(\cdot,\tau)|^2G \,
  dz-
  \int_{\R^N}\left(\frac{\mu}{|x|^{2s}}v_i^2(\cdot,\tau)+\tau^{s} h(\sqrt{\tau}x,\tau)v^2(\cdot,\tau) \right)G(x,0) \, dx 
\end{equation*}
is absolute continuous on $[T_1,T_2]$ for any $[T_1,T_2] \subset (0,T_i)$ and, for a.e. $\tau \in (0,T_i)$,
\begin{multline*}
  \frac{d}{d\tau}\Bigg(\int_{\R^{N+1}_+}y^{1-2s}|\nabla V_i(\cdot,\tau)|^2G \, dz\\
  -\int_{\R^N}\left(\frac{\mu}{|x|^{2s}}v_i^2(\cdot,\tau)+\tau^{s}
    h(\sqrt{\tau}x,\tau+a_i)
    v_i^2(\cdot,\tau) \right)G(x,0) \, dx\Bigg) \\
  =2\tau\int_{\R^{N+1}_+}y^{1-2s} (V_i)_t^2(z,\tau) G(z) \,
  dz-\int_{\R^N}\tau^{s-\frac12}
  \nabla h(\sqrt{\tau}x,\tau+a_i)\cdot \frac{x}{2}v_i^2(\tau,x) G(x,0) \, dx \\
  -\int_{\R^N}\left(s\tau^{s-1}h(\sqrt{\tau}x,\tau+a_i)+\tau^s
    h_t(\sqrt{\tau}x,\tau+a_i)\right) v_i^2(\tau,x) G(x,0) \, dx.
\end{multline*}
The proof is thereby complete.
\end{proof}

For any $i=1 \dots k$, let us define the Almgren-Poon frequency function  
\begin{equation*}
\mathcal{N}_i:(0,2 \alpha) \to \R \cup \{-\infty,+\infty\}, \quad \mathcal{N}(t):=\frac{tD_i(t)}{H_i(t)}.
\end{equation*}

\begin{proposition} \label{prop-N-W11}
If there exists $\beta_i, T_i \in (0,2 \alpha)$ such that 
\begin{equation}\label{betai-ti-properties}
\beta_i <T_i, \quad H_i(t) >0 \text{ for all } t \in (\beta_i,T_i), \text { and } V_i(\cdot,T_i)\in \mathcal{H},
\end{equation} 
then $\mathcal{N}_i\in W_{loc}^{1,1}(\beta_i,T_i)$ and the weak derivative of $\mathcal{N}_i$ can be written as 
\begin{equation}\label{eq-derivative-N}
\mathcal{N}_i'(t)=\nu_{1,i}(t) +\nu_{2,i}(t) \quad \text{ for a.e. } t \in (\beta_i,T_i)
\end{equation}
where
\begin{align}\label{def-nu1}
\nu_{1,i}(t):=\frac{2t}{H_i^2(t)}\Bigg[\left(\int_{\R_+^{N+1}}y^{1-2s}\left|(V_i)_t\right|^2Gdz \right)&
  \left(\int_{\R^{N+1}_+}y^{1-2s}V_i^2 G \,dz\right)\\
  &
-\left(\int_{\R_+^{N+1}}y^{1-2s}(V_i)_tV_iGdz\right)^2\Bigg]\notag
\end{align} 
and 
\begin{align}\label{def-nu2}
  \nu_{2,i}(t):=-\frac{1}{H_i(t)}   \times
                                   \Bigg(\int_{\R^N}\bigg(st^{s-1}h(\sqrt{t}x,t+a_i)
                                   &+t^{s-\frac12}\nabla h(\sqrt{t}x,t+a_i)\cdot \frac{x}{2}\\
                                 &+t^s h_t(\sqrt{t}x,t+a_i) \bigg) v_i^2(x,t) G(x,0) \, dx\Bigg).\notag
\end{align}
Furthermore $\nu_{1,i}(t) \ge 0$ for a.e. $t \in (\beta_i,T_i)$ and 
\begin{equation}\label{ineq-N-bounded-below}
\mathcal{N}_i(t) > -\frac{N+2-2s}{4} \quad \text{ for any } t \in (\beta_i,T_i).
\end{equation}
\begin{proof}
  Since $H_i(t)>0$ for any $t \in (\beta_i,T_i)$, then
  $1/{H_i},tD_i \in W^{1,1}_{loc}(\beta_i,T_i)$ by Proposition
  \ref{prop-drivitive-H} and Proposition
  \ref{prop-derivative-tD}. Hence
  $\mathcal{N}_i \in W^{1,1}_{loc}(\beta_i,T_i)$. Furthermore
\begin{equation*}
\mathcal{N}_i'(t)=\frac{(tD_i)'H_i-tD_iH_i'}{H_i^2}
\end{equation*}
and so, thanks to  \eqref{eq-H'} and \eqref{eq-derivative-tD}, we
conclude that \eqref{eq-derivative-N} holds with $\nu_{1,i}$ and $\nu_{2,i}$
as in \eqref{def-nu1} and \eqref{def-nu2} respectively.  By
the Cauchy-Schwarz inequality in $\mathcal{L}$ we have
$\nu_{1,i}(t) \ge 0$ for a.e. $t \in (\beta_i,T_i)$. Finally, 
\eqref{ineq-N-bounded-below} follows directly from \eqref{ineq-D+H}
with the function $f(x,t):=h(x,t+a_i)$.
\end{proof}
\end{proposition}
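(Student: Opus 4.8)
The plan is to obtain $\mathcal N_i$ as a quotient of two functions whose regularity and derivatives have already been computed, apply the Leibniz/quotient rule, and then identify the two resulting pieces and their signs.

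\emph{Regularity.} Under \eqref{betai-ti-properties} the denominator satisfies $H_i>0$ on $(\beta_i,T_i)$, and $H_i\in W^{1,1}_{loc}(0,2\alpha)$ by Proposition \ref{prop-drivitive-H}; since a $W^{1,1}_{loc}$ function on an interval is continuous, $H_i$ is bounded away from $0$ on every compact subinterval of $(\beta_i,T_i)$, so $1/H_i\in W^{1,1}_{loc}(\beta_i,T_i)$. The hypothesis $V_i(\cdot,T_i)\in\mathcal H$ lets us apply Proposition \ref{prop-derivative-tD}: it gives $(V_i)_t\in L^2((\tau,T_i),\mathcal L)$ for every $\tau\in(0,T_i)$, and $t\mapsto tD_i(t)\in W^{1,1}_{loc}(0,T_i)$ with weak derivative given by \eqref{eq-derivative-tD}. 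A product of two $W^{1,1}_{loc}$ functions on an interval is again $W^{1,1}_{loc}$ (both are continuous, hence locally bounded, so the Leibniz rule holds), whence $\mathcal N_i=(tD_i)\cdot(1/H_i)\in W^{1,1}_{loc}(\beta_i,T_i)$.

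\emph{Computation of $\mathcal N_i'$.} By the quotient rule, for a.e. $t\in(\beta_i,T_i)$,
\begin{equation*}
\mathcal N_i'(t)=\frac{(tD_i)'(t)\,H_i(t)-tD_i(t)\,H_i'(t)}{H_i^2(t)}.
\end{equation*}
I would substitute $H_i'(t)=2D_i(t)$ from \eqref{eq-H'} and $(tD_i)'(t)$ from \eqref{eq-derivative-tD}, using the identity
\begin{equation*}
D_i(t)=\int_{\R^{N+1}_+}y^{1-2s}(V_i)_t\,V_i\,G\,dz,
\end{equation*}
which is exactly \eqref{eq-H'} together with the identification $\mathcal L\subset\mathcal H^*$ of Remark \ref{remark-Lt-Ht*} (valid pointwise once $(V_i)_t(\cdot,t)\in\mathcal L$). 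Then the term $2t\big(\int y^{1-2s}|(V_i)_t|^2G\big)\big(\int y^{1-2s}V_i^2G\big)$ arising from $(tD_i)'H_i$, minus $tD_iH_i'=2t\big(\int y^{1-2s}(V_i)_tV_iG\big)^2$, divided by $H_i^2$, is precisely $\nu_{1,i}$ as in \eqref{def-nu1}; the remaining contribution to $(tD_i)'H_i$ — the $\R^N$-integral carrying $h$, $\nabla h\cdot x$ and $h_t$ — divided by $H_i^2$ and simplified by $H_i/H_i^2=1/H_i$, is precisely $\nu_{2,i}$ as in \eqref{def-nu2}. This gives \eqref{eq-derivative-N}.

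\emph{Signs.} The bound $\nu_{1,i}(t)\ge0$ is the Cauchy--Schwarz inequality in $\mathcal L$ (with weight $y^{1-2s}G$) applied to $(V_i)_t(\cdot,t)$ and $V_i(\cdot,t)$, both of which lie in $\mathcal L$ for a.e. $t$ by part (i) of Proposition \ref{prop-derivative-tD}. For \eqref{ineq-N-bounded-below} I would apply Proposition \ref{prop-ineq-D+H} with $f(x,t):=h(x,t+a_i)$, which is admissible with $K=C_g$ by \eqref{hp-g-subhomogeneous} and \eqref{def-h}, to the function $V=V_i(\cdot,t)$; inequality \eqref{ineq-D+H} then reads
\begin{equation*}
tD_i(t)+\frac{N+2-2s}{4}H_i(t)\ge C_{N,s,\mu}\Big(\int_{\R^{N+1}_+}y^{1-2s}|\nabla V_i|^2G\,dz+\int_{\R^{N+1}_+}y^{1-2s}V_i^2G\,dz\Big)\ge C_{N,s,\mu}\,H_i(t),
\end{equation*}
and dividing by $H_i(t)>0$ gives $\mathcal N_i(t)\ge-\frac{N+2-2s}{4}+C_{N,s,\mu}>-\frac{N+2-2s}{4}$ for a.e. $t$; since $\mathcal N_i$ has a continuous representative on $(\beta_i,T_i)$, the strict inequality in fact holds for every $t$ there.

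The main difficulty does not lie in this assembly but in the ingredients it uses. The differentiation formula \eqref{eq-derivative-tD}, on which the whole computation rests, needs the extra regularity $(V_i)_t\in L^2((\tau,T_i),\mathcal L)$ (Proposition \ref{prop-Vt-L2}) in order to justify testing \eqref{eq:eqVi} with $(V_i)_t$ and the subsequent integration by parts against the Gaussian weight; and Proposition \ref{prop-ineq-D+H} encodes the coercivity coming from the strict subcriticality $\mu<\kappa_s\Lambda_{N,s}$, which is what forces the lower bound on $\mathcal N_i$. Within the present proof, the only point requiring a little care is verifying $1/H_i\in W^{1,1}_{loc}(\beta_i,T_i)$, i.e.\ that $H_i$ stays bounded away from $0$ on compact subintervals — immediate from the continuity of $H_i$.
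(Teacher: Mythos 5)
Your proposal is correct and follows essentially the same route as the paper: express $\mathcal N_i$ as $(tD_i)\cdot(1/H_i)$, use Proposition \ref{prop-drivitive-H} and Proposition \ref{prop-derivative-tD} for the $W^{1,1}_{loc}$ regularity and the quotient rule, identify $\nu_{1,i}$ and $\nu_{2,i}$ via \eqref{eq-H'} and \eqref{eq-derivative-tD}, get $\nu_{1,i}\ge0$ by Cauchy--Schwarz in $\mathcal L$, and obtain \eqref{ineq-N-bounded-below} from \eqref{ineq-D+H} with $f(x,t)=h(x,t+a_i)$. The extra details you supply (continuity of $H_i$ to control $1/H_i$, the identification $D_i(t)=\int_{\R^{N+1}_+}y^{1-2s}(V_i)_tV_i\,G\,dz$ via Remark \ref{remark-Lt-Ht*}) are exactly the points the paper leaves implicit, so there is no gap.
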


The remainder term $\nu_{2,i}$ can be estimated in terms of the
frequency function as follows.
\begin{proposition}\label{prop-nu2-estimates}
Let $\nu_{2,i}$ be as in \eqref{def-nu2}. Then there exists a constant $C_1>0$ such that, if $i \in \{i,\dots,k\}$ and $\beta_i,T_i \in (0,2 \alpha )$ are as in \eqref{betai-ti-properties}, then  
\begin{equation}\label{ineq-nu2-estimates}
|\nu_{2,i}(t)| \le 	C_1 \left(t^{-1+\frac{\e}{2}}+\norm{ h_t(\cdot,t+a_i)}_{L^{\frac{N}{2s}}(\R^N)}\right)\left(\mathcal{N}_i(t)+\frac{N+2-2s}{4}\right)
\end{equation}
for a.e. $t \in (\beta_i,T_i)$.
\end{proposition}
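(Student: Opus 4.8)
The plan is to estimate the three contributions to $\nu_{2,i}(t)$ appearing in \eqref{def-nu2} separately, bounding the relevant integrals against $H_i(t)$ times the quadratic form $\int y^{1-2s}|\nabla V_i|^2 G\,dz + \int y^{1-2s}V_i^2 G\,dz$, and then expressing the result in terms of the frequency $\mathcal{N}_i(t)$. The bridge between these two is \eqref{ineq-D+H} from Proposition \ref{prop-ineq-D+H}: applied with $f(x,t):=h(x,t+a_i)$ (which satisfies \eqref{hp-h-subhomogeneous-no-nabla} with $K=C_g$ by \eqref{hp-g-subhomogeneous} and \eqref{def-h}), it gives, for a.e. $t\in(0,2\alpha)$,
\begin{equation*}
\int_{\R^{N+1}_+}y^{1-2s}|\nabla V_i|^2 G\,dz + \int_{\R^{N+1}_+}y^{1-2s}V_i^2 G\,dz \le \frac{1}{C_{N,s,\mu}}\Big(tD_i(t)+\tfrac{N+2-2s}{4}H_i(t)\Big),
\end{equation*}
so dividing by $H_i(t)>0$ one controls the full $\mathcal{H}$-norm of $V_i(\cdot,t)$, normalized by $H_i(t)$, by $\mathcal{N}_i(t)+\frac{N+2-2s}{4}$.

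Next I would treat the first two terms inside the bracket in \eqref{def-nu2}, namely $st^{s-1}h(\sqrt t x,t+a_i)$ and $t^{s-\frac12}\nabla h(\sqrt t x,t+a_i)\cdot\frac x2$. By \eqref{hp-g-subhomogeneous}–\eqref{def-h}, both $|h(\sqrt t x,t+a_i)|$ and $|\nabla h(\sqrt t x,t+a_i)\cdot x|$ are bounded by $C_g(1+|\sqrt t x|^{-2s+\varepsilon})\le C_g(1+t^{-s+\varepsilon/2}|x|^{-2s+\varepsilon})$ for small $t$; hence the combined coefficient of $v_i^2$ is pointwise $\le \co\, t^{s-1}(1+t^{-s+\varepsilon/2}|x|^{-2s+\varepsilon}) = \co\,(t^{s-1}+t^{-1+\varepsilon/2}|x|^{-2s+\varepsilon})$. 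Splitting the $x$-integral over $\{|x|\le 1\}$ and $\{|x|>1\}$ exactly as in the proof of \eqref{ineq-h}, using \eqref{ineq-trace} on the region where the weight is bounded and the weighted Hardy inequality \eqref{ineq-hardy-frac} on $\{|x|\le1\}$ to absorb the $|x|^{-2s}$ singularity, one obtains
\begin{equation*}
\int_{\R^N}\Big|st^{s-1}h+t^{s-\frac12}\nabla h\cdot\tfrac x2\Big|\,v_i^2\,G(\cdot,0)\,dx \le \co\,(t^{s-1}+t^{-1+\varepsilon/2})\Big(\int_{\R^{N+1}_+}y^{1-2s}|\nabla V_i|^2 G\,dz+\int_{\R^{N+1}_+}y^{1-2s}V_i^2 G\,dz\Big),
\end{equation*}
and since on the bounded interval $(0,2\alpha)$ one has $t^{s-1}\le \co\, t^{-1+\varepsilon/2}$ when $\varepsilon\le 2s$ (and otherwise $t^{s-1}$ is itself bounded), both powers are dominated by $t^{-1+\varepsilon/2}$. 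For the third term, $t^s h_t(\sqrt t x,t+a_i)$, I would instead invoke \eqref{ineq-h_t} with $\rho(x):=h_t(\sqrt t x,t+a_i)$, noting that $\|\rho\|_{L^{N/2s}(\R^N)}=t^{-s}\|h_t(\cdot,t+a_i)\|_{L^{N/2s}(\R^N)}$ by the change of variables $x\mapsto\sqrt t x$ (the Jacobian $t^{-N/2}$ raised to the power $2s/N$ gives exactly $t^{-s}$), so $t^s\int|h_t(\sqrt t x,t+a_i)|\,v_i^2\,G(\cdot,0)\,dx \le C''_{N,s}\|h_t(\cdot,t+a_i)\|_{L^{N/2s}(\R^N)}\|V_i(\cdot,t)\|_{\mathcal{H}}^2$.

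Combining the two bounds, dividing by $H_i(t)$, and applying the consequence of \eqref{ineq-D+H} displayed above yields \eqref{ineq-nu2-estimates} with $C_1$ depending only on $N,s,\mu,\varepsilon,C_g$ and $\alpha$ (equivalently on the fixed data). I expect the only real subtlety to be bookkeeping: making sure the powers of $t$ from the $|\sqrt t x|^{-2s+\varepsilon}$ rescaling combine correctly with the explicit $t^{s-1}$, $t^{s-\frac12}$, $t^s$ prefactors to produce the single exponent $-1+\frac\varepsilon2$ claimed in the statement, and checking that the $h_t$ term genuinely needs the $L^{N/2s}$ norm (rather than the pointwise bound) because no decay in $t$ is available for it — this is precisely why that norm appears explicitly in \eqref{ineq-nu2-estimates} rather than being absorbed into an integrable power of $t$. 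The hypotheses \eqref{hp-g-Lr} guarantee that $t\mapsto\|h_t(\cdot,t+a_i)\|_{L^{N/2s}(\R^N)}$ is at least $L^r_{loc}$ in $t$, which is what will make the right-hand side of \eqref{ineq-nu2-estimates} integrable when this estimate is later used in a Gronwall argument, but that is not needed for the statement itself.
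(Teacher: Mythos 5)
Your argument is correct and is essentially the paper's proof written out in full: the paper likewise obtains \eqref{ineq-nu2-estimates} by combining \eqref{ineq-D+H} (applied with $f(x,t)=h(x,t+a_i)$, so that $tD_i(t)+\tfrac{N+2-2s}{4}H_i(t)$ controls the Gaussian $\mathcal H$-norm of $V_i(\cdot,t)$), the estimate \eqref{ineq-h} applied to $s\,h+\tfrac12\nabla h\cdot z$ (which is exactly your split over $\{|x|\le1\}$ and $\{|x|>1\}$, giving the factor $t^{-1+\frac{\e}{2}}$ since $\e<2s$), and \eqref{ineq-h_t} together with the scaling identity $\|t^s h_t(\sqrt t\,\cdot,t+a_i)\|_{L^{N/(2s)}(\R^N)}=\|h_t(\cdot,t+a_i)\|_{L^{N/(2s)}(\R^N)}$. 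One cosmetic remark: \eqref{hp-g-subhomogeneous} bounds $|\nabla h(\sqrt t x,\cdot)\cdot(\sqrt t x)|$, so $|\nabla h(\sqrt t x,\cdot)\cdot x|$ carries an extra factor $t^{-1/2}$ compared with what you stated, but the combined coefficient $\co\, t^{s-1}\big(1+t^{-s+\e/2}|x|^{-2s+\e}\big)$ you use is exactly what the correct bookkeeping yields, so the conclusion is unaffected.
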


\begin{proof}
 Estimate \eqref{ineq-nu2-estimates} follows from
  \eqref{ineq-D+H}, \eqref{ineq-h}, and \eqref{ineq-h_t}, taking into
  account that, by a
  change of variables,
\begin{equation*}
  \norm{t^sh_t(\sqrt{t}\cdot,t+a_i)}_{L^{\frac{N}{2s}}(\R^N)}
  =\norm{h_t(\cdot,t+a_i)}_{L^{\frac{N}{2s}}(\R^N)}
\end{equation*}
for any $i \in \{i,\dots,k\}$.
\end{proof}

\begin{proposition} \label{prop-N-bounded} There exists a constant
  $C_2>0$ such that, if $i \in \{i,\dots,k\}$ and
  $\beta_i,T_i \in (0,2 \alpha )$ are as in
  \eqref{betai-ti-properties}, then
\begin{equation}\label{ineq-N-estimates}
\mathcal{N}_i(t) \le -\frac{N+2-2s}{4}+C_2\left(\mathcal{N}_i(T_i)+\frac{N+2-2s}{4}\right)
\end{equation}
for  any $t \in (\beta_i,T_i)$.
\end{proposition}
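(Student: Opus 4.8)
The plan is a Gr\"onwall argument on the shifted frequency function. Fix $i\in\{1,\dots,k\}$ and $\beta_i,T_i$ as in \eqref{betai-ti-properties}, and set $\Phi_i(t):=\mathcal{N}_i(t)+\frac{N+2-2s}{4}$ for $t\in(\beta_i,T_i)$. By \eqref{ineq-N-bounded-below} one has $\Phi_i>0$ on $(\beta_i,T_i)$, and by Proposition \ref{prop-N-W11} the function $\mathcal{N}_i$, hence $\Phi_i$, belongs to $W^{1,1}_{loc}(\beta_i,T_i)$ with $\Phi_i'=\nu_{1,i}+\nu_{2,i}\ge-|\nu_{2,i}|$ a.e., since $\nu_{1,i}\ge0$. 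Plugging in the remainder estimate \eqref{ineq-nu2-estimates} of Proposition \ref{prop-nu2-estimates}, I would obtain $\Phi_i'(t)\ge-C_1\,\eta_i(t)\,\Phi_i(t)$ for a.e. $t\in(\beta_i,T_i)$, where $\eta_i(t):=t^{-1+\frac{\e}{2}}+\norm{h_t(\cdot,t+a_i)}_{L^{\frac{N}{2s}}(\R^N)}$.

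Since $\Phi_i$ is positive and locally absolutely continuous, so is $\log\Phi_i$, and the inequality above rewrites as $\frac{d}{dt}\log\Phi_i(t)\ge-C_1\eta_i(t)$ for a.e. $t$. Integrating over $[t,T']$ for $\beta_i<t<T'<T_i$ and then letting $T'\to T_i^-$ yields $\log\Phi_i(T_i)-\log\Phi_i(t)\ge-C_1\int_t^{T_i}\eta_i(\tau)\,d\tau\ge-C_1 I_i$ with $I_i:=\int_0^{2\alpha}\eta_i(\tau)\,d\tau$, hence $\Phi_i(t)\le e^{C_1 I_i}\Phi_i(T_i)$. Here I use that $\mathcal{N}_i$ extends continuously up to $T_i$: indeed $H_i$ is continuous and $H_i(T_i)>0$ (otherwise the Corollary after Proposition \ref{prop-Ht-monotonicity} would force $H_i\equiv0$ on $(0,T_i)\supset(\beta_i,T_i)$), while $tD_i$ enjoys the regularity of Proposition \ref{prop-derivative-tD}; alternatively one may simply pass to the limit $T'\to T_i^-$ and invoke $\mathcal{N}_i(T')\to\mathcal{N}_i(T_i)$.

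It then remains to bound $I_i$ by a constant independent of $i$. Because $\e>0$, one has $\int_0^{2\alpha}\tau^{-1+\frac{\e}{2}}\,d\tau=\frac{2}{\e}(2\alpha)^{\e/2}<\infty$. For the other term, the change of variable $\sigma=\tau+a_i$ together with $0\le a_i<a_i+2\alpha\le T$ gives $\int_0^{2\alpha}\norm{h_t(\cdot,\tau+a_i)}_{L^{\frac{N}{2s}}(\R^N)}\,d\tau\le\int_0^T\norm{h_t(\cdot,\sigma)}_{L^{\frac{N}{2s}}(\R^N)}\,d\sigma$, which is finite since $h_t(\cdot,\sigma)=-g_t(\cdot,-\sigma)$ by \eqref{def-h}, $g_t\in L^r((-T,0),L^{\frac{N}{2s}}(\R^N))$ with $r>1$ by \eqref{hp-g-Lr}, and $(-T,0)$ has finite measure, so $g_t\in L^1$ in time by H\"older. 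Therefore $C_2:=\exp\!\big(C_1(\tfrac{2}{\e}(2\alpha)^{\e/2}+\norm{g_t}_{L^1((-T,0),L^{\frac{N}{2s}}(\R^N))})\big)$ works and gives \eqref{ineq-N-estimates}.

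I expect the only genuine point to watch is precisely this uniform-in-$i$ control of the time integral of the remainder — which is exactly where the subhomogeneity exponent $\e>0$ in \eqref{hp-g-subhomogeneous} and the $L^r$-in-time integrability of $g_t$ in \eqref{hp-g-Lr} enter — together with the mild technicality of the continuity of $\mathcal{N}_i$ up to $T_i$, which in any case is sidestepped by the limiting procedure above. Everything else is an immediate consequence of Propositions \ref{prop-N-W11} and \ref{prop-nu2-estimates} and elementary ODE manipulations.
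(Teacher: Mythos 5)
Your argument is correct and follows essentially the same route as the paper: combine $\nu_{1,i}\ge 0$ with the remainder bound \eqref{ineq-nu2-estimates} to get the differential inequality for $\mathcal{N}_i+\frac{N+2-2s}{4}$, integrate (Gr\"onwall) up to $T_i$, and bound the resulting exponential uniformly in $i$ using $\int_0^{2\alpha}\tau^{-1+\e/2}\,d\tau<\infty$ and the $L^1$-in-time control of $h_t$ coming from \eqref{hp-g-Lr} and \eqref{def-h}. Your extra remarks on $H_i(T_i)>0$ and the limit $T'\to T_i^-$ only make explicit points the paper leaves implicit.
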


\begin{proof}
  Since $\nu_{1,i}\ge0$ by Proposition \ref{prop-N-W11}, from
  \eqref{ineq-nu2-estimates} it follows that, a.e. in $(\beta_i,T_i)$,
\begin{equation*}
\mathcal{N}'_i(t) \ge-C_1 \left(t^{-1+\frac{\e}{2}}+\norm{h_t(\cdot,t+a_i)}_{L^{\frac{N}{2s}}(\R^N)}\right)\left(\mathcal{N}_i(t)+\frac{N+2-2s}{4}\right). 
\end{equation*} 
By integration we obtain the estimate
\begin{equation*}
  \mathcal{N}_i(t) \le-\frac{N+2-2s}{4}+\left(\mathcal{N}_i(T_i)
    +\frac{N+2-2s}{4}\right)
  e^{\big(\frac{2C_1}{\epsilon}T_i^{\e/2}
    +C_1\norm{h_t(\cdot,t+a_i)}_{L^1((0,2 \alpha ),L^{N/(2s)}(\R^N))} \big)}
\end{equation*}
for any  $t \in (\beta_i,T_i)$, which implies
\eqref{ineq-N-estimates} in view of \eqref{hp-g-Lr} and \eqref{def-h}.
\end{proof}

\begin{proposition}\label{prop-H-positive}
For any $i \in \{i, \dots,k\}$, if $H_i(t)\not \equiv 0$ then 
\begin{equation}\label{ineq-H-positive}
H_i(t)>0 \quad \text{ for all } t \in (0,2 \alpha).
\end{equation}
\end{proposition}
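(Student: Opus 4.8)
The plan is to argue by contradiction, exploiting the monotonicity machinery already developed: using the Corollary following Proposition~\ref{prop-Ht-monotonicity} I would reduce to the case of a ``first zero'' of $H_i$, and then I would derive a contradiction from the upper bound on the Almgren--Poon frequency function established in Proposition~\ref{prop-N-bounded}.

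First I would fix $i\in\{1,\dots,k\}$ with $H_i\not\equiv0$ and suppose, for contradiction, that $H_i(\bar t)=0$ for some $\bar t\in(0,2\alpha)$. Since $H_i\ge0$ and $H_i\in W^{1,1}_{\mathrm{loc}}(0,2\alpha)$ by Proposition~\ref{prop-drivitive-H} (so $H_i$ is continuous on $(0,2\alpha)$), the Corollary after Proposition~\ref{prop-Ht-monotonicity} gives $H_i\equiv0$ on $(0,\bar t\,]$. I would then set $t^*:=\sup\{t\in(0,2\alpha):H_i\equiv0\ \text{on}\ (0,t]\}$, so that $\bar t\le t^*<2\alpha$ (the strict inequality because $H_i\not\equiv0$) and, by continuity, $H_i\equiv0$ on $(0,t^*]$. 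Applying the same Corollary once more to any $s\in(t^*,2\alpha)$ with $H_i(s)=0$ (and using continuity again) would force $s\le t^*$, a contradiction; hence $H_i(s)>0$ for every $s\in(t^*,2\alpha)$.

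Next I would activate the frequency function to the right of $t^*$. Since $V_i\in L^2((\tau,2\alpha),\mathcal{H})$ for every $\tau\in(0,2\alpha)$ (Remark~\ref{reamrk-eq-V-i}), I can choose $T_i\in(t^*,2\alpha)$ with $V_i(\cdot,T_i)\in\mathcal{H}$; then $\beta_i:=t^*$ and $T_i$ satisfy \eqref{betai-ti-properties}, so Propositions~\ref{prop-N-W11} and~\ref{prop-N-bounded} apply on $(t^*,T_i)$. Note that $\mathcal{N}_i(T_i)$ is finite, because $H_i(T_i)>0$ and $D_i(T_i)<\infty$ --- the latter from $V_i(\cdot,T_i)\in\mathcal{H}$ together with \eqref{ineq-trace}, \eqref{ineq-hardy-frac} and \eqref{hp-g-subhomogeneous} --- so that, setting $M:=-\tfrac{N+2-2s}{4}+C_2(\mathcal{N}_i(T_i)+\tfrac{N+2-2s}{4})<\infty$, Proposition~\ref{prop-N-bounded} gives $\mathcal{N}_i(t)\le M$ for all $t\in(t^*,T_i)$. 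Since $H_i>0$ on $(t^*,2\alpha)$, the function $\log H_i$ is locally absolutely continuous there, and by \eqref{eq-H'} and the identity $\mathcal{N}_i=tD_i/H_i$ one has $(\log H_i)'(t)=2D_i(t)/H_i(t)=2\mathcal{N}_i(t)/t\le 2M/t$ for a.e. $t\in(t^*,T_i)$ (the quantity being integrable by \eqref{ineq-N-bounded-below}). Integrating over $(t,T_i)$ and using $t>t^*>0$ would give $\log H_i(T_i)-\log H_i(t)\le 2M\log(T_i/t)\le C_3$ for a finite constant $C_3$ independent of $t\in(t^*,T_i)$, hence $H_i(t)\ge e^{-C_3}H_i(T_i)>0$ on $(t^*,T_i)$. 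Letting $t\to(t^*)^+$ and using continuity of $H_i$ would then yield $H_i(t^*)\ge e^{-C_3}H_i(T_i)>0$, contradicting $H_i(t^*)=0$. Thus $H_i$ would vanish nowhere in $(0,2\alpha)$, and since $H_i\ge0$ this establishes \eqref{ineq-H-positive}.

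The only delicate point is that Propositions~\ref{prop-N-W11} and~\ref{prop-N-bounded} must be invoked with left endpoint exactly equal to the first zero $t^*$; this is legitimate precisely because of the structural fact, extracted in the second step from the Corollary after Proposition~\ref{prop-Ht-monotonicity}, that $H_i$ is strictly positive on the entire interval $(t^*,2\alpha)$. Granting that, the conclusion follows from a routine Gronwall-type logarithmic integration.
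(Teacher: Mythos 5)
Your proposal is correct and follows essentially the same route as the paper: isolate the putative first zero of $H_i$ via the corollary of Proposition \ref{prop-Ht-monotonicity}, pick $T_i$ beyond it with $V_i(\cdot,T_i)\in\mathcal{H}$ so that Propositions \ref{prop-N-W11} and \ref{prop-N-bounded} apply, and integrate $H_i'=2D_i=\tfrac{2}{t}\mathcal{N}_iH_i\le \tfrac{2M}{t}H_i$ to get $H_i(t)\ge (t/T_i)^{2M}H_i(T_i)>0$ up to the first zero, a contradiction. Your logarithmic/Gronwall phrasing and the definition of $t^*$ as a supremum rather than the paper's infimum $t_i$ are only cosmetic differences.
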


\begin{proof}
  Since $H_i(t)\not \equiv 0$ and $H_i$ is continuous by Remark
  \ref{remark-parabolic-change-variables}, there exists
  $T_i\in (0,2 \alpha)$ such that
\begin{equation}\label{H-Ti-positive}
	H_i(T_i)>0 \quad \text{ and } \quad V_i(\cdot,T_i)\in \mathcal{H}.
\end{equation}
By Proposition \ref{prop-Ht-monotonicity} it follows that $H_i(t)>0$
for any $ t \in [T_i,2 \alpha)$. If we define
\begin{equation*}
t_i:=\inf\{\tau \in (0,T_i):H_i(t)>0 \text{ for all } t \in (\tau,2 \alpha)\},
\end{equation*}	
then either 
\begin{equation*}
t_i=0 \quad \text{ and } \quad H_i(t)>0 \text{ for all } t \in (0,2 \alpha) 
\end{equation*}
or 
\begin{equation}\label{H-bad-case}
0<t_i<T_i \quad \text{ and }  \quad 
\begin{cases}
H_i(t)=0, \text{ for any } &t \in (0,t_i],\\
H_i(t)>0, \text{ for any } &t \in (t_i,2 \alpha).\\
\end{cases}
\end{equation}
Now we prove that the second case can not occur arguing by
contradiction. If \eqref{H-bad-case} holds, then, thanks to
Proposition \ref{prop-N-bounded} and \eqref{eq-H'},
\begin{equation*}
\dfrac{t}{2}H_i'(t)\le \left(-\frac{N+2-2s}{4}+C_2\left(\mathcal{N}_i(T_i)+\frac{N+2-2s}{4}\right)\right) H_i(t) 
\end{equation*}
for a.e. $t \in (t_i,T_i)$. Integrating the above inequality  we obtain
\begin{equation*}
  H_i(t)\ge \frac{t^{2\left(-\frac{N+2-2s}{4}+
        C_2\left(\mathcal{N}_i(T_i)
          +\frac{N+2-2s}{4}\right)\right)}}
  {T_i^{2\left(-\frac{N+2-2s}{4}+C_2\left(\mathcal{N}_i(T_i)+\frac{N+2-2s}{4}\right)\right)}} H_i(T_i)
\end{equation*}
for all $t \in [t_i,T_i)$.  Since $H_i(t_i)=0$, we have reached a
contradiction in view of \eqref{H-Ti-positive}.  In conclusion,
\eqref{ineq-H-positive} must hold.
\end{proof}

\begin{proposition}\label{prop-Hi-not-0-then-Hi+1-not-0}
For any $i \in \{1,\dots,k-1\}$ 
\begin{equation*}
H_i(t)\equiv 0  \text{ in } (0,2\alpha) \text{ if and only if } H_{i+1}(t) \equiv 0 \text{ in } (0,2\alpha).
\end{equation*}
\end{proposition}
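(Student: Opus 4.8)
The plan is to reduce the statement to a property of the original solution $U$ on overlapping time intervals, and then to upgrade ``vanishing on a subinterval'' to ``vanishing everywhere'' by means of Proposition \ref{prop-H-positive}. The first step is the elementary observation that, since $y^{1-2s}G>0$ a.e.\ on $\R^{N+1}_+$ and since for each fixed $t\in(0,2\alpha)$ the dilation $z\mapsto \sqrt t\,z$ is a diffeomorphism of $\R^{N+1}_+$, we have $H_i(t)=0$ if and only if $V_i(\cdot,t)=0$ a.e.\ on $\R^{N+1}_+$, i.e.\ if and only if $U(\cdot,t+a_i)=0$ a.e.\ on $\R^{N+1}_+$. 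Consequently, since $a_i+2\alpha=b_i$, the condition $H_i\equiv 0$ on $(0,2\alpha)$ is equivalent to $U\equiv 0$ on $\R^{N+1}_+\times(a_i,b_i)$, and likewise $H_{i+1}\equiv 0$ on $(0,2\alpha)$ is equivalent to $U\equiv 0$ on $\R^{N+1}_+\times(a_{i+1},b_{i+1})$.

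Next I would use the overlap of consecutive intervals. Recalling $a_i=(i-1)\alpha$ and $b_i=(i+1)\alpha$, we have $(a_i,b_i)\cap(a_{i+1},b_{i+1})=(a_{i+1},b_i)=(i\alpha,(i+1)\alpha)$, a nonempty open interval. Assume $H_i\equiv 0$ on $(0,2\alpha)$. Then $U\equiv 0$ on $\R^{N+1}_+\times(a_{i+1},b_i)$; translating by $a_{i+1}$, this gives $V_{i+1}(\cdot,t)=0$ for a.e.\ $t\in(0,b_i-a_{i+1})=(0,\alpha)$, hence $H_{i+1}(t)=0$ for a.e.\ $t\in(0,\alpha)$. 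If $H_{i+1}$ were not identically $0$ on $(0,2\alpha)$, then Proposition \ref{prop-H-positive} would force $H_{i+1}(t)>0$ for \emph{all} $t\in(0,2\alpha)$, contradicting the fact that $H_{i+1}$ vanishes on a subset of positive measure of $(0,\alpha)$. Therefore $H_{i+1}\equiv 0$ on $(0,2\alpha)$.

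The reverse implication is entirely symmetric: if $H_{i+1}\equiv 0$ on $(0,2\alpha)$, then $U\equiv 0$ on $\R^{N+1}_+\times(a_{i+1},b_{i+1})\supseteq \R^{N+1}_+\times(a_{i+1},b_i)$, and since $t'\in(a_{i+1},b_i)$ corresponds to $t=t'-a_i\in(\alpha,2\alpha)$, we obtain $H_i(t)=0$ for a.e.\ $t\in(\alpha,2\alpha)$; Proposition \ref{prop-H-positive} then again rules out $H_i\not\equiv 0$. The argument is short, and its only real content is the dichotomy supplied by Proposition \ref{prop-H-positive} (itself a consequence of the monotonicity of Proposition \ref{prop-Ht-monotonicity} together with the bound of Proposition \ref{prop-N-bounded}); this is precisely what allows the passage from vanishing on a subinterval to vanishing on all of $(0,2\alpha)$, so no serious obstacle arises beyond carefully matching the time intervals.
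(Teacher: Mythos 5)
Your proof is correct and follows essentially the same route as the paper: both implications are reduced to the overlap $(a_{i+1},b_i)$ of consecutive time intervals, after identifying $H_i(t)=0$ with $U(\cdot,t+a_i)\equiv 0$, and then upgraded to vanishing on all of $(0,2\alpha)$ via the dichotomy of Proposition \ref{prop-H-positive}. The paper merely phrases the two directions as arguments by contradiction, while you argue directly, but the ingredients and structure are the same.
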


\begin{proof}
  We start by proving that if $H_i(t)\equiv 0 \text{ in } (0,2\alpha)$
  then $H_{i+1}(t) \equiv 0 \text{ in } (0,2\alpha)$. By
  contradiction, if there exists $\bar t \in (0,2\alpha)$ such that
  $H_{i+1}(\bar t)>0$, then $H_{i+1}(t)>0$ for all $t \in (0,2\alpha)$
  by Proposition \ref{prop-H-positive}. It follows that
  $V_{i+1}(\cdot,t) \not \equiv 0$ for all $t \in (0,2\alpha)$ and
  $V(\cdot,t) \not \equiv 0$ for all $t \in (i
  \alpha,(i+1)\alpha)$. Therefore $V_i(\cdot,t) \not \equiv 0$ for
  some $ t \in (0,2\alpha)$, which is a contradiction.

  Now let us prove that, if
  $H_{i+1}(t)\equiv 0 \text{ in } (0,2\alpha)$, then
  $H_i(t) \equiv 0 \text{ in } (0,2\alpha)$. By contradiction, let us
  assume that $H_i(t)\not \equiv 0$.  Then $H_i(t)>0$ for any
  $t \in (0,2\alpha)$ by Proposition
  \ref{prop-H-positive}. It follows that $V_i(\cdot,t)\not \equiv 0$
  for all $t \in (0,2 \alpha)$ and so
  $V_{i+1}(\cdot,t) \not \equiv 0$ for all $t \in (0,\alpha)$,
  hence $H_{i+1}(t)\not \equiv 0$ for all $t\in(0,\alpha)$,
  which is a contradiction.
\end{proof}

\begin{proposition}\label{prop-H-positve-everywhere}
  If $U$ is a weak solution of \eqref{prob-frac-extended} such that
  $U \not \equiv 0$ in $\R_+^{N+1} \times (0,T)$, then
\begin{equation*}
  H_i(t)>0 \quad \text{for any } t \in (0,2 \alpha) \text{ and } i \in \{1,\dots,k\}.
\end{equation*}
\end{proposition}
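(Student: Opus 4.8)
The plan is to argue by contradiction, combining the dichotomy already established in Proposition \ref{prop-H-positive} --- each $H_i$ is either identically zero on $(0,2\alpha)$ or strictly positive throughout $(0,2\alpha)$ --- with the propagation equivalence of Proposition \ref{prop-Hi-not-0-then-Hi+1-not-0}.

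First I would suppose, for contradiction, that $H_{i_0}(t_*)=0$ for some index $i_0\in\{1,\dots,k\}$ and some $t_*\in(0,2\alpha)$; by the contrapositive of Proposition \ref{prop-H-positive} this forces $H_{i_0}\equiv0$ on $(0,2\alpha)$. Then, applying the equivalence in Proposition \ref{prop-Hi-not-0-then-Hi+1-not-0} repeatedly --- first along the indices $i_0,i_0+1,\dots,k-1$ to propagate the vanishing upward, then along $i_0-1,i_0-2,\dots,1$ to propagate it downward --- I conclude that $H_i\equiv0$ on $(0,2\alpha)$ for every $i\in\{1,\dots,k\}$.

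Next I would translate this back to $U$. Since $y^{1-2s}G>0$ almost everywhere on $\R^{N+1}_+$, the identity $\int_{\R^{N+1}_+}y^{1-2s}V_i^2G\,dz=0$ forces $V_i=0$ a.e.\ on $\R^{N+1}_+\times(0,2\alpha)$. Recalling that $V_i(z,t)=U(\sqrt{t}\,z,t+a_i)$ and that $(z,t)\mapsto(\sqrt{t}\,z,t+a_i)$ is a diffeomorphism of $\R^{N+1}_+\times(0,2\alpha)$ onto $\R^{N+1}_+\times(a_i,b_i)$ (because $a_i+2\alpha=b_i$), a change of variables then gives $U\equiv0$ on $\R^{N+1}_+\times(a_i,b_i)$ for each $i$. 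Since $\bigcup_{i=1}^k(a_i,b_i)=(0,T)$, this means $U\equiv0$ on $\R^{N+1}_+\times(0,T)$, contradicting the hypothesis $U\not\equiv0$. Hence $H_i\not\equiv0$ for every $i$, and a last application of Proposition \ref{prop-H-positive} yields $H_i(t)>0$ for all $t\in(0,2\alpha)$ and all $i\in\{1,\dots,k\}$.

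Since all the analytic content has already been developed in the preceding propositions (the monotonicity of $t\mapsto t^{-2C_{N,s,\mu}+\frac{N-2+2s}{2}}H_i(t)$, the frequency bound, and the vanishing-propagation between consecutive time windows), this argument is essentially a bookkeeping step and I do not expect a genuine obstacle. The only point needing a little care is to chain the ``if and only if'' of Proposition \ref{prop-Hi-not-0-then-Hi+1-not-0} in both directions along the chain of overlapping windows, together with the observation that the union of the windows $(a_i,b_i)$ exhausts $(0,T)$, which is exactly what makes the contradiction with $U\not\equiv0$ go through.
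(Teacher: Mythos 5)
Your proposal is correct and is essentially the paper's own argument, merely phrased contrapositively: both rest on the dichotomy of Proposition \ref{prop-H-positive}, the chained equivalence of Proposition \ref{prop-Hi-not-0-then-Hi+1-not-0}, and the change of variables relating $V_i$ to $U$ on the overlapping windows $(a_i,b_i)$ covering $(0,T)$. The paper simply runs the same logic forward (from $U\not\equiv0$ to some $H_{i_0}\not\equiv0$, then propagates), so there is no substantive difference.
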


\begin{proof}
  If $U \not \equiv 0$ in $\R_+^{N+1} \times (0,T)$, then there exists
  some $i_0 \in \{1,\dots,k\}$ such that $V_i \not \equiv 0$ in
  $(0,2 \alpha)$. Then $H_{i_0}(t) \not \equiv 0$ in $(0,2
  \alpha)$. Thanks to Proposition \ref{prop-Hi-not-0-then-Hi+1-not-0},
  $H_i(t) \not \equiv 0$ in $(0,2 \alpha)$ for any
  $i \in \{1,\dots,k\}$. In view of
  Proposition \ref{prop-H-positive}, we can therefore conclude that $H_i(t)>0$ for any
  $t \in (0,2 \alpha)$ and   $i \in \{1,\dots,k\}$.
\end{proof}

\begin{proof}[\textbf{Proof of Theorem \ref{theorem-unicity-Cauchy}}]
  It is not restrictive to assume that $t_0=0$. Let $W$ be a solution
  of \eqref{eq-weak-frac-extended-forward}. Let $\bar t \in (0,T)$ be
  such that $W(z,-\bar t)\equiv0$ in $\R^{N+1}_+$, {so that, letting $U$ be
  as in \eqref{def-U}, $U(z,\bar t)\equiv 0$ in $\R^{N+1}_+$}. Then
  $\bar t \in (a_i,b_i)$ for some $i \in \{1,\dots,k\}$ and
  $H_i(\bar t-a_i)=0$.  By Proposition
  \ref{prop-H-positve-everywhere} it follows that $U\equiv 0$ in
  $\R^{N+1}_+\times (0,T)$ and hence, by \eqref{def-U}, $W \equiv 0$ in
  $\R^{N+1}_+\times (-T,0)$.
\end{proof}

From now on, we assume that $U \not \equiv 0$ in $\R^{N+1}_+\times
(0,T)$ and, defining $V$ as in
\eqref{def-V}, we denote, for all $t\in(0,2\alpha)$,
\begin{align*}
  &H(t):=H_1(t)=\int_{\R^{N+1}_+} y^{1-2s}V^2(z,t)G(z) \, dz,	\\
  &D(t):=D_1(t)=\frac{1}{t}\bigg(\int_{\R^{N+1}_+} y^{1-2s}|\nabla
    V|^2G \, dz
    -\int_{\R^N}  \left(\frac{\mu}{|x|^{2s}}v^2
    +t^sh(\sqrt{t}x,t) v^2\right)G(x,0)\, dx\bigg).	
\end{align*}
Since we are assuming that $U \not \equiv 0$ in
$\R^{N+1}_+\times (0,T)$, thanks to Proposition
\ref{prop-H-positve-everywhere} the Almgren-Poon type frequency
function
\begin{equation*}
\mathcal{N}:(0,2 \alpha)\to \R, \quad \mathcal{N}(t):=\frac{tD(t)}{H(t)}.
\end{equation*}
is well-defined. Furthermore, in view of Proposition \ref{prop-N-W11}
$\mathcal{N} \in W^{1,1}_{loc}(0,2 \alpha)$  and 
\begin{equation*}
\mathcal{N}'(t)=\nu_1(t)+\nu_2(t) \quad \text{ for a.e. } t \in (0,2 \alpha),
\end{equation*} 
where we have defined 
\begin{equation}\label{def-nu12}
\nu_1(t):=\nu_{1,1}(t), \quad \nu_2(t):=\nu_{2,1}(t),
\end{equation}
according to   notation \eqref{def-nu1}--\eqref{def-nu2}.
Since $V(\cdot,t) \in \mathcal{H}$ for a.e. $t \in (0,T)$, there exists
\begin{equation}\label{def-T0}
T_0 \in (0,2 \alpha) \quad \text{ such that } \quad V(\cdot,T_0) \in \mathcal{H}. 
\end{equation}

\begin{proposition}\label{prop-limit-N} The limit 
\begin{equation}\label{limit-N}
\gamma:=\lim_{t \to 0^+}	\mathcal{N}(t)
\end{equation}
exists and it is finite.
\end{proposition}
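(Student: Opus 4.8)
The plan is to combine the two-sided bounds on $\mathcal N$ already established in this section with the integrability of the remainder term $\nu_2$, and then pass to the limit in the Fundamental Theorem of Calculus.

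First I would observe that $\mathcal N$ is bounded on $(0,T_0)$. Since $U\not\equiv0$, Proposition~\ref{prop-H-positve-everywhere} gives $H(t)=H_1(t)>0$ for all $t\in(0,2\alpha)$; together with $V(\cdot,T_0)\in\mathcal H$ (see \eqref{def-T0}) this means that, for every $\beta\in(0,T_0)$, the pair $\beta,T_0$ satisfies \eqref{betai-ti-properties} with $i=1$. Hence \eqref{ineq-N-bounded-below} gives $\mathcal N(t)>-\frac{N+2-2s}{4}$ on $(0,2\alpha)$, and Proposition~\ref{prop-N-bounded} (with $T_1=T_0$ and $\beta\downarrow0$) produces a constant $M>0$ such that $0\le \mathcal N(t)+\frac{N+2-2s}{4}\le M$ for all $t\in(0,T_0)$. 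Plugging this into \eqref{ineq-nu2-estimates} then yields $|\nu_2(t)|\le C_1M\big(t^{-1+\frac{\e}{2}}+\norm{h_t(\cdot,t+a_1)}_{L^{N/(2s)}(\R^N)}\big)$ for a.e.\ $t\in(0,T_0)$; the first term is integrable near $0$ because $-1+\frac{\e}{2}>-1$, and the second is integrable on $(0,T_0)$ by \eqref{hp-g-Lr} and \eqref{def-h} (recall $h_t\in L^r((0,T),L^{N/(2s)}(\R^N))$ with $r>1$), so $\nu_2\in L^1(0,T_0)$.

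With these facts in hand I would conclude as follows. Because $\mathcal N\in W^{1,1}_{loc}(0,2\alpha)$ with $\mathcal N'=\nu_1+\nu_2$ and $\nu_1\ge0$ (Proposition~\ref{prop-N-W11}), for $0<t<T_0$ one has $\mathcal N(t)=\mathcal N(T_0)-\int_t^{T_0}\nu_1(\sigma)\,d\sigma-\int_t^{T_0}\nu_2(\sigma)\,d\sigma$. As $t\to0^+$ the non-decreasing quantity $\int_t^{T_0}\nu_1$ tends to some value in $[0,+\infty]$, while $\int_t^{T_0}\nu_2\to\int_0^{T_0}\nu_2\in\R$ by absolute integrability; hence $\lim_{t\to0^+}\mathcal N(t)$ exists in $[-\infty,+\infty)$, and the lower bound forces it to be finite (which, a posteriori, also shows $\nu_1\in L^1(0,T_0)$). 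I do not expect any genuine obstacle here: the argument is essentially a bookkeeping of the estimates of Propositions~\ref{prop-N-W11}--\ref{prop-N-bounded}, and the only point demanding attention is checking that their hypotheses hold throughout $(0,T_0)$, which is exactly what Proposition~\ref{prop-H-positve-everywhere} and the choice of $T_0$ guarantee.
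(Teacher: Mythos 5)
Your proof is correct and follows essentially the same route as the paper: boundedness of $\mathcal N$ from Propositions \ref{prop-N-W11} and \ref{prop-N-bounded}, integrability of $\nu_2$ via Proposition \ref{prop-nu2-estimates} together with \eqref{hp-g-Lr} and \eqref{def-h}, and then the representation $\mathcal N(t)=\mathcal N(T_0)-\int_t^{T_0}\nu_1-\int_t^{T_0}\nu_2$ with $\nu_1\ge0$ to pass to the limit. Your extra bookkeeping (checking \eqref{betai-ti-properties} via Proposition \ref{prop-H-positve-everywhere} and noting a posteriori that $\nu_1\in L^1(0,T_0)$) only makes explicit what the paper leaves implicit.
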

\begin{proof}
  From Propositions \ref{prop-N-W11} and 
  \ref{prop-N-bounded} it follows that $\mathcal{N}$ is
  bounded. Hence the limit \eqref{limit-N}, if it exists, is
  finite.  Furthermore, from Proposition \ref{prop-N-W11} we have that
  $\nu_1 \ge 0$ a.e. in $(0,2 \alpha)$, whereas
  $\nu_ 2 \in L^1(0,T_0)$ by Proposition \ref{prop-nu2-estimates}, 
  Proposition \ref{prop-N-bounded}, \eqref{def-h}, and
    \eqref{hp-g-Lr}, where $T_0$ is as in
  \eqref{def-T0}.  Then, from
\begin{equation*}
\mathcal{N}(t)=\mathcal{N}(T_0)-\int_t^{T_0} \mathcal{N}'(\tau) \, d\tau=\mathcal{N}(T_0)-\int_t^{T_0} \nu_1(\tau) \, d\tau-\int_t^{T_0} \nu_2(\tau) \, d\tau,
\end{equation*}
we conclude that the limit \eqref{limit-N} exists.
\end{proof}

\begin{proposition}\label{prop-H-estmiates}
Let $T_0$ be as in \eqref{def-T0} and $\gamma$ as in \eqref{limit-N}. Then there exists a constant $K_1>0$ such that 
\begin{equation}\label{ineq-H-estimates-above}
H(t) \le K_1 t^{2 \gamma} \quad\text{for all t } \in (0,T_0).
\end{equation}
Moreover, for any $\sigma>0$, there exists a constant $K_2(\sigma)>0$
such that
\begin{equation}\label{ineq-H-estimates-below}
H(t) \ge K_2(\sigma) t^{2 \gamma+\sigma} \quad\text{for all } t\in (0,T_0).
\end{equation}
\end{proposition}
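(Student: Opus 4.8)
The plan is to integrate the logarithmic derivative of $H$. By Proposition \ref{prop-drivitive-H} (applied with $i=1$) we have $H'=2D$ a.e.\ in $(0,2\alpha)$, and since $H>0$ on $(0,2\alpha)$ by Proposition \ref{prop-H-positve-everywhere} and $\mathcal N(t)=tD(t)/H(t)$, it follows that $(\log H)'(t)=2\mathcal N(t)/t$ for a.e.\ $t\in(0,2\alpha)$, with $\log H$ absolutely continuous on compact subintervals of $(0,2\alpha)$. Integrating between $t\in(0,T_0)$ and $T_0$ and writing $\mathcal N(\tau)=\gamma+(\mathcal N(\tau)-\gamma)$, one gets
\[
H(t)=H(T_0)\left(\frac{t}{T_0}\right)^{2\gamma}\exp\!\left(-2\int_t^{T_0}\frac{\mathcal N(\tau)-\gamma}{\tau}\,d\tau\right),\qquad t\in(0,T_0).
\]
Both inequalities then reduce to estimating $\int_t^{T_0}\tau^{-1}(\mathcal N(\tau)-\gamma)\,d\tau$: a uniform lower bound gives \eqref{ineq-H-estimates-above}, while an upper bound by $\tfrac\sigma2\log(1/t)$ plus a constant gives \eqref{ineq-H-estimates-below}.

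For \eqref{ineq-H-estimates-above} I would use that, as in the proof of Proposition \ref{prop-limit-N}, $\mathcal N(\tau)-\gamma=\int_0^\tau\nu_1(\sigma)\,d\sigma+\int_0^\tau\nu_2(\sigma)\,d\sigma$ with $\nu_1\ge0$ (Proposition \ref{prop-N-W11}), so that $\mathcal N(\tau)-\gamma\ge-\int_0^\tau|\nu_2(\sigma)|\,d\sigma$. Since $\mathcal N$ is bounded on $(0,T_0)$ by Propositions \ref{prop-N-W11} and \ref{prop-N-bounded}, estimate \eqref{ineq-nu2-estimates} of Proposition \ref{prop-nu2-estimates} (with $a_1=0$) yields $|\nu_2(\sigma)|\le C\big(\sigma^{-1+\varepsilon/2}+\norm{h_t(\cdot,\sigma)}_{L^{\frac{N}{2s}}(\R^N)}\big)$, and applying the H\"older inequality in the time variable together with \eqref{hp-g-Lr} and \eqref{def-h} (which give $h_t\in L^r((0,T),L^{\frac{N}{2s}}(\R^N))$ with $r>1$) one obtains $\int_0^\tau|\nu_2|\le C(\tau^{\varepsilon/2}+\tau^{1-1/r})$. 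Since $\varepsilon/2>0$ and $1-1/r>0$, the function $\tau\mapsto\tau^{-1}(\tau^{\varepsilon/2}+\tau^{1-1/r})$ is integrable near $0$, hence $\int_t^{T_0}\tau^{-1}(\mathcal N(\tau)-\gamma)\,d\tau\ge-C'$ uniformly in $t\in(0,T_0)$, and \eqref{ineq-H-estimates-above} follows from the displayed identity.

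For \eqref{ineq-H-estimates-below} I would instead exploit that $\mathcal N(\tau)\to\gamma$ as $\tau\to0^+$ (Proposition \ref{prop-limit-N}): given $\sigma>0$, choose $t_\sigma\in(0,\min\{T_0,1\})$ such that $\mathcal N(\tau)-\gamma<\sigma/2$ for all $\tau\in(0,t_\sigma)$. For $t\in(0,t_\sigma)$, splitting $\int_t^{T_0}=\int_t^{t_\sigma}+\int_{t_\sigma}^{T_0}$, the first integral is at most $\tfrac\sigma2\log(t_\sigma/t)\le\tfrac\sigma2\log(1/t)$ and the second is a finite constant depending on $\sigma$; the displayed identity then gives $H(t)\ge K_2(\sigma)t^{2\gamma+\sigma}$ on $(0,t_\sigma)$. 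For $t\in[t_\sigma,T_0)$ the estimate is immediate from the continuity and strict positivity of $H$ (Remark \ref{remark-parabolic-change-variables} and Proposition \ref{prop-H-positve-everywhere}), up to shrinking $K_2(\sigma)$. The only point requiring care is the bookkeeping in the bound $\int_0^\tau|\nu_2|\le C(\tau^{\varepsilon/2}+\tau^{1-1/r})$: using the a priori boundedness of $\mathcal N$ to absorb the factor $\mathcal N(\tau)+\tfrac{N+2-2s}{4}$ appearing in \eqref{ineq-nu2-estimates}, and applying H\"older in time to the $L^r$-in-time control on $h_t$.
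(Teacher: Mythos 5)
Your proposal is correct and follows essentially the same route as the paper: both bound $\mathcal N(\tau)-\gamma$ from below by $-\int_0^\tau|\nu_2|\le C(\tau^{\varepsilon/2}+\tau^{1-1/r})$ using Propositions \ref{prop-N-W11}, \ref{prop-nu2-estimates}, \ref{prop-N-bounded} together with H\"older in time and \eqref{hp-g-Lr}, then integrate $H'/H=2\mathcal N(t)/t$ over $(t,T_0)$ for the upper bound, and use $\mathcal N(t)<\gamma+\sigma/2$ near $0$ plus continuity and positivity of $H$ on $[T_\sigma,T_0]$ for the lower bound. Writing the integration as an exact identity for $\log H$ rather than as a differential inequality is only a cosmetic difference.
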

\begin{proof}
  Thanks to the H\"older inequality, \eqref{def-h}, \eqref{hp-g-Lr},
  Proposition \ref{prop-N-W11}, Proposition \ref{prop-nu2-estimates},
  and Proposition \ref{prop-N-bounded}, for any $t\in (0,T_0)$ we have
  that
\begin{multline*}
  \mathcal{N}(t) -\gamma=\int_{0}^{t}(\nu_1(\tau)+\nu_2(\tau)) \,
  d\tau
  \ge  \int_{0}^{t}\nu_2(\tau) \, d\tau \\
  \ge - C_1C_2
  \left(\mathcal{N}_i(T_i)+\frac{N+2-2s}{4}\right)
  \int_{0}^{t}\left(\tau^{-1+\frac{\e}{2}}
    +\norm{h_t(\cdot,\tau)}_{L^{\frac{N}{2s}}(\R^N)}\right)
  \, d\tau \ge- C_3 t^{\delta},
\end{multline*}
for some constant $C_3>0$, where
$\delta:=\min\left\{\frac{\e}{2},1-\frac{1}{r}\right\}$.  It follows
that, taking into account \eqref{eq-H'},
\begin{equation*}
  \frac{H'(t)}{H(t)}= \frac{2}{t}\mathcal{N}(t)
  \ge \frac{2\gamma}{t}-2C_3 t^{-1+\delta}\quad \text{for a.e. } t \in (0,T_0).
\end{equation*}
An integration over $(t,T_0)$ yields
\begin{equation*}
  H(t)\le \frac{H(T_0)}{T_0^{2
      \gamma}}e^{\frac{2C_3T_0^\delta}{\delta}}t^{2 \gamma}  \quad
  \text{for all } t\in (0,T_0),
\end{equation*} 
so that \eqref{ineq-H-estimates-above} is proved.

Furthermore, since $\gamma:=\lim_{t \to 0^+}\mathcal{N}(t)$, for any
$ \sigma >0$ there exists $T_\sigma \in (0,T_0)$ such that
$\mathcal{N}(t)<\gamma +\sigma /2$ for any $t \in (0,T_\sigma)$, and hence
\begin{equation*}
\frac{H'(t)}{H(t)}= \frac{2}{t}\mathcal{N}(t) <\frac{2\gamma+\sigma}{t} \quad \text{ for any } t \in (0,T_\sigma).
\end{equation*}
An integration of the above estimate over $(t,T_\sigma)$,
  together with continuity and positivity of $H$ in $[T_\sigma,T_0]$,
  yields \eqref{ineq-H-estimates-below} for some constant
$K_2(\sigma)$.
\end{proof}

\section{The Blow-up Analysis} \label{sec-blow-up}

If $V$ is a solution of \eqref{eq-weak-formulation-gaussian}, then it
is easy to see that, for any $\la>0$, the function
\begin{equation}\label{def-V-la}
V_\la(z,t):=V( z,\la^2t)  
\end{equation}
belongs to $L^2((\tau,T/ \la^2), \mathcal{H})$ for all
  $0<\tau<T/\la^2$ and  solves 
\begin{align}
&\sideset{_{\mathcal{H}^*}}{_{\mathcal{H}}}{\mathop{\left\langle
                (V_\la)_t, \phi\right\rangle}}
                =\frac{1}{t}\int_{\R^{N+1}_+}y^{1-2s}\nabla V_\la
                \cdot \nabla \phi \,G \, dz
                \label{eq-weak-formulation-gaussian-Vla}\\
  &-\frac{1}{t}\int_{\R^N}\left(\frac{\mu}{|x|^{2s}} v_\la(x,t)\phi(x,0)
    +\la^{2s}t^s h(\la \sqrt{t}x,\la^2t)v_\la(x,t)\phi(x,0)\right)G(x,0)\, dx,\notag
\end{align}
for a.e. $t \in (0,T/\la^2)$ and any $\phi \in C^{\infty}_c(\overline{\R^{N+1}_+})$, where
\begin{equation*}
v_\la(x,t):=v(x,\la^2t)=\Tr(V_\la(\cdot,t))(x). 
\end{equation*}
We can also define the height and  energy functions
associated to the scaled equation
\eqref{eq-weak-formulation-gaussian-Vla} as
\begin{align}
  H_\la(t)&=\int_{\R^{N+1}_+} y^{1-2s}V_\la^2 G \, dz, \notag 	\\
  D_\la(t)&=\frac{1}{t}\left(\int_{\R^{N+1}_+} y^{1-2s}|\nabla V_\la|^2G
            \, dz
            -\int_{\R^N}  \left(\frac{\mu}{|x|^{2s}}v_\la^2
            +\la^{2s}t^sh(\la\sqrt{t}x,\lambda^2t) v_\la^2\right)G(x,0)\, dx\right)\notag
\end{align}
and the Almgren-Poon frequency function as
\begin{equation}\label{def-N-scaled}
\mathcal{N}_\la(t):=\frac{tD_\la(t)}{H_\la(t)}.	
\end{equation}
For any $\la>0$, we have the following scaling properties:
\begin{equation}\label{D-H-N-scaling}
  D_\la(t)=\la^2D(\la^2t), \quad H_\la(t)= H(\la^2t) \quad \text{ and
  }
  \quad \mathcal{N}_\la(t)=\frac{\la^2tD(\la^2t)}{H(\la^2t)}=\mathcal{N}(\la^2 t),
\end{equation}
on $(0,2\alpha/\la^2)$. Let, for any $\la \in (0,\sqrt{T_0})$ and for any $t \in (0,T/T_0)$,
\begin{equation}\label{def-W-la}
W_\la(z,t):=\frac{V(z,\la^2t)}{\sqrt{H(\la^2)}}.
\end{equation}
In particular we note that $ 1  \in (0,T/T_0)$. Similarly, we define  
\begin{equation*}
  w_\la(z,t):=\frac{v(z,\la^2t)}{\sqrt{H(\la^2)}}
\end{equation*}
for any $t \in (0,T/T_0)$.
From \eqref{def-V-la} and \eqref{def-W-la} we deduce that $W_\la$
belongs to $L^2((\tau,T/ \la^2), \mathcal{H})$ for all
  $0<\tau<T/\la^2$ and solves \eqref{eq-weak-formulation-gaussian-Vla}.

\begin{proposition}\label{prop-Wla-Wlat-bounded}
Let $T_0$ be as in \eqref{def-T0}. Then 
\begin{equation}\label{Wla-bounded}
  \{W_\la\}_{\la \in (0,\sqrt{T_0})}\text{ is bounded in  }
  L^\infty((\tau,1),\mathcal{H})
  \quad \text{ for any } \tau \in (0,1),
\end{equation}
and 
\begin{equation}\label{Wla-t-bounded}
  \{(W_\la)_t\}_{\la \in (0,\sqrt{T_0})}\text{ is bounded in  }
  L^\infty((\tau,1),\mathcal{H}^*)  \quad \text{ for any } \tau \in (0,1).
\end{equation}
Moreover
\begin{equation}\label{Wla-compact}
  \{W_\la\}_{\la \in (0,\sqrt{T_0})}\text{ is relatively compact in }
  C^0([\tau,1],\mathcal{L}) \quad \text{ for any } \tau \in (0,1).
\end{equation}
\end{proposition}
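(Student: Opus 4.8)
The plan is to prove the three assertions in turn, using the scaling identities \eqref{D-H-N-scaling}, the monotonicity of $H$, and the frequency bounds of Section~\ref{sec-Almgren-Poon-type-monotonicity-formula}. Throughout, observe that $\la\in(0,\sqrt{T_0})$ forces $\la^2t\in(0,T_0)\subset(0,2\alpha)\subset(0,\bar T)$ for every $t\in(0,1)$, since $2\alpha<\bar T<1$; in particular $\la<1$, and the rescaled potential satisfies $\la^{2s}t^sh(\la\sqrt tx,\la^2t)=\rho^sh(\sqrt\rho x,\rho)$ with $\rho:=\la^2t\in(0,\bar T)$, i.e. it is the original potential evaluated at physical time $\rho$.

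\textbf{Step 1: the bound \eqref{Wla-bounded}.} First I would bound the height. Since $\int_{\R^{N+1}_+}y^{1-2s}W_\la^2G\,dz=H(\la^2t)/H(\la^2)$ and, by Proposition~\ref{prop-Ht-monotonicity}, $t\mapsto t^{-2C_{N,s,\mu}+\frac{N-2+2s}{2}}H(t)$ is non-decreasing on $(0,2\alpha)$, from $0<\la^2t\le\la^2<2\alpha$ one gets $H(\la^2t)/H(\la^2)\le t^{2C_{N,s,\mu}-\frac{N-2+2s}{2}}$, which is bounded uniformly for $t\in(\tau,1)$ and $\la\in(0,\sqrt{T_0})$. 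For the gradient term, recall that $W_\la$ solves \eqref{eq-weak-formulation-gaussian-Vla} and that, by \eqref{D-H-N-scaling},
\[
\int_{\R^{N+1}_+}y^{1-2s}|\nabla W_\la|^2G\,dz-\int_{\R^N}\Big(\tfrac{\mu}{|x|^{2s}}w_\la^2+\la^{2s}t^sh(\la\sqrt tx,\la^2t)w_\la^2\Big)G(\cdot,0)\,dx=\mathcal{N}(\la^2t)\,\tfrac{H(\la^2t)}{H(\la^2)}.
\]
Applying inequality \eqref{ineq-D+H} of Proposition~\ref{prop-ineq-D+H} with $f=h$ (admissible since $h$ satisfies \eqref{hp-h-subhomogeneous-no-nabla} with $K=C_g$ and $\bar T$ is the very constant used to define $\alpha$) to $V=W_\la(\cdot,t)\in\mathcal{H}$ at time $\rho=\la^2t$, together with the height bound and the fact that $\mathcal{N}(\la^2t)+\tfrac{N+2-2s}{4}$ is nonnegative and bounded above on $(0,T_0)$ — by \eqref{ineq-N-bounded-below} and Proposition~\ref{prop-N-bounded} used with $T_1=T_0$ and $\beta_1\to0^+$, which is legitimate because $H>0$ on all of $(0,2\alpha)$ by Proposition~\ref{prop-H-positve-everywhere} — yields a uniform bound on $\int_{\R^{N+1}_+}y^{1-2s}|\nabla W_\la|^2G\,dz$. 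Adding the two estimates gives \eqref{Wla-bounded}.

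\textbf{Step 2: the bound \eqref{Wla-t-bounded}.} Testing \eqref{eq-weak-formulation-gaussian-Vla} against an arbitrary $\phi\in\mathcal{H}$ and using $1/t\le1/\tau$ on $(\tau,1)$, the right-hand side is controlled by $\tfrac1\tau\norm{W_\la(\cdot,t)}_{\mathcal{H}}\norm{\phi}_{\mathcal{H}}$ plus the two boundary terms: the $\mu$-term is estimated by Cauchy--Schwarz and the Hardy-type inequality \eqref{ineq-hardy-frac}, while the $h$-term is estimated by Cauchy--Schwarz and \eqref{ineq-h} applied with the admissible potential $(y,t)\mapsto\la^{2s}h(\la y,\la^2t)$ (again $K=C_g$, since $\la<1$), noting that $t^s+t^{\e/2}$ is bounded on $(\tau,1)$. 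Using \eqref{Wla-bounded}, all terms are bounded by $\co\,\norm{\phi}_{\mathcal{H}}$ with $\co$ depending only on $\tau$, whence $\norm{(W_\la)_t(\cdot,t)}_{\mathcal{H}^*}\le\co$ for a.e.\ $t\in(\tau,1)$, which is \eqref{Wla-t-bounded}.

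\textbf{Step 3: the compactness \eqref{Wla-compact}.} This follows from the Aubin--Lions--Simon compactness lemma applied to the chain $\mathcal{H}\hookrightarrow\mathcal{L}\hookrightarrow\mathcal{H}^*$, the first embedding being compact by Proposition~\ref{prop-embedding} and the second linear and continuous by Remark~\ref{remark-Lt-Ht*}, combined with the bounds \eqref{Wla-bounded} and \eqref{Wla-t-bounded} on $[\tau,1]$. I expect the only real difficulty to be the bookkeeping of the parabolic self-similar rescaling in Step~1: one must verify carefully that the scaled potential coincides with the original one at physical time $\la^2t\in(0,\bar T)$, so that every functional inequality of Section~\ref{sec-Inequalities-Traces} and every frequency estimate of Section~\ref{sec-Almgren-Poon-type-monotonicity-formula} can be invoked with constants independent of $\la$.
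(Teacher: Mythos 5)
Your proposal is correct and follows essentially the same route as the paper: the height bound from Proposition \ref{prop-Ht-monotonicity}, the gradient bound from \eqref{ineq-D+H} combined with the uniform frequency bound of Proposition \ref{prop-N-bounded} (with $T_1=T_0$ as in \eqref{def-T0}) via the scaling identities \eqref{D-H-N-scaling}, a duality estimate for $(W_\la)_t$ using the Hardy-trace inequalities, and Simon's compactness lemma for \eqref{Wla-compact}. The only cosmetic difference is that in Step 2 you estimate the $h$-boundary term by Cauchy--Schwarz together with \eqref{ineq-h} at the physical time $\rho=\la^2 t$, whereas the paper splits the regions $|x|\le 1$ and $|x|\ge 1$ directly using \eqref{hp-g-subhomogeneous}; both are sound and yield the same $\mathrm{const}\,t^{-1}\norm{W_\la(\cdot,t)}_{\mathcal{H}}$ bound.
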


\begin{proof}
From Proposition \ref{prop-Ht-monotonicity}, for any $t \in (0,1)$,
\begin{equation}\label{eq-prop-Wla-Wlat-bounded:1}
  \int_{\R^{N+1}_+} y^{1-2s} W^2_\la(z,t) \, G(z) \, dz
  = \frac{H(\la^2t)}{H(\la^2)} \le t^{2C_{N,s,+\mu}-\frac{N-2+2s}{2}}.
\end{equation}
Furthermore, by \eqref{ineq-D+H}, \eqref{ineq-N-estimates}, and \eqref{D-H-N-scaling}
\begin{multline*}
  \frac{1}{t} \left(-\frac{N+2-2s}{4}+C_2\left(\mathcal{N}(T_0)
      +\frac{N+2-2s}{4}\right)\right)H_\la(t)\ge \la^2D(\la^2t)\\
  \ge \frac{1}{t}\left(-\frac{N+2-2s}{4}+C_{N,s,\mu} \right)H_\la(t)
  +\frac{1}{t}C_{N,s,\mu} \int_{\R^{N+1}_+} y^{1-2s} |\nabla
  V_\la(z,t)|^2 G(z) \, dz, \end{multline*} for all
$ \la \in (0,\sqrt{T_0})$ and a.e. $ t \in (0,1)$. It follows that,
taking into account \eqref{eq-prop-Wla-Wlat-bounded:1},
\begin{align}\label{eq-prop-Wla-Wlat-bounded:2}
  \int_{\R^{N+1}_+} &y^{1-2s} |\nabla V_\la(z,t)|^2 G(z) \, dz  \\
                    &\notag\le
                      C^{-1}_{N,s,h}\left(C_2\left(\mathcal{N}(T_0)+\frac{N+2-2s}{4}\right)
  -C_{N,s,\mu} \right) \int_{\R^{N+1}_+} y^{1-2s} V^2_\la(z,t) G(z) \, dz\\
&\notag\le
C^{-1}_{N,s,h}\left(C_2\left(\mathcal{N}(T_0)+\frac{N+2-2s}{4}\right)
  -C_{N,s,\mu} \right) t^{2C_{N,s,\mu}-\frac{N-2+2s}{2}}H(\la^2)
\end{align}
for a.e. $t \in (0,1)$. Hence we have proved \eqref{Wla-bounded} in
view of \eqref{eq-prop-Wla-Wlat-bounded:1} and
\eqref{eq-prop-Wla-Wlat-bounded:2}.

To prove \eqref{Wla-t-bounded}, we note that, from \eqref{def-W-la}, 
\begin{equation*}
(W_\la)_t=\frac{\la^2 }{\sqrt{H(\la^2)}}V_t(z,\la^2 t ).
\end{equation*}
To estimate $\norm{(W_\la)_t(\cdot,t)}_{\mathcal{H}^*}$, we observe that, for any $\phi\in \mathcal{H}$,
\begin{align}\label{eq-prop-Wla-Wlat-bounded:4}
 & \la^{2s}t^{-1+s}\left|\int_{\R^N}
    h(\la\sqrt{t}x,\la^2t)w_\la(x,t)\Tr\phi(x)G(x,0)\, dx
  \right|\\
 \notag&\le C_g\la^{2s}t^{-1+s}\int_{\R^N}|w_\la(x,t)||\Tr\phi(x)|G(x,0)\, dx\\
 \notag &\quad+C_g\la^\e t^{-1+\e/2} \int_{\R^N}|x|^{-2s+\e}|w_\la(x,t)||\Tr\phi(x)|G(x,0)\, dx\\
\notag  &\le
  C_gK_{N,s}\la^{2s}t^{-1+s}\norm{W_\la(\cdot,t)}_{\mathcal{H}}
  \norm{\phi}_{\mathcal{H}}\\
\notag&\quad  +C_g\la^{\e}t^{-1+\e/2}\int_{\{|x| \le 1\}}|x|^{-2s}|w_\la(x,t)||\Tr\phi(x)|G(x,0)\, dx\\
  \notag&\quad+C_g\la^{\e}t^{-1+\e/2}\int_{\{|x| \ge 1\}}|w_\la(x,t)||\Tr\phi(x)|G(x,0)\, dx \\
\notag  &\le \frac{C_g\,\la^\e}{
  t^{1-\e/2}}\left(K_{N,s}t^{s-\e/2}\la^{2s-\e}
    +\kappa_{s}^{-1} \Lambda_{N,s}^{-1}
    \max\left\{1, \tfrac{N+2-2s}{4}\right\}+K_{N,s} \right)  \norm{W_\la(\cdot,t)}_{\mathcal{H}}
  \norm{\phi}_{\mathcal{H}},
\end{align}
for any $\la \in (0,\sqrt{T_0})$ and a.e. $t\in (0,1)$, thanks to \eqref{hp-g-subhomogeneous},
\eqref{ineq-trace}, \eqref{ineq-hardy-frac}, \eqref{def-h}  and the H\"older inequality.  Then, by
\eqref{ineq-hardy-frac} and \eqref{eq-prop-Wla-Wlat-bounded:4},
\begin{multline*}
  \left|\sideset{_{\mathcal{H}^*}}{_{\mathcal{H}}}{\mathop{\left\langle
          (W_\la)_t(\cdot,t), \phi\right\rangle}}\right| \le
  \Bigg(1+\mu\,\kappa_{s}^{-1} \Lambda_{N,s}^{-1}
  \max\left\{1, \tfrac{N+2-2s}{4}\right\}\\
  +C_gT_0^{\e/2}\left(K_{N,s}T_0^{\frac{2s-\e}{2}}+\kappa_{s}^{-1}
    \Lambda_{N,s}^{-1} \max\left\{1,
      \tfrac{N+2-2s}{4}\right\}+K_{N,s} \right) \Bigg)
  \frac{\norm{W_\la(\cdot,t)}_{\mathcal{H}}
    \norm{\phi}_{\mathcal{H}}}{t}.
\end{multline*}
Hence
\begin{equation*}
  \norm{(W_\la)_t(\cdot,t)}_{\mathcal{H}^*}\le
  \frac{\rm{const}}{t} \norm{W_\la(\cdot,t)}_{\mathcal{H}},
\end{equation*}
so that  \eqref{Wla-t-bounded} follows from \eqref{Wla-bounded}.

Finally, in view of \eqref{Wla-bounded} and \eqref{Wla-t-bounded}, we
can apply \cite[Corollary 8]{SJ} to obtain \eqref{Wla-compact}.
\end{proof}

\begin{proposition}\label{prop-blow-up}
  Let $V$ be a solution to \eqref{eq-weak-formulation-gaussian} such
  that $V\not\equiv0$ in $\R^{N+1}_+\times
(0,T)$  and
  let $\gamma$ be as in \eqref{limit-N}. Then $\gamma$ is an
  eigenvalue of problem
  \eqref{prob-eigenvalue-Ornstein-Uhlenbeck-operator}.  Furthermore,
  for any sequence $\la_n \to 0^+$, there exist a subsequence
  $\la_{n_k} \to 0^+$ and an eigenfunction $Y$ of problem
  \eqref{prob-eigenvalue-Ornstein-Uhlenbeck-operator} associated to
  $\gamma$ such that $\norm{Y}_{\mathcal{L}}=1$ and, for any
  $\tau \in (0,1)$,
\begin{align*}
  &\lim_{k \to \infty} \int_{\tau}^1\norm{\frac{V(z,\la_{n_k}^2t)}
    {\sqrt{H(\la_{n_k}^2)}}-t^\gamma  Y(z)}_{\mathcal{H}}^2 \, dt =0,\\
  &\lim_{k \to \infty} \sup\limits_{t \in [\tau,1]}
    \norm{\frac{V(z,\la_{n_k}^2t)}{\sqrt{H(\la_{n_k}^2)}}-t^\gamma Y(z)}_{\mathcal{L}} =0.
\end{align*}  
\end{proposition}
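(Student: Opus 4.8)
The plan is to exploit the compactness results of Proposition \ref{prop-Wla-Wlat-bounded} together with the monotonicity machinery of Section \ref{sec-Almgren-Poon-type-monotonicity-formula}. Set $W_{\la}$ as in \eqref{def-W-la}; by \eqref{Wla-bounded}, \eqref{Wla-t-bounded} and \eqref{Wla-compact}, given any $\la_n\to0^+$ we can extract a subsequence $\la_{n_k}\to0^+$ and a limit function $Y\in L^2((\tau,1),\mathcal H)$ for every $\tau\in(0,1)$, with $(W_{\la_{n_k}})_t\rightharpoonup Y_t$ in $L^2((\tau,1),\mathcal H^*)$ and $W_{\la_{n_k}}\to Y$ in $C^0([\tau,1],\mathcal L)$. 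First I would pass to the limit in the scaled weak formulation \eqref{eq-weak-formulation-gaussian-Vla}: the degenerate-parabolic term and the Hardy term converge by weak/strong convergence in $\mathcal H$ (using \eqref{ineq-hardy-frac} to control the trace term), while the perturbation term carries a prefactor $\la_{n_k}^{2s}t^s h(\la_{n_k}\sqrt t x,\la_{n_k}^2 t)$ which vanishes in the limit thanks to \eqref{hp-g-subhomogeneous} and the trace inequalities exactly as in \eqref{eq-prop-Wla-Wlat-bounded:4}. This shows $Y$ solves the homogeneous scaled problem $\langle Y_t,\phi\rangle_{\mathcal H^*,\mathcal H}=\frac1t\int y^{1-2s}\nabla Y\cdot\nabla\phi\,G-\frac1t\int\frac{\mu}{|x|^{2s}}\Tr(Y)\Tr(\phi)G(\cdot,0)$.

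Next I would identify $Y$ as a homogeneous solution. By the scaling relations \eqref{D-H-N-scaling}, $\mathcal N_{\la}(t)=\mathcal N(\la^2 t)\to\gamma$ as $\la\to0^+$ uniformly on compact subsets of $(0,1]$, because $\gamma=\lim_{t\to0^+}\mathcal N(t)$ (Proposition \ref{prop-limit-N}) together with the monotonicity-type control of Proposition \ref{prop-N-bounded}. A standard computation — writing the frequency function of the limit, $\mathcal N^Y(t):=\frac{t\int y^{1-2s}|\nabla Y|^2G-t\int\frac{\mu}{|x|^{2s}}|\Tr Y|^2 G(\cdot,0)}{\int y^{1-2s}Y^2 G}$, and using that $\nu_1\ge0$ forces $\mathcal N^Y\equiv\gamma$ — shows, via the Cauchy–Schwarz equality case in $\mathcal L$, that $Y_t(\cdot,t)=\frac{\gamma}{t}Y(\cdot,t)$, whence $Y(z,t)=t^{\gamma}Y(z)$ for some $Y\in\mathcal H\setminus\{0\}$ (I use $H(\la^2 t)/H(\la^2)\to t^{2\gamma}$ locally uniformly, which follows from \eqref{ineq-H-estimates-above}–\eqref{ineq-H-estimates-below}, to guarantee $Y\not\equiv0$ and to get $\|Y\|_{\mathcal L}=1$ after normalising the $t=1$ slice). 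Plugging $Y(z,t)=t^{\gamma}Y(z)$ into the limiting weak formulation gives precisely \eqref{eq-eigenvalue-Ornstein-Uhlenbeck-operator}, so $Y$ is an eigenfunction of \eqref{prob-eigenvalue-Ornstein-Uhlenbeck-operator} with eigenvalue $\gamma$; by Proposition \ref{prop-eigenvalues}, $\gamma$ belongs to the list \eqref{def-eigenvalues}.

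Finally, the strong convergence statements: the $C^0([\tau,1],\mathcal L)$ convergence of $W_{\la_{n_k}}$ to $t^{\gamma}Y$ is already in hand from \eqref{Wla-compact} once the limit is identified. For the $L^2((\tau,1),\mathcal H)$ convergence I would test the equation for $W_{\la_{n_k}}-t^{\gamma}Y$ against itself (legitimate since both lie in the relevant spaces and have $L^2$-in-time $\mathcal H^*$ derivatives), integrate in $t$, and use that the energy $D_{\la_{n_k}}$ converges — this follows because $\mathcal N_{\la_{n_k}}\to\gamma$ and $H_{\la_{n_k}}(t)/H_{\la_{n_k}}(1)\to t^{2\gamma}$ give convergence of $\int_\tau^1 D_{\la_{n_k}}(t)H_{\la_{n_k}}(t)/H(\la_{n_k}^2)\,dt$, and the perturbation term is again negligible. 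The main obstacle I anticipate is making rigorous the passage from weak convergence of $(W_{\la_{n_k}})_t$ in $\mathcal H^*$ to the identification $Y_t=\frac\gamma t Y$: one must justify that the remainder $\nu_2$ in the frequency-function derivative is integrable near $0$ uniformly in the scaling (controlled via Proposition \ref{prop-nu2-estimates}, \eqref{hp-g-Lr} and Proposition \ref{prop-N-bounded}), so that no energy is lost in the limit and the Cauchy–Schwarz equality case genuinely applies; this is where the hypotheses \eqref{hp-g-Lr}–\eqref{hp-g-subhomogeneous} on $g$ are essential.
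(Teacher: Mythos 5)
Your skeleton is essentially the paper's (compactness from Proposition \ref{prop-Wla-Wlat-bounded}, passage to the limit in \eqref{eq-weak-formulation-gaussian-Vla} with the potential term killed as in \eqref{eq-prop-Wla-Wlat-bounded:4}, constancy of the limiting frequency, the Cauchy--Schwarz equality case giving $\widetilde W_t=\frac{\gamma}{t}\widetilde W$ and hence $t^\gamma Y$, and an energy argument for strong $\mathcal H$-convergence), but two linked justifications in your plan do not hold as stated. First, the claim that $H(\la^2t)/H(\la^2)\to t^{2\gamma}$ ``follows from \eqref{ineq-H-estimates-above}--\eqref{ineq-H-estimates-below}'' is false: those bounds have exponent $2\gamma$ from above but only $2\gamma+\sigma$ from below, with a $\sigma$-dependent constant, so they do not control the ratio at all. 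In the paper this ratio convergence is a \emph{consequence} of the blow-up result (the positivity of $\lim_{t\to0^+}t^{-2\gamma}H(t)$ is Proposition \ref{prop-limit-H-positive}, whose proof uses Proposition \ref{prop-blow-up}, so invoking it here risks circularity). In fact you do not need it for nondegeneracy: by the very definition \eqref{def-W-la} one has $\norm{W_\la(\cdot,1)}_{\mathcal L}=1$ exactly, and the $C^0([\tau,1],\mathcal L)$ compactness transfers this to the limit, giving $\widetilde W\not\equiv0$ and $\norm{Y}_{\mathcal L}=1$ for free. If you do want the ratio convergence (you also use it in your final energy step), the correct derivation is to integrate \eqref{eq-H'}, i.e. $H'/H=2\mathcal N(t)/t$, between $\la^2t$ and $\la^2$ and use Proposition \ref{prop-limit-N}, which gives $H(\la^2t)/H(\la^2)\to t^{2\gamma}$ uniformly on $[\tau,1]$ — not the two power bounds.

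Second, your identification step asserts that ``$\nu_1\ge0$ forces $\mathcal N^Y\equiv\gamma$'' at a stage where you only have weak $L^2((\tau,1),\mathcal H)$ convergence. With weak convergence the coercive quadratic form in the numerator of the frequency is merely weakly lower semicontinuous, so you only get $\mathcal N_{\widetilde W}(t)\le\gamma$ a.e., and monotonicity of $\mathcal N_{\widetilde W}$ does not upgrade this to equality; as written this is a gap. There are two ways to close it, and you have the ingredients for both: (a) the paper's order — prove strong $L^2((\tau,1),\mathcal H)$ convergence \emph{first}, by testing the difference of \eqref{eq-weak-formulation-gaussian-Vla} and the limiting equation with $W_{\la_{n_k}}-\widetilde W$, integrating in time, and using the coercivity \eqref{ineq-D+H} with $f\equiv0$ together with \eqref{eq-prop-Wla-Wlat-bounded:4} and the $C^0\mathcal L$-convergence; only then pass to the limit in $\mathcal N_{\la_{n_k}}(t)=\mathcal N(\la_{n_k}^2t)\to\gamma$ to get $\mathcal N_{\widetilde W}\equiv\gamma$; or (b) keep your order, but first establish $H_{\widetilde W}(t)=t^{2\gamma}$ via the corrected ratio-convergence argument above, since then $2D_{\widetilde W}=H_{\widetilde W}'$ yields $\mathcal N_{\widetilde W}\equiv\gamma$ directly, and the Cauchy--Schwarz equality case of Proposition \ref{prop-N-W11} (with $h\equiv0$) applies as you intend. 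With either repair, the remainder of your argument — homogeneity, the eigenvalue equation for $Y$, and the final strong-convergence statement — goes through as in the paper.
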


\begin{proof}
  Let $\la_n \to 0^+$. By Proposition
  \ref{prop-Wla-Wlat-bounded} and a diagonal argument, there exists a
  subsequence $\la_{n_k} \to 0^+$ and
  $\widetilde{W} \in \cap_{\tau \in (0,1)}
  (C^0([\tau,1],\mathcal{L})\cap L^2((\tau,1),\mathcal{H}))$ such that
  $\widetilde{W}_t \in \cap_{\tau \in (0,1)}
  L^2((\tau,1),\mathcal{H^*})$,
\begin{equation}\label{eq-prop-blow-up:1}
  W_{\la_{n_k}} \rightharpoonup \widetilde{W}
  \text{ weakly  in }  L^2((\tau,1),\mathcal{H}),
  \quad  (W_{\la_{n_k}})_t \rightharpoonup \widetilde{W}_t \text{ weakly in }  	L^2((\tau,1),\mathcal{H^*}),
\end{equation} 
and 
\begin{equation}\label{eq-prop-blow-up:2}
W_{\la_{n_k}} \to \widetilde{W} \text{ strongly in }  C^0([\tau,1],\mathcal{L}),
\end{equation}
as $k \to +\infty$, for any $\tau \in (0,1)$.
Since, by \eqref{def-W-la}, 
\begin{equation*}
\norm{W_{\la_{n_k}}(\cdot,1)}_{\mathcal{L}}=1,
\end{equation*}
from \eqref{eq-prop-blow-up:2} we obtain that
\begin{equation}\label{eq-prop-blow-up:3}
\norm{\widetilde{W}(\cdot,1)}_{\mathcal{L}}=1,
\end{equation}
hence $\widetilde{W} \not \equiv 0$.
Now we claim that 
\begin{equation}\label{eq-prop-blow-up:4}
W_{\la_{n_k}} \to \widetilde{W} \text{ strongly  in }  L^2((\tau,1),\mathcal{H})  \quad \text{ for any }\tau \in (0,1).
\end{equation}
Thanks to \eqref{eq-prop-Wla-Wlat-bounded:4} and
\eqref{eq-prop-blow-up:1}, we can pass to the limit in
\eqref{eq-weak-formulation-gaussian-Vla} thus obtaining, for any
$\phi \in \mathcal{H}$ and a.e. $t \in (0,1)$,
\begin{multline}\label{eq-prop-blow-up:5}
  \sideset{_{\mathcal{H}^*}}{_{\mathcal{H}}}
  {\mathop{\left\langle (\widetilde{W})_t(\cdot,t),\phi\right\rangle}}
  =\frac{1}{t}\int_{\R^{N+1}_+}y^{1-2s}\nabla \widetilde{W}(z,t)
  \cdot \nabla \phi(z) \, G(z) \, dz \\
  -\frac{1}{t}\int_{\R^N}\frac{\mu}{|x|^{2s}}
  \widetilde{w}(x,t)\Tr\phi(x)G(x,0)\, dx,
\end{multline}
where $\widetilde{w}(\cdot,t):=\Tr(\widetilde{W}(\cdot,t))$. Testing
the difference between \eqref{eq-weak-formulation-gaussian-Vla} and
\eqref{eq-prop-blow-up:5} with $(W_{\la_{n_k}}-\widetilde{W})$, 
integrating between $\tau$ and $1$, and taking into account
Remark
\ref{remark-parabolic-change-variables},  we obtain that
\begin{align*}
  \int_{\tau}^{1}&\left(\int_{\R^{N+1}_+}y^{1-2s}|\nabla \widetilde{W}-\nabla W_{\la_{n_k}}|^2 \, G \, dz\right) \, dt\\
 & \qquad-\int_{\tau}^{1}\left(\int_{\R^N}\frac{\mu}{|x|^{2s}} |\widetilde{w}(x,t)-w_{\la_{n_k}}(x,t)|^2 G(x,0)\, dx\right)\, dt\\
 & =\frac{1}{2}\norm{\widetilde{W}(\cdot,1)-W_{\la_{n_k}}(\cdot,1)}_{\mathcal{L}}^2-\frac{\tau}{2}\norm{\widetilde{W}(\cdot,\tau)-W_{\la_{n_k}}(\cdot, \tau)}_{\mathcal{L}}^2\\
 & \qquad-\frac{1}{2}\int_{\tau}^1\left(\int_{\R_+^{N+1}}y^{1-2s}|\widetilde{W}-W_{\la_{n_k}}|^2 G\, dz\right) \, dt \\
 & \qquad+\la^{2s}\int_{\tau}^{1}\left(\int_{\R^N}t^sh(\la\sqrt{t}x,\la^2t)w_{\la_{n_k}}(x,t)(w_{\la_{n_k}}(x,t)-\tilde{w}(x,t))G(x,0)\, dx\right) \, dt.\\
\end{align*}
Then by
\eqref{eq-prop-Wla-Wlat-bounded:4}, \eqref{eq-prop-blow-up:1} and
\eqref{eq-prop-blow-up:2} we conclude that
\begin{multline*}
  \lim_{k \to 0^+}
  \Bigg(\int_{\tau}^{1}\left(\int_{\R^{N+1}_+}y^{1-2s}
    \left(|\nabla \widetilde{W}-\nabla W_{\la_{n_k}}|^2
      +\frac12|\widetilde{W}-W_{\la_{n_k}}|^2\right)\, G \, dz\right) \, dt\\
  -\int_{\tau}^{1}\left(\int_{\R^N}\frac{\mu}{|x|^{2s}}
    |\widetilde{w}(x,t)-w_{\la_{n_k}}(x,t)|^2 G(x,0)\, dx\right)\,
  dt\Bigg)=0.
\end{multline*}
Thanks to Proposition \ref{prop-ineq-D+H} with $f\equiv 0$ and
  \eqref{eq-prop-blow-up:2},
we conclude that \eqref{eq-prop-blow-up:4} holds. Therefore, for any
$\tau \in (0,1)$,
\begin{equation}\label{eq-prop-blow-up:5.1}
\lim_{k \to \infty}\int_{\tau}^1 \norm{W_{\la_{n_k} }(\cdot,t)-\widetilde W(\cdot,t)}_{\mathcal{H}}^2 \, dt =0
\end{equation}
and, by \eqref{eq-prop-blow-up:2},
\begin{equation*}
\lim_{k \to \infty}\sup\limits_{t \in [\tau,1]} \norm{W_{\la_{n_k} }(\cdot,t)-\widetilde{W} (\cdot,t)}_{\mathcal{L}}^2 \, dt =0.
\end{equation*}
Let us define, for any $t \in (0,1)$,
\begin{align*}
&H_{\widetilde{W}}(t):=\int_{\R^{N+1}_+} y^{1-2s} \widetilde{W}^2G \, dz,\\
&D_{\widetilde{W}}(t):=\frac{1}{t}\left(\int_{\R^{N+1}_+} y^{1-2s} |\nabla \widetilde{W}|^2G \, dz - \int_{\R^N} \frac{\mu}{|x|^{2s}}\widetilde{w}^2(x,t) G(x,0)\, dx\right).
\end{align*}
Since $H_{\widetilde{W}}(1)=\|\widetilde{W}(\cdot,1)\|_{\mathcal{L}}^2=1$ by \eqref{eq-prop-blow-up:3}, then
\begin{equation*}
H_{\widetilde{W}}(t)>0 \text{ for any } t \in (0,1)
\end{equation*}
as we can prove arguing as in  Proposition
\ref{prop-H-positive}. Hence the function 
\begin{equation*}
\mathcal{N}_{\widetilde{W}}:(0,1)\to \R, \quad \mathcal{N}_{\widetilde{W}}(t):=\frac{tD_{\widetilde{W}}(t)}{H_{\widetilde{W}}(t)} 
\end{equation*}
is well defined.
Furthermore, from \eqref{def-N-scaled} and \eqref{def-W-la} , it follows that 
\begin{equation*}
\mathcal{N}_\la(t)=\frac{\int_{\R^{N+1}_+} y^{1-2s}|\nabla W_\la|^2 G \, dz-\int_{\R^N}  \left(\frac{\mu}{|x|^{2s}}w_\la^2(x,t) +\la^{2s}t^sh(\la \sqrt{t}x,\la^2t) w^2_\la(x,t)\right)G(x,0)\, dx}{\int_{\R^{N+1}_+} y^{1-2s}W_\la^2(z,t) G \, dz}.
\end{equation*}
Then, from  \eqref{ineq-hardy-frac},
\eqref{eq-prop-Wla-Wlat-bounded:4}, and \eqref{eq-prop-blow-up:5.1} 
we deduce that
\begin{equation*}
\lim_{k \to \infty}\mathcal{N}_{\la_{n_k}}(t)=\mathcal{N}_{\widetilde{W}}(t) \quad \text{ for a.e. }  t \in (0,1).
\end{equation*}
On the other hand,
$\mathcal{N}_{\la_{n_k}}(t)=\mathcal{N}(\la^2_{n_k}t)$ for any
$t \in (0,1)$ by \eqref{D-H-N-scaling} and hence
\begin{equation*}
  \mathcal{N}_{\widetilde W}(t)=\lim_{k \to \infty}\mathcal{N}_{\la_{n_k}}(t)=\lim_{k \to \infty}\mathcal{N}(\la_{n_k}^2t)=\gamma \quad \text{ for any  }  t \in (0,1),
\end{equation*}
with $\gamma$ as in \eqref{limit-N}. It follows that
$\mathcal{N}'_{\widetilde{W}}(t) \equiv 0$ in $(0,1)$. In view of
Proposition \ref{prop-N-W11} in the case $h \equiv 0$, we deduce that
\begin{equation*}
\left(\int_{\R_+^{N+1}}y^{1-2s}\widetilde{W}_t^2G\, dz\right)\left( \int_{\R^{N+1}_+}y^{1-2s}\widetilde{W}^2 G \, dz\right)
=\left(\int_{\R_+^{N+1}}y^{1-2s} \widetilde{W}_t\widetilde{W}Gdz\right)^2
\end{equation*}
for a.e. $t \in (0,1)$. In particular, for the vectors
$\widetilde{W}_t(\cdot,t)$ and $\widetilde{W}(\cdot,t)$ in
$\mathcal{L}$, equality holds in the Cauchy-Schwarz inequality for
a.e. $t\in(0,1)$.
Hence there exists a function $\beta:(0,1) \to \R$ such that
\begin{equation}\label{eq-prop-blow-up:6.2}
 \widetilde{W}_t(z,t)=\beta(t)\widetilde{W}(z,t) \quad \text{ for a.e. } z \in \R^{N+1}_+ \text{ and  a.e. }   t \in (0,1).
\end{equation}
Thanks to \eqref{eq-H'} and Remark \ref{remark-Lt-Ht*},
\begin{equation*}
D_{\widetilde{W}}(t)=\sideset{_{\mathcal{H}^*}}{_{\mathcal{H}}}{\mathop{\left\langle (\widetilde{W})_t(\cdot,t),\widetilde{W}(\cdot,t)\right\rangle}}=\beta(t)H_{\widetilde{W}}(t),
\end{equation*}
and so 
\begin{equation}\label{eq-prop-blow-up:6.2.2}
\beta(t)=\frac{\gamma}{t} \quad \text{ for a.e. } t \in (0,1).
\end{equation}
Combining \eqref{eq-prop-blow-up:5}, \eqref{eq-prop-blow-up:6.2} and
\eqref{eq-prop-blow-up:6.2.2}, we conclude that  $\widetilde{W}$ satisfies
\begin{equation}\label{eq-prop-blow-up:6.3}
\gamma \int_{\R^{N+1}_+} y^{1-2s}\widetilde{W}\phi \,G\, dz=\int_{\R^{N+1}_+}y^{1-2s}\nabla \widetilde{W}\cdot \nabla \phi \, G \, dz 
-\int_{\R^N}\frac{\mu}{|x|^{2s}} \widetilde{w}\phi \,G(x,0)\, dx,
\end{equation}
for all $\phi \in \mathcal{H}$ and a.e. $t \in (0,1)$.

Furthermore from \eqref{eq-prop-blow-up:6.2} it follows that, letting
$\widetilde{W}^\eta(z,t):=\widetilde{W}(z, \eta^2 t)$ for any
$\eta >0$,
\begin{equation*}
\frac{d\widetilde{W}^\eta}{d \eta}(z,t)=\frac{2 \gamma}{\eta} \widetilde{W}^\eta(z,t) \quad \text{ in a distributional sense and    a.e. in }  \R^{N+1}_+ \times (0,1).
\end{equation*}
An integration yields 
\begin{equation*}
\widetilde{W}^\eta(z,t)=\eta^{2\gamma}\widetilde{W}(z,t) \quad \text{ for all } \eta >0  \text{ and  a.e. in }  \R^{N+1}_+ \times (0,1).
\end{equation*} 
Let $Y(z):=\widetilde{W}(z,1)$. Then $\norm{Y}_\mathcal{L}=1$ and
\begin{equation}\label{eq-prop-blow-up:8}
\widetilde{W}(z,t)=\widetilde{W}^{\sqrt{t}}\left(z,1\right)=t^\gamma Y\left(z\right) \text{ for  a.e. } z \in \R^{N+1}_+ \text { and a.e. } t \in (0,1).
\end{equation}
Moreover, from \eqref{eq-prop-blow-up:6.3} and
\eqref{eq-prop-blow-up:8}, $Y\in \mathcal{H}$ and $Y$ satisfies
\begin{equation*}
  \gamma\int_{\R^{N+1}_+} y^{1-2s}Y\phi G\,
  dz=\int_{\R^{N+1}_+}y^{1-2s}\nabla Y
  \cdot \nabla \phi \, G \, dz \\
  -\int_{\R^N}\frac{\mu}{|x|^{2s}} \Tr(Y)\Tr(\phi)G(x,0)\, dx
\end{equation*}
for any $\phi\in C_c^{\infty}(\overline{\R^{N+1}_+})$, i.e.
$\gamma$ is an eigenvalue of problem
\eqref{prob-eigenvalue-Ornstein-Uhlenbeck-operator} and $Y$ is an
associated eigenfunction. The proof is then complete.
\end{proof}

Now we study  the asymptotic behavior of  $H(t)$ as $t \to 0^+$.
\begin{proposition}\label{prop-limit-H}
  Let $\gamma$ be as in \eqref{limit-N}. Then the limit
  $\lim_{t \to 0^+} t^{-2\gamma} H(t)$ exists and it is finite.
\end{proposition}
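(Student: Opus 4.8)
The plan is to study the monotonicity of the function $t\mapsto t^{-2\gamma}H(t)$ on $(0,T_0)$, where $T_0$ is as in \eqref{def-T0} and $\gamma$ as in \eqref{limit-N}. Since $tH'(t)=2tD(t)=2\mathcal{N}(t)H(t)$ by \eqref{eq-H'} and the definition of $\mathcal{N}$, a direct computation gives, for a.e. $t\in(0,T_0)$,
\begin{equation*}
\frac{d}{dt}\left(t^{-2\gamma}H(t)\right)=t^{-2\gamma-1}\big(tH'(t)-2\gamma H(t)\big)=2\,t^{-2\gamma-1}H(t)\big(\mathcal{N}(t)-\gamma\big).
\end{equation*}
The idea is to integrate this identity over $(t,T_0)$ and analyse the right-hand side using the decomposition $\mathcal{N}'=\nu_1+\nu_2$ with $\nu_1\ge0$ (Proposition~\ref{prop-N-W11}) together with the a priori bounds on $H$ from Proposition~\ref{prop-H-estmiates}.

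Next I would write, using that $\mathcal{N}\in W^{1,1}_{loc}(0,2\alpha)$ and $\mathcal{N}(t)\to\gamma$ as $t\to0^+$ (Proposition~\ref{prop-limit-N}),
\begin{equation*}
\mathcal{N}(t)-\gamma=\int_0^t\nu_1(\tau)\,d\tau+R(t),\qquad R(t):=\int_0^t\nu_2(\tau)\,d\tau,
\end{equation*}
where the first term is nonnegative for all $t$. The key estimate on the remainder is $|R(t)|\le C\,t^{\delta}$ for $t\in(0,T_0)$, with $\delta:=\min\{\tfrac{\e}{2},1-\tfrac1r\}>0$ and some constant $C>0$ depending only on the data; this follows from Proposition~\ref{prop-nu2-estimates} combined with the boundedness of $\mathcal{N}$ (Proposition~\ref{prop-N-bounded}), the inequality \eqref{ineq-N-bounded-below}, and the integrability $h_t\in L^r((0,T),L^{\frac{N}{2s}}(\R^N))$ coming from \eqref{hp-g-Lr} and \eqref{def-h}, via $\int_0^t\norm{h_t(\cdot,\tau)}_{L^{\frac{N}{2s}}(\R^N)}\,d\tau\le t^{1-1/r}\norm{h_t}_{L^r((0,T),L^{\frac{N}{2s}}(\R^N))}$. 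This is exactly the computation already performed inside the proof of Proposition~\ref{prop-H-estmiates}.

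Then I would carry out the splitting argument: for $0<t<T_0$,
\begin{equation*}
t^{-2\gamma}H(t)=T_0^{-2\gamma}H(T_0)-\int_t^{T_0}2s^{-2\gamma-1}H(s)\left(\int_0^s\nu_1(\tau)\,d\tau\right)ds-\int_t^{T_0}2s^{-2\gamma-1}H(s)R(s)\,ds.
\end{equation*}
By \eqref{ineq-H-estimates-above} one has $s^{-2\gamma-1}H(s)\le K_1s^{-1}$, so the integrand of the last integral is bounded in absolute value by $2CK_1s^{-1+\delta}\in L^1(0,T_0)$, hence that integral converges as $t\to0^+$. The integrand of the middle integral is nonnegative, so the middle integral is monotone in the lower limit and therefore admits a (possibly infinite) limit as $t\to0^+$. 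Since $0\le t^{-2\gamma}H(t)\le K_1$ by \eqref{ineq-H-estimates-above}, the displayed identity forces the middle integral to remain bounded, hence to have a finite limit; consequently $\lim_{t\to0^+}t^{-2\gamma}H(t)$ exists and is finite. The main obstacle is really the power-type control $|R(t)|\le Ct^\delta$ on the primitive of $\nu_2$ near $0$ — but, as indicated, this is already available from Proposition~\ref{prop-nu2-estimates} and Proposition~\ref{prop-N-bounded}; once it is in hand, the conclusion is the elementary fact that a sum of a monotone function and an absolutely convergent integral has a limit, combined with the two-sided estimate on $H$.
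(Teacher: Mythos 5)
Your proposal is correct and follows essentially the same route as the paper: differentiate $t^{-2\gamma}H(t)$, write $\mathcal{N}(t)-\gamma=\int_0^t(\nu_1+\nu_2)\,d\tau$, use $\nu_1\ge 0$ for a monotone contribution and the bound $\bigl|\int_0^t\nu_2\,d\tau\bigr|\le C t^{\delta}$ (from Propositions~\ref{prop-nu2-estimates} and \ref{prop-N-bounded} together with \eqref{hp-g-Lr}) plus $H(\rho)\le K_1\rho^{2\gamma}$ to make the other contribution absolutely integrable. Combined with the upper bound \eqref{ineq-H-estimates-above}, this yields existence and finiteness of the limit exactly as in the paper's proof.
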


\begin{proof}
  Thanks to \eqref{ineq-H-estimates-above}, we only need to show that
  the limit exists. By \eqref{eq-H'}, Proposition
  \ref{prop-N-W11}, and \eqref{limit-N},
\begin{align*}
  \frac{d}{dt}(t^{-2\gamma}H(t))&
                                  =-2\gamma t^{-2 \gamma -1}H(t)   +t^{-2\gamma}H'(t)=2t^{-2\gamma-1}(tD(t)-\gamma H(t))\\
                                &=2t^{-2\gamma-1}H(t)\int_0^t(\nu_1(\tau)+\nu_2(\tau)) \, d\tau
\end{align*}
for a.e.  $t \in (0,T_0)$, where $\nu_1$ and $\nu_2$ have been
defined in \eqref{def-nu12}.  An integration over $(t,T_0)$ yields
\begin{equation*}
  \frac{H(T_0)}{T_0^{2\gamma}}-\frac{H(t)}{t^{2\gamma}}
  =\int_{t}^{T_0} 2\rho^{-2\gamma-1}H(\rho)
  \left(\int_0^\rho\nu_1(\tau)d \tau \right)  d\rho
  +\int_{t}^{T_0} 2\rho^{-2\gamma-1}H(\rho)
  \left(\int_0^\rho\nu_2(\tau)d \tau \right)  d\rho.
\end{equation*}
Since by Proposition \ref{prop-N-W11} it follows that $\nu_1 \ge 0$, then the function 
\begin{equation*}
  t \to \int_{t}^{T_0} 2\rho^{-2\gamma-1}
  H(\rho)\left(\int_0^\rho\nu_1(\tau)d \tau \right) \, d\rho
\end{equation*}
is non-increasing on $(0,T_0)$ and it has a limit as
  $t\to0^+$.  From \eqref{hp-g-Lr},  \eqref{def-h}, Proposition
\ref{prop-nu2-estimates} and Proposition \ref{prop-N-bounded} we
deduce that
\begin{equation*}
\left|\int_0^\rho\nu_2(\tau)d\tau \right| \le {\rm{const}}\, \rho^\delta,
\end{equation*}
where $\delta:=\min\{\frac{\e}{2},1-\frac{1}{r}\}$.
Then by \eqref{ineq-H-estimates-above}
\begin{equation*}
\left| 2\rho^{-2\gamma-1}H(\rho)\left(\int_0^\rho\nu_2(\tau)d \tau \right) \, d\rho\right| \le {\rm{const}}\, \rho^{-1+\delta}
\end{equation*}
hence the function 
\begin{equation*}
\rho \to 2\rho^{-2\gamma-1}H(\rho)\left(\int_0^\rho\nu_2(\tau)d \tau \right) \, dr 
\end{equation*}
belongs to $L^1(0,T_0)$. We conclude that limit
$\lim_{t \to 0^+} t^{-2\gamma} H(t)$ exists.
\end{proof}

\begin{proposition}\label{prop-limit-H-positive}
Let $\gamma$ be as in \eqref{limit-N}. Then 
$\lim_{t \to 0^+} t^{-2\gamma} H(t)>0$.
\end{proposition}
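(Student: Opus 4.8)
The plan is to argue by contradiction: we assume $\ell:=\lim_{t\to0^+}t^{-2\gamma}H(t)=0$ (this limit exists and is finite by Proposition \ref{prop-limit-H}) and deduce a contradiction by projecting $V$ onto a blow-up profile. First I would fix an arbitrary sequence $\la_n\to0^+$ and, by Proposition \ref{prop-blow-up}, pass to a subsequence (not relabelled) along which $W_{\la_n}(\cdot,1)=\frac{V(\cdot,\la_n^2)}{\sqrt{H(\la_n^2)}}$ converges strongly in $\mathcal L$ to an eigenfunction $Y$ of \eqref{prob-eigenvalue-Ornstein-Uhlenbeck-operator} associated with $\gamma$, normalized by $\norm{Y}_{\mathcal L}=1$. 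In particular $(W_{\la_n}(\cdot,1),Y)_{\mathcal L}\to\norm{Y}_{\mathcal L}^2=1$.

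Next I would study the scalar quantity $\upsilon(t):=\int_{\R^{N+1}_+}y^{1-2s}V(z,t)\,Y(z)\,G(z)\,dz$. Since $Y\in\mathcal H$ does not depend on $t$, Remark \ref{remark-parabolic-change-variables} gives $\upsilon'(t)=\df{\mathcal H^*}{V_t(\cdot,t)}{Y}{\mathcal H}$; testing \eqref{eq-weak-formulation-gaussian} with $\phi=Y$ (admissible by density together with \eqref{ineq-trace} and \eqref{ineq-hardy-frac}) and exploiting the symmetry of the bilinear form appearing in \eqref{eq-eigenvalue-Ornstein-Uhlenbeck-operator} together with the fact that $Y$ solves \eqref{eq-eigenvalue-Ornstein-Uhlenbeck-operator} with eigenvalue $\gamma$, one obtains
\begin{equation*}
  \upsilon'(t)-\frac{\gamma}{t}\,\upsilon(t)=-\,t^{s-1}\int_{\R^N}h(\sqrt{t}\,x,t)\,v(x,t)\,\Tr(Y)(x)\,G(x,0)\,dx=:P(t),
\end{equation*}
where $v(\cdot,t)=\Tr(V(\cdot,t))$. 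By the Cauchy--Schwarz inequality, by \eqref{ineq-h} applied to $V(\cdot,t)$ and to $Y$ (with $K=C_g$), and by the a priori bound $\norm{V(\cdot,t)}_{\mathcal H}\le C\sqrt{H(t)}\le C'\,t^{\gamma}$ --- which follows from the boundedness of $\mathcal N$ (Propositions \ref{prop-N-W11} and \ref{prop-N-bounded}), the coercivity estimate \eqref{ineq-D+H} with $f=h$, and \eqref{ineq-H-estimates-above} --- I would estimate $|P(t)|\le C\,t^{\gamma+\e/2-1}$. Hence $\big(t^{-\gamma}\upsilon(t)\big)'=t^{-\gamma}P(t)$ is integrable near $t=0$, so $\beta:=\lim_{t\to0^+}t^{-\gamma}\upsilon(t)$ exists and $\big|t^{-\gamma}\upsilon(t)-\beta\big|\le C\,t^{\e/2}$ for $t$ small.

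Finally I would combine the two ingredients. Rewriting
\begin{equation*}
  (W_{\la_n}(\cdot,1),Y)_{\mathcal L}=\frac{\upsilon(\la_n^2)}{\sqrt{H(\la_n^2)}}=\frac{\upsilon(\la_n^2)/(\la_n^2)^{\gamma}}{\sqrt{H(\la_n^2)/(\la_n^2)^{2\gamma}}},
\end{equation*}
the numerator tends to $\beta$ while the denominator tends to $\sqrt{\ell}=0$; since the left-hand side tends to $1$, this forces $\beta=0$, whence $|\upsilon(t)|\le C\,t^{\gamma+\e/2}$ near $0$. On the other hand, the lower bound \eqref{ineq-H-estimates-below} with $\sigma=\e/2$ yields $\sqrt{H(\la_n^2)}\ge\sqrt{K_2(\e/2)}\,(\la_n^2)^{\gamma+\e/4}$, so
\begin{equation*}
  \big|(W_{\la_n}(\cdot,1),Y)_{\mathcal L}\big|=\frac{|\upsilon(\la_n^2)|}{\sqrt{H(\la_n^2)}}\le C\,(\la_n^2)^{\e/4}\longrightarrow0\quad\text{as }n\to\infty,
\end{equation*}
contradicting $(W_{\la_n}(\cdot,1),Y)_{\mathcal L}\to1$. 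Therefore $\ell>0$. The conceptual crux is the use of the almost sharp lower bound \eqref{ineq-H-estimates-below} to force the normalized projection onto the blow-up profile $Y$ to vanish once $\beta=0$ is known; the main computational point is the estimate on $P(t)$, whose integrability near $0$ --- and hence the decay $|\upsilon(t)|\le C\,t^{\gamma+\e/2}$ --- hinges on matching the subhomogeneity exponent $\e$ in \eqref{hp-g-subhomogeneous} with the growth rate $t^{\gamma}$ of $\norm{V(\cdot,t)}_{\mathcal H}$.
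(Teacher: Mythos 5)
Your proof is correct, and it follows the same overall strategy as the paper (argument by contradiction, an ODE for the projection of $V(\cdot,t)$ onto eigenfunctions associated with $\gamma$, an estimate of the perturbation term via \eqref{hp-g-subhomogeneous} and \eqref{ineq-h}, and the interplay between the upper bound \eqref{ineq-H-estimates-above}, the almost sharp lower bound \eqref{ineq-H-estimates-below}, and the blow-up result of Proposition \ref{prop-blow-up}), but your implementation differs in a way worth noting. The paper expands $V(\cdot,\la^2)$ in the full orthonormal basis $\{\widetilde Y_{n,j}\}$ of $\mathcal L$, uses Parseval's identity together with the contradiction hypothesis to get $\la^{-2\gamma}V_{n,j}(\la)\to 0$, integrates the ODE for each coefficient with $(n,j)\in J_0$ to obtain $|V_{n,j}(\la)|\le C\la^{2\gamma+\e}$, and concludes that the blow-up limit $Y$ is $\mathcal L$-orthogonal to every $\widetilde Y_{n,j}$ with $(n,j)\in J_0$, hence $Y\equiv 0$ by the description of the eigenspace in Proposition \ref{prop-eigenvalues}. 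You instead project onto the single, subsequence-dependent, limit profile $Y$ itself, obtain the same type of ODE by the symmetry of the bilinear form in \eqref{eq-eigenvalue-Ornstein-Uhlenbeck-operator}, and eliminate the integration constant $\beta$ through the ratio structure $\upsilon(\la_n^2)/\sqrt{H(\la_n^2)}$; this makes the concluding step lighter, since you never need Parseval nor the fact that $\{\widetilde Y_{n,j}\}_{(n,j)\in J_0}$ spans the eigenspace, but only $\norm{Y}_{\mathcal L}=1$. Incidentally, $\beta=0$ follows even more directly from Cauchy--Schwarz, since $|\upsilon(t)|\le\sqrt{H(t)}\,\norm{Y}_{\mathcal L}$ gives $t^{-\gamma}|\upsilon(t)|\le\sqrt{t^{-2\gamma}H(t)}\to 0$ under the contradiction hypothesis, without invoking the convergence of the normalized projection at that stage. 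The auxiliary steps you use are all justified: $\norm{V(\cdot,t)}_{\mathcal H}^2\le C\,H(t)$ follows from \eqref{ineq-D+H} together with Proposition \ref{prop-N-bounded}, testing \eqref{eq-weak-formulation-gaussian} with $Y\in\mathcal H$ is legitimate by density, the absolute continuity and differentiation of $\upsilon$ are covered by Remark \ref{remark-parabolic-change-variables}, and your bound $|P(t)|\le C\,t^{\gamma+\e/2-1}$ is the exact analogue of the paper's estimate \eqref{ineq-xi}.
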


\begin{proof}
We argue by contradiction assuming that $\lim_{t \to 0^+} t^{-2\gamma} H(t)=0$. 

Since $V_\la(z,1)=V(z, \la^2) \in \mathcal{H}$ and
$h(\la x, \la^2 )\Tr(V_\la)(x,1) \in \mathcal{H}^*$ for a.e.
$\la \in (0,\sqrt{T_0})$, by Proposition \ref{prop-eigenvalues} we can
expand them in $\mathcal{L}$ and $\mathcal{H}^*$ respectively as
\begin{align*}
  V_\la(z,1)&=\sum_{(n,j)\in \mathbb{N} \times(\mathbb{N}\setminus
    \{0\})}
    V_{n,j}(\la)\widetilde{Y}_{n,j}(z) \quad \text{ in } \mathcal{L},\\
  h(\la x, \la^2 )\Tr(V_\la)(x,1)&=
    \sum_{(n,j)\in \mathbb{N} \times(\mathbb{N}\setminus
    \{0\})}\xi_{n,j}(\la)\widetilde{Y}_{n,j}(z)
    \quad \text{ in } \mathcal{H}^*,
\end{align*} 
where 
\begin{align}
  &V_{n,j}(\la):=\int_{\R^{N+1}_+} y^{1-2s}V_\la(z,1)\widetilde{Y}_{n,j}(z)G(z)\, dz, \label{def-V-la-mj}\\
  &\xi_{n,j}(\la):=\!\!\sideset{_{\mathcal{H}^*}}{_{\mathcal{H}}}{\mathop{\left\langle h(\la \cdot, \la^2 )\Tr(V_\la)(\cdot,1), \widetilde{Y}_{n,j}\right\rangle}}\!\!\label{def-xi-la-mj}
  =\!\int_{\R^N}\! h(\la 
    x,\la^2)v_\la(x,1)\Tr(\widetilde{Y }_{n,j}) G(x,0)\, dx, 
\end{align}
for a.e. $\la
\in (0,\sqrt{T_0})$. By Parseval's identity
\begin{equation*}
H(\la^2)=\sum_{(m,i)\in \mathbb{N} \times(\mathbb{N}\setminus
  \{0\})}(V_{m,i}(\la))^2 \ge (V_{n,j}(\la))^2 
\end{equation*}
for any $\la \in (0,T_0)$ and $(n,j)\in \mathbb{N} \times(\mathbb{N}\setminus
  \{0\})$. Hence, from $\lim_{t \to 0^+} t^{-2\gamma} H(t)=0$ we deduce that 
\begin{equation}\label{eq-prop-limit-H-positive:1}
  \lim_{\la \to 0^+} \la^{-2\gamma}V_{n,j}(\la)=0 \quad \text{ for any }
  (n,j)
  \in \mathbb{N} \times(\mathbb{N}\setminus \{0\}).
\end{equation}
By \eqref{def-V-la}, Remarks \ref{remark-parabolic-change-variables},
  \ref{remark-Lt-Ht*}, and Proposition \ref{prop-Vt-L2}, it is
easy to see that, for any
$(n,j)\in \mathbb{N} \times(\mathbb{N}\setminus \{0\})$,
$V_{n,j}$ is absolute continuous on any closed sub-interval of
$(0,\sqrt{T_0})$ and 
\begin{equation*}
  \frac{d}{d \la } V_{n,j}(\la)=
  \df{\mathcal{H}^*}{\frac{d}{d \la }
    V_\la(\cdot,1)}{\widetilde{Y}_{n,j}}{\mathcal{H}}
  =\ps{\mathcal{L}}{\frac{d}{d \la } V_\la(\cdot,1)}{\widetilde{Y}_{n,j}}
  =2 \la \ps{\mathcal{L}}{ V_t(\cdot,\la^2)}{\widetilde{Y}_{n,j}}
\end{equation*}
a.e. and in a distributional sense in $(0,\sqrt{T_0})$.
Furthermore, for any $(n,j) \in \mathbb{N} \times(\mathbb{N}\setminus \{0\})$, since $\widetilde{Y}_{n,j}$ is 
an eigenfunction of problem
\eqref{prob-eigenvalue-Ornstein-Uhlenbeck-operator} we have that
\begin{align*} 
  &2\la\int_{\R^{N+1}_+}y^{1-2s}V_t( z,\la^2)\widetilde{Y}_{n,j}(z)G(z) \, dz
    =\frac{2}{\la}\Bigg(\int_{\R^{N+1}_+}y^{1-2s}\nabla V(\cdot,\la^2)\cdot \nabla \widetilde{Y}_{n,j} \, 
    G \, dz \\ 
  &\quad\quad-\int_{\R^N}\left(\frac{\mu}{|x|^{2s}} v(x,\la^2)\Tr(\widetilde{Y}_{n,j})+\la^{2s}h(\la 
    x,\la^2)v(x,\la^2)\Tr(\widetilde{Y}_{n,j})\right)G(x,0)\, dx \Bigg)\\
  &=\frac{2}{\la}\gamma_{n,j}\int_{\R^{N+1}_+}y^{1-2s}V(\cdot,\la^2) \widetilde{Y}_{n,j} G \, dz \\
  &\quad\quad-2\la^{2s-1}\int_{\R^N} h(\la 
    x,\la^2)v(x,\la^2)\Tr(\widetilde{Y}_{n,j}) G(x,0)\, dx
    =\frac{2}{\la}\gamma_{n,j}V_{n,j}(\la)-2 \la^{2s-1}\xi_{n,j}(\la), 
\end{align*}
for a.e. $\la \in (0,\sqrt{T_0})$, by
\eqref{eq-weak-formulation-gaussian}, \eqref{def-V-la-mj} and
\eqref{def-xi-la-mj}.  In conclusion we have proved that, for any
$(n,j) \in\mathbb{N} \times(\mathbb{N}\setminus \{0\})$,
\begin{equation*}
\frac{d}{d \la } V_{n,j}(\la)=\frac{2}{\la}\gamma_{n,j}V_{n,j}(\la)-2 \la^{2s-1}\xi_{n,j}(\la)
\end{equation*}
for a.e. $\la \in (0,\sqrt{T_0})$ and  in a distributional sense.
An integration  yields
\begin{equation} \label{eq-prop-limit-H-positive:2}
  V_{n,j}(\bar \la)={\bar
    \la}^{2\gamma_{n,j}}\left(\la^{-2\gamma_{n,j}}V_{n,j}(\la)+2\int_{\bar
      \la}^\la \tau^{2s-1-2\gamma_{n,j}}\xi_{n,j}(\tau)\, d \tau
  \right)
\end{equation}
for any $\bar \la , \la \in (0,\sqrt{T_0})$.

Thanks to Proposition \ref{prop-blow-up}, there exists an
eigenvalue $\gamma_{m_0,k_0}$ of
\eqref{prob-eigenvalue-Ornstein-Uhlenbeck-operator} such that
$\gamma=\gamma_{m_0,k_0}$.  Then, for any $(n,j) \in J_0$ (see
\eqref{def-J0}), we can estimate $\xi_{n,j}$ as follows. From the
H\"older inequality, the fact that
$\widetilde{Y}_{n,j} \in \mathcal{H}$, \eqref{ineq-D+H}, and
\eqref{ineq-h} it follows that
\begin{align}\label{ineq-xi}
&\la^{2s}|\xi_{n,j}(\la)|= \la^{2s}\left|\int_{\R^N}h(\la x,\la^2) v( x,\la^2) \Tr(\widetilde{Y}_{n,j})(x) G(x,0) \, dx\right| \\
\notag&\le\left(\int_{\R^N}\!\la^{2s}|h(\la x,\la^2)| |v( x,\la^2)|^2  G(x,0) dx\!\right)^{\!\!\frac{1}{2}}\!\!
\left(\int_{\R^N}\!\la^{2s}|h(\la x,\la^2)|
        |\Tr(\widetilde{Y}_{n,j})|^2 G(x,0) 
        dx\!\right)^{\!\!\frac{1}{2}}\\
  &\notag\le \rm{const} \,  \la^{\e +2 \gamma}
\end{align}
for any $\la \in (0,\sqrt{T_0})$, where Proposition
\ref{prop-N-bounded} and \eqref{ineq-H-estimates-above} have been used. 

It follows that $\tau \to \tau^{2s-1-2\gamma}\xi_{n,j}(\tau)$ belongs
to $L^1(0,\sqrt{T_0})$ for any $(n,j) \in J_0$.  Passing to the limit
as $\bar \la \to 0^+$ in \eqref{eq-prop-limit-H-positive:2}, from
\eqref{eq-prop-limit-H-positive:1} we deduce that
\begin{equation}\label{eq-prop-limit-H-positive:3}
V_{n,j}(\la)=-2\la^{2\gamma}\int_{0}^{\la}  \tau^{2s-1-2\gamma}\xi_{n,j}(\tau)\, d \tau \quad \text{ for any } (n,j) \in J_0.
\end{equation}
Combining \eqref{ineq-xi} and \eqref{eq-prop-limit-H-positive:3} we
obtain that
\begin{equation*}
  |V_{n,j}(\la)| \le \mathop{\rm const}
  \la^{\e+2 \gamma} \quad \text{ for any } \la
  \in  (0,\sqrt{T_0}) \text{ and some const}>0 \text{ independent of } \la.
\end{equation*}
Fixing some $\sigma \in (0,\e)$, by \eqref{ineq-H-estimates-below}
there exists a constant $K(\sigma)>0$
such that, for any $\la \in (0,\sqrt{T_0})$,
\begin{equation*}
H(\la^2)\ge K(\sigma) \la^{2(2\gamma +\sigma)}.
\end{equation*}
We conclude that 
\begin{equation}\label{eq-prop-limit-H-positive:4.1}
\frac{|V_{n,j}(\la)|}{\sqrt{H(\la^2)}} = O\left(\la^{\e-\sigma}\right)=o(1) \quad \text{ as } \la \to 0^+.
\end{equation}
On the other hand, for any sequence $\la_i\to 0^+$,
Proposition \ref{prop-blow-up} provides a subsequence
$\la_{i_k} \to 0^+$ and an eigenfunction $Y$ of
\eqref{prob-eigenvalue-Ornstein-Uhlenbeck-operator} associated to the
eigenvalue $\gamma$ such that
\begin{equation*}
  \frac{V_{\la_{i_k}}(\cdot,1)}{\sqrt{H(\la_{i_k}^2)}} \to Y
  \quad \text{ strongly in } \mathcal{L} \text{ as } k \to \infty.
\end{equation*}
In particular, for any $(n,j) \in J_0$,
\begin{equation}\label{eq-prop-limit-H-positive:5}
  \frac{V_{n,j}(\la_{i_k})}{\sqrt{H(\la_{i_k}^2)}}
  =
  \ps{\mathcal{L}}{\frac{V_{\la_{i_k}}(\cdot,1)}{\sqrt{H(\la_{i_k}^2)}}}{\widetilde{Y}_{n,j}}
  \to\ps{\mathcal{L}}{Y}{\widetilde{Y}_{n,j}}\quad \text{ as } k \to \infty.
\end{equation}
From \eqref{eq-prop-limit-H-positive:4.1} and
\eqref{eq-prop-limit-H-positive:5} we deduce that
$(Y,\widetilde{Y}_{n,j})_{\mathcal{L}}=0$ for any $(n,j) \in J_0$.
We conclude that $Y \equiv 0$, a contradiction.
\end{proof}

\begin{proof}[\textbf{ Proof of Theorems \ref{theorem-blow-up-extended} and \ref{theorem-blow-up}}]
  In view of Proposition \ref{prop-blow-up}, there exists an
  eigenvalue $\gamma_{m_0,k_0}$ of problem
  \eqref{prob-eigenvalue-Ornstein-Uhlenbeck-operator} such that
  \eqref{limit-N-forward} holds. Let $J_0$ be as in \eqref{def-J0} and
  $\la_i \to 0^+$ as $i \to +\infty$. Thanks to Proposition
  \ref{prop-blow-up} and Proposition \ref{prop-limit-H-positive} there
  exists a subsequence $\{\la_{i_k}\}_{k \in \mathbb{N}}$ and real
  numbers $\{\beta_{n,j}:(n,j )\in J_0\}$ such that
  $\beta_{\tilde n, \tilde j}\neq 0$ for some
  $(\tilde n,\tilde j )\in J_0$ and, for any $\tau \in (0,1)$,
\begin{equation}\label{eq-proof-theorem-blow-up-extended:1}
\lim_{k \to \infty}\int_{\tau}^{1}\norm{\la_{i_k}^{-2\gamma_{m_0,k_0}}V( z,\la_{i_k}^2 t)-t^{\gamma_{m_0,k_0}}\sum_{(n,j) \in J_0}\beta_{n,j}\widetilde{Y}_{n,j}(z)}^2_{\mathcal{H}} dt=0
\end{equation}
and 
\begin{equation}\label{eq-proof-theorem-blow-up-extended:2}
\lim_{k \to \infty}\sup_{t \in [\tau,1]}\norm{\la_{i_k}^{-2\gamma_{m_0,k_0}}V( z,\la_{i_k}^2 t)-t^{\gamma_{m_0,k_0}}\sum_{(n,j) \in J_0}\beta_{n,j}\widetilde{Y}_{n,j}(z )}^2_{\mathcal{L}}=0.
\end{equation}
It follows that 
\begin{equation}\label{eq-proof-theorem-blow-up-extended:2.1}
  \la_{i_k}^{-2\gamma_{m_0,k_0}}V( z,\la_{i_k}^2 )\to \sum_{(n,j) \in
    J_0}\beta_{n,j}\widetilde{Y}_{n,j}(z)
  \quad \text{ strongly in } \mathcal{L} \text{ as } k \to \infty.
\end{equation}
Let us prove that $\{\beta_{n,j}:(n,j )\in J_0\}$ depends neither on the
sequence $\{\la_{i}\}_{i \in \mathbb{N}}$ nor on its subsequence
$\{\la_{i_k}\}_{k \in \mathbb{N}}$.  Let $\Lambda \in (0,\sqrt{T_0})$
and let $V_{n,j}$, $\xi_{n,j}$ be as in \eqref{def-V-la-mj} and
\eqref{def-xi-la-mj} respectively.  From
\eqref{eq-proof-theorem-blow-up-extended:2.1} we obtain that, for any
$(n,j) \in J_0$,
\begin{equation*}
  \la_{i_k}^{-2\gamma_{m_0,k_0}}V_{n,j}(\la_{i_k})\to\beta_{n,j}\quad\text{as $k \to \infty$}.
\end{equation*}
By \eqref{eq-prop-limit-H-positive:2},  for any $(n,j) \in J_0$ and $\la\in (  0,\Lambda)$,
\begin{equation*}
  V_{n,j}(\la)={\la}^{2\gamma_{m_0,k_0}}
  \left(\Lambda^{-2\gamma_{m_0,k_0}}V_{n,j}(\Lambda)
    +2\int_{\la}^\Lambda  \tau^{2s-1-2\gamma_{m_0,j_0}}\xi_{n,j}(\tau)\, d \tau \right).
\end{equation*}
Furthermore, proceeding as in Proposition
\ref{prop-limit-H-positive}, we can prove that 
$\tau \to \tau^{2s-1-2\gamma_{m_0,j_0}}\xi_{n,j}(\tau)$ belongs to
$L^1(0,\sqrt{T_0})$. Hence
\begin{align*}
  \beta_{n,j}&=\Lambda^{-2\gamma_{m_0,k_0}}V_{n,j}(\Lambda)+2\int_0^\Lambda  \tau^{2s-1-2\gamma_{m_0,j_0}}\xi_{n,j}(\tau)\, d \tau\\
             &=\Lambda^{-2\gamma_{m_0,k_0}}\int_{\R^{N+1}_+} y^{1-2s}V( z,\Lambda^2)\widetilde{Y}_{n,j}(z) G(z)\, dz \\
             &\qquad+2\int_0^\Lambda\tau^{2s-1-2\gamma_{m_0,k_0}}\left(\int_{\R^N} h(\tau x,\tau^2)v( x,\tau^2)\Tr(\widetilde{Y}_{n,j})(x) G(x,0)\, dx \right)\, d\tau,
\end{align*}
so that $\beta_{n,j}$ depends neither on the sequence
$\{\la_{i}\}_{i \in \mathbb{N}}$ nor on its subsequence
$\{\la_{i_k}\}_{k \in \mathbb{N}}$ for any $(n,j) \in J_0$. Then, by
the Urysohn subsequence principle, we conclude that the convergences in
\eqref{eq-proof-theorem-blow-up-extended:1} and
\eqref{eq-proof-theorem-blow-up-extended:2} actually hold as
$\la \to 0^+$, thus proving Theorem \ref{theorem-blow-up-extended}.
Theorem \ref{theorem-blow-up} follows from Theorem
\ref{theorem-blow-up-extended} and the continuity of
the trace operator $\Tr$ from $\mathcal{H}$ into $L^2(\R^N,G(x,0))$,
see Proposition \ref{prop-trace-ineq}.
\end{proof}

The strong unique continuation principles stated in Corollaries
\ref{corollary-unique-principle-extended} and
\ref{corollary-unique-principle} easily follow from Theorem
\ref{theorem-blow-up-extended} and Theorem \ref{theorem-blow-up}.
\begin{proof}[\textbf{ Proof of Corollaries
    \ref{corollary-unique-principle-extended}
    and \ref{corollary-unique-principle}}]
  We start by proving Corollary
  \ref{corollary-unique-principle-extended}.  Let us assume by
  contradiction that $W \not \equiv 0$ on $\R^{N+1}_+ \times(-T,0)$
  and let $\gamma_{m_0,k_0}$ be as in Theorem
  \ref{theorem-blow-up-extended}.  In view of
  \eqref{eq-W-O} we have that
\begin{equation*}
  \lim_{\la \to 0^+} \la^{-2\gamma_{m_0,k_0}}t^{-\gamma_{m_0,k_0}}W(\la
  z\sqrt{t},-\la^2 t)=0
  \quad \text{ for a.e. } (z,t) \in \R^{N+1}_+\times (0,1).
\end{equation*}
On the other hand, by Theorem \ref{theorem-blow-up-extended} there
exists $Y \in \mathcal{H} \setminus \{0\}$ such that $Y$ is an
eigenfunction of problem
\eqref{prob-eigenvalue-Ornstein-Uhlenbeck-operator} and, for a.e.
$z \in \R^{N+1}_+$ and $t \in (0,1)$, 
\begin{equation*}
\lim_{n \to \infty} \la_n^{-2\gamma_{m_0,k_0}}t^{-\gamma_{m_0,k_0}}W(\la_n z \sqrt{t},-\la_n^2 t)= Y(z),
\end{equation*}
along a sequence $\lambda_n\to0^+$.
We conclude that $Y \equiv 0$, thus reaching a contradiction. In the
same way, we can deduce Corollary \ref{corollary-unique-principle}
from Theorem \ref{theorem-blow-up}, in
view of Proposition \ref{prop-trace-Y-0}.
\end{proof}


\begin{thebibliography}{10}

\bibitem{AMSI}
{\sc Abramowitz, M., and Stegun, I.~A.}
\newblock {\em Handbook of mathematical functions with formulas, graphs, and
  mathematical tables}.
\newblock National Bureau of Standards Applied Mathematics Series, No. 55. U.
  S. Government Printing Office, Washington, D.C., 1964.
\newblock For sale by the Superintendent of Documents.

\bibitem{ABDG}
{\sc Arya, V., Banerjee, A., Danielli, D., and Garofalo, N.}
\newblock Space-like strong unique continuation for some fractional parabolic
  equations.
\newblock {\em Preprint 2022, \tt{arXiv:2203.07428}\/}.

\bibitem{AT}
{\sc Audrito, A., and Terracini, S.}
\newblock On the nodal set of solutions to a class of nonlocal parabolic
  equations.
\newblock {\em Preprint 2018, \tt{arXiv:1807.10135}\/}.

\bibitem{AV} {\sc G. Alessandrini, S. Vessella}, {\newblock Remark on the strong
    unique continuation property for parabolic operators},
  {\em Proc. Amer. Math. Soc.},  132 (2004), no. 2, 499--501.

\bibitem{BG}
{\sc Banerjee, A., and Garofalo, N.}
\newblock Monotonicity of generalized frequencies and the strong unique
  continuation property for fractional parabolic equations.
\newblock {\em Adv. Math. 336\/} (2018), 149--241.

\bibitem{BGM}
{\sc Banerjee, A., Garofalo, N., and Manna, R.}
\newblock A strong unique continuation property for the heat operator with
  {H}ardy type potential.
\newblock {\em J. Geom. Anal. 31}, 6 (2021), 5480--5504.

\bibitem{BCS}
{\sc Biswas, A., De~Le\'{o}n-Contreras, M., and Ra\'{u}l~Stinga, P.}
\newblock Harnack inequalities and {H}\"{o}lder estimates for master equations.
\newblock {\em SIAM J. Math. Anal. 53}, 2 (2021), 2319--2348.

\bibitem{CS}
{\sc Caffarelli, L., and Silvestre, L.}
\newblock An extension problem related to the fractional {L}aplacian.
\newblock {\em Comm. Partial Differential Equations 32}, 7-9 (2007),
  1245--1260.
  
\bibitem{CH} X. Y. Chen, {\it A strong unique continuation theorem for
    parabolic equations}, Math. Ann., 311 (1998), no. 4, 603--630.


\bibitem{CSK}
{\sc Chua, S.-K.}
\newblock Some remarks on extension theorems for weighted {S}obolev spaces.
\newblock {\em Illinois J. Math. 38}, 1 (1994), 95--126.

\bibitem{DL}
{\sc Dautray, R., and Lions, J.-L.}
\newblock {\em Mathematical analysis and numerical methods for science and
  technology. {V}ol. 5}.
\newblock Springer-Verlag, Berlin, 1992.
\newblock Evolution problems. I, With the collaboration of Michel Artola,
  Michel Cessenat and H\'{e}l\`ene Lanchon, Translated from the French by Alan
  Craig.

\bibitem{ES} {\sc L. Escauriaza}, {\newblock Carleman inequalities and the heat
    operator}, \newblock {\em Duke Math. J.},  104 (2000), no. 1, 113--127.

\bibitem {EF} {\sc L. Escauriaza, F.J. Fern\'andez}, \newblock Unique
    continuation for parabolic operators, {\em Ark. Mat.},  41 (2003),
  no. 1, 35--60.

\bibitem {EKPV} {\sc L. Escauriaza, C. E. Kenig, G. Ponce, L. Vega}, {\newblock
    Decay at infinity of caloric functions within characteristic
    hyperplanes}, \newblock {\em Math. Res. Lett.},  13 (2006), no. 2-3, 441--453.

\bibitem{EV}{\sc  L. Escauriaza, L. Vega}, {\newblock Carleman inequalities and
    the heat operator. II}, \newblock {\em Indiana Univ. Math. J.}, 50 (2001), no. 3,
  1149--1169.

\bibitem{E}
{\sc Evans, L.~C.}
\newblock {\em Partial differential equations}, second~ed., vol.~19 of {\em
  Graduate Studies in Mathematics}.
\newblock American Mathematical Society, Providence, RI, 2010.

\bibitem{FKCSR}
{\sc Fabes, E.~B., Kenig, C.~E., and Serapioni, R.~P.}
\newblock The local regularity of solutions of degenerate elliptic equations.
\newblock {\em Comm. Partial Differential Equations 7}, 1 (1982), 77--116.

\bibitem{Fall-Felli-2014}
{\sc Fall, M.~M., and Felli, V.}
\newblock Unique continuation property and local asymptotics of solutions to
  fractional elliptic equations.
\newblock {\em Comm. Partial Differential Equations 39}, 2 (2014), 354--397.

\bibitem{FP}
{\sc Felli, V., and Primo, A.}
\newblock Classification of local asymptotics for solutions to heat equations
  with inverse-square potentials.
\newblock {\em Discrete Contin. Dyn. Syst. 31}, 1 (2011), 65--107.

\bibitem{FP2}
{\sc Felli, V., and Primo, A.}
\newblock On parabolic equations with critical electromagnetic potentials.
\newblock {\em J. Differential Equations 268}, 2 (2020), 738--783.

\bibitem {FE} F.J. Fern\'andez, {\it Unique continuation for parabolic
    operators. II}, Comm. Partial Differential Equations 28 (2003),
  no. 9-10, 1597--1604.



\bibitem{HI}
{\sc Herbst, I.~W.}
\newblock Spectral theory of the operator {$(p^{2}+m^{2})^{1/2}-Ze^{2}/r$}.
\newblock {\em Comm. Math. Phys. 53}, 3 (1977), 285--294.

\bibitem{HNVML}
{\sc Hyt\"{o}nen, T., van Neerven, J., Veraar, M., and Weis, L.}
\newblock {\em Analysis in {B}anach spaces. {V}ol. {I}. {M}artingales and
  {L}ittlewood-{P}aley theory}, vol.~63 of {\em Ergebnisse der Mathematik und
  ihrer Grenzgebiete. 3. Folge. A Series of Modern Surveys in Mathematics
  [Results in Mathematics and Related Areas. 3rd Series. A Series of Modern
  Surveys in Mathematics]}.
\newblock Springer, Cham, 2016.

\bibitem{KA}
{\sc Kufner, A.}
\newblock {\em Weighted Sobolev spaces}.
\newblock A Wiley-Interscience Publication. John Wiley \& Sons, Inc., New York,
  1985.
\newblock Translated from the Czech.

\bibitem{LG}
{\sc Leoni, G.}
\newblock {\em A first course in {S}obolev spaces}, second~ed., vol.~181 of
  {\em Graduate Studies in Mathematics}.
\newblock American Mathematical Society, Providence, RI, 2017.


\bibitem{LI1} {\sc F.H. Lin}, {\newblock A uniqueness theorem for parabolic
    equations}, {\em Comm. Pure Appl. Math.}, 43 (1990), no. 1, 127--136.

\bibitem {LI2} {\sc F.H. Lin}, {\newblock Nodal sets of solutions of elliptic and
    parabolic equations}, {\em Comm. Pure Appl. Math. } 44 (1991), no. 3,
  287--308.

\bibitem{LM}
{\sc Lions, J.-L., and Magenes, E.}
\newblock {\em Non-homogeneous boundary value problems and applications. {V}ol.
  {I}}.
\newblock Die Grundlehren der mathematischen Wissenschaften, Band 181.
  Springer-Verlag, New York-Heidelberg, 1972.
\newblock Translated from the French by P. Kenneth.

\bibitem{MD}
{\sc MacDonald, A.~D.}
\newblock Properties of the confluent hypergeometric function.
\newblock {\em J. Math. Physics 28\/} (1949), 183--191.

\bibitem{NS}
{\sc Nystr\"{o}m, K., and Sande, O.}
\newblock Extension properties and boundary estimates for a fractional heat
  operator.
\newblock {\em Nonlinear Anal. 140\/} (2016), 29--37.

\bibitem{Opic-Kufner}
{\sc Opic, B., and Kufner, A.}
\newblock {\em Hardy-type inequalities}, vol.~219 of {\em Pitman Research Notes
  in Mathematics Series}.
\newblock Longman Scientific \& Technical, Harlow, 1990.

\bibitem{PC}
{\sc Poon, C.-C.}
\newblock Unique continuation for parabolic equations.
\newblock {\em Comm. Partial Differential Equations 21}, 3-4 (1996), 521--539.

\bibitem {SS} J-C. Saut, B. Scheurer, {\it Unique continuation for
    some evolution equations}, J. Differential Equations, 66 (1987),
  no. 1, 118--139.


\bibitem{SJ}
{\sc Simon, J.}
\newblock Compact sets in the space {$L^p(0,T;B)$}.
\newblock {\em Ann. Mat. Pura Appl. (4) 146\/} (1987), 65--96.

\bibitem{STV}
{\sc Sire, Y., Terracini, S., and Vita, S.}
\newblock Liouville type theorems and regularity of solutions to degenerate or
  singular problems part {I}: even solutions.
\newblock {\em Comm. Partial Differential Equations 46}, 2 (2021), 310--361.

\bibitem {SO} C.D. Sogge, {\it A unique continuation theorem for
    second order parabolic differential operators},  Ark. Mat., 28
  (1990), no. 1, 159--182.

\bibitem{ST}
{\sc Stinga, P.~R., and Torrea, J.~L.}
\newblock Regularity theory and extension problem for fractional nonlocal
  parabolic equations and the master equation.
\newblock {\em SIAM J. Math. Anal. 49}, 5 (2017), 3893--3924.

\bibitem{TXZ}
{\sc Tao, X., and Zhang, S.}
\newblock Weighted doubling properties and unique continuation theorems for the
  degenerate {S}chr\"{o}dinger equations with singular potentials.
\newblock {\em J. Math. Anal. Appl. 339}, 1 (2008), 70--84.

\end{thebibliography}
\end{document}